\numberwithin{equation}{section}
\definecolor{mediumpurple}{rgb}{0.58, 0.44, 0.86}
\definecolor{cadmiumgreen}{rgb}{0.0, 0.42, 0.24}
\definecolor{navyblue}{rgb}{0.0, 0.0, 0.5}
\definecolor{babyblueeyes}{rgb}{0.63, 0.79, 0.95}
\newtheorem{theorem}{Theorem}[subsection]
\newtheorem{proposition}[theorem]{Proposition}
\newtheorem{corollary}[theorem]{Corollary}
\newtheorem{lemma}[theorem]{Lemma}
\theoremstyle{definition}
\newtheorem{remark}[theorem]{Remark}
\newtheorem{definition}[theorem]{Definition}
\newtheorem{example}[theorem]{Example}
\newcommand{\C}{\mathbb{C}}
\newcommand{\Z}{\mathbb{Z}}
\newcommand{\skewschubert}{\lambda/\mu}
\newcommand{\skewschvar}{S_{\skewschubert}^\circ}
\def\rank{\mathrm{rank}}
\newcommand{\Fl}{\mathcal{F}\ell}
\newcommand{\Fr}{\Fl^{\mathrm{fr}}}
\newcommand{\CF}{\mathcal{F}}
\newcommand{\CFr}{\CF^{\mathrm{fr}}}
\newcommand{\CL}{\mathcal{L}}
\newcommand{\sq}{\square}
\newcommand{\pos}[1]{\Pi^{\circ}_{#1}}
\newcommand{\posone}[1]{\Pi^{\circ,1}_{#1}}
\newcommand{\cpos}[1]{\widehat{\Pi^{\circ}}_{#1}}
\newcommand{\relpos}[1]{\buildrel #1 \over \longrightarrow}
\newcommand{\wrelpos}[1]{\buildrel #1 \over \Longrightarrow}
\DeclareMathOperator{\spann}{span}
\newcommand{\std}{\mathrm{std}}
\newcommand{\ant}{\mathrm{ant}}
\def\br{\beta}
\def\BS{\mathrm{BS}}
\def\minor{\Delta}
\def\GL{\mathrm{GL}}
\def\seed{\Sigma}
\def\x2{x^{(2)}}
\def\u1{u^{(1)}}
\def\w0{\Delta}
\def\Br{\mathrm{Br}}
\def\GN{\mathsf{GN}}
\def\BA{\mathsf{BA}}
\def\forget{\pi}
\def\parabolic{\mathsf{P}}
\def\GR{\mathrm{G}R}
\newcommand{\Gr}{\mathrm{Gr}}
\newcommand{\cox}{\mathbf{c}}
\newcommand{\borel}{\mathsf{B}}
\newcommand{\unipotent}{\mathsf{U}}
\newcommand{\iso}{\iota}
\newcommand{\mubar}{\overline{\mu}}
\newcommand{\lambdabar}{\overline{\lambda}}
\newcommand{\skewright}{\lambda^{a,R}/\mu^{a,R}}
\newcommand{\skewleft}{\lambda^{a,L}/\mu^{a,L}}
\newcommand{\skewmiddle}{\lambda^{a,M}/\mu^{a,M}}
\newcommand{\lift}[1]{\beta(#1)}
\newcommand{\subs}[1]{\langle #1 \rangle}
\newcommand{\longest}{\underline{w_{0}}}
\newcommand{\Wop}{W^{\mathrm{op}}}
\newcommand{\newword}[1]{\emph{\textbf{#1}}}
\newcommand{\TS}[1]{{\color{purple}{{\bf Tonie}: #1}}}
\newcommand{\EG}[1]{{\color{red}{{\bf Eugene}: #1}}}
\newcommand{\JS}[1]{{\color{cyan}{{\bf Jos\'e}: #1}}}
\newcommand{\SK}[1]{{\color{orange}{{\bf Soyeon}: #1}}}
\definecolor{aquamarine}{rgb}{0.5, 1.0, 0.83}
\definecolor{aqua}{rgb}{0.0, 1.0, 1.0}
\title{Splicing skew shaped positroids}
\author[E. Gorsky]{Eugene Gorsky}
\address{Department of Mathematics, University of California Davis\\ One Shields Avenue, Davis CA 95616}
\email{egorskiy@ucdavis.edu}
\author[S. Kim]{Soyeon Kim}
\address{Department of Mathematics, University of California Davis\\ One Shields Avenue, Davis CA 95616}
\email{syxkim@ucdavis.edu}
\author[T. Scroggin]{Tonie Scroggin}
\address{Department of Mathematics, University of California Davis\\ One Shields Avenue, Davis CA 95616}
\email{tmscroggin@ucdavis.edu}
\author[J. Simental]{Jos\'e Simental}
\address{Instituto de Matem\'aticas, Universidad Nacional Aut\'onoma de M\'exico. Ciudad Universitaria, CDMX,  M\'exico}
\email{simental@im.unam.mx}
\date{}
\begin{document}

\begin{abstract}
Skew shaped positroids (or skew shaped positroid varieties) are certain Richardson varieties in the flag variety that admit a realization as explicit subvarieties of the Grassmannian $\Gr(k,n)$. They are parametrized by a pair of Young diagrams $\mu \subseteq \lambda$ fitting inside a $k \times (n-k)$-rectangle. For every $a = 1, \dots, n-k$, we define an explicit open set $U_a$ inside the skew shaped positroid $S^{\circ}_{\lambda/\mu}$, and show that $U_a$ is isomorphic to the product of two smaller skew shaped positroids. Moreover, $U_a$ admits a natural cluster structure and the aforementioned isomorphism is quasi-cluster in the sense of Fraser. Our methods depend on realizing the skew shaped positroid as an explicit braid variety, and generalize the work of the first and third authors for open positroid cells in the Grassmannian.
\end{abstract}
\maketitle

\section{Introduction}

In this paper, we study cutting and gluing operations for skew shaped positroid varieties, extending the constructions in \cite{GS} for the maximal positroid cell.

\subsection{Main results}
Let $\mu \subseteq \lambda$ be partitions fitting inside a $k \times (n-k)$ rectangle. The skew diagram $\skewschubert$ defines the \newword{skew shaped positroid} (or skew shaped positroid variety) $S_{\skewschubert}^{\circ}$  which can be thought of as the intersection of a Schubert cell in the complete flag variety $\Fl(n)$ labeled by $\mu$ and an opposite Schubert cell labeled by $\lambda$, see Definition \ref{def: schubert}. 
Skew shaped positroids can be thought of as special cases of open Richardson \cite{KL79} or positroid varieties \cite{KLS}. Crucially, even though the skew shaped positroid is a Richardson variety in the full flag variety, it can also be defined inside the Grassmannian $\Gr(k,n)$. Thus we can, and for the most part will, think of $S_{\skewschubert}^{\circ}$ as being a subvariety of the Grassmannian $\Gr(k,n)$. When $\mu=\varnothing$ then skew shaped positroid is an open Schubert variety, considered in \cite{SSBW19}.

\begin{remark}
    While the skew shaped positroid  $S^{\circ}_{\skewschubert}$ is defined in a similar way to the skew Schubert varieties of \cite{SSBW19}, when $\mu\neq\varnothing$ the variety $S^{\circ}_{\skewschubert}$ is generally not a skew Schubert variety. The variety $S^{\circ}_{\skewschubert}$ is closely related to Grassmannian Richardson varieties, see Remark \ref{rmk: grassmannian richardson}.
\end{remark}
Skew shaped positroids admit a cluster structure coming from source labels of plabic graphs, established in work of Galashin--Lam \cite{GL19} following earlier work of Leclerc \cite{Leclerc} and Serhiyenko--Sherman-Bennett--Williams  \cite{SSBW19}; see Section \ref{sec: skew cluster} for more details. 

Using the lattice path labeling of a skew Young diagram  described in Section \ref{subsec:lattice path labeling for skew}, we explicitly define the Grassmann necklace of $S_{\skewschubert}^{\circ}$ based on the boundary ribbon of the skew diagram $\skewschubert$ in Section \ref{sec: Grassmann necklac}.

The main construction of the paper concerns cutting the skew diagram $\skewschubert$ into two.  Choose an integer $a$ such that $1\le a\le n-k$. 
Let $\lambda^{a,R}$ be the partition obtained by considering only the first $a-1$ columns (from the right) of $\lambda$, and define $\mu^{a,R}$ similarly. Note that $\mu^{a,R} \subseteq \lambda^{a,R}$ are partitions fitting inside a $k \times (a-1)$-rectangle, so we can consider the skew shaped positroid $S_{\skewright}^{\circ} \subseteq \Gr(k,k+a-1)$. 

Now let $\lambda^{a,L}$ be the partition obtained by considering the $a, \ldots, n-k$-columns  (from the right) of $\lambda$, and similarly for $\mu^{a,L}$. Note that $\mu^{a,L} \subseteq \lambda^{a,L}$ fit inside a $k \times (n-k-a+1)$-rectangle, so we can consider the skew shaped  positroid $S_{\skewleft}^\circ \subseteq \Gr(k,n-a+1)$. We are ready to state the main result of the paper.

\begin{theorem}
\label{thm: main}
Let $n > 0$ and $0 < k < n$. Let $\mu \subseteq \lambda$ be partitions fitting inside a $k \times (n-k)$-rectangle. Choose $1 \leq a \leq n-k$, and let $U_a$ be the principal open set in $S_{\skewschubert}^\circ$ defined by the non-vanishing of the cluster variables in the $a$-th column of $\skewschubert$  (from the right), see Definition \ref{def: open subset in skew}. Then, we have an isomorphism
\[
\Phi_a: U_a \xrightarrow{\sim} S_{\skewleft}^\circ \times S_{\skewright}^\circ. 
\]
Furthermore, $\Phi_a$ is a quasi-cluster equivalence.
\end{theorem}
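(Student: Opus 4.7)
\medskip

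\noindent\textbf{Proof strategy.} The plan is to reduce the statement to an analogous splitting statement about braid varieties, and then to transport it back to the Grassmannian along the braid variety presentation of $S^\circ_{\skewschubert}$. First, I would use the realization of $S^\circ_{\skewschubert}$ as the braid variety associated to a specific braid word $\skewbraid$, read off from the skew shape. The boundary ribbon and lattice path labeling introduced in Section \ref{sec: Grassmann necklac} should give a concrete factorization of the word $\skewbraid$ into three blocks: a ``left'' block coming from the columns $a+1,\dots,n-k$ (which will yield $\skewbraid^{a,L}$), a ``middle'' block of crossings coming from column $a$, and a ``right'' block coming from columns $1,\dots,a-1$ (which will yield $\skewbraid^{a,R}$). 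The middle block should be removable after localization at the cluster variables in column $a$, giving an open immersion into the product of braid varieties; this is directly analogous to the cutting procedure of \cite{GS}.

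\medskip

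\noindent\textbf{Construction of $\Phi_a$.} Once the braid-theoretic factorization is in place, I would define $\Phi_a$ by restricting certain minors in $\Gr(k,n)$ to their natural counterparts in $\Gr(k,n-a+1)$ and $\Gr(k,k+a-1)$. Explicitly, on the open locus $U_a$ where the column-$a$ cluster variables are invertible, the $k\times n$ matrix representing a point of $S^\circ_{\skewschubert}$ can be put into a block form whose first block realizes a point of $S^\circ_{\skewleft}$ and whose second block realizes a point of $S^\circ_{\skewright}$. The Grassmann necklace description from Section \ref{sec: Grassmann necklac} ensures that the necklace elements of $S^\circ_{\skewschubert}$ indexed by boundary steps on either side of column $a$ match, after appropriate truncation, the necklaces of the two factors. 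To check that $\Phi_a$ is an isomorphism, I would build the inverse by gluing the two matrices back along their shared column-$a$ data, with the invertibility of the column-$a$ cluster variables guaranteeing that the gluing is unique and regular.

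\medskip

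\noindent\textbf{Quasi-cluster equivalence.} For the cluster statement, I would pick a specific seed on $S^\circ_{\skewschubert}$ coming from a plabic graph (or equivalently from a weave/cycle basis on $\skewbraid$) in which the frozen and mutable variables associated with column $a$ are distinguished and are, moreover, frozen in the target seed on the product. Splitting the plabic graph along column $a$ produces two plabic graphs for $\skewleft$ and $\skewright$, and the corresponding quivers sit as subquivers of the quiver on $S^\circ_{\skewschubert}$, with the column-$a$ cluster variables becoming frozen on each side. Comparing the two sets of cluster variables, the change should differ only by monomials in frozen variables, which is exactly Fraser's definition of a quasi-cluster equivalence; one then invokes the compatibility of cluster structures with braid-variety isomorphisms from \cite{GS} to conclude.

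\medskip

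\noindent\textbf{Main obstacle.} The most delicate step is expected to be verifying the quasi-cluster property, as opposed to merely the algebraic isomorphism. The braid varieties in \cite{GS} come equipped with their own cluster structure (from weaves), while the cluster structure on $S^\circ_{\skewschubert}$ used in the statement is the Galashin--Lam plabic cluster structure of \cite{GL19}, so one must carefully match these two structures along the identification of $S^\circ_{\skewschubert}$ with a braid variety and check that the splitting $\Phi_a$ sends one distinguished seed to another up to freezing the column-$a$ variables and mutations. Handling the change from the ``source-labeled'' plabic cluster variables to the weave-theoretic ones, especially across the cut, is where most of the combinatorial work will live.
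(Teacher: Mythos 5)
Your high-level strategy — realize $S^\circ_{\skewschubert}$ as the braid variety $X(\longest\br_{\skewschubert})$, factor the braid along the cut, transport the resulting splicing back to the Grassmannian — is indeed the route the paper takes. But there are a few substantive mismatches, and one genuine gap that your outline glosses over.

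First, a structural point: the paper factors $\br_{\skewschubert}=\beta_L\beta_R$ into \emph{two} blocks, not three. All crossings in column $a$ are absorbed into $\beta_L$; nothing is ``removed.'' The localization $U_a$ is not about deleting a middle block — rather it is precisely the condition (Lemma \ref{lem: freeze column}) that the intermediate flag $\CF^a$ at the boundary between $\beta_L$ and $\beta_R$ is transversal to $\CF^W$, which is what makes $\Omega^L$ and $\Omega^R$ in \eqref{eq: Phi flags} well-defined points of $X(\longest\beta_L)$ and $X(\longest\beta_R)$. Your ``removable middle block'' picture does not match this.

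Second, and more seriously, you describe $\Phi_a$ as putting the matrix ``into a block form whose first block realizes a point of $S^\circ_{\skewleft}$ and whose second block realizes a point of $S^\circ_{\skewright}$.'' This suggests both factors are obtained by selecting columns of $V$, which is only true for the left factor $V^L$ (see \eqref{eq: def w}). The right factor $V^R=(u_1,\dots,u_{k+a-1})$ is \emph{not} a submatrix of $V$: the vectors $u_{b^R_i}$ must be produced by intersecting the two transverse flags, $u_{b^R_i}\in\CF^a_i\cap W_{k-i+1}$ with the normalization \eqref{eq: step 1 triang}, and then the remaining $u$'s are reconstructed recursively via Lemma \ref{lem: step 2 triang}. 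Verifying that the resulting $V^R$ actually lands in $S^\circ_{\skewright}$ is the content of the map $\Xi$ from Theorem \ref{thm: from skew schubert to braid variety}, and the paper explicitly flags this construction as ``the most technically challenging part of the paper.'' Your proposal has no mechanism for producing these new vectors, so the inverse/gluing argument you sketch would fail as stated.

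Third, on the quasi-cluster statement: your third paragraph (splitting the plabic graph, comparing the two quivers, measuring the discrepancy by frozen monomials) is essentially the paper's strategy, carried out by the triangularity Lemma \ref{lem: minors triang} and the exchange-ratio Lemmas \ref{lem: exchange ratios not bdry}, \ref{lem: exchange ratios bdry}, all done directly on the Grassmannian side with Cramer's rule. However, your ``Main obstacle'' paragraph then pivots to a different strategy: reconciling the weave-theoretic cluster structure on braid varieties with the Galashin--Lam plabic one. This is \emph{not} what the paper does — the cluster structure on $X(\longest\br_{\skewschubert})$ is never used in the proof of Theorem \ref{thm: main}. The paper defers exactly that route to future work \cite{forthcoming}. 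Your weave-based alternative is plausible but requires additional compatibility results the present paper neither states nor needs; in particular, \cite{GS} only treats the top positroid cell, so ``invoking the compatibility... from \cite{GS}'' is not available for general skew shapes. You should either commit to the direct minor computation (as the paper does) or spell out the weave/plabic comparison you would need — you cannot simply cite it away.
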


In the theorem above, \emph{quasi-cluster equivalence} means that the map $\Phi_a^{\ast}: \C[S^{\circ}_{\lambda^{a, L}/\mu^{a, L}} \times S^{\circ}_{\lambda^{a, R}/\mu^{a, r}}] \to \C[U_a]$ sends cluster variables to cluster variables, up to nicely behaved monomials in frozen variables, see Section \ref{subsec: cluster}. In particular, $\Phi_a^{\ast}$ preserves cluster monomials, and the map $\Phi_a$  sends cluster tori to cluster tori.

In \cite{Muller}, Muller defines a \emph{cluster chart} of a cluster variety to be the open set obtained by localizing some cluster variables in a single cluster, provided some technical conditions are satisfied, see e.g. \cite[Lemma 3.2]{Muller}. Theorem \ref{thm: main}, then, provides examples of cluster charts in skew shaped positroids that are themselves isomorphic to products of skew shaped positroids. We expect that this will allow for inductive methods to, for example, compute the cohomology \cite{Scroggin} or deep locus \cite{CGSS} of skew shaped positroids. We remark, however, that outside of some special cases, the open sets $U_{a}$ do not cover the skew shaped positroid $S^{\circ}_{\skewschubert}$.

We describe the construction of the map $\Phi_a$ and prove Theorem \ref{thm: main} in Section \ref{sec: splicing}. Instead of going into details in the introduction, we chose to illustrate the complexity of $\Phi_a$ in an example.

\begin{example}
\label{ex: intro}
 Let $n=12,\,k=5$ so that $n-k=7$. For the Young diagrams $\lambda=(7,7,5,3,1)$ and $\mu=(3,1)$ the corresponding skew diagram is shown in Figure \ref{fig: intro before}. 
\begin{figure}[ht!]
\begin{minipage}{0.5\textwidth}
\centering
\begin{tikzpicture}[scale=0.8]
\filldraw[color=lightgray] (0,-3)--(0,-5)--(3,-5)--(3,-4)--(1,-4)--(1,-3)--(0,-3);
\draw (0,0)--(0,-5)--(7,-5)--(7,-3)--(5,-3)--(5,-2)--(3,-2)--(3,-1)--(1,-1)--(1,0)--(0,0);
\draw [dotted, ultra thick, color=blue] (2,-5.5)--(2,0.5);
\draw (0,-4)--(7,-4);
\draw (0,-3)--(5,-3);
\draw (0,-2)--(3,-2);
\draw (0,-1)--(1,-1);
\draw (1,-5)--(1,-1);
\draw (2,-5)--(2,-1);
\draw (3,-5)--(3,-2);
\draw (4,-5)--(4,-2);
\draw (5,-5)--(5,-3);
\draw (6,-5)--(6,-3);
\end{tikzpicture}
\end{minipage}\hfill
   \begin{minipage}{0.5\textwidth}
   \centering
\begin{tikzpicture}[scale=0.8]
\filldraw[color=lightgray] (0,-3)--(0,-5)--(2,-5)--(2,-4)--(1,-4)--(1,-3)--(0,-3);
\draw (0,0)--(0,-5)--(2,-5)--(2,-1)--(1,-1)--(1,0)--(0,0);
\draw (0,-4)--(2,-4);
\draw (0,-3)--(2,-3);
\draw (0,-2)--(2,-2);
\draw (0,-1)--(1,-1);
\draw (1,-5)--(1,-1);

\filldraw[color=lightgray] (3,-4)--(3,-5)--(4,-5)--(4,-4)--(3,-4);
\draw (3,-5)--(3,-1)--(4,-1)--(4,-2)--(6,-2)--(6,-3)--(8,-3)--(8,-5)--(3,-5);
\draw (3,-4)--(8,-4);
\draw (3,-3)--(6,-3);
\draw (3,-2)--(4,-2);
\draw (4,-5)--(4,-2);
\draw (5,-5)--(5,-2);
\draw (6,-5)--(6,-2);
\draw (7,-5)--(7,-3);
\end{tikzpicture}
\end{minipage}
\caption{On the left is the diagram for $\skewschubert$ with corresponding matrix $(v_1,\dots,v_{12})$, where $\lambda=(7,7,5,3,1)$ and $\mu=(3,1)$. Performing the cut at $a=6$ is equivalent to cutting the diagram along the boundary of the 5-th and 6-th column from the right, as indicated by the \textcolor{blue}{blue} dotted line. The resulting skew diagrams under the map $\Phi_6$ are shown on the right, corresponding to the diagrams for $\lambda^{6,L}/\mu^{6,L}$ with matrix $(w_1,\dots,w_7)$ and $\lambda^{6,R}/\mu^{6,R}$ with matrix $(u_1,\dots,u_{10})$, respectively.}
\label{fig: intro before} 
\end{figure}
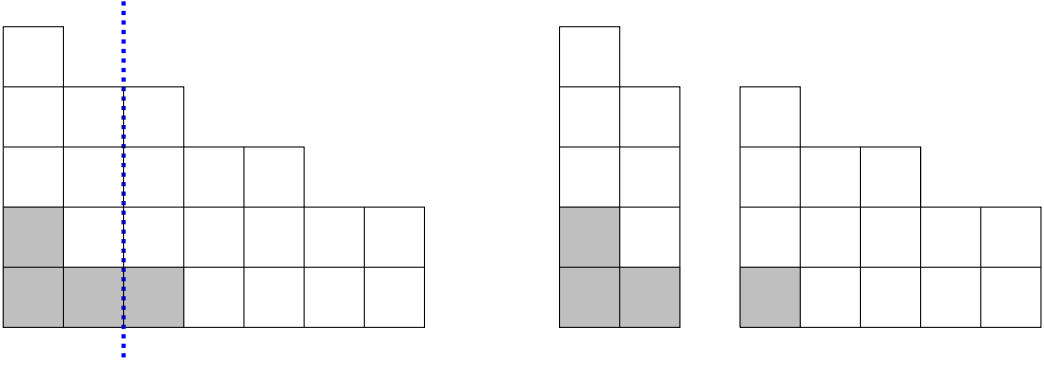
Points in the skew shaped positroid $S_{\skewschubert}^{\circ}\subset \Gr(5,12)$ can be represented by $5\times 12$ matrices with columns $(v_1,\ldots,v_{12})$.

We choose $a=6$ and cut the skew diagram as shown in Figure \ref{fig: intro before}.
We get two skew diagrams
$$
\lambda^{6,L}/\mu^{6,L}=(2,2,2,2,1)/(2,1),\ \lambda^{6,R}/\mu^{6,R}=(5,5,3,1)/(1)
$$
and the corresponding skew shaped positroids
$$
S_{\lambda^{6,L}/\mu^{6,L}}^\circ\subset \Gr(5,7),\ 
S_{\lambda^{6,R}/\mu^{6,R}}^\circ\subset \Gr(5,10).
$$

We denote the corresponding matrices by $V^L=(w_1,\ldots,w_7)$ and $V^R=(u_1,\ldots,u_{10})$.
The open subset $U_6\subset S_{\skewschubert}^{\circ}$ is defined by the inequalities 
$$
\Delta_{5,7,10,11,12}\neq 0,\  \Delta_{5,7,8,11,12}\neq 0,\ 
\Delta_{5,7,8,9,12}\neq 0.
$$
Now the map $\Phi_6: U_6\to S_{\lambda^{6,L}/\mu^{6,L}}^\circ\times S_{\lambda^{6,R}/\mu^{6,R}}^\circ$ 
is defined by the following equations:

$$
w_1=v_5, w_2=v_7, w_3=v_8, w_4=v_9, w_5=v_{10}, w_6=v_{11}, w_7=v_{12},
$$
while
$$
u_1=v_1,\ u_2=v_2,\ u_3=v_3,\ u_4=v_4,\ u_5=v_5,\ u_6=v_6,\ 
$$
$$
u_7\in \langle v_5,v_7\rangle\cap \langle v_8,v_{10},v_{11},v_{12}\rangle, u_8\in \langle v_5,v_7, v_8\rangle\cap \langle v_{10},v_{11}, v_{12}\rangle, 
$$
$$ 
u_9\in \langle v_5,v_7,v_8,v_9\rangle\cap \langle v_{11},v_{12}\rangle, u_{10}=v_{12},
$$
We prove that all intersections in question are one-dimensional, and provide explicit formulas for the vectors $u_7,u_8,u_9$, see  Section \ref{sec: intro example details} for all details.
\end{example}

In general, the construction of the map $\Phi_a$ has a similar flavor. The vectors $(w_1,\ldots,w_{n-a+1})$ corresponding to $S_{\skewleft}^{\circ}$ are chosen as a specific subset of vectors $(v_1,\ldots,v_n)$ described in Section \ref{sec: splicing left}. The vectors $(u_1,\ldots,u_{a+k-1})$ corresponding to $S_{\skewright}^{\circ}$ are instead obtained by a rather intricate  procedure that involves intersecting certain subspaces spanned by $v_i$, see  Section \ref{sec: splicing right}. 

Sometimes, the cluster variables in the $a$-th column of the skew diagram $\skewschubert$ will all be frozen, and we obtain the following result.

\begin{corollary}
\label{cor: factorization}
Assume that all boxes in the $a$-th column from the right in $\skewschubert$ belong to the boundary ribbon $R(\skewschubert)$ (see Definition \ref{def: ribbon}). Then $U_a$ coincides with the whole skew shaped positroid $S_{\skewschubert}^{\circ}$, and we have the isomorphism
$$
S_{\skewschubert}^{\circ}\simeq S_{\skewright}^\circ \times S_{\skewleft}^\circ.
$$
\end{corollary}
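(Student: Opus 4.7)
The plan is to deduce Corollary \ref{cor: factorization} directly from Theorem \ref{thm: main}, so the only real work is to verify that under the hypothesis the principal open set $U_a$ already equals the entire skew shaped positroid $\skewschvar$. Once this is established, the isomorphism $\skewschvar \simeq S_{\skewleft}^\circ \times S_{\skewright}^\circ$ is just the conclusion of Theorem \ref{thm: main} applied in this special case, since $\Phi_a$ is in particular defined on all of $U_a$ and is an isomorphism onto the product.

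To see that $U_a = \skewschvar$, I would appeal to the cluster structure on skew shaped positroids recalled in Section \ref{sec: skew cluster}. In that structure each box of $\skewschubert$ is assigned a cluster variable, and a cluster variable is frozen precisely when its box lies on the boundary ribbon $R(\skewschubert)$ (Definition \ref{def: ribbon}). By hypothesis, every box in the $a$-th column from the right lies in $R(\skewschubert)$, so all of the cluster variables defining $U_a$ in Definition \ref{def: open subset in skew} are frozen. The next step is to recall that frozen cluster variables on a positroid subvariety are identified with the relevant Plücker coordinates indexed by the Grassmann necklace constructed in Section \ref{sec: Grassmann necklac}; these Plücker coordinates are by definition nonvanishing on the open positroid $\skewschvar$. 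Thus the intersection of the non-vanishing loci of the $a$-th column cluster variables is all of $\skewschvar$, giving $U_a = \skewschvar$ as required.

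The main potential obstacle is a small piece of bookkeeping rather than a genuinely new argument: one must match the cluster variables in the $a$-th column with the correct frozen Plücker coordinates from the Grassmann necklace, using the lattice path labeling of Section \ref{subsec:lattice path labeling for skew}. If Section \ref{sec: skew cluster} only records the quiver and the placement of frozens combinatorially, I would insert a short sentence (or cite the appropriate statement in that section) identifying the cluster variable at a ribbon box with the Plücker coordinate read off from the lattice path labeling, and invoke the standard fact that these Plücker coordinates cut out precisely the complement of $\skewschvar$ inside the closed skew shaped positroid. After this identification the proof is immediate.
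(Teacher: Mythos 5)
Your proposal is correct, and it is the natural argument: Theorem \ref{thm: main} does all the work once you know $U_a = S^\circ_{\skewschubert}$, and the equality follows because the defining minors of $U_a$ come from boxes in $R(\skewschubert)$. The paper does not spell this corollary out, but your reasoning matches what is implicit throughout.

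One small remark: you can shortcut the detour through the cluster structure entirely. By Definition \ref{def: skew grassmann necklace}, each $I'(a,i)$ with $\sq_{a,i}\in R(\skewschubert)$ is literally an entry of the Grassmann necklace $\mathcal{I}_{\skewschubert}$, and by Definition \ref{def: positroid} together with Definition \ref{def: Grassmann necklace for V} the minors indexed by Grassmann necklace entries are automatically nonvanishing on the positroid $\Pi^\circ_{\mathcal{I}_{\skewschubert}}=S^\circ_{\skewschubert}$. This is exactly the fact used in Case 3 of Lemma \ref{lem: freeze column}, so no cluster-theoretic input (or the identification of frozens with boundary regions of a plabic graph) is required. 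Invoking the cluster structure is not wrong, but it brings in more machinery than needed and leaves a loose end — you would have to justify that frozen variables on $S^\circ_{\skewschubert}$ are invertible, which again ultimately reduces to the Grassmann necklace description.
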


\begin{remark}
One can define and study the compositions of the splicing maps $\Phi_a$ as follows. Suppose that $1\le a<b\le n-k$ and $\skewschubert$, then we can draw two  vertical cuts, one between $a$-th and $(a-1)$-th columns, and another between $b$-th and $(b-1)$-th columns. After performing these cuts we obtain three pieces: left $\skewleft$, middle $\skewmiddle$, and right $\skewright$.

Now, we can make an initial cut at $a$, then follow with a cut at $b$ to get the following maps:
$$
S_{\skewschubert}^{\circ}\xleftarrow{\Phi_a^{-1}}
S_{\skewleft\cup \skewmiddle}^{\circ}\times S_{\skewright}^{\circ}\xleftarrow{\Phi_{b-a+1}^{-1}\times \mathrm{Id}} S_{\skewleft}^{\circ}\times S_{ \skewmiddle}^{\circ}\times S_{\skewright}^{\circ}.
$$
On the other hand, if we make the initial cut at $b$, then follow with a cut at $a$ we then get the maps:
$$
S_{\skewschubert}^{\circ}\xleftarrow{\Phi_b^{-1}}
S_{\skewleft}^{\circ}\times S_{\skewmiddle\cup \skewright}^{\circ} \xleftarrow{\mathrm{Id}\times \Phi_{a}^{-1}} S_{\skewleft}^{\circ}\times S_{ \skewmiddle}^{\circ}\times S_{\skewright}^{\circ}.
$$
The image for both maps is the the principal open subset $U_{a,b}\subset S_{\skewright}^\circ$  defined by non-vanishing of all cluster variables in $a$-th and $b$-th columns of $\skewschubert$. However, we expect that the maps do not coincide but differ by some explicit monomials in frozen cluster variables on $U_{a,b}$. 

See \cite[Section 4]{Scroggin} for detailed computations for $k=2$. We plan to study the general case in future work.\footnote{There is slight discrepancy for the notation of the splicing map between \cite{Scroggin} and our paper. In \cite{Scroggin}, the splicing (or cutting) map is denoted $\Phi^{-1
}_{ij}$, whereas in our paper it is denoted $\Phi_a$.}
\end{remark}

\subsection{Braid varieties} 
The key insight for the construction of the map $\Phi_a$ comes from the construction of \newword{braid varieties} \cite{CGGS2,CGGS}. A braid variety $X(\beta)$ is an affine algebraic variety which can be associated to a positive braid $\beta$ on $k$ strands. It parametrizes sequences of complete flags in $\C^k$ with relative positions governed by the crossings in $\beta$, see Definition \ref{def: braid variety} for more details. Braid varieties have a cluster structure constructed in \cite{CGGLSS22,GLSBS22,GLSB}.

Given a skew diagram $\skewschubert$, we define a $k$-strand braid $\beta_{\skewschubert}$, see Section \ref{sec:braid diagram configuration for skew}. The crossings in $\beta_{\skewschubert}$ are in bijection with the boxes in $\skewschubert$, except for the top boxes in each column.

\begin{example}\label{ex:running k-strand}
    For $\lambda=(7,7,5,3,1), \mu=(3,1)$ we get the braid $$\br_{\skewschubert} = (\sigma_4\sigma_3)(\sigma_3\sigma_2)(\sigma_3\sigma_2)(\sigma_2\sigma_1)(\sigma_2\sigma_1)(\sigma_1)(\sigma_1).$$  
    The braid is shown in \textcolor{blue}{blue} and the crossed out boxes are not part of it.
\begin{center}
\begin{tikzpicture}[scale=0.8]
\filldraw[color=lightgray!75] (0,0) -- (3,0) -- (3,1) -- (1,1) -- (1,2) -- (0,2) -- cycle;
\draw[color=gray!150] (0,0) -- (0,5) -- (1,5) -- (1,4) -- (3,4) -- (3,3) -- (5,3) -- (5,2) -- (7,2) -- (7,0)-- cycle;
\draw[color=gray!150] (0,1) to (7,1);
\draw[color=gray!150] (0,2) to (7,2);
\draw[color=gray!150] (0,3) to (5,3);
\draw[color=gray!150] (0,4) to (3,4);
\draw[color=gray!150] (1,0) to (1,5);
\draw[color=gray!150] (2,0) to (2,4);
\draw[color=gray!150] (3,0) to (3,4);
\draw[color=gray!150] (4,0) to (4,3);
\draw[color=gray!150] (5,0) to (5,2);
\draw[color=gray!150] (6,0) to (6,2);

\draw[color=gray] (0,5) to (1,4); \draw[color=gray] (0,4) to (1,5);

\draw[color=gray] (1,3) to (2,4); \draw[color=gray] (1,4) to (2,3);

\draw[color=gray] (2,3) to (3,4); \draw[color=gray] (2,4) to (3,3);

\draw[color=gray] (3,2) to (4,3); \draw[color=gray] (3,3) to (4,2); \draw[color=gray](4,2) to (5,3); \draw[color=gray] (4,3) to (5,2);

\draw[color=gray](5,1) to (6,2); \draw[color=gray] (5,2) to (6,1); \draw[color=gray] (6,1) to (7,2); \draw[color=gray] (6,2) to (7,1);
\draw[color=blue, thick] (-0.25,4) to (0,4) to[out=0, in=180] (1,2) to[out=0, in=180] (2,3) to[out=0, in=180] (3,1)  to[out=0, in=180] (4,2) to[out=0, in=180] (5,0) to[out=0, in=180] (6,1) to[out=0, in=180] (7,0) to (7.25,0);
\draw[color=blue, thick] (-0.25,3) to (0,3) to[out=0, in=180] (1,4) to[out=0, in=180] (7.25,4) ;
\draw[color=blue, thick] (-0.25,2) to (0,2) to[out=0, in=180] (1,3) to[out=0, in=180] (2,1) to[out=0, in=180] (3,2) to[out=0, in=180] (4,0) to[out=0, in=180] (5,1) to[out=0, in=180] (6,0) to[out=0, in=180] (7,1) to (7.25,1);
\draw[color=blue, thick] (-0.25,1) to (0,1) to[out=0, in=180] (1,1) to[out=0, in=180] (2,2) to[out=0, in=180] (3,3) to[out=0, in=180] (7.25,3);
\draw[color=blue, thick] (-0.25,0) to[out=0, in=180] (3,0) to[out=0, in=180] (4,1) to[out=0, in=180] (5,2) to[out=0, in=180] (7.25,2);
\end{tikzpicture} 
\end{center}
 
\end{example}

\begin{remark}
Note that all boxes in the $k$-th (topmost) row of $\skewschubert$ are crossed out and not used in the braid $\beta_{\skewschubert}$. In fact, the topmost strand of  $\beta_{\skewschubert}$ goes right below the topmost row of $\skewschubert$.
\end{remark}

We construct an explicit isomorphism between the skew shaped positroid and a certain braid variety. This is inspired by the constructions of \cite{CGGS2,CLSBW23,STWZ} but we do not check if our isomorphism coincides with either of these.

\begin{theorem}
\label{thm: schubert to braid intro}
We have an isomorphism of algebraic varieties
\begin{equation}
\label{eq: intro skew Schubert to braid}
\skewschvar \cong X\left(\longest\br_{\skewschubert}\right) \times (\C^{\times})^{s}.
\end{equation}
Here $\longest$ is the half-twist braid on $k$ strands and $s$ is the number of crossed out boxes. Moreover, \eqref{eq: intro skew Schubert to braid} is a quasi-cluster equivalence.
\end{theorem}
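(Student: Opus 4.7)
The plan is to realize the isomorphism \eqref{eq: intro skew Schubert to braid} concretely on matrix representatives. A point of $\skewschvar$ is presented as a matrix whose columns $(v_1,\dots,v_n)$ satisfy explicit rank conditions read off from $\skewschubert$. From such a matrix I would build a chain of complete flags in $\C^k$ whose consecutive relative positions realize the word $\longest\br_{\skewschubert}$, with the "normalization" data attached to the top boxes of each column contributing the $(\C^{\times})^{s}$ factor. This is in the spirit of the constructions of \cite{CGGS2,CLSBW23,STWZ}, but worked out independently from the lattice-path / boundary-ribbon description developed earlier in the paper.

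To make the dictionary precise, I would proceed column by column through $\skewschubert$, recording how $\spann(v_1,\dots,v_i)$ extends $\spann(v_1,\dots,v_{i-1})$. Each internal (non-top) box of a column corresponds to a simple reflection describing how the flag built at position $i$ differs from the previous one, producing one generator of $\br_{\skewschubert}$; each top box corresponds to a nonzero scalar that fixes the leading coefficient of $v_i$ along a distinguished direction, contributing one $\C^{\times}$-factor (hence $s$ total, one per column). The prefactor $\longest$ reflects the Richardson nature of $\skewschvar$ inside $\Fl(k)$: the two defining flags must be in opposite position $w_0$, and choosing a reduced expression for $w_0$ produces the half-twist at the front of the braid word. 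I would then verify that the resulting morphism $\skewschvar \to X(\longest\br_{\skewschubert}) \times (\C^{\times})^{s}$ is an isomorphism by writing down an explicit inverse: given a flag chain and $s$ scalars, each $v_i$ is reconstructed by intersecting appropriate subspaces of consecutive flags and rescaling by the prescribed coefficient. The open conditions defining $\skewschvar$ translate directly to those defining the braid variety, so injectivity/surjectivity become a combinatorial consistency check against the Grassmann necklace of Section \ref{sec: Grassmann necklac}.

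The final assertion, that the isomorphism is a quasi-cluster equivalence, is where I expect the real work to lie. The cluster structure on $\skewschvar$ comes from source-labeled plabic graphs \cite{GL19,SSBW19}, while the one on $X(\longest\br_{\skewschubert})$ comes from weaves \cite{CGGLSS22,GLSBS22,GLSB}; these two constructions must be matched through the map above. My plan is to produce a single initial seed on each side compatible with the boundary ribbon of $\skewschubert$, check that the two quivers coincide once the frozens on the $(\C^{\times})^{s}$-side are incorporated, and compare the initial cluster variables up to monomials in frozens. The main obstacle is precisely this monomial bookkeeping: the scalars attached to the top boxes will enter the cluster variables of $\skewschvar$ in a nontrivial way, and tracking them carefully is what distinguishes a strict cluster equivalence from a quasi-cluster one. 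Once one seed is matched correctly, the statement for all seeds follows because quasi-cluster equivalence is preserved under mutation.
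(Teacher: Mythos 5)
Your high-level strategy is the same as the paper's: label the regions of the braid diagram for $\longest\br_{\skewschubert}$ by subspaces built from columns of the matrix, extract the $(\C^{\times})^s$ factor from the top boxes, and recover the column vectors from the flag chain by taking one-dimensional intersections and rescaling. However, several steps are stated too loosely, and one of them hides the actual content of the proof.

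The first gap is the recipe for the flag chain. You write that you would "record how $\spann(v_1,\dots,v_i)$ extends $\spann(v_1,\dots,v_{i-1})$," which suggests naive spans of initial segments of the columns. These are not the right subspaces: for $i>k$ they equal $\C^k$ and carry no information, and in any case they do not produce the correct relative positions. What the paper actually uses are the spaces $V(a,i)=\spann\{v_j : j\in J(a,i)\}$ indexed by a box $\sq_{a,i}$ and built from the short lattice-path label $J(a,i)$, together with the fixed frame $\Wop_i=\langle v_{b_1},\dots,v_{b_i}\rangle$ on the left. Establishing that these spaces have the right dimensions, containments, equalities across the ribbon, and transversalities (Lemmas \ref{lem: dimension V}--\ref{lem: transversality V} and \ref{lem: V tilde}) is the heart of the construction, and your sketch skips over precisely this content. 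Relatedly, the $(\C^{\times})^s$ factor is not "one per column": $s=|R^1(\skewschubert)|$ counts top boxes of \emph{nonempty} columns of $\skewschubert$, and the paper fixes the scalars via the minors $\Delta_{I'(a,\lambdabar_a)}$.

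The second gap is the claim that injectivity and surjectivity reduce to a "combinatorial consistency check against the Grassmann necklace." In fact, when you run the inverse map $\Xi$ on an arbitrary point of the braid variety, you can only verify directly that the defining inequalities $\Delta_{I'(a,i)}\neq 0$ along the boundary ribbon hold; you cannot a priori verify all the vanishing conditions $\Delta_J=0$ that carve out $S^\circ_{\skewschubert}$ as a positroid. The paper bridges this by a genuinely geometric step: after showing $S^\circ_{\skewschubert}\subset \mathrm{Im}\,\Xi$, it uses irreducibility of both sides and the dimension count $\dim X(\longest\br_{\skewschubert})+s=|\lambda|-|\mu|=\dim S^\circ_{\skewschubert}$ to conclude $\overline{\mathrm{Im}\,\Xi}=\overline{S^\circ_{\skewschubert}}$, and only then recovers the open conditions. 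Without some such argument, the direct combinatorial check you propose does not close. The plan for the quasi-cluster part (match one seed, track frozens) is reasonable and is indeed what the paper does, comparing the plabic source-labeled seed with the \cite{SW} seed on the braid variety, so the structure of that part of your proposal is fine even if the details remain to be carried out.
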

We can visualize Theorem \ref{thm: schubert to braid intro} by the diagram 
\begin{equation}
\label{eq: omega intro}
\Omega(V)=\left[\CF^{W}\stackrel{\longest}{\dashrightarrow}\CF^{\Wop}\stackrel{\beta_{\skewschubert}}{\dashrightarrow}\CF^0\right],
\end{equation}
where $\CF^W$ and $\CF^{\Wop}$ are a specific pair of opposite flags, and dotted arrows encode sequences of flags in $\C^k$. All the subspaces in all these flags can be defined as spans of some explicit subsets of columns of $V$, see Figure \ref{fig: skew braid diagram} for an example. Our construction of both the isomorphism $\Omega$ and its inverse uses only elementary linear algebra, avoiding the use of microlocal sheaves \cite{STWZ} or the Chekanov--Eliashberg DGA \cite{CGGS2}, or marking points in the braid closure of $\longest\br_{\skewschubert}$ \cite{CLSBW23}. 


Next, we can describe the analogue of the map $\Phi_a$ for braid varieties. We factor the braid $\beta_{\skewschubert}$ as $\beta_{\skewschubert}=\beta_L\beta_R$ where $\beta_L$ and $\beta_R$ are two braids corresponding to $\skewleft$ and $\skewright$, respectively. This allows us to expand \eqref{eq: omega intro} to the diagram
\begin{equation}
\label{eq: intro omega long}
\Omega(V)=\left[\CF^{W}\stackrel{\longest}{\dashrightarrow}\CF^{\Wop}\stackrel{\beta_{L}}{\dashrightarrow}\CF^a\stackrel{\beta_{R}}{\dashrightarrow}\CF^0\right],
\end{equation}
where $\CF^a$ is a specific flag on the boundary between $\beta_L$ and $\beta_R$, refer to (\ref{eq: flag cut}) for further details.

 The key observation (Lemma \ref{lem: freeze column}) is that the open subset $U_a\subset \skewschvar$ corresponds under the isomorphism \eqref{eq: intro skew Schubert to braid} to the set of sequences of complete flags \eqref{eq: intro omega long} such that the flag $\CF^a$ is transversal to $\CF^W$. 
This allows us to define a map 
$$
(\Omega^L,\Omega^R): U_a\rightarrow X\left(\longest\beta_R\right)\times X\left(\longest\beta_L\right)
$$
given by
\begin{equation}
\label{eq: Phi flags intro}
\Omega^L(V)=\left[\CF^{W}\stackrel{\longest}{\dashrightarrow}\CF^{\Wop}\stackrel{\beta_{L}}{\dashrightarrow}\CF^a\right],\quad  \Omega^R(V)=\left[\CF^{W}\stackrel{\longest}{\dashrightarrow}\CF^{a}\stackrel{\beta_{R}}{\dashrightarrow}\CF^0\right].
\end{equation}

By using \eqref{eq: intro skew Schubert to braid} again and taking into account the $\C^{\times}$ factors, we arrive at our construction of the map $\Phi_a$. This argument shows that $\Phi_a$ is indeed well defined, and is an isomorphism of algebraic varieties. In Section \ref{sec: splicing}, we prove that this construction of $\Phi_a$ is consistent with the linear algebra construction of $V^L=(w_1,\ldots,w_{n-a+1})$ and $V^R=(u_1,\ldots,u_{a+k-1})$, alluded to in Example \ref{ex: intro}, and provide a precise definition of the latter.

To prove that $\Phi_a$ is a quasi-isomorphism of cluster structures, we use the triangularity of $u_i$ with respect to $v_i$ and compare the corresponding minors in Lemma \ref{lem: minors triang}. The computation of exchange ratios is then fairly straightforward (yet tedious), and is done in Lemmas \ref{lem: exchange ratios not bdry} and \ref{lem: exchange ratios bdry}. This step does not require braid varieties.

To summarize, the most technically challenging part of the paper is to construct the element $V^R = (u_1, \dots, u_{k+a-1})$ and verify that it indeed represents an element in the skew shaped positroid $S^{\circ}_{\lambda^{a, R}/\mu^{a, R}}$.


In the forthcoming paper \cite{forthcoming}, we use the cluster structure on braid varieties \cite{CGGLSS22,GLSBS22,SW} to study more general splicing maps for braid varieties and to prove that these are cluster quasi-equivalences. This would give an alternative proof that $\Phi_a$ is a quasi-equivalence.

\section*{Acknowledgments}

The authors would like to thank Jo\~ao Pedro Carvalho, Roger Casals, Mikhail Gorsky, Thomas Lam, Seung Jin Lee, Eunjeong Lee, Matteo Parisi, Melissa Sherman-Bennett,  David Speyer, Daping Weng, and Lauren Williams   for the useful discussions. 
Eugene Gorsky, Soyeon Kim and Tonie Scroggin were partially supported by the NSF grant DMS-2302305. Jos\'e Simental was partially supported by CONAHCyT project CF-2023-G-106 and UNAM’s PAPIIT Grant IA102124.

\section{Background}

\subsection{Flags and framed flags} We will work with the variety $\Fl(k)$ of complete flags   in the $k$-dimensional space $\C^k$. Given a matrix $M = (m_1,m_2,\cdots, m_k) \in \GL(k)$ with columns $m_1, \dots, m_k$, its associated flag is
\[
\CF(M) := \left(\{0\} \subseteq \subs{m_1} \subseteq \subs{m_1, m_2} \subseteq \cdots \subseteq \subs{m_1, \dots, m_k} = \C^k\right).
\]
Note that $\CF(M) = \CF(MU)$ for any upper triangular matrix $U$, so that we have an identification 
\[
\Fl(k) = \GL(k)/\borel(k),
\]
where $\borel(k) \subseteq \GL(k)$ is the Borel subgroup of upper triangular matrices. 

Let $\unipotent(k) \subseteq \borel(k)$ be the subgroup of upper triangular matrices with $1$'s on the diagonal. The variety of \emph{framed flags} is the space
\[
\Fr(k) := \GL(k)/\unipotent(k).
\]
An element of $\Fr(k)$ is known as a framed flag -- it is the data of a flag $\CF \in \Fl(k)$, together with a choice of a nonzero element in $\CF_i/\CF_{i-1}$ for all $i = 1, \dots, k$ (referred to as a framing). Forgetting the framing gives rise to a natural map $\forget: \Fr(k) \to \Fl(k)$. As above, every element $M \in \GL(k)$ gives rise to a framed flag, that we will denote by $\CFr(M)$.

\subsection{Relative position} Let $w \in S_k$. Abusing the notation, we denote by $w \in \GL(k)$ the corresponding permutation matrix. In particular, we have the flag (resp. framed flag) $\CF(w)$ (resp. $\CFr(w)$). The \emph{standard flag} $\CF^{\std}$ is the flag $\CF(e)$, where $e \in S_k$ is the identity, and the \emph{antistandard flag} $\CF^{\ant}$ is the flag $\CF(w_0)$, where $w_0 \in S_k$ is the longest element. The standard and antistandard framed flags are defined analogously.

Note that we have an action of the group $\GL(k)$ both on $\Fl(k)$ and $\Fr(k)$. We say that two flags $\CF^1, \CF^2 \in \Fl(k)$ are in relative position $w \in S_k$ if there exists an element $g \in \GL(k)$ such that
\[
g\CF^1 = \CF^{\std}, \qquad g\CF^2 = \CF(w). 
\]
We will write $\CF^1 \relpos{w} \CF^2$ to express that $\CF^1$ and $\CF^2$ are in relative position $w$. 
It is easy to verify that $\CF^1\relpos{w}\CF^2$ if and only if for every $i, j = 1, \dots, k$ we have
\[
\dim(\CF^1_i \cap \CF^2_j) = \big|\{1, \dots, i\}\cap\{w(1), \dots, w(j)\}\big|. 
\]
In particular, for a simple reflection $s_j \in S_k$, we have that $\CF^1 \relpos{s_j} \CF^2$ if and only if $\CF^1_j \neq \CF^2_j$, but $\CF^1_i = \CF^2_i$ for every $i \neq j$. 

The notion of relative position for framed flags is slightly more complicated. We say that two framed flags $(\CFr)^1$ and $(\CFr)^2$ are in \emph{strong relative position} $w$ if there exists $g \in \GL(k)$ such that
\[
g(\CFr)^1 = (\CFr)^{\std}, \qquad g(\CFr)^2 = \CFr(w).
\]
As before, we write $(\CFr)^1 \relpos{w} (\CFr)^2$ if the framed flags $(\CFr)^1$ and $(\CFr)^2$ are in strong relative position $w$. 

In particular, consider two framed flags $\CFr,(\CFr)'$ with underlying flags $\CF=\pi(\CFr),\CF'=\pi((\CFr)')$. Then  $\CFr\relpos{s_i}(\CFr)'$  if and only if   $\CF \relpos{s_i} \CF'$ and the framing on $(\CFr)'$ is determined by the framing on $\CFr$ as follows. 
Let
$\CL_j=\CF_j/\CF_{j-1}$ and $\CL'_j=\CF'_j/\CF'_{j-1}$ for $j=1,\ldots,n$. Then we have canonical isomorphisms
\begin{equation}
\label{eq: propagate framing}
\CL_i\simeq \CL'_{i+1},\ \CL_{i+1}\simeq \CL'_i,\ \CL_j\simeq \CL'_j\ (j\neq i,i+1).
\end{equation}
Indeed, the following composition is an isomorphism whenever $\CF \relpos{s_i} \CF'$:
$$
\CL_i=\CF_i/\CF_{i-1}\hookrightarrow \CF_{i+1}/\CF_{i-1}=\CF'_{i+1}/\CF'_{i-1}\rightarrow \CF'_{i+1}/\CF'_i=\CL'_{i+1}.
$$
The other isomorphisms are constructed similarly. The framing on $(\CFr)'$ is then determined by the framing on $\CFr$ using these isomorphisms.

We also say that the framed flags $(\CF^r)^1$ and $(\CF^r)^2$ are in \emph{weak relative position $w$} if there exists $g \in \GL(k)$ and a diagonal matrix $t$ such that
\[
g(\CFr)^1 = (\CFr)^{\std}, \qquad g(\CFr)^2 = t\CFr(w).
\]
We write $(\CFr)^1 \wrelpos{w} (\CFr)^2$ to express this relationship. Note that $(\CFr)^1 \relpos{w} (\CFr)^2$ implies $(\CFr)^1 \wrelpos{w} (\CFr)^2$ but the converse is not true. Also, note that $(\CFr)^1 \wrelpos{w} (\CFr)^2$ if and only if the corresponding flags $\forget((\CFr)^1)$ and $\pi((\CFr)^2)$ are in relative position $w$. 

\subsection{Transversality}   We say that two flags $\CF^1$ and $\CF^2$ are transversal, and write $\CF^1 \pitchfork \CF^2$, if $\CF^1 \relpos{w_0} \CF^2$. Note that this is the case if and only if
\[
\CF^1_i \cap \CF^2_{k-i} = \{0\}, \qquad \text{equivalently} \qquad \CF^1_i + \CF^2_{k-i}= \C^k
\]
for every $i = 1, \dots, k$. By definition, if $\CF^1 \pitchfork \CF^2$ then there exists a matrix $g \in \GL(k)$ such that $g\CF^1 = \CF^{\std}$ and $g\CF^2 = \CF^{\ant}$, and this matrix is unique up to left multiplication by an element of $\borel (k)\cap w_0\borel(k) w_0$, i.e., a diagonal matrix. Equivalently, one can say that there exists a basis $u_1, \dots, u_k$ of $\C^k$ such that
\[
\CF^1 = \left(\subs{u_1} \subseteq \subs{u_1, u_2} \subseteq \cdots \subseteq \C^k\right) \qquad \text{and} \qquad \CF^2 = \left(\subs{u_k} \subseteq \subs{u_k, u_{k-1}} \subseteq \cdots \subseteq \C^k\right),
\]
such a basis is unique up to scalar multiplication. 

For framed flags, we say that $(\CFr)^1$ and $(\CFr)^2$ are transversal, and again write $(\CFr)^1 \pitchfork (\CFr)^2$, if $(\CFr)^1 \wrelpos{w_0} (\CFr)^2$ (note that we are using \emph{weak relative position} here). In this case, there exists a matrix $g \in \GL(k)$ and a diagonal matrix $t$ such that $g(\CFr)^1 = (\CFr)^{\std}$ and $g(\CFr)^2 = t(\CFr)^{\ant}$. Since $\unipotent(k) \cap w_0\borel(k) w_0 = \{1\}$, such a matrix $g$ is unique. If we have framed flags $(\CFr)^1$ and $(\CFr)^2$ that are in strong relative position $w_0$, we say that $(\CFr)^1$ and $(\CFr)^2$ are strongly transversal. 


\subsection{Braid varieties}

Now we are ready to define braid varieties.

\begin{definition}
\label{def: braid variety}
Let $\beta=\sigma_{i_1}\cdots\sigma_{i_r}\in \Br_k^+$ be a positive braid word.  
The braid variety $X(\beta)$ is defined as the space of configurations of flags 
\begin{equation}
\label{eq: rel position beta}
\CF^0\xrightarrow{s_{i_1}}\CF^1\xrightarrow{s_{i_2}}\ldots \xrightarrow{s_{i_r}}\CF^r.
\end{equation}
such that $\CF^0$ is standard and $\CF^r$ is antistandard.
We will often abbreviate \eqref{eq: rel position beta} as $\CF^0\stackrel{\beta}{\dashrightarrow}\CF^r$.
\end{definition}


Following \cite{CGGS,CZ} we will often use the following equivalent version of Definition \ref{def: braid variety}. Given the braid diagram of $\beta$ drawn horizontally, we encode a point in the braid variety $X(\beta)$ by labeling all regions by subspaces of $\C^k$ as follows:
\begin{itemize}
\item[(a)] The unbounded region below $\beta$ is labeled by $0$, and the unbounded region above $\beta$ is labeled by $\C^k$.
\item[(b)] If a subspace $W$ labels a region directly above one labeled by a subspace $V$, then $V\subset W$ and $\dim W=\dim V+1$.
\item[(c)] At a crossing in $\beta$, we have a configuration of subspaces
\begin{center}
\begin{tikzpicture}
\draw (0,1)..controls (0.5,1) and (0.5,0)..(1,0);
\draw (0,0)..controls (0.5,0) and (0.5,1)..(1,1);
\draw (0,0.5) node {$B$};
\draw (0.5,0) node {$A$};
\draw (0.5,1) node {$D$};
\draw (1,0.5) node {$C$};
\end{tikzpicture}
\end{center}
where $A\subset B\subset D$, $A\subset C\subset D$ and $B\neq C$. 
\item[(d)] On the left boundary of $\beta$ we get the flag $\CF^{0}=\CF^{\std}$ and on the right boundary of $\beta$ we get the flag $\CF^{r}=\CF^{\ant}$.
\end{itemize}
See Figure \ref{fig: skew braid diagram} for an example of such labeling. The conditions (a) and (b) imply that every vertical cross-section of the diagram corresponds to a complete flag in $\C^k$, and the condition (c) ensures that the neighboring flags are in a correct relative position.

Furthermore, we can think of elements of $X(\beta)$ as configurations of framed flags \begin{equation}
\label{eq: rel position beta framed}
(\CFr)^0\xrightarrow{s_{i_1}}(\CFr)^1\xrightarrow{s_{i_2}}\ldots \xrightarrow{s_{i_r}}(\CFr)^r.
\end{equation}
such that $(\CFr)^0$ is transversal to $(\CFr)^r$, modulo simultaneous left action of $\GL(k)$. 
 Indeed, if  $(\CFr)^0$ is transversal to $(\CFr)^r$ then there exists a unique matrix $g\in \GL(k)$ such that $g(\CFr)^0$ is the standard framed flag and $g(\CFr)^r$ projects to the antistandard flag. See \cite{comparisonpaper,GLSB,GLSBS22}. 


\begin{lemma}
\label{lem: flags}
Let $\longest$ be the positive braid lift of $w_0$.
The braid variety $X(\longest\beta)$ can be equivalently described as the space of following data, up to simultaneous left action of $\GL(k)$: 

\noindent (a) Configuration of framed flags $(\CFr)^0\stackrel{\longest}{\dashrightarrow}(\CFr)^1\stackrel{\beta}{\dashrightarrow}(\CFr)^2$
such that $(\CFr)^0$ is transversal to $(\CFr)^2$. 

\noindent (b) Configuration of framed flags $(\CFr)^1\stackrel{\beta}{\dashrightarrow}(\CFr)^2$ and a framed flag $(\CFr)^0$ which is strongly transversal to $(\CFr)^1$ and weakly transversal to  $(\CFr)^2$.

\noindent (c) Configuration of flags $\CF^1\stackrel{\beta}{\dashrightarrow}\CF^2$ and a choice of basis $u_1,\ldots,u_k$ such that 
$$
\CF^1=\left\{\langle u_1\rangle, \langle u_1,u_2\rangle,\ldots \langle u_1,u_2,\ldots,u_k\rangle \right\}
$$
and 
$\CF^2$ is transversal to the framed flag
$$
\{\subs{u_k}, \subs{u_k, u_{k-1}},  \cdots \subseteq \subs{u_k, \dots, u_1}\}.
$$
\end{lemma}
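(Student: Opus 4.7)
The plan is to prove the three descriptions are equivalent by leveraging the framed-flag interpretation of the braid variety $X(\beta)$ recalled immediately before the lemma: $X(\longest\beta)$ is the space of configurations of framed flags $(\CFr)^0 \xrightarrow{s_{i_1}} \cdots \xrightarrow{s_{i_r}} (\CFr)^r$ (where $s_{i_1}\cdots s_{i_r}$ is the braid word of $\longest\beta$) with $(\CFr)^0$ weakly transversal to $(\CFr)^r$, modulo the simultaneous $\GL(k)$-action. Splitting this sequence at the juncture between the $\longest$-portion and the $\beta$-portion, and renaming the intermediate framed flag as $(\CFr)^1$ and the terminal one as $(\CFr)^2$, produces description (a) of the lemma directly.

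For the equivalence (a)$\Leftrightarrow$(b), the key point is that the sub-sequence $(\CFr)^0 \stackrel{\longest}{\dashrightarrow} (\CFr)^1$ is uniquely determined by its endpoints whenever these are strongly transversal. Starting from (a), iterating the framing propagation \eqref{eq: propagate framing} across the $\binom{k}{2}$ simple transposition steps of $\longest$ shows that $(\CFr)^0$ and $(\CFr)^1$ end up in strong relative position $w_0$. Conversely, given a strongly transversal pair, we normalize via $\GL(k)$ so that $(\CFr)^0 = (\CFr)^{\std}$ and $(\CFr)^1 = (\CFr)^{\ant}$; then the intermediate unframed flags along $\longest$ are uniquely pinned down, because the unframed braid variety $X(\longest)$ with standard and antistandard endpoints is a single point (a classical fact about reduced words for $w_0$), and at each step the framings propagate uniquely from $(\CFr)^{\std}$ via \eqref{eq: propagate framing}.

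For (b)$\Leftrightarrow$(c), we establish a $\GL(k)$-equivariant bijection between strongly transversal pairs of framed flags and ordered bases of $\C^k$. Given a basis $u_1, \ldots, u_k$, let $M$ be the matrix with these columns and define $(\CFr)^1 := \CFr(M)$, $(\CFr)^0 := \CFr(Mw_0)$, where $w_0$ is the longest permutation matrix; the underlying flags are $\CF^1 = (\subs{u_1} \subseteq \subs{u_1, u_2} \subseteq \cdots)$ and $\CF^0 = (\subs{u_k} \subseteq \subs{u_k, u_{k-1}} \subseteq \cdots)$. Since $M^{-1}(\CFr)^1 = (\CFr)^{\std}$ and $M^{-1}(\CFr)^0 = \CFr(w_0) = (\CFr)^{\ant}$, the pair is strongly transversal. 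Conversely, strong transversality yields a unique $g \in \GL(k)$ with $g(\CFr)^1 = (\CFr)^{\std}$ and $g(\CFr)^0 = (\CFr)^{\ant}$, and we set $u_i := g^{-1}e_i$. Under this bijection, the weak transversality of $(\CFr)^0$ and $(\CFr)^2$ demanded by (b) depends only on the underlying flags, so it becomes exactly the condition in (c) that $\CF^2$ is transversal to the antistandard flag $\subs{u_k} \subseteq \subs{u_k, u_{k-1}} \subseteq \cdots$.

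The main obstacle is the uniqueness claim in (a)$\Leftrightarrow$(b): verifying that, after normalizing $(\CFr)^0 = (\CFr)^{\std}$, the framing propagation along every crossing of the specific reduced word $\longest$ produces $(\CFr)^{\ant}$ at the end, with all intermediate framed flags uniquely determined. This is a careful but routine inductive bookkeeping using \eqref{eq: propagate framing}, combined with the fact that $X(\longest)$ is already a single point at the unframed level. The remaining content of the lemma is essentially a dictionary between three equivalent ways of encoding the same piece of data.
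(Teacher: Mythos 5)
Your proof is correct and follows essentially the same route as the paper's: strong transversality of the endpoints pins down the $\longest$-portion uniquely, and strongly transversal pairs of framed flags correspond $\GL(k)$-equivariantly to ordered bases. The only step you leave implicit, which the paper spells out in its final sentence, is that in passing from (c) to (b) the framing on $(\CFr)^2$ is obtained by propagating the framing on $(\CFr)^1$ (the one induced by $u_1,\ldots,u_k$) along the crossings of $\beta$ via \eqref{eq: propagate framing}; your more detailed treatment of (a)$\Leftrightarrow$(b) through the single-point property of $X(\longest)$ otherwise fills in what the paper asserts tersely.
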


\begin{proof}
Part (a) follows from the definition. Part (b) follows from (a) since    $(\CFr)^0\stackrel{\longest}{\dashrightarrow} (\CFr)^1$ is equivalent to strong transversality of $(\CFr)^0$ and $(\CFr)^1$. Part (c) follows from (b) since for strongly transversal $(\CFr)^0$ and $(\CFr)^1$  there is a unique basis $u_1,\ldots,u_k$ such that 
$$
\CF^1=\left\{\langle u_1\rangle,\langle u_1,u_2\rangle,\ldots \langle u_1,u_2,\ldots,u_k\rangle \right\}
$$
and 
$$
\CF^0=\left\{\langle u_k\rangle,\langle u_k,u_{k-1}\rangle,\ldots \langle u_k,u_{k-1},\ldots,u_1\rangle \right\},
$$
where we denote by $\CF^0$ and $\CF^1$ the standard projections of the framed flags $(\CFr)^0, (\CFr)^1$, respectively. Finally, if we are given the data as in (c) then the basis $u_1, \dots, u_k$ naturally endows $\CF^1$ with a framing, that we extend through the configuration $\CF^1 \stackrel{\br}{\dashrightarrow}\CF^2$ to obtain a framing on $\CF^2$, and we recover the data of (b). 
\end{proof}

\begin{remark}
\label{rem: propagate framing}
In what follows, we only record the framing on $(\CFr)^0$ and treat all other flags as unframed. As above, the strong relative position requirement induces a framing on all other flags using \eqref{eq: propagate framing}.
\end{remark}

\section{Skew shaped positroid varieties}

\subsection{Positroid varieties}

First, we recall equivalent ways of describing positroid varieties in the Grassmannian: Grassmann necklaces and bounded affine permutations.

\begin{definition}\label{def: abstract defn of Grassmann necklace}
A \newword{$(k,n)$-source Grassmann necklace} $\mathcal{I} = (I_1, I_2, \dots, I_n)$ is an $n$-tuple of $k$-element subsets $I_{i} \subseteq [n]$ where 
\begin{itemize}
    \item if $i\in I_{i}$ then there exists $j \in [n]$  such that $I_{i-1} = (I_{i} \setminus \{i\})\cup\{j\}$, and 
    \item if $i\not\in I_{i}$ then $I_i=I_{i-1}$.
\end{itemize}
We denote by $\GN(k,n)$ the set of $(k,n)$-Grassmann necklaces. 

\end{definition}

\begin{remark}
    Note that if $i \in I_{i}$ then it may be that $I_{i-1} = I_{i} = (I_{i}\setminus\{i\})\cup\{i\}$. Finally, note that $I_{i-1}\setminus I_{i}$ is either empty or a singleton. 
\end{remark}

\begin{remark}
Grassmann necklaces were first defined in \cite{postnikov}. Our definition is slightly different, as we use $I_{i-1}$ instead of $I_{i+1}$, this difference is indicated by the word {\bf source} in the name.
\end{remark}

\begin{definition}[\cite{KLS}]
A bijection $f: \mathbb{Z}\to \mathbb{Z}$ is called a \newword{$(k,n)$-bounded affine permutation} if it satisfies the following conditions:
\begin{enumerate}
\item $f(i+n) = f(i) + n$ for every $i \in \Z$.
\item For every $i \in \Z$, $i \leq f(i) \leq f(i)+n$.
\item We have:
\[
\sum_{i = 1}^{n}(f(i) - i) = nk.
\]
\end{enumerate}
Note that, by (1), $f$ is uniquely determined by the values $f(1), \dots, f(n)$, which are pairwise distinct modulo $n$. We often simply denote $f = [f(1), \dots, f(n)]$. Note also that, upon the presence of (1) and (2), (3) is equivalent to $|\{i \in [n] : f(i) > n\}| = k$. We denote the set of $(k,n)$-bounded affine permutations by $\BA(k,n)$. We say that $f$ is a \newword{bounded $n$-affine permutation} if there exists $k$ such that $f$ is a $(k,n)$-bounded affine permutation.
\end{definition}

\begin{lemma}
\label{lem: GN to BA bijection}
    The sets $\GN(k,n)$ and $\BA(k,n)$ are in natural bijection.
\end{lemma}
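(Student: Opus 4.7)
The plan is to construct the bijection explicitly in both directions. Going from $\BA(k,n)$ to $\GN(k,n)$, I would define $\Psi(f) = (I_1, \ldots, I_n)$ by
$$
I_i := \{ j \bmod n \,:\, j \in \{i - n + 1, \ldots, i\}, \; f(j) > i \},
$$
with residues taken in $\{1, \ldots, n\}$. The cardinality $|I_i| = k$ is immediate: after the substitution $m = j + n - i$, periodicity (condition (1)) transforms the condition $f(j) > i$ into $f(m) > n$, and the latter holds for exactly $k$ values $m \in [n]$ (a well-known consequence of conditions (1) and (3)). To verify the necklace axioms of Definition~\ref{def: abstract defn of Grassmann necklace}, I would first note that $i \in I_i$ iff $f(i) > i$, i.e., iff $i$ is not a fixed point of $f$. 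Comparing the defining windows for $I_i$ and $I_{i-1}$ (which differ by the exchange $\{i\} \leftrightarrow \{i-n\}$) together with the tightening of thresholds from $> i-1$ to $> i$ (losing only the unique $j$ with $f(j) = i$), a direct case analysis on the value $f(i)$ yields:
\begin{itemize}
\item if $f(i) = i$: $I_{i-1} = I_i$ and $i \notin I_i$;
\item if $f(i) = i + n$: $I_{i-1} = I_i$ and $i \in I_i$;
\item if $i < f(i) < i + n$: $I_i \setminus I_{i-1} = \{i\}$ and $I_{i-1} \setminus I_i = \{f^{-1}(i) \bmod n\}$.
\end{itemize}
These cases exactly match Definition~\ref{def: abstract defn of Grassmann necklace}, with $j = f^{-1}(i) \bmod n$ in the nontrivial case.

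For the inverse map $\Psi^{-1}\colon \GN(k,n) \to \BA(k,n)$, I would reverse the case analysis. Given $\mathcal{I} = (I_1, \ldots, I_n)$ with $I_0 := I_n$, define $f$ on $[n]$ by:
\begin{itemize}
\item if $r \notin I_r$, set $f(r) := r$;
\item if $r \in I_r$ and $I_{r-1} = I_r$, set $f(r) := r + n$;
\item otherwise, $r \in I_r \setminus I_{r-1}$; by a cyclicity argument on the sequence $I_1, \ldots, I_n, I_1$ there is a unique $i' \in [n]$ with $r \in I_{i'-1} \setminus I_{i'}$. Set $f(r) := i'$ if $r < i'$, else $f(r) := i' + n$.
\end{itemize}
Extend by $f(r+n) := f(r) + n$. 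Conditions (1) and (2) hold by construction, and (3) reduces to $|I_1| = k$. Bijectivity modulo $n$ follows since each $r \in [n]$ enters and leaves the cyclic necklace equally often (indeed, it can enter only at position $i = r$).

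Verifying $\Psi \circ \Psi^{-1} = \mathrm{id}$ and $\Psi^{-1} \circ \Psi = \mathrm{id}$ is then a direct unraveling of these constructions. The main obstacle will be the combinatorial bookkeeping in the inverse direction: accurately locating the exit position $i'$ of each non-trivial residue $r$, and then choosing correctly between $f(r) = i'$ and $f(r) = i' + n$ so as to respect the bound of condition (2).
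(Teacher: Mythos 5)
Your proof is correct and takes essentially the same approach as the paper's: both construct the bijection explicitly in each direction and verify the defining axioms by a case analysis on whether $f(i)=i$, $f(i)=i+n$, or $i<f(i)<i+n$ (equivalently, on how $I_i$ and $I_{i-1}$ differ). One tiny slip in the reverse direction: condition~(3) is most naturally checked via the equality $I_n = \{r \in [n] : f(r) > n\}$ (read off from your forward window formula at $i = n$), so it reduces to $|I_n| = k$ rather than $|I_1| = k$; since every $I_i$ has cardinality $k$, this is of course immaterial.
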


In the proof, we will need the following definition.

\begin{definition}
We define the cyclic order $\leq_i$ on the set $[n]$: $i <_i i+1 <_i i+2 <_i \cdots <_i n <_i 1 <_i \cdots <_i i-1$.

\end{definition}

\begin{proof}[Proof of Lemma \ref{lem: GN to BA bijection}]
    This is essentially a combination of \cite[Corollary 3.13]{KLS} and \cite[Remark 2.4]{MS17}. For the reader's convenience and to fix notation, we provide the bijection $\varphi: \GN(k,n) \to \BA(k,n)$. Let us first describe $\bar{f}_{\mathcal{I}} := \varphi(\mathcal{I}) \pmod n$ through its inverse $\bar{f}_{\mathcal{I}}^{-1}$:
    we have  $\bar{f}^{-1}_{\mathcal{I}}(i) = i$ if $I_{i-1} = I_{i}$, and $\bar{f}^{-1}_{\mathcal{I}}(i) = j$ if $I_{i-1}\setminus I_{i} = \{j\}$. To lift this to a bounded affine permutation, we set $f_{\mathcal{I}}(i) = i+n$ if $I_{i-1} = I_{i}$ and $i \in I_{i}$, and $f_{\mathcal{I}}(i) = i$ if $i \not\in I_{i}$. 
   On the other hand, if $f: \Z \to \Z$ is a $k$-bounded affine permutation, let $I = \{i \in [n] \mid f(i) = i+n\}$, and $\bar{f} = f \pmod n$. Then $I_{i}$ consists of $I$ together with the elements $\{a \in [n] \mid a <_{i} \bar{f}(a)\}$. The collection $\mathcal{I}_f = (I_1, \dots, I_n)$ is a Grassmann necklace, and $f \mapsto \mathcal{I}_{f}$, $\mathcal{I} \mapsto f_{\mathcal{I}}$ are inverse bijections. 
\end{proof}
Now, we associate a Grassmann necklace and a bounded affine permutation to an element $V$ in the Grassmannian $\Gr(k,n)$ following \cite{KLS,postnikov}. We represent $V$ by a $k\times n$ matrix of rank $k$, up to row operations, and denote by $v_1,\ldots,v_n\in \C^k$ the columns of $V$. Furthermore, we define $v_{i}$ for all $i\in \mathbb{Z}$ by 
setting $v_{i+n}:=(-1)^{k-1}v_{i}$.
    Given an (ordered) $k$-element subset $J=\{j_1,\ldots,j_k\}\subset \{1,\ldots,n\}$, we denote by $\minor_J=\minor_J(V)=v_{j_1}\wedge\cdots \wedge v_{j_k}$ the maximal minor of $V$ in the columns of $J.$ 
\begin{definition}(\cite{MS17})
\label{def: Grassmann necklace for V}
Note that the order $\leq_i$ naturally extends to a partial order on the set of $k$-element subsets of $[n]$:
\[
\{j_1<_ij_2<_i\cdots <_i j_k\} \leq_i \{\ell_1<_i \ell_2<_i \cdots <_i \ell_k\} \qquad \text{if} \qquad j_s \leq_i \ell_s \; \text{for every} \; s = 1, \dots, k.
\]
Given this ordering, the \newword{source Grassmann necklace} associated to $V$ is a collection of $k$-element sets $\mathcal{I}_{V}=(I_{1},I_{2}\ldots,I_{n})$ where $I_{i}$ is defined as  \[
I_{i}:=\text{max}_{\leq_{i+1}}\left\{J \in \binom{[n]}{k} \mid \minor_{J}(V) \neq 0\right\}.
\]
In other words, $I_{i}$ is the maximal element, with respect to the $\leq_{i+1}$ ordering, among all $k$-element subsets $J$ such that $\Delta_J(V)\neq 0$. 
\end{definition}

\begin{remark}
    Note that the order $\leq_{i+1}$ is the cyclic ordering where $i$ is the \emph{largest} element and $i+1$ is the smallest.
\end{remark}

\begin{remark}
Here we have used the source Grassmann necklace. Other works in the literature use \emph{target} Grassmann necklaces, see e.g. \cite{MS17, FSB22}. These are related to source and target labelings for plabic graphs, see Section \ref{sec: skew cluster}.

\end{remark}
\begin{definition}(\cite{KLS})
Let $V = (v_1|\cdots|v_n) \in \Gr(k,n).$ The bounded affine permutation $f_{V}$ is defined as \[
f_{V}(i):=\text{min}\{j\geq i\ \vline\  v_{i}\in \text{span}(v_{i+1},v_{i+2},\ldots,v_{j})\}.
\]
\end{definition}

\begin{definition}(\cite[Theorem 5.1]{KLS})
\label{def: positroid}
Given a bounded affine permutation $f$ (or, equivalently, the corresponding Grassmann necklace $\mathcal{I} = \varphi^{-1}(f)$) we define the \newword{open positroid variety} by
$$
\Pi_f^{\circ}=\Pi_{\mathcal{I}}^{\circ}:=\{V\in \Gr(k,n)\ :\ f_V=f\} = \{V \in \Gr(k,n)\ :\ \mathcal{I}_V = \mathcal{I}\}.
$$
\end{definition}

\subsection{Skew shaped positroids}

We fix $0 < k < n$ and consider a partition $\lambda$ whose Young diagram fits inside the $(n-k)\times k$-rectangle, that is, $\lambda = (\lambda_1, \dots, \lambda_k)$ with $\lambda_1 \leq n-k$. We draw partitions using the French convention, i.e., the boxes are lower left justified. Given partitions $\lambda$ and $\mu$ with $\mu_i \leq \lambda_i$ for all $1 \leq i \leq k$, the \newword{skew diagram} $\skewschubert$ denotes the set-theoretic difference of the Young diagrams of $\lambda$ and $\mu$. We also define 
$$
\mubar_a:=\mu^t_{n-k+1-a},\quad \lambdabar_a:=\lambda^t_{n-k+1-a}.
$$
In other words, $\lambdabar_a$ is the height of the $a$-th column of $\lambda$, reading from right-to-left. 

Recall that $w\in S_n$ is a $k$-Grassmannian permutation if $w(1) < \cdots < w(k)$ and $w(k+1) < \cdots < w(n)$.

\begin{definition}
Given a partition $\lambda\subseteq (n-k)^k$ we define the $k$-Grassmannian permutation 
\begin{equation}\label{eq: w lambda}
    w_{\lambda}=[n-k+1-\lambda_1,\ldots,n-\lambda_k,1+\lambda_{n-k}^{t},\ldots,k+\lambda_{1}^{t}].
\end{equation}

\end{definition}

\begin{remark}\label{rmk:comparison-CGGS2}
    Note that the permutation $w_\lambda$ as defined by \eqref{eq: w lambda} differs from its namesake element in \cite[(2.1)]{CGGS2}, that here we call $\overline{w}_{\lambda}$. In fact, the two elements $w_{\lambda}$ and $\overline{w}_{\lambda}$ are related as follows. For $\lambda \subseteq (n-k)^{k}$, consider the complementary partition $$\lambda' = (n-k-\lambda_k, \dots, n-k-\lambda_1) \subseteq (n-k)^{k}.$$ Then, $w_{\lambda} = \overline{w}_{\lambda'}$. 
\end{remark}

Let us now obtain reduced decompositions for $w_{\lambda}$ and $w_{\mu}$. For this, we use \cite[(2.2)]{CGGS2}, together with Remark \ref{rmk:comparison-CGGS2}. Note that if $\lambda'$ is as defined in Remark \ref{rmk:comparison-CGGS2} then $(\lambda')^{t} = (k-\lambda_{n-k}^{t}, k-\lambda_{n-k-1}^{t}, \dots, k-\lambda_1^{t})$. Thus,
\begin{equation}\label{eq:dec-w-lambda}
\begin{array}{rl}
w_{\lambda} = \overline{w}_{\lambda'} = & (s_{n-(\lambda')^{t}_{n-k}}\cdots s_{n-1})\cdots (s_{k+1-(\lambda')^{t}_{1}}\cdots s_k) \\ = & (s_{n-k+\lambda_1^{t}}\cdots s_{n-1})\cdots (s_{1 + \lambda^{t}_{n-k}}\cdots s_k)
\end{array}
\end{equation}
More precisely, we can write $w_\lambda = C_{\lambda, n-k}\cdots C_{\lambda, 1}$, where
\[
C_{\lambda, j} = (s_{j + \lambda^{t}_{n-k-j+1}}\cdots s_{k+j-1}).
\]
If some $\lambda_{j}^{t} = k$, we just skip these factors so $C_{\lambda, j} = 1$. 

We can picture the permutation $w_{\lambda}$ as follows. Starting on the southeast corner of the rectangle, label the vertical steps on the east side of the rectangle by $1, \dots, k$, and the horizontal steps on the north side of the rectangle by $k+1, \dots, n$. Similarly, starting on the southeast corner of the rectangle move towards the northwest corner following the shape of the partition $\lambda$, labeling these steps consecutively -- this is referred to as a \newword{$\lambda$-labeling.} The permutation $w_{\lambda}$ then starts at the rectangle labels and moves west or south to the $\lambda$ labels. See Figure \ref{fig:wlambda}.

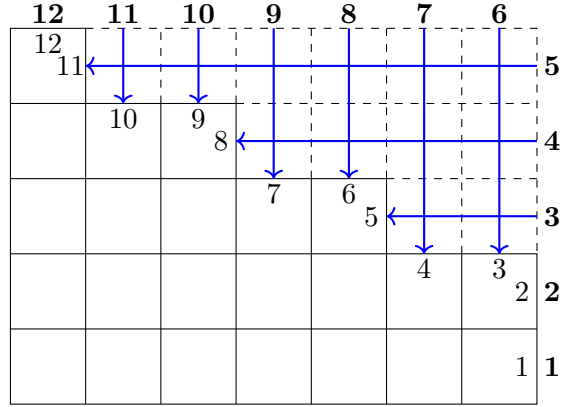
\begin{figure}
    \centering
    \begin{tikzpicture}
   
\draw (0,0)--(0,-5)--(7,-5)--(7,-3)--(5,-3)--(5,-2)--(3,-2)--(3,-1)--(1,-1)--(1,0)--(0,0);

\draw (0,-4)--(7,-4);
\draw (0,-3)--(5,-3);
\draw (0,-2)--(3,-2);
\draw (0,-1)--(1,-1);
\draw (1,-5)--(1,-1);
\draw (2,-5)--(2,-1);
\draw (3,-5)--(3,-2);
\draw (4,-5)--(4,-2);
\draw (5,-5)--(5,-3);
\draw (6,-5)--(6,-3);

\draw[dashed] (1,0) to (7,0) to (7,-3);
\draw[dashed] (3, -1) to (7,-1);
\draw[dashed] (5, -2) to (7,-2);
\draw[dashed] (2, -1) to (2,0);
\draw[dashed] (3, -1) to (3,0);
\draw[dashed] (4, -2) to (4,0);
\draw[dashed] (5, -2) to (5,0);
\draw[dashed] (6,-3) to (6,0);

\draw[color=blue,thick, ->] (6.5, 0) to (6.5, -3);
\draw[color=blue,thick, ->] (5.5, 0) to (5.5, -3);
\draw[color=blue,thick, ->] (4.5, 0) to (4.5, -2);
\draw[color=blue,thick, ->] (3.5, 0) to (3.5, -2);
\draw[color=blue,thick, ->] (2.5, 0) to (2.5, -1);
\draw[color=blue,thick, ->] (1.5, 0) to (1.5, -1);
\draw[color=blue,thick, ->] (7, -2.5) to (5, -2.5);
\draw[color=blue,thick, ->] (7, -1.5) to (3, -1.5);
\draw[color=blue,thick, ->] (7, -0.5) to (1, -0.5);

\node at (7.2, -4.5) {$\bf 1$};
\node at (7.2, -3.5) {$\bf 2$};
\node at (7.2, -2.5) {$\bf 3$};
\node at (7.2, -1.5) {$\bf 4$};
\node at (7.2, -0.5) {$\bf 5$};
\node at (6.5, 0.2) {$\bf 6$};
\node at (5.5, 0.2) {$\bf 7$};
\node at (4.5, 0.2) {$\bf 8$};
\node at (3.5, 0.2) {$\bf 9$};
\node at (2.5, 0.2) {$\bf 10$};
\node at (1.5, 0.2) {$\bf 11$};
\node at (0.5, 0.2) {$\bf 12$};

\node at (6.8, -4.5) {$1$};
\node at (6.8, -3.5) {$2$};
\node at (6.5, -3.2) {$3$};
\node at (5.5, -3.2) {$4$};
\node at (4.8, -2.5) {$5$};
\node at (4.5, -2.2) {$6$};
\node at (3.5, -2.2) {$7$};
\node at (2.8, -1.5) {$8$};
\node at (2.5, -1.2) {$9$};
\node at (1.5, -1.2) {$10$};
\node at (0.8, -0.5) {$11$};
\node at (0.5, -0.2) {$12$};

\end{tikzpicture}
    \caption{The permutation $w_\lambda$ for $\lambda = (7,7,5,3,1)$ considered inside the rectangle $(7^5)$. The factors $C_{\lambda, j}$ correspond to the columns of the complement of $\lambda$ inside the rectangle.}
    \label{fig:wlambda}
\end{figure}

\begin{remark}
    Note that \eqref{eq:dec-w-lambda} is a reduced decomposition of $w_{\lambda}$, that has length $k(n-k) - |\lambda| = |\lambda'|$.
\end{remark}

\begin{lemma}
\label{lem:additive-factorization}
    If $\mu \subseteq \lambda$ then there is a length-additive decomposition
    \[
    w_{\mu} = w_{\skewschubert}w_{\lambda},
    \]
    where
    \[
    w_{\skewschubert} = (s_{n-k+\mu_1^{t}}\cdots s_{n-k+\lambda_1^{t}-1})\cdots (s_{1+\mu_{n-k}^{t}}\cdots s_{\lambda_{n-k}^{t}})
    \]
    is a reduced decomposition. In particular, $w_{\mu}\geq w_{\lambda}$ in Bruhat order.
\end{lemma}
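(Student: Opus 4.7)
The plan is to exploit the explicit reduced decomposition \eqref{eq:dec-w-lambda} applied to both $\lambda$ and $\mu$, and to manipulate them column by column. Writing $w_\lambda = C_{\lambda, n-k}\cdots C_{\lambda, 1}$ and, analogously, $w_\mu = C_{\mu, n-k}\cdots C_{\mu, 1}$ with $C_{\mu, j} = (s_{j+\mu^t_{n-k-j+1}}\cdots s_{k+j-1})$, the hypothesis $\mu \subseteq \lambda$ (equivalently $\mu^t_i \leq \lambda^t_i$ for all $i$) lets me factor each column as $C_{\mu, j} = D_j\, C_{\lambda, j}$, where
\[
D_j := (s_{j+\mu^t_{n-k-j+1}}\cdots s_{j+\lambda^t_{n-k-j+1}-1})
\]
is interpreted as the empty word whenever the two column heights coincide. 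The element $w_{\skewschubert}$ displayed in the statement is then exactly $D_{n-k}\cdots D_1$, so the target identity becomes $(D_{n-k}\cdots D_1)(C_{\lambda, n-k}\cdots C_{\lambda, 1}) = w_\mu$.

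The heart of the argument — and essentially its only nontrivial step — will be to commute each $D_i$ past every $C_{\lambda, j}$ with $i < j$, so that the long product reassembles as $(D_{n-k} C_{\lambda, n-k})\cdots (D_1 C_{\lambda, 1}) = w_\mu$. Since simple reflections $s_p, s_q$ commute iff $|p-q|\geq 2$, the plan is to bound the largest index occurring in $D_i$ by $i + \lambda^t_{n-k-i+1} - 1$ and the smallest index occurring in $C_{\lambda, j}$ by $j + \lambda^t_{n-k-j+1}$, and to check that their difference is at least $2$. That difference equals
\[
(j-i) + (\lambda^t_{n-k-j+1} - \lambda^t_{n-k-i+1}) + 1,
\]
which is at least $2$ once one observes that $j-i \geq 1$ and that $\lambda^t$ is itself a (weakly decreasing) partition, so $\lambda^t_{n-k-j+1} \geq \lambda^t_{n-k-i+1}$. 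This is the main obstacle: a delicate but ultimately elementary index bookkeeping that is the reason for grouping the reduced word into columns in the first place.

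Finally, length additivity will drop out from a one-line count: $\ell(w_\mu) = k(n-k) - |\mu|$, $\ell(w_\lambda) = k(n-k) - |\lambda|$, while the displayed word for $w_{\skewschubert}$ has length
\[
\sum_{j=1}^{n-k}(\lambda^t_{n-k-j+1} - \mu^t_{n-k-j+1}) = |\lambda| - |\mu|.
\]
These add to exactly $\ell(w_\mu)$, which simultaneously confirms that the displayed expression is reduced and yields $w_\mu \geq w_\lambda$ in Bruhat order, as any length-additive factorization refines Bruhat order.
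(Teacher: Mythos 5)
Your proof is correct and complete. The paper does not actually display a proof of this lemma—it only refers the reader to Figure~\ref{fig:additive-factorization-wmu} for a pictorial interpretation—and your column-by-column factorization $C_{\mu,j}=D_jC_{\lambda,j}$, the index bookkeeping showing $D_i$ commutes with $C_{\lambda,j}$ for $j>i$, and the length count $\ell(w_{\skewschubert})+\ell(w_\lambda)=|\lambda|-|\mu|+k(n-k)-|\lambda|=\ell(w_\mu)$ are exactly the algebraic content that the figure encodes.
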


See Figure \ref{fig:additive-factorization-wmu} 
for a pictorial interpretation of Lemma \ref{lem:additive-factorization}. The permutation $w_{\skewschubert}$ starts at the $\lambda$ labels and moves west or south to the $\mu$ labels. 

\begin{definition}
\label{def: schubert}
Recall that  $\borel(n)$ is the Borel subgroup of upper triangular matrices in $\GL(n)$, 
 and $\borel_{-}(n)$ the opposite Borel subgroup of lower triangular matrices. 
Given two partitions $\mu\subseteq\lambda\subseteq (n-k)^k$, we define the \newword{skew shaped positroid} as the intersection of $\borel(n) w_{\mu}\borel(n)/\borel(n)$ and $\borel_{-}(n)w_{\lambda}\borel(n)/\borel(n)$. This is a subvariety of $\Fl(n)=\GL(n)/\borel(n)$ which will be denoted by $\skewschvar$. 
\end{definition}

Since $w_{\lambda} \leq w_{\mu}$ and $w_{\mu}$ is $k$-Grassmannian, we have that the function $f_{\lambda/\mu} = w_{\lambda}t_kw_{\mu}^{-1}$ is a $(k,n)$-bounded affine permutation, where $t_k = [1+n, \dots, k+n, k+1, \dots, n]$. We denote the corresponding positroid variety by $\Pi^{\circ}_{w_{\lambda}, w_{\mu}}$.

\begin{remark}\label{rmk: schubert cell isom}
The skew shaped positroid $\skewschvar$ is, by definition, the open Richardson variety $R_{w_{{\lambda}}, w_{\mu}}^{\circ}\subset \Fl(n)$. It is known \cite{KLS} that under the projection map $ \Fl(n)\to \Gr(k,n)$, the open Richardson variety $R_{w_{\lambda},w_{\mu}}^{\circ}$ is isomorphic to the open positroid variety $\Pi_{w_{\lambda},w_{\mu}}^{\circ}$.
\end{remark}

\begin{remark}\label{rmk: grassmannian richardson}
The skew shaped positroid $S^{\circ}_{\skewschubert}$ differs from the skew Schubert varieties studied in \cite{SSBW19}. The variety $S^{\circ}_{\skewschubert}$ is closely related to Grassmannian Richardson varieties \cite{eberhardt-stroppel} as follows. Let $\parabolic = \parabolic(k,n) \subseteq \GL(n)$ be the parabolic subgroup of block-upper triangular matrices, with diagonal blocks of sizes $k$ and $n-k$. It is known and easy to show that
\[
\Gr(k,n) = \GL(n)/\parabolic.
\]
For an element $w \in S_n$, the Grassmannian Schubert cell (resp. opposite Grassmannian Schubert cell) is the subvariety
\[
\borel(n)w\parabolic/\parabolic \subseteq \Gr(k,n) \qquad \text{(resp.} \qquad \borel_{-}(n)w\parabolic/\parabolic\text{).}
\]
These depend only on the coset $w(S_{k} \times S_{n-k})$ and not on $w$ itself. In particular, we can take minimal-length coset representatives, which are precisely the $k$-Grassmannian permutations. If $w_{\lambda}, w_{\mu}$ are $k$-Grassmannian permutations, the Grassmannian Richardson variety is
\[
\GR^{\circ}_{w_{\lambda}, w_{\mu}} := \left(\borel_{-}(n)w_{\lambda}\parabolic/\parabolic\right)\cap\left(\borel(n)w_{\mu}\parabolic/\parabolic\right) \subseteq \Gr(k,n).
\]
The variety $\GR^{\circ}_{w_{\lambda}, w_{\mu}}$ is nonempty if and only if $w_{\lambda} \leq w_{\mu}$ in Bruhat order, in which case it is an affine, smooth algebraic variety of dimension $\ell(w_{\mu}) - \ell(w_{\lambda})$. In fact, $w_{\lambda} \leq w_{\mu}$ if and only if $\mu \subseteq \lambda$, and in this case $\ell(w_{\mu}) - \ell(w_{\lambda}) = |\skewschubert|$. We will denote $\GR^{\circ}_{\skewschubert} := \GR^{\circ}_{w_{\lambda}, w_{\mu}}$.

The skew shaped positroid $S^{\circ}_{\skewschubert}$ is a dense subvariety of the Grassmannian Richardson variety $\GR^{\circ}_{\skewschubert}$. Moreover, the Grassmannian Richardson variety $\GR^{\circ}_{\skewschubert}$ may be expressed as the union of positroids
\[
\GR^{\circ}_{\skewschubert} = \bigcup_{\substack{w \leq w_{\mu} \\ w(S_{k} \times S_{n-k}) = w_{\lambda}(S_{k} \times S_{n-k})}}\Pi^{\circ}_{w, w_{\mu}}
\]
and $\dim(\Pi^{\circ}_{w, w_{\mu}}) \leq \dim(\GR^{\circ}_{\skewschubert})$, with equality if and only if $w = w_{\lambda}$. Indeed, if $w(S_{k}\times S_{n-k}) = w_{\lambda}(S_{k} \times S_{n-k})$, then the opposite Schubert cell in the flag variety $\borel_{-}(n)w\borel(n)/\borel(n)$ projects to the opposite Schubert cell $\borel_{-}(n)w_{\lambda}\parabolic/\parabolic$ in the Grassmannian, while the Schubert cell $\borel(n)w_{\mu}\borel(n)/\borel(n)$ projects to $\borel(n)w_{\mu}\parabolic/\parabolic$ so $\Pi^{\circ}_{w, w_{\mu}} \subseteq \GR^{\circ}_{\skewschubert}$. We also have $\ell(w_{\lambda}) \leq \ell(w)$, with equality if and only if $w = w_{\lambda}$. We thank Melissa Sherman-Bennett for discussions pertaining to this remark. 

\end{remark}
\begin{figure}
    \centering
    \begin{tikzpicture}
     \filldraw[color=lightgray] (0,-2)--(0,-5)--(3,-5)--(3,-3)--(2,-3)--(2,-2)--(0,-2);
\draw (0,0)--(0,-5)--(7,-5)--(7,-3)--(5,-3)--(5,-2)--(3,-2)--(3,-1)--(1,-1)--(1,0)--(0,0);

\draw (0,-4)--(7,-4);
\draw (0,-3)--(5,-3);
\draw (0,-2)--(3,-2);
\draw (0,-1)--(1,-1);
\draw (1,-5)--(1,-1);
\draw (2,-5)--(2,-1);
\draw (3,-5)--(3,-2);
\draw (4,-5)--(4,-2);
\draw (5,-5)--(5,-3);
\draw (6,-5)--(6,-3);

\draw[dashed] (1,0) to (7,0) to (7,-3);
\draw[dashed] (3, -1) to (7,-1);
\draw[dashed] (5, -2) to (7,-2);
\draw[dashed] (2, -1) to (2,0);
\draw[dashed] (3, -1) to (3,0);
\draw[dashed] (4, -2) to (4,0);
\draw[dashed] (5, -2) to (5,0);
\draw[dashed] (6,-3) to (6,0);

\draw[color=red, thick, ->] (6.5, 0) to (6.5, -3);
\draw[color=red, thick, ->] (5.5, 0) to (5.5, -3);
\draw[color=red, thick, ->] (4.5, 0) to (4.5, -2);
\draw[color=red, thick, ->] (3.5, 0) to (3.5, -2);
\draw[color=red, thick, ->] (2.5, 0) to (2.5, -1);
\draw[color=red, thick, ->] (1.5, 0) to (1.5, -1);
\draw[color=red, thick, ->] (7, -2.5) to (5, -2.5);
\draw[color=red, thick, ->] (7, -1.5) to (3, -1.5);
\draw[color=red, thick, ->] (7, -0.5) to (1, -0.5);

\draw[color=blue, thick, ->] (7, -4.5) to (3, -4.5);
\draw[color=blue, thick, ->] (7, -3.5) to (3, -3.5);
\draw[color=blue, thick, ->] (5, -2.5) to (2, -2.5);
\draw[color=blue, thick, ->] (3, -1.5) to (0, -1.5);
\draw[color=blue, thick, ->] (1, -0.5) to (0, -0.5);
\draw[color=blue, thick, ->] (0.5, 0) to (0.5, -2);
\draw[color=blue, thick, ->] (1.5, -1) to (1.5, -2);
\draw[color=blue, thick, ->] (2.5, -1) to (2.5, -3);
\draw[color=blue, thick, ->] (3.5, -2) to (3.5, -5);
\draw[color=blue, thick, ->] (4.5, -2) to (4.5, -5);
\draw[color=blue, thick, ->] (5.5, -3) to (5.5, -5);
\draw[color=blue, thick, ->] (6.5, -3) to (6.5, -5);
\end{tikzpicture}
    \caption{The length-additive decomposition $w_\mu = {\color{blue} w_{\skewschubert}}{\color{red} w_{\lambda}}$.}
    \label{fig:additive-factorization-wmu}
\end{figure}

\subsection{Lattice path labeling of a skew  Young diagram}\label{subsec:lattice path labeling for skew}

Given $1\le a\le n-k,\,1\le i\le k$ we define the square $\sq_{a,i}$ as the $i$-th box from the bottom in $a$-th column from the right. It is clear that  
$$
\sq_{a,i}\in \skewschubert\Leftrightarrow \mubar_a<i\leq \lambdabar_a.
$$
We denote
\begin{equation}
\label{eq: def di}
d_i=n-k+1-\lambda_i
\end{equation}
so that $\sq_{d_i,i}$ is the rightmost box in the $i$-th row of $\lambda$. 

\begin{definition}
\label{def: ribbon}
The \newword{boundary ribbon} $R(\lambda)$ of the diagram $\lambda$ consists of those boxes in $\lambda$ such that the box immediately northeast of it does not belong to $\lambda$. More precisely, $R(\lambda)$ consists of the boxes $\sq_{a,i}$ for $d_i\le a\le  d_{i+1}$ which correspond to the horizontal steps, and of boxes 
$\sq_{a,i}$ for $\lambdabar_{a-1}\le i\le \lambdabar_{a}$ which correspond to the vertical steps. The ribbon $R(\lambda)$ has exactly $\lambda_1+\lambdabar_{n-k}-1$ boxes in total. 

Given $\mu \subseteq \lambda$, we decompose
$$
R(\lambda)=R(\skewschubert)\cup \overline{R}(\skewschubert),
$$
where $$R(\skewschubert)=R(\lambda)\cap (\skewschubert),\quad \overline{R}(\skewschubert)=R(\lambda)\cap\mu.
$$
\end{definition}

\begin{example}
\label{ex: running}
Throughout the paper, we consider the running example with $\lambda=(7,7,5,3,1)$ and $\mu=(3,3,2)$. Here $n-k=7$ and $k=5$, so $n=12$. 
\begin{center}
\begin{tikzpicture}
\filldraw[color=lightgray] (0,-2)--(0,-5)--(3,-5)--(3,-3)--(2,-3)--(2,-2)--(0,-2);
\filldraw[color=pink] (0,0)--(0,-2)--(2,-2)--(2,-3)--(4,-3)--(4,-4)--(6,-4)--(6,-5)--(7,-5)--(7,-3)--(5,-3)--(5,-2)--(3,-2)--(3,-1)--(1,-1)--(1,0)--(0,0);
\draw (0,0)--(0,-5)--(7,-5)--(7,-3)--(5,-3)--(5,-2)--(3,-2)--(3,-1)--(1,-1)--(1,0)--(0,0);
\draw (0,-4)--(7,-4);
\draw (0,-3)--(5,-3);
\draw (0,-2)--(3,-2);
\draw (0,-1)--(1,-1);
\draw (1,-5)--(1,-1);
\draw (2,-5)--(2,-1);
\draw (3,-5)--(3,-2);
\draw (4,-5)--(4,-2);
\draw (5,-5)--(5,-3);
\draw (6,-5)--(6,-3);
\draw [navyblue](3.5,-3.5) node {$*$};
\draw [dashed,<-,navyblue] (3.65,-3.5)--(6.75,-3.5);
\draw [dashed,<-,navyblue] (3.5,-3.65)--(3.5,-4.75);
\draw (3.3,-3.75) node {\textcolor{navyblue}{$i$}};
\draw (3.75,-3.25) node {\textcolor{navyblue}{$a$}};
\end{tikzpicture}
\end{center}
The boundary ribbon $R(\lambda)=R(\skewschubert)$ is marked in \textcolor{pink!250}{pink}, $\overline{R}(\skewschubert)$ is empty and the box $\sq_{4,2}$ is labeled by $\textcolor{navyblue}{*}$. 
\end{example}

\begin{example}
\label{ex: disconnected}
For $n=9,\,k=4$, $\lambda=(5,5,2,2)$ and $\mu=(3,3)$ we get the following picture:
\begin{center}
\begin{tikzpicture}
\filldraw[color=lightgray] (0,-2)--(3,-2)--(3,-4)--(0,-4)--(0,-2);
\filldraw[color=gray] (1,-2)--(1,-3)--(3,-3)--(3,-2)--(1,-2);
\filldraw[color=pink] (0,0)--(0,-1)--(1,-1)--(1,-2)--(2,-2)--(2,0)--(0,0);
\filldraw[color=pink] (3,-2)--(3,-3)--(4,-3)--(4,-4)--(5,-4)--(5,-2)--(3,-2);
\draw (0,-4)--(5,-4);
\draw (0,-3)--(5,-3);
\draw (0,-2)--(5,-2);
\draw (0,-1)--(2,-1);
\draw (0,0)--(2,0);
\draw (0,0)--(0,-4);
\draw (1,-4)--(1,0);
\draw (2,-4)--(2,0);
\draw (3,-4)--(3,-2);
\draw (4,-4)--(4,-2);
\draw (5,-4)--(5,-2);
\end{tikzpicture}
\end{center}
The boundary ribbon 
$R(\skewschubert)$ is marked in \textcolor{pink!250}{pink} and $\overline{R}(\skewschubert)$  is marked in dark gray. Note that $R(\skewschubert)$ is disconnected.
\end{example}
\begin{definition}
Given a square $\sq_{a,i} \in (n-k)^{k}$, we define $\mu_{a,i}$ as the minimal Young diagram such that $\mu \subseteq \mu_{a,i}$ and $\sq_{a,i} \in \mu_{a,i}$. Note that $\mu_{a,i} = \mu$ if and only if $\sq_{a,i} \in \mu$,  and if $\sq_{a,i} \in \skewschubert$ then $\mu_{a,i} \subseteq \lambda$. We will identify $\mu_{a,i}$ with its \newword{boundary path} $P_{a,i}$ from the southeast to the northwest corner of the $(n-k)\times k$-rectangle.
\end{definition}

\begin{remark}In \cite[eq. (13.1)]{RW24} Rietsch and Williams define ``shapes" 
$$
\mathrm{sh}(a,i)=\mathrm{Rect}(a,i)\cup \mu
$$ 
where $\mathrm{Rect}(a,i)$ is the rectangle with northeast corner at $\sq_{a,i}$ and southwest corner at $\sq_{n-k,1}$. Clearly, the boundary of $\mathrm{sh}(a,i)$ is our path $P_{a,i}.$
\end{remark}

\begin{definition}
\label{def: I' and Imu}
Given a square $\sq_{a,i}\in \skewschubert$, we define its \newword{label} $I'(a,i)\subset \{1,\ldots,n\}$ as follows.
Label all the steps of the path $P_{a,i}$ with numbers from $1$ to $n$, starting at the southeast corner and  increasing as we go west and north. The labels of the {\bf vertical steps} in $P_{a,i}$ are the elements of $I'(a,i)$. Identifying $\mu$ (resp. $\lambda$) with its boundary path $P_{\mu}$ (resp. $P_{\lambda}$), we define the label $I_{\mu}$ (resp. $I_{\lambda}$) to be the set consisting of the labels of the vertical steps of $P_{\mu}$ (resp. $P_{\lambda}$). 

\end{definition}

\begin{example}
In our running example for $(a,i)=(4,2)$ we show the path $P_{a,i}$ as follows:
\begin{center}
\begin{tikzpicture} \label{fig: path picture in skew}
\filldraw[color=lightgray] (0,-2)--(0,-5)--(3,-5)--(3,-3)--(2,-3)--(2,-2)--(0,-2);
\filldraw[color=mediumpurple!25] (4,-3)--(4,-5)--(5,-5)--(5,-3)--(4,-3);
\draw (0,0)--(0,-5)--(7,-5)--(7,-3)--(5,-3)--(5,-2)--(3,-2)--(3,-1)--(1,-1)--(1,0)--(0,0);
\draw (0,-4)--(7,-4);
\draw (0,-3)--(5,-3);
\draw (0,-2)--(3,-2);
\draw (0,-1)--(1,-1);
\draw (1,-5)--(1,-1);
\draw (2,-5)--(2,-1);
\draw (3,-5)--(3,-2);
\draw (4,-5)--(4,-2);
\draw (5,-5)--(5,-3);
\draw (6,-5)--(6,-3);
\draw [navyblue](3.5,-3.5) node {$*$};
\draw[line width=2] (0,0)--(0,-2)--(2,-2)--(2,-3)--(4,-3)--(4,-5)--(7,-5);
\draw[line width=2,mediumpurple] (4,-3) -- (5,-3) -- (5,-5);
\draw (6.5,-4.8) node {$1$};
\draw (5.5,-4.8) node {$2$};
\draw (4.5,-4.8) node {$3$};
\draw (4.15,-4.5) node {$4$};
\draw (4.15,-3.5) node {$5$};
\draw (3.5,-2.8) node {$6$};
\draw (2.5,-2.8) node {$7$};
\draw (2.15,-2.5) node {$8$};
\draw (1.5,-1.8) node {$9$};
\draw (0.5,-1.8) node {$10$};
\draw (0.2,-1.5) node {$11$};
\draw (0.2,-0.5) node {$12$};
\draw [mediumpurple!130](4.5,-3.5) node {$*$};
\draw [mediumpurple!150](5.15,-4.5) node {$3$};
\draw [mediumpurple!150](5.15,-3.5) node {$4$};
\draw [mediumpurple!150](4.5,-2.8) node {$5$};
\end{tikzpicture}
\end{center}
Therefore, $I'(4,2)=\{4,5,8,11,12\}$ which is labeled by \textcolor{navyblue}{*}. We also draw an another path $P_{3,2}$ for $\sq_{3,2}$ with $I'(3,2)=\{3,4,8,11,12\}$, labeled by \textcolor{mediumpurple!130}{*}. We can see that $P_{4,2}$ and $P_{3,2}$ are the same except for the \textcolor{mediumpurple!150}{purple} rectangle region. We also note that $I_{\mu} = \{5,6,8,11,12\}$.
\end{example}

It should be noted that $I'(a,i)$ is a $k$-element subset of $\{1,\ldots,n\}$. A nice way to describe this $k$-element set is given by starting with the $k$-element set $I_{\mu}=\{b_{1}<b_{2}<\cdots<b_{k}\}$ and expressing the ordered subset $I(a,i)$ as 
\begin{equation}
    I(a,i)=(a,a+1,\ldots,a+i-1,b_{i+1},\ldots,b_{k}),
\end{equation}
where $1\le a\le n-k$ and $1\le i\le k$. From this set $I(a,i)$, we describe an ordered set 

\begin{equation}
    \label{eqn: I'}
    I'(a,i)=\{a_1,\ldots,a_i,b_{i+1},\ldots,b_k\},
\end{equation}
where $a_j = \min\{a+j-1, b_j\}$. Then we label $\sq_{a,i}$ with $I'(a,i)$, 
and the diagram $\mu$ with $I_{\mu}$.

\begin{example}
\label{ex: running labels}
In  Example \ref{ex: running} we label all squares $\sq_{a,i}\in  \skewschubert$ by $I'(a,i)$. We also put the label $I_{\mu}$ inside $\mu$. 
\begin{center}
\resizebox{4in}{2.5in}{
\begin{tikzpicture}
\filldraw[color=lightgray,scale=2] (0,-2)--(0,-5)--(3,-5)--(3,-3)--(2,-3)--(2,-2)--(0,-2);
\draw[scale=2] (0,0)--(0,-5)--(7,-5)--(7,-3)--(5,-3)--(5,-2)--(3,-2)--(3,-1)--(1,-1)--(1,0)--(0,0);
\draw[scale=2] (0,-4)--(7,-4);
\draw[scale=2] (0,-3)--(5,-3);
\draw[scale=2] (0,-2)--(3,-2);
\draw[scale=2] (0,-1)--(1,-1);
\draw[scale=2] (1,-5)--(1,-1);
\draw[scale=2] (2,-5)--(2,-1);
\draw[scale=2] (3,-5)--(3,-2);
\draw[scale=2] (4,-5)--(4,-2);
\draw[scale=2] (5,-5)--(5,-3);
\draw[scale=2] (6,-5)--(6,-3);
\draw (13,-9) node {\scriptsize $1,6,8,11,12$};
\draw (11,-9) node {\scriptsize $2,6,8,11,12$};
\draw (9,-9) node {\scriptsize $3,6,8,11,12$};
\draw (7,-9) node {\scriptsize $4,6,8,11,12$};
\draw (13,-7) node {\scriptsize $1,2,8,11,12$};
\draw (11,-7) node {\scriptsize $2,3,8,11,12$};
\draw (9,-7) node {\scriptsize $3,4,8,11,12$};
\draw (7,-7) node {\scriptsize $4,5,8,11,12$};
\draw (9,-5) node {\scriptsize $3,4,5,11,12$};
\draw (7,-5) node {\scriptsize $4,5,6,11,12$};
\draw (5,-5) node {\scriptsize $5,6,7,11,12$};
\draw (5,-3) node {\scriptsize $5,6,7,8,12$};
\draw (3,-3) node {\scriptsize $5,6,8,9,12$};
\draw (1,-3) node {\scriptsize $5,6,8,10,12$};
\draw (1,-1) node {\scriptsize $5,6,8,10,11$};
\draw (3,-7) node{\scriptsize $5,6,8,11,12$};
\end{tikzpicture}}
\end{center}
\end{example}
It will be convenient to shorten the labeling $I'(a,i)$ as follows.
\begin{definition}
Given a square $\sq_{a,i}\in \skewschubert$, we define its \newword{short path} $\overline{P}_{a,i}$ as the part of $P_{a,i}$ which stops after reaching the northeast corner of $\sq_{a,i}$ (but, in particular, includes the eastern boundary of $\sq_{a,i}$). 
We define the \newword{short label} $J(a,i)$ by reading the numbers at the vertical steps of $\overline{P}_{a,i}$. 
\end{definition}
Clearly, we have $|J(a,i)|=i$. Furthermore, the following result is clear from definitions and allows us to go back and forth between the short labels $J(a,i)$ and longer labels $I'(a,i)$.

\begin{proposition}\label{prop: labeling relationship between I'(a,i) and I mu}
The short label $J(a,i)$ consists of the first $i$ elements in $I'(a,i)$. The remaining $(k-i)$ elements of $I'(a,i)$ are the same as the last $(k-i)$ elements of the fixed subset $I_{\mu}$.
\end{proposition}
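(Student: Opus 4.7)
The plan is to rewrite the claim as a statement about labels of N (north) steps along boundary paths and verify it via a single label formula applied to both $\mu$ and $\mu_{a,i}$.

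I would first establish the following general fact: for any partition $Q\subseteq (n-k)^{k}$, if one labels its boundary path from SE to NW by $1,\dots,n$ as in Definition \ref{def: I' and Imu}, then the $j$-th N step along the path (in the traversed order) carries label $j + (n-k) - Q_j$. The reason is that just before that N step the path has executed $j-1$ earlier N steps and, having moved from horizontal coordinate $n-k$ to $Q_j$, has also executed $(n-k)-Q_j$ W steps; the $j$-th N step is then step number $j + (n-k) - Q_j$. This quantity is strictly increasing in $j$, so it reads off the label set in sorted order. Applied to $Q=\mu$, this gives $b_j = j + (n-k) - \mu_j$.

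I would next apply the same formula to $Q = \mu_{a,i}$. By definition $\mu_{a,i}$ is the union of $\mu$ with the SW rectangle $\{\sq_{a',i'} : a'\ge a,\, i'\le i\}$ of width $n-k-a+1$ and height $i$, so its row lengths are $(\mu_{a,i})_j = \max(\mu_j, n-k-a+1)$ for $j\le i$ and $(\mu_{a,i})_j = \mu_j$ for $j>i$. Substituting and using $A - \max(x,y) = \min(A-x,A-y)$ converts the label formula into $\min(b_j, a+j-1) = a_j$ for $j\le i$ and $b_j$ for $j>i$. In sorted order this realizes
\[
I'(a,i) = \{a_1 < a_2 < \cdots < a_i < b_{i+1} < \cdots < b_k\},
\]
matching \eqref{eqn: I'}.

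Finally, I would identify $J(a,i)$ with the first $i$ entries $\{a_1,\dots,a_i\}$. The short path $\overline{P}_{a,i}$ ends at the NE corner of $\sq_{a,i}$, which sits at vertical coordinate $i-k$ — exactly $i$ units above the SE starting corner. Since $P_{a,i}$ is monotone (only W and N steps) and passes through this corner, $\overline{P}_{a,i}$ is a prefix of $P_{a,i}$ whose N-step count equals $i$. A short check confirms that the corner genuinely lies on $P_{a,i}$: the box $\sq_{a,i}$ belongs to the SW rectangle, hence to $\mu_{a,i}$, while its NE neighbor $\sq_{a-1,i+1}$ lies neither in the SW rectangle (since $a-1<a$) nor in $\mu$ (since $\sq_{a,i}\in\skewschubert$ forces $\mubar_{a-1}\le\mubar_a<i<i+1$). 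Therefore $J(a,i)=\{a_1,\dots,a_i\}$ and the remaining $k-i$ elements $\{b_{i+1},\dots,b_k\}$ of $I'(a,i)$ are, by definition, the last $k-i$ elements of $I_\mu$. I do not expect a serious obstacle; the whole argument is bookkeeping around the boundary-path label formula, and the only mildly delicate point is the last verification, which is exactly where the skew-shape hypothesis is used.
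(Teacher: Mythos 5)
Your argument is correct, and it supplies the computation behind a statement that the paper explicitly treats as immediate (``The following result is clear from definitions''). The central facts you use all check out: the $j$-th N step of the boundary path of a partition $Q\subseteq(n-k)^k$ carries label $j+(n-k)-Q_j$ (hence $b_j=n-k-\mu_j+j$, as asserted elsewhere in the paper); the row lengths of $\mu_{a,i}$ are $\max(\mu_j,\,n-k-a+1)$ for $j\le i$ and $\mu_j$ for $j>i$; and the identity $A-\max(x,y)=\min(A-x,A-y)$ converts this into $\min(b_j,a+j-1)=a_j$ in rows $j\le i$. That the short path $\overline{P}_{a,i}$ is the prefix of $P_{a,i}$ through its $i$-th N step, certified by the transversality check that $\sq_{a,i}\in\mu_{a,i}$ while $\sq_{a-1,i+1}\notin\mu_{a,i}$, closes the loop. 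One small bonus of your route: the same computation simultaneously verifies the closed formula \eqref{eqn: I'} for $I'(a,i)$, which the paper states but does not derive.

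Two cosmetic quibbles. First, the phrase ``vertical coordinate $i-k$'' silently switches to a coordinate system with the SE corner at height $-k$; what matters (and is correct) is that the corner sits $i$ units above the SE start, so you might as well say just that. Second, where you observe that the new label is ``strictly increasing in $j$,'' it is worth noting explicitly that you also need $a_i<b_{i+1}$ (which holds: $a_i=a+i-1<a+i\le b_i+1\le b_{i+1}$, using $b_i\ge a+i$ since $\mu_i\le n-k-a$), so that the concatenation $\{a_1,\dots,a_i,b_{i+1},\dots,b_k\}$ is genuinely sorted. Neither affects the correctness of the argument.
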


\begin{example}
\label{ex: running J}
In our running example  we can label all squares $\sq_{a,i}\in  \skewschubert$ by $J(a,i)$:
\begin{center}
\resizebox{4in}{2.5in}{
\begin{tikzpicture}
\filldraw[color=lightgray,scale=2] (0,-2)--(0,-5)--(3,-5)--(3,-3)--(2,-3)--(2,-2)--(0,-2);
\draw[scale=2] (0,0)--(0,-5)--(7,-5)--(7,-3)--(5,-3)--(5,-2)--(3,-2)--(3,-1)--(1,-1)--(1,0)--(0,0);
\draw[scale=2] (0,-4)--(7,-4);
\draw[scale=2] (0,-3)--(5,-3);
\draw[scale=2] (0,-2)--(3,-2);
\draw[scale=2] (0,-1)--(1,-1);
\draw[scale=2] (1,-5)--(1,-1);
\draw[scale=2] (2,-5)--(2,-1);
\draw[scale=2] (3,-5)--(3,-2);
\draw[scale=2] (4,-5)--(4,-2);
\draw[scale=2] (5,-5)--(5,-3);
\draw[scale=2] (6,-5)--(6,-3);
\draw (13,-9) node {$1$};
\draw (11,-9) node {$2$};
\draw (9,-9) node {$3$};
\draw (7,-9) node {$4$};
\draw (13,-7) node {$1,2$};
\draw (11,-7) node {$2,3$};
\draw (9,-7) node {$3,4$};
\draw (7,-7) node {$4,5$};
\draw (9,-5) node {$3,4,5$};
\draw (7,-5) node {$4,5,6$};
\draw (5,-5) node {$5,6,7$};
\draw (5,-3) node { $5,6,7,8$};
\draw (3,-3) node {$5,6,8,9$};
\draw (1,-3) node { $5,6,8,10$};
\draw (1,-1) node {\scriptsize $5,6,8,10,11$};
\end{tikzpicture}}
\end{center}
\end{example}

The following gives us a recursive description of $J(a,i)$.

\begin{lemma}
\label{lem: J inductive}
$ $\\\vspace{-0.5cm}
\begin{enumerate}[label=\alph*)]
    \item Assume that the boxes $\sq_{a,i}$ and $\sq_{a+1,i}$ are both in the diagram $\skewschubert$. Then $J(a+1,i)$ is obtained from $J(a,i)$ by swapping $a+\mubar_a$ by $a+i$. 
    \item Assume that the boxes $\sq_{a,i}$ and $\sq_{a,i+1}$ are both in the diagram $\skewschubert$. Then $J(a,i+1)$ is obtained from $J(a,i)$ by adding the element $a+i$. 
    \item In the above assumptions, $J(a,i)\subset J(a,i+1)$ and $J(a+1,i)\subset J(a,i+1)$.
\end{enumerate} 
\end{lemma}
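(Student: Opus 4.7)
My plan is to give an explicit description of the short path $\overline{P}_{a,i}$ and then read off the labels of its vertical steps. Since $\sq_{a,i} \in \skewschubert$, meaning $\mubar_a < i$, minimality forces the column heights of $\mu_{a,i}$, read from the right, to be $\mubar_{a'}$ for $a' < a$, exactly $i$ for $a' = a$, and $\max(\mubar_{a'}, i)$ for $a' > a$. In particular, $\mu_{a,i}$ agrees with $\mu$ on the first $a-1$ columns. Hence $\overline{P}_{a,i}$ has the following alternating structure (with the convention $\mubar_0 := 0$): for each $a' = 1, \ldots, a-1$ take $\mubar_{a'} - \mubar_{a'-1}$ vertical steps up column $a'$ followed by one horizontal step into column $a'+1$; then take $i - \mubar_{a-1}$ vertical steps up column $a$ to height $i$. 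Counting labels cumulatively yields the formula that the vertical step reaching height $j$ in column $a'$ carries the label $j + a' - 1$, and in particular $|\overline{P}_{a,i}| = a+i-1$.

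Part (b) will then be immediate: the diagrams $\mu_{a,i}$ and $\mu_{a,i+1}$ agree everywhere except that the latter has one additional box at height $i+1$ in column $a$, so $\overline{P}_{a,i+1}$ is obtained from $\overline{P}_{a,i}$ by appending one more vertical step in column $a$. By the formula this new step has label $a+i$, yielding $J(a,i+1) = J(a,i) \cup \{a+i\}$.

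For part (a), I would compare $\overline{P}_{a,i}$ and $\overline{P}_{a+1,i}$ step by step. Both paths follow the same trajectory through columns $1, \ldots, a-1$ and even climb column $a$ together up to height $\mubar_a$. They then diverge: $\overline{P}_{a,i}$ continues climbing column $a$ from $\mubar_a$ to $i$, contributing vertical labels $\mubar_a+a, \ldots, i+a-1$; while $\overline{P}_{a+1,i}$ instead takes a horizontal step (label $\mubar_a+a$) and climbs column $a+1$ from $\mubar_a$ to $i$, contributing vertical labels $\mubar_a+a+1, \ldots, i+a$. The symmetric difference of these two intervals of consecutive integers is exactly $\{a+\mubar_a,\, a+i\}$, confirming the claimed swap $J(a+1,i) = (J(a,i) \setminus \{a+\mubar_a\}) \cup \{a+i\}$.

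Finally, part (c) follows at once from (a) and (b): the inclusion $J(a,i) \subseteq J(a,i+1)$ is exactly (b), and combining (a) with (b) gives $J(a+1,i) \subseteq J(a,i) \cup \{a+i\} = J(a,i+1)$. I anticipate no serious obstacle here, since the argument reduces entirely to careful combinatorial bookkeeping of paths and labels; no input is needed beyond the explicit shape of $\mu_{a,i}$ described in the first step.
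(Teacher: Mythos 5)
Your proof is correct and takes essentially the same approach as the paper: both analyze the explicit structure of the short paths $\overline{P}_{a,i}$ and the labels of their vertical steps. Your version is somewhat more detailed than the paper's, especially for part (a), where you explicitly track the divergence of $\overline{P}_{a,i}$ and $\overline{P}_{a+1,i}$ step by step and compute the symmetric difference of the two label intervals; the paper's argument for (a) is considerably terser (and arguably has a small notational slip in writing $J(a,i)\setminus I(a,i)$ rather than separating off the $\mu$-part $\{b_1,\dots,b_{\mubar_a}\}$ of $J(a,i)$), but is morally identical in content.
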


\begin{proof}
Part (a) follows from the fact that $J(a,i)\setminus I(a,i)=\{a+\mubar_a,a+1+\mubar_a,\ldots, a+i-1\}$ and $J(a+1,i)\setminus I(a,i)=\{a+1+\mubar_{a},a+2+\mubar_{a}+\cdots,a+i\}$ (see Example \ref{fig: path picture in skew}). 
Part (b) follows from the observation that the short path $\overline{P}_{a, i+1}$ is obtained from $\overline{P}_{a,i}$ by adding an additional vertical step at the end. Thus, we must include the label of this vertical step, which equals the length of $\overline{P}_{a,i} + 1 = (a+i-1)+1$.
The first claim of part (c) follows from (b), and the second claim of (c) follows from (a). 
\end{proof}

\begin{corollary}
\label{cor: contained in the bottom}
We have $J(a,i)\subset J(a,\lambdabar_a)$.
\end{corollary}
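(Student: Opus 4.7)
The plan is a short induction on $i$, fixing $a$ and using part (c) (equivalently, part (b)) of Lemma \ref{lem: J inductive}. Since $\sq_{a,i}\in\skewschubert$ means $\mubar_a<i\le\lambdabar_a$, the case $i=\lambdabar_a$ is trivially the equality $J(a,i)=J(a,\lambdabar_a)$, so we may assume $\mubar_a<i<\lambdabar_a$.

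For any such $i$, both boxes $\sq_{a,i}$ and $\sq_{a,i+1}$ lie in $\skewschubert$, because $\mubar_a<i<i+1\le\lambdabar_a$. Hence the hypothesis of Lemma \ref{lem: J inductive}(b) is satisfied, and part (c) of the same lemma gives $J(a,i)\subset J(a,i+1)$. Iterating this one-step containment yields
\[
J(a,i)\;\subset\;J(a,i+1)\;\subset\;J(a,i+2)\;\subset\;\cdots\;\subset\;J(a,\lambdabar_a),
\]
which is the desired conclusion. There is no real obstacle here: the content of the corollary is already encoded in Lemma \ref{lem: J inductive}(b)–(c), and the corollary only records the telescoped version, namely that moving up one column inside $\skewschubert$ only adds elements to the short label.
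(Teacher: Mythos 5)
Your proof is correct and follows the same approach the paper implicitly intends: the corollary is stated immediately after Lemma \ref{lem: J inductive} with no separate proof, and your argument simply telescopes the one-step containment $J(a,i)\subset J(a,i+1)$ from Lemma \ref{lem: J inductive}(c). The bookkeeping of the case $i=\lambdabar_a$ and the verification that intermediate boxes lie in $\skewschubert$ are both handled correctly.
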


We will also need the analogues of the labeling $J(a,i)$ for the rightmost boxes in each row of $\mu$.

\begin{definition}
Suppose that $\sq_{a,i}\in \mu$ but $\sq_{a-1,i}\in \skewschubert$. We define $\widetilde{J}(a,i)=\{b_1,\ldots,b_i\}$ where $I_{\mu}=\{b_1,\ldots,b_k\}$ as above.
\end{definition}

\begin{lemma}
\label{lem: J tilda inductive}
Suppose that $\sq_{a+1,i}\in \mu$ but $\sq_{a,i}\in \skewschubert$. Then:
\begin{enumerate}[label=\alph*)]
    \item $\widetilde{J}(a+1,i)$ is obtained from $J(a,i)$ by swapping $(a+\mubar_a)$ for $(a+i)$.
    \item Assume that  $\sq_{a,i},\sq_{a,i+1}$ are both in the diagram $\skewschubert$.  Then $J(a,i+1)=J(a,i)\cup \widetilde{J}(a+1,i)$.
\end{enumerate} 
\end{lemma}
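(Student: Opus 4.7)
My plan is to reduce both parts to explicit descriptions of $J(a,i)$ and $\widetilde{J}(a+1,i)$ in terms of the elements $b_1 < \cdots < b_k$ of $I_\mu$.

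First I would establish the closed-form identity $b_j = n - k + j - \mu_j$ by direct inspection of the path $P_\mu$: the $j$-th vertical step lies at the east boundary of $\mu$ in row $j$, i.e., at column $\mu_j$ from the west, so it is preceded by exactly $(n-k) - \mu_j$ horizontal steps and $j-1$ vertical steps. Combining this with \eqref{eqn: I'} and Proposition \ref{prop: labeling relationship between I'(a,i) and I mu}, and observing that $a_j = \min(a+j-1, b_j)$ equals $b_j$ precisely when $b_j \le a+j-1$, which by the formula for $b_j$ is equivalent to $\mu_j \ge n-k+1-a$, i.e.\ to $\sq_{a,j} \in \mu$, i.e.\ to $j \le \mubar_a$, I would obtain the closed form
\[
J(a,i) \;=\; \{b_1,\ldots,b_{\mubar_a}\} \,\sqcup\, \{a+\mubar_a,\,a+\mubar_a+1,\,\ldots,\,a+i-1\},
\]
valid whenever $i > \mubar_a$.

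For part (a), the hypothesis $\sq_{a,i} \in \skewschubert$ and $\sq_{a+1,i} \in \mu$ gives $\mubar_a < i \le \mubar_{a+1}$. For each $j$ with $\mubar_a < j \le i$ the same reasoning shows $\sq_{a,j}\notin\mu$ but $\sq_{a+1,j}\in\mu$, which forces $\mu_j = n-k-a$ and hence $b_j = a+j$. Therefore
\[
\widetilde{J}(a+1,i) \;=\; \{b_1,\ldots,b_i\} \;=\; \{b_1,\ldots,b_{\mubar_a}\} \,\sqcup\, \{a+\mubar_a+1,\,\ldots,\,a+i\},
\]
which differs from $J(a,i)$ precisely by removing $a+\mubar_a$ and inserting $a+i$, giving the claimed swap.

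Part (b) follows immediately by combining part (a) with Lemma \ref{lem: J inductive}(b): the latter gives $J(a,i+1) = J(a,i) \cup \{a+i\}$, and part (a) together with the closed form above gives $\widetilde{J}(a+1,i) \subseteq J(a,i) \cup \{a+i\}$, so
\[
J(a,i) \cup \widetilde{J}(a+1,i) \;=\; J(a,i) \cup \{a+i\} \;=\; J(a,i+1).
\]

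The main obstacle is purely bookkeeping: correctly matching the French-convention indexing, the columns-from-the-right ordering, and the convention that $\overline{P}_{a,i}$ includes the eastern boundary of $\sq_{a,i}$, all of which are needed for the formula $b_j = n-k+j-\mu_j$ and for identifying when $a+j-1$ versus $b_j$ attains the minimum. Once this is set up, everything reduces to elementary set comparisons.
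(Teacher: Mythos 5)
Your proof is correct and follows the same combinatorial route the paper intends; the paper's own proof is a one-liner deferring to the proof of Lemma \ref{lem: J inductive} by analogy, whereas you carry out the explicit computation. In particular, your closed form $J(a,i)=\{b_1,\ldots,b_{\mubar_a}\}\sqcup\{a+\mubar_a,\ldots,a+i-1\}$ and the identity $b_j=n-k+j-\mu_j$ (which the paper also records in the proof of Lemma \ref{lem: baf for skew}) make precise the set-theoretic bookkeeping that the paper leaves implicit, and your derivation that $b_j=a+j$ for $\mubar_a<j\le i$ is exactly the content needed to turn the ``analogy'' into a proof. Part (b) then follows by combining part (a) with Lemma \ref{lem: J inductive}(b), just as you say.
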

\begin{proof}
This is completely analogous to Lemma \ref{lem: J inductive}.
\end{proof}

\subsection{Grassmann necklace for a skew shaped positroid}
\label{sec: Grassmann necklac}
We have seen in Remark \ref{rmk: schubert cell isom} that the skew shaped positroid is in fact a positroid variety. In this subsection, we use the combinatorics from the previous subsection to explicitly describe the corresponding Grassmann necklace. 
We define a Grassmann necklace associated to a skew diagram $\skewschubert$ using the boundary ribbon
$R(\lambda)=R(\skewschubert)\cup \overline{R}(\skewschubert)$ from Definition \ref{def: ribbon}. Recall the definition of labels $I'(a,i)$ and $I_{\mu}$ from Definition \ref{def: I' and Imu}.

\begin{definition}\label{def: skew grassmann necklace}
Given a skew diagram $\skewschubert$, we define the corresponding Grassmann necklace $\mathcal{I}_{\skewschubert}$ as follows:
\begin{itemize}
\item If $\lambda_1<n-k$, we define $\mathcal{I}_{\skewschubert,1}=\cdots=\mathcal{I}_{\skewschubert,n-k-\lambda_1}=I_{\mu}$.
\item For a box $\sq_{a,i}\in R(\skewschubert)$, we define $\mathcal{I}_{\skewschubert,a+i-1}=I'(a,i)$.
\item For a  box $\sq_{a,i}\in \overline{R}(\skewschubert)$ we define $\mathcal{I}_{\skewschubert,a+i-1}=I_{\mu}$.
\item If $\lambdabar_1<k$, we define
$\mathcal{I}_{\skewschubert,n-k+\lambdabar_1}=\cdots=\mathcal{I}_{\skewschubert,n-1}=I_{\mu}$.
\item Finally, $\mathcal{I}_{\skewschubert,n}=I_{\mu}$.
\end{itemize}
\end{definition}

\begin{lemma}
\label{lem: necklace for skew}
The collection of subsets $\mathcal{I}_{\skewschubert}$ is in fact a Grassmann necklace.
\end{lemma}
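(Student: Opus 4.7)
The plan is to verify the two defining conditions of Definition \ref{def: abstract defn of Grassmann necklace} at each index $i \in [n]$, with the cyclic convention $\mathcal{I}_{\skewschubert, 0} := \mathcal{I}_{\skewschubert, n}$. That each entry $\mathcal{I}_{\skewschubert,i}$ is a $k$-subset of $[n]$ is immediate from \eqref{eqn: I'} together with the definition of $I_\mu$, so the real content lies in the recursion between $\mathcal{I}_{\skewschubert,i-1}$ and $\mathcal{I}_{\skewschubert,i}$. The guiding observation is uniform: each entry $\mathcal{I}_{\skewschubert,i}$ is the set of labels of the vertical steps of some SE-to-NW boundary path $Q_i$ inside the $k \times (n-k)$ rectangle, namely $Q_i = P_\mu$ when $i$ lies in a padding region or corresponds to a box of $\overline{R}(\skewschubert)$, and $Q_i = P_{a,j}$ whenever $i = a+j-1$ for some $\sq_{a,j} \in R(\skewschubert)$. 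The necklace condition then reformulates as a local modification of $Q_{i-1}$ versus $Q_i$.

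The first case to handle is the ``constant'' one, where $\mathcal{I}_{\skewschubert,i-1} = \mathcal{I}_{\skewschubert,i} = I_\mu$; here I need to check $i \notin I_\mu$. If $i$ belongs to one of the two padding ranges then the $i$-th step of $P_\mu$ is horizontal, being a step along the SE or NW boundary of the rectangle that lies outside $\mu$. If instead $i = a+j-1$ for $\sq_{a,j} \in \overline{R}(\skewschubert) = R(\lambda)\cap \mu$, then $\sq_{a,j}$ is also a box of $R(\mu)$, and the step of $P_\mu$ reaching the NE corner of $\sq_{a,j}$ is horizontal: the box $\sq_{a-1,j+1}$ is outside $\mu$ by the $R(\lambda)$ condition, and the local shape forces $P_\mu$ to arrive at this corner from the east. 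In either subcase $i \notin I_\mu$.

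The substantive case is the ribbon-to-ribbon transitions. If $\sq_{a,j}, \sq_{a+1,j} \in R(\skewschubert)$ are consecutive ribbon boxes joined by a horizontal step (indices $a+j-1$ and $a+j$), Lemma \ref{lem: J inductive}(a) gives $J(a+1,j) = (J(a,j) \setminus \{a+\mubar_a\}) \cup \{a+j\}$. Combining with Proposition \ref{prop: labeling relationship between I'(a,i) and I mu}, which says the last $k-j$ entries of both $I'(a,j)$ and $I'(a+1,j)$ agree with the last $k-j$ entries of $I_\mu$, yields exactly
\[
\mathcal{I}_{\skewschubert, a+j-1} = \bigl(\mathcal{I}_{\skewschubert, a+j} \setminus \{a+j\}\bigr) \cup \{a+\mubar_a\},
\]
as required. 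The vertical-step case $\sq_{a,j}, \sq_{a,j+1} \in R(\skewschubert)$ is analogous via Lemma \ref{lem: J inductive}(b), and the swapped element is $b_{j+1}$, where $I_\mu = \{b_1 < \cdots < b_k\}$. The transitions in which one adjacent ribbon box lies in $\overline{R}(\skewschubert)$ are reduced to the above via Lemma \ref{lem: J tilda inductive}, which compares $J$ and $\widetilde{J}$.

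Finally, I will handle the transitions between a padding region and the ribbon together with the wrap-around at $i = n$. At the first ribbon box $\sq_{d_1, 1}$, where $d_1 = n-k-\lambda_1 + 1$, formula \eqref{eqn: I'} gives $I'(d_1, 1) = \{d_1\} \cup \{b_2, \dots, b_k\} = (I_\mu \setminus \{b_1\}) \cup \{d_1\}$, which is exactly the required exchange with swap element $b_1$; a parallel computation treats the passage from the last ribbon box $\sq_{n-k, \lambdabar_{n-k}}$ into the final padding region and the wrap-around transition into $\mathcal{I}_{\skewschubert, n} = I_\mu$. The main obstacle in this proof is organizational rather than mathematical: several boundary configurations occur depending on whether $\lambda_1 = n-k$, whether the topmost row of $\lambda$ meets the top of the rectangle, and whether neighbors of a ribbon box lie in $R$ or in $\overline{R}$. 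The inductive Lemmas \ref{lem: J inductive} and \ref{lem: J tilda inductive} reduce every subcase to a direct comparison of swap elements, so the overall verification is bookkeeping-heavy but conceptually straightforward.
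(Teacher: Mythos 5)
Your case analysis follows the paper's: both proofs verify the exchange $\mathcal{I}_{\skewschubert,i-1} = (\mathcal{I}_{\skewschubert,i}\setminus\{i\})\cup\{j'\}$ transition by transition using Lemmas \ref{lem: J inductive} and \ref{lem: J tilda inductive}, then treat the boundary and wrap-around steps by hand.

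However, the passage on constant steps is both unnecessary and incorrect. You state that for a constant transition $\mathcal{I}_{\skewschubert,i-1} = \mathcal{I}_{\skewschubert,i} = I_\mu$ one ``needs to check $i \notin I_\mu$.'' There is no such requirement: when $I_{i-1} = I_i$ and $i \in I_i$, the first clause of Definition \ref{def: abstract defn of Grassmann necklace} holds with $j = i$; when $i \notin I_i$, the second clause is immediate. The paper accordingly discards constant steps in one sentence, and the horizontal-versus-vertical distinction for these indices only becomes relevant in Lemma \ref{lem: baf for skew}, where one reads off the bounded affine permutation. The claim $i \notin I_\mu$ is moreover simply false. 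For $\sq_{a,j}\in\overline{R}(\skewschubert)$, whether the $P_\mu$-step at position $a+j-1$ is horizontal is governed by whether $\sq_{a-1,j}\in\mu$, not by $\sq_{a-1,j+1}$; when $\sq_{a-1,j}\notin\mu$ the step is vertical and $a+j-1\in I_\mu$. A concrete instance is $\lambda = (2,2,1)$, $\mu = (2,1,1)$, $k=3$, $n=5$: here $I_\mu = \{1,3,4\}$, the boxes $\sq_{2,2}$ and $\sq_{2,3}$ both lie in $\overline{R}(\skewschubert)$, so the transition at $i = 4$ is constant, yet $4 \in I_\mu$. Deleting the extraneous verification leaves your argument in full agreement with the paper's proof.
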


\begin{proof}
We first notice that $I'(a,i)$ and $I_{\mu}$ all have the same cardinality $k$. For the steps between $I_{\mu}$ and $I_{\mu}$, no additional verification is required. We now consider several cases:

{\bf Case 1a:} Horizontal step where both $\sq_{a,i},\sq_{a+1,i}$ are in $R(\skewschubert)$. Here $i=\lambdabar_a$, and by Lemma \ref{lem: J inductive} we have $I'(a+1,\lambdabar_a)=I'(a,\lambdabar_a)\setminus \{a+\mubar_a\}\cup \{a+\lambdabar_a\}$. Therefore
\begin{equation}
\label{eq: GN case 1}
\mathcal{I}_{\skewschubert,a+\lambdabar_a}=\mathcal{I}_{\skewschubert,a+\lambdabar_a-1}\setminus \{a+\mubar_a\}\cup \{a+\lambdabar_a\}.
\end{equation}

{\bf Case 1b:} Horizontal step where $\sq_{a,i}\in R(\skewschubert)$ but $\sq_{a+1,i}\in \overline{R}(\skewschubert)$. In this case, we similarly get
$I_{\mu}=I'(a,\lambdabar_a)\setminus \{a+\mubar_a\}\cup \{a+\lambdabar_a\}$ and \eqref{eq: GN case 1} holds.

{\bf Case 1c:} From $I'(n-k,\lambdabar_{n-k})$ to $I_{\mu}$.
If $\mubar_{n-k}<\lambdabar_{n-k}$ then $\mathcal{I}_{\skewschubert,n-k+\lambdabar_{n-k}-1}=I'(n-k,k)$ and (similarly to Case 1)  we get
\begin{equation}
\label{eq: GN case 1c}
\mathcal{I}_{\skewschubert,n-k+\lambdabar_{n-k}}=I_{\mu}=\mathcal{I}_{\skewschubert,n-k+\lambdabar_{n-k}-1}\setminus \{n-k+\mubar_{n-k}\}\cup \{n-k+\lambdabar_{n-k}\}.
\end{equation}

{\bf Case 2a:} Vertical step where both $\sq_{a,i},\sq_{a,i+1}$ are in $R(\skewschubert)$. In this case 
by  Lemma \ref{lem: J inductive} we have $I'(a,i+1)=I'(a,i)\setminus \{b_{i+1}\}\cup \{a+i\}$ and 
\begin{equation}
\label{eq: GN case 2}
\mathcal{I}_{\skewschubert,a+i}=\mathcal{I}_{\skewschubert,a+i-1}\setminus \{b_{i+1}\}\cup \{a+i\}.
\end{equation}

{\bf Case 2b:} Vertical step where $\sq_{a,i}\in \overline{R}(\skewschubert)$ but $\sq_{a,i+1}\in R(\skewschubert)$. In this case $I'(a,i+1)=I_{\mu}\setminus \{b_{i+1}\}\cup \{a+i\}$ and \eqref{eq: GN case 2} holds.

{\bf Case 2c:} From $I_{\mu}$ to $I'(n-k-\lambda_1,1)$. If $\mu_1<\lambda_1$, then
$\sq_{n-k-\lambda_1+1,1}\in R(\skewschubert)$ and $I'(n-k-\lambda_1+1,1)=\{n-k-\lambda_1+1,b_2,\ldots,b_k\}$. Therefore
\begin{equation}
\label{eq: GN case 2c}
\mathcal{I}_{\skewschubert,n-k-\lambda_1+1}= \mathcal{I}_{\skewschubert,n-k-\lambda_1}\setminus\{b_1\}\cup\{n-k-\lambda_1+1\}.
\end{equation}

\end{proof}

Let us denote $f_{\skewschubert}$ to be the bounded affine permutation 
corresponding to $\mathcal{I}_{\skewschubert}$ via the bijection from Lemma \ref{lem: GN to BA bijection}.
\begin{lemma}
\label{lem: baf for skew}
The bounded affine permutation $f_{\skewschubert}$
satisfies 
\begin{align}
\label{eq: f horizontal}
f_{\skewschubert}(a+\overline{\mu_a})&=a+\overline{\lambda_a},\quad  1\le a\le n-k\\
\label{eq: f vertical}
f_{\skewschubert}(b_i)=f_{\skewschubert}(n-k-\mu_i+i)&=(n-k-\lambda_{i}+i)+n,\quad 1\le i\le k.
\end{align}
\end{lemma}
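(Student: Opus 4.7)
The plan is to use the bijection from Lemma~\ref{lem: GN to BA bijection} to read off $f_{\skewschubert}$ from $\mathcal{I}_{\skewschubert}$, leveraging the case-by-case setup in the proof of Lemma~\ref{lem: necklace for skew}. A preliminary observation is that labeling the steps of the boundary path of $\mu$ from SE to NW by $1, \ldots, n$, the $k$ vertical steps carry exactly the labels $I_\mu = \{b_1, \ldots, b_k\}$, while the $a$-th horizontal step from the right sits at height $\overline{\mu}_a$ after $a-1$ horizontal and $\overline{\mu}_a$ vertical predecessors, so its label is $a + \overline{\mu}_a$. Hence $\{a + \overline{\mu}_a : 1 \le a \le n-k\} \sqcup \{b_i : 1 \le i \le k\} = [n]$, and the two identities in the statement suffice to determine $f_{\skewschubert}$ on every input.

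For \eqref{eq: f horizontal}, fix $a$ and set $j = a+\overline{\lambda}_a$. When $\overline{\mu}_a < \overline{\lambda}_a$, Cases 1a, 1b, 1c in the proof of Lemma~\ref{lem: necklace for skew} give $I_{j-1}\setminus I_j = \{a+\overline{\mu}_a\}$, so $\bar{f}_{\skewschubert}(a+\overline{\mu}_a) = a+\overline{\lambda}_a$; the strict inequality $a+\overline{\mu}_a < a+\overline{\lambda}_a$ forces the bounded lift $f_{\skewschubert}(a+\overline{\mu}_a) = a+\overline{\lambda}_a$. When $\overline{\mu}_a = \overline{\lambda}_a = t$, the identity reduces to $f_{\skewschubert}(a+t) = a+t$, so I must check that $a+t$ is a non-shifted fixed point, which amounts to $\mathcal{I}_{\skewschubert, a+t-1} = \mathcal{I}_{\skewschubert, a+t} = I_\mu$ together with $a+t \notin I_\mu$. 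The non-membership follows by splitting $I_\mu$ via the defining inequalities $\mu_i \ge n-k+1-a$ for $i \le t$ and $\mu_i \le n-k-a$ for $i > t$, which yield $b_i < a+t$ in the first range and $b_i > a+t$ in the second. For the necklace entries, the case $t=0$ lands in the initial padding (since $a \le n-k-\lambda_1$), while $t > 0$ corresponds to positions indexed by boxes of $\overline{R}(\skewschubert)$ inside the empty column, or falling on the right end where the definition falls back to $I_\mu$.

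For \eqref{eq: f vertical}, fix $i$ and set $c_i = n-k-\lambda_i + i$. When $\lambda_i > \mu_i$, Cases 2a, 2b, 2c give $I_{c_i - 1}\setminus I_{c_i} = \{b_i\}$, hence $\bar{f}_{\skewschubert}(b_i) = c_i$, and the strict inequality $b_i - c_i = \lambda_i - \mu_i > 0$ forces $f_{\skewschubert}(b_i) = c_i + n$. When $\lambda_i = \mu_i$ we have $b_i = c_i$, and the statement reduces to $b_i \in \mathcal{I}_{\skewschubert, b_i}$. This holds because the box $\sq_{n-k-\mu_i+1, i}$ that indexes position $b_i$ lies in $\overline{R}(\skewschubert)$: its northeast neighbor sits outside $\lambda$ thanks to $\lambda_{i+1} \le \lambda_i$, and the box itself lies in $\mu$ because $\mu_i = \lambda_i$. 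Hence $\mathcal{I}_{\skewschubert, b_i} = I_\mu$, which contains $b_i$ by definition.

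The main obstacle is not a single deep step but the bookkeeping in the degenerate cases (empty column or row of $\skewschubert$): one must locate where the relevant position lies in the necklace between box-indexed and padding-indexed regions, and combine the inequalities coming from $\mu \subseteq \lambda$ with the definitions of $\overline{\mu}_a$ and $\overline{\lambda}_a$ to confirm or rule out membership in $I_\mu$. These degenerate cases are not explicitly covered in the proof of Lemma~\ref{lem: necklace for skew} and will need separate verification.
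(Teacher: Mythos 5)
Your approach is essentially the same as the paper's: read off $\bar{f}_{\skewschubert}$ from the Grassmann necklace via the bijection in Lemma~\ref{lem: GN to BA bijection}, using the case analysis already done in the proof of Lemma~\ref{lem: necklace for skew}, and then handle separately the degenerate steps where both necklace entries equal $I_\mu$. The key organizing observation, that $\{a+\overline{\mu}_a : 1\le a\le n-k\}\sqcup\{b_i:1\le i\le k\}=[n]$, and the lift-by-strict-inequality arguments in the non-degenerate cases, are all correct.

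One small gap: in the vertical degenerate case $\lambda_i=\mu_i$, you say the statement reduces to $b_i\in\mathcal{I}_{\skewschubert,b_i}$ and you verify $\mathcal{I}_{\skewschubert,b_i}=I_\mu$. But the lift formula from Lemma~\ref{lem: GN to BA bijection} gives $f_{\mathcal{I}}(i)=i+n$ only under the joint hypothesis $I_{i-1}=I_i$ \emph{and} $i\in I_i$; you still need to show $\mathcal{I}_{\skewschubert,b_i-1}=\mathcal{I}_{\skewschubert,b_i}$ (equivalently, that $\bar{f}(b_i)=b_i$ rather than some smaller residue). This does hold: the ribbon box at position $b_i-1$ is $\sq_{d_i,i-1}$, whose northeast neighbor is $\sq_{d_i-1,i}\notin\lambda$, and $i-1\le\overline{\mu}_{d_i}$ since $\overline{\mu}_{d_i}\ge i$, so $\sq_{d_i,i-1}\in\overline{R}(\skewschubert)$ as well (with the case $i=1$ handled by the padding). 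A parallel remark applies to the horizontal degenerate case: the ribbon box at position $a+t$ lies in a column $a'>a$, not in the empty column $a$ itself, so your phrase ``inside the empty column'' does not literally describe that box; the correct observation is that $\lambda$ weakly increasing in column height forces any ribbon box $\sq_{a',i'}$ at position $a+t-1$ or $a+t$ to satisfy $a'\ge a$ and $i'\le t\le\overline{\mu}_{a'}$, hence to lie in $\overline{R}(\skewschubert)$. Once these points are made explicit the proof is complete.
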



\begin{proof}
It is easy to see that
$$
b_i=n-k-\mu_i+i.
$$
Furthermore, in \eqref{eq: GN case 2} we have $b_{i+1}=(n-k-\mu_{i+1})+i+1$ and $a=n-k-\lambda_{i+1}+1$, so $b_{i+1}>a+i$.
Now the statement mostly follows from the proof of Lemma \ref{lem: necklace for skew} and Lemma \ref{lem: GN to BA bijection}, with cases 1c and 2c corresponding to $a=n-k$ and $i=1$ respectively.
It remains to consider  the steps in $\mathcal{I}_{\skewschubert}$ from $I_{\mu}$ to $I_{\mu}$ which split into several cases:

{\bf Case 1:} If $\mu_1\le \lambda_1<n-k$, then $b_1=n-k-\mu_1+1$. Therefore for $1\le j\le n-k-\lambda_1$ we have $j\notin I_{\mu}$, so $f(j)=j$.

{\bf Case 2:} If $\lambdabar_{n-k}<k$ and $n-k+\lambdabar_{n-k}\le j\le n$, then $j\in I_{\mu}$, so $f(j)=j+n$.

{\bf Case 3:} If we have a horizontal step with both $\sq_{a,i},\sq_{a+1,i}\in \overline{R}(\skewschubert)$ then 
$i=\mubar_a=\lambdabar_a$ while $\mathcal{I}_{\skewschubert,a+i-1}=\mathcal{I}_{\skewschubert,a+i}=I_{\mu}$. Since $a+i=a+\mubar_a\notin I_{\mu}$, we get $f(a+\mubar_a)=a+\mubar_a=a+\lambdabar_a$, in agreement with \eqref{eq: f horizontal}.

{\bf Case 4:} If we have a vertical step with both $\sq_{a,i},\sq_{a,i+1}\in \overline{R}(\skewschubert)$ then
$a+i-1=b_i\in I_{\mu}$, so $f(b_i)=b_i+n$. On the other hand, in this case $b_i=n-k-\mu_i+i=n-k-\lambda_i+i$, in agreement with \eqref{eq: f vertical}.
\end{proof}

\begin{example}
\label{ex: necklace and f}
In our running example we have 
\[
    \mathcal{I}_{\skewschubert}=
    \begin{Bmatrix}        
    \{1{,}6{,}8{,}11{,}12\},\{1{,}2{,}8{,}11{,}12\},\{2{,}3{,}8{,}11{,}12\},\{3{,}4{,}8{,}11{,}12\},\{3{,}4{,}5{,}11{,}12\},\{4{,}5{,}6{,}11{,}12\},\\
    \{5{,}6{,}7{,}11{,}12\},\{5{,}6{,}7{,}8{,}12\},\{5{,}6{,}8{,}9{,}12\},\{5{,}6{,}8{,}10{,}12\},
    \{5{,}6{,}8{,}10{,}11\},\{5{,}6{,}8{,}11{,}12\}\end{Bmatrix}.
\]

Therefore the affine permutation $f=f_{\skewschubert}$ has the following values:
\begin{equation}
\label{eq: baf example horizontal}
f(1)=3,\ f(2)=4,\ f(3)=6,\ f(4)=7,\ f(7)=9,\ f(9)=10,\ f(10)=12
\end{equation}
and
\begin{equation}
\label{eq: baf example vertical}
f(6)=2+12,\ f(8)=5+12,\ f(11)=8+12,\ f(12)=11+12,\ f(5)=1+12.
\end{equation}
Note that the equations \eqref{eq: baf example horizontal} correspond to the horizontal steps in $R(\skewschubert)$ and \eqref{eq: baf example vertical} correspond to the vertical steps in it. Also, \eqref{eq: baf example vertical} describes the values $f(b_i)$ for $i=1,\ldots,k$.
\end{example}

\begin{example}
In Example \ref{ex: disconnected} we get
\begin{center}
\resizebox{3.5in}{2.5in}{
\begin{tikzpicture}
\filldraw[color=lightgray, scale=2] (0,-2)--(3,-2)--(3,-4)--(0,-4)--(0,-2);
\draw[scale=2] (0,-4)--(5,-4);
\draw[scale=2] (0,-3)--(5,-3);
\draw[scale=2] (0,-2)--(5,-2);
\draw[scale=2] (0,-1)--(2,-1);
\draw[scale=2] (0,0)--(2,0);
\draw[scale=2] (0,0)--(0,-4);
\draw[scale=2] (1,-4)--(1,0);
\draw[scale=2] (2,-4)--(2,0);
\draw[scale=2] (3,-4)--(3,-2);
\draw[scale=2] (4,-4)--(4,-2);
\draw[scale=2] (5,-4)--(5,-2);

\draw (9,-7) node {\scriptsize $1,4,8,9$};
\draw (9,-5) node {\scriptsize $1,2,8,9$};
\draw (7,-7) node {\scriptsize $2,4,8,9$};
\draw (7,-5) node {\scriptsize $2,3,8,9$};
\draw (3,-3) node {\scriptsize $3,4,6,9$};
\draw (3,-1) node {\scriptsize $3,4,6,7$};
\draw (1,-3) node {\scriptsize $3,4,7,9$};
\draw (1,-1) node {\scriptsize $3,4,7,8$};

\draw (3,-7) node {\scriptsize $3,4,8,9$};
\end{tikzpicture}}
\end{center}
The Grassmann necklace is 
$$
\mathcal{I}_{\skewschubert}=\begin{Bmatrix}
    \{1,4,8,9\},\{1,2,8,9\},\{2,3,8,9\},\{3,4,8,9\},\{3,4,8,9\},\\\{3,4,6,9\},\{3,4,6,7\},\{3,4,7,8\},\{3,4,8,9\}\}
\end{Bmatrix}.
$$

The bounded affine permutation is given by
$$
f(1)=3,\ f(2)=4,\ f(5)=5,\ f(6)=8, f(7)=9,
$$
and
$$
f(3)=1+9,\ f(4)=2+9,\ f(8)=6+9,\ f(9)=7+9.
$$
\end{example}

Let us define a wiring diagram for the bounded affine permutation $f_{\skewschubert}$. 

Given partitions $\mu \subseteq \lambda$, draw both the $\lambda$ and $\mu$-labeling in the same picture. For each horizontal step of $\mu$, labeled by $i$ in the $\mu$-labeling, draw a vertical arrow going up until reaching the boundary of $\lambda$, with $\lambda$-label $j$. Then, $f(i) = j$. If on the other hand $i$ labels a vertical step of $\mu$, draw a horizontal arrow going right until reaching the boundary of $\lambda$ with label $j$, and $f(i) = j + n$.

\begin{figure}[ht!]
    \centering
     \begin{tikzpicture}[scale=0.9]
     \filldraw[color=lightgray] (0,-2)--(0,-5)--(3,-5)--(3,-3)--(2,-3)--(2,-2)--(0,-2);
\draw (0,0)--(0,-5)--(7,-5)--(7,-3)--(5,-3)--(5,-2)--(3,-2)--(3,-1)--(1,-1)--(1,0)--(0,0);
\draw (0,-4)--(7,-4);
\draw (0,-3)--(5,-3);
\draw (0,-2)--(3,-2);
\draw (0,-1)--(1,-1);
\draw (1,-5)--(1,-1);
\draw (2,-5)--(2,-1);
\draw (3,-5)--(3,-2);
\draw (4,-5)--(4,-2);
\draw (5,-5)--(5,-3);
\draw (6,-5)--(6,-3);

\node at (-0.2,-0.5){\tiny \bf 12};
\node at (-0.2,-1.5){\tiny \bf 11};
\node at (0.5,-2.2){{\tiny\bf 10}};
\node at (1.5,-2.2){\tiny \bf 9};
\node at (1.7,-2.5){\tiny \bf 8};

\node at (2.5, -3.2) {\tiny \bf 7};
\node at (2.7, -3.5) {\tiny \bf 6};
\node at (2.7, -4.5) {\tiny \bf 5};
\node at (3.5, -5.2) {\tiny \bf 4};
\node at (4.5, -5.2) {\tiny \bf 3};
\node at (5.5, -5.2) {\tiny \bf 2};
\node at (6.5, -5.2) {\tiny \bf 1};

\node at (0.5, 0.25) {\tiny 12};
\node at (1.25, -0.3) {\tiny 11};
\node at (1.6, -0.75) {\tiny  10};
\node at (2.5, -0.75) {\tiny  9};
\node at (3.25, -1.3) {\tiny 8}; 
\node at (3.6, -1.75) {\tiny 7};
\node at (4.5, -1.75) {\tiny 6};
\node at (5.25, -2.3) {\tiny 5};
\node at (5.6, -2.75) {\tiny 4};
\node at (6.5, -2.75) {\tiny 3};
\node at (7.25, -3.5) {\tiny 2};
\node at (7.25, -4.5) {\tiny 1};

\draw[color=blue,<-] (0.5, 0) to (0.5, -2);
\draw[color=blue,<-] (1.5, -1) to (1.5, -2);
\draw[color=blue,<-] (2.5, -1) to (2.5, -3);
\draw[color=blue,<-] (3.5, -2) to (3.5, -5);
\draw[color=blue,<-] (4.5, -2) to (4.5, -5);
\draw[color=blue,<-] (5.5, -3) to (5.5, -5);
\draw[color=blue,<-] (6.5, -3) to (6.5, -5);

\draw[color=blue,<-] (1, -0.5) to (0,-0.5);
\draw[color=blue,<-] (3, -1.5) to (0,-1.5);
\draw[color=blue,<-] (5, -2.5) to (2,-2.5);
\draw[color=blue,<-] (7, -3.5) to (3, -3.5);
\draw[color=blue,<-] (7, -4.5) to (3, -4.5);
\end{tikzpicture} \qquad 
\begin{tikzpicture}[scale=0.9]
\filldraw[color=lightgray] (0,-2)--(3,-2)--(3,-4)--(0,-4)--(0,-2);
\draw (0,-4)--(5,-4);
\draw (0,-3)--(5,-3);
\draw (0,-2)--(5,-2);
\draw (0,-1)--(2,-1);
\draw (0,0)--(2,0);
\draw (0,0)--(0,-4);
\draw (1,-4)--(1,0);
\draw (2,-4)--(2,0);
\draw (3,-4)--(3,-2);
\draw (4,-4)--(4,-2);
\draw (5,-4)--(5,-2);

\node at (0.5, 0.25) {\tiny 9};
\node at (1.5, 0.25) {\tiny 8};
\node at (2.2, -0.5) {\tiny 7};
\node at (2.2, -1.5) {\tiny 6};
\node at (2.5, -1.75) {\tiny 5};
\node at (3.5, -1.75) {\tiny 4};
\node at (4.5, -1.75) {\tiny 3};
\node at (5.2, -2.5) {\tiny 2};
\node at (5.2, -3.5) {\tiny 1};

\node at (-0.2, -0.5) {\tiny \bf 9};
\node at (-0.2, -1.5) {\tiny \bf 8};
\node at (0.5, -2.2) {\tiny \bf 7};
\node at (1.5, -2.2) {\tiny \bf 6};
\node at (2.5, -2.2) {\tiny \bf 5};
\node at (2.7, -2.5) {\tiny \bf 4};
\node at (2.7, -3.5) {\tiny \bf 3};
\node at (3.5, -4.2) {\tiny \bf 2};
\node at (4.5, -4.2) {\tiny \bf 1};

\draw[color=blue,<-] (0.5, 0) to (0.5, -2);
\draw[color=blue,<-] (1.5, 0) to (1.5, -2);
\draw[color=blue,<-] (3.5, -2) to (3.5, -4);
\draw[color=blue,<-] (4.5, -2) to (4.5, -4);

\draw[color=blue,<-] (2, -0.5) to (0, -0.5);
\draw[color=blue,<-] (2, -1.5) to (0, -1.5);
\draw[color=blue,<-] (5, -2.5) to (3,-2.5);
\draw[color=blue,<-] (5, -3.5) to (3, -3.5);
\end{tikzpicture}

    \caption{The wiring diagram associated to the bounded affine permutation $f_{\skewschubert}$. For horizontal arrows, we have to add $n$ to the value of the target. Note that $5$ is a fixed point of $f_{\skewschubert}$ for the right-hand figure.}
    \label{fig:wiring-diag-affine}
\end{figure}
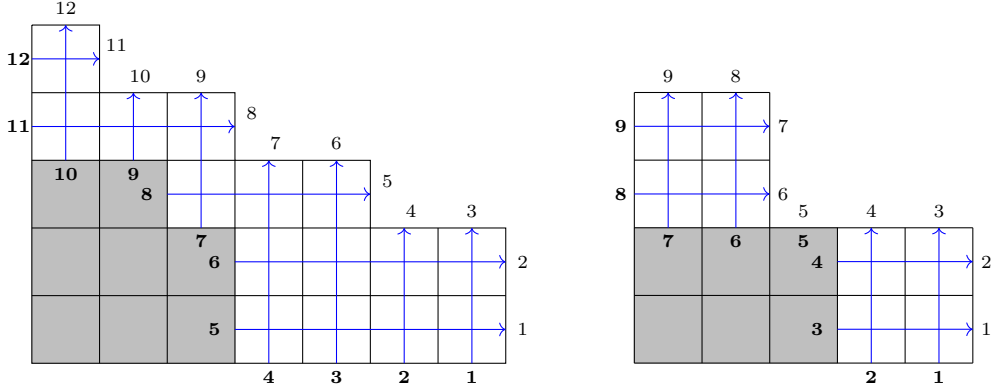

\begin{lemma}
\label{lem: f factorization}
We have $f_{\skewschubert} = w_{\lambda}t_kw_{\mu}^{-1},$ where 
$t_k=[1+n, \dots, k+n, k+1, \dots, n]$.
\end{lemma}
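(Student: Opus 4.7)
The plan is to verify the identity $f_{\skewschubert} = w_{\lambda} t_k w_{\mu}^{-1}$ pointwise, by showing that both sides are bounded affine permutations of $\Z$ that agree on a system of representatives of $\Z/n\Z$. Since every integer in $\{1,\dots,n\}$ labels either a vertical or a horizontal step of the boundary path $P_{\mu}$, we have the disjoint decomposition
\[
\{1,\dots,n\}=\{b_1,\dots,b_k\}\sqcup\{a+\mubar_a : 1\le a\le n-k\},
\]
where $b_i = n-k-\mu_i+i$ labels the $i$-th vertical step (from bottom) and $a+\mubar_a$ labels the horizontal step at the top of the $a$-th column (from the right). It therefore suffices to compute $w_{\lambda} t_k w_{\mu}^{-1}$ on each $b_i$ and each $a+\mubar_a$ and compare with Lemma \ref{lem: baf for skew}.

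First I would unpack the one-line notation of $w_{\lambda}$ from \eqref{eq: w lambda}: for $1\le i\le k$, $w_{\lambda}(i) = n-k+i-\lambda_i$, and for $1\le a\le n-k$, $w_{\lambda}(k+a) = a+\lambdabar_a$ (using $\lambdabar_a = \lambda^{t}_{n-k+1-a}$). The analogous formulas hold for $w_{\mu}$. In particular, $w_{\mu}^{-1}(b_i) = w_{\mu}^{-1}(n-k+i-\mu_i) = i$ and $w_{\mu}^{-1}(a+\mubar_a) = k+a$. Since $t_k(i) = i+n$ for $1\le i\le k$ and $t_k(j)=j$ for $k+1\le j\le n$, we then compute
\[
w_{\lambda} t_k w_{\mu}^{-1}(b_i) = w_{\lambda}(i+n) = w_{\lambda}(i) + n = (n-k-\lambda_i + i) + n,
\]
which matches \eqref{eq: f vertical}, and
\[
w_{\lambda} t_k w_{\mu}^{-1}(a+\mubar_a) = w_{\lambda}(k+a) = a+\lambdabar_a,
\]
which matches \eqref{eq: f horizontal}.

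Finally, I would note that the conjugate $w_{\lambda} t_k w_{\mu}^{-1}$ is manifestly a bounded affine permutation in $\BA(k,n)$: the periodicity $(w_{\lambda} t_k w_{\mu}^{-1})(x+n) = (w_{\lambda} t_k w_{\mu}^{-1})(x)+n$ is automatic (both $w_{\lambda}, w_{\mu}$ are in $S_n$, $t_k$ is periodic), and the bound $x\le (w_\lambda t_k w_\mu^{-1})(x)\le x+n$ together with $\sum_{i=1}^n (f(i)-i) = nk$ follows from the same properties of $t_k$ once we know, by Lemma \ref{lem:additive-factorization}, that $w_\mu \ge w_\lambda$ in Bruhat order. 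Since $f_{\skewschubert}$ is also in $\BA(k,n)$ and the two permutations agree on $\{1,\dots,n\}$, they are equal as affine permutations, completing the proof. The only mild subtlety is keeping track of the orientation conventions for the labels $b_i$ and $a+\mubar_a$ versus the one-line notation of $w_\lambda, w_\mu$; once those are aligned the rest is a direct substitution.
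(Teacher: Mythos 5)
Your argument is correct and coincides with the paper's own proof: both compute $w_{\lambda}t_kw_{\mu}^{-1}$ separately on the labels $b_i$ of vertical steps and $a+\mubar_a$ of horizontal steps of $P_{\mu}$, using the explicit one-line form of $w_{\lambda}$ (and $w_{\mu}$), and then match the results to the formulas in Lemma~\ref{lem: baf for skew}. You are slightly more explicit about the affine periodicity and about why the right-hand side lies in $\BA(k,n)$, but these are points the paper also relies on (the latter is stated just before Remark~\ref{rmk: schubert cell isom}).
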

\begin{proof}
We have $w_{\lambda}(i)=n-k-\lambda_i+i$ for $1\le i\le k$ and $w_{\lambda}(k+a)=a+\lambdabar_a$ for $1\le a\le n-k$. Therefore 
$$
w_{\lambda}t_kw_{\mu}^{-1}(a+\mubar_a)=w_{\lambda}t_k(k+a)=w_{\lambda}(k+a)=a+\lambdabar_a
$$
and
$$
w_{\lambda}t_kw_{\mu}^{-1}(n-k-\mu_i+i)=w_{\lambda}t_k(i)=w_{\lambda}(i+n)=n-k-\lambda_i+i.
$$
This agrees with $f_{\skewschubert}$ by Lemma \ref{lem: baf for skew}.
\end{proof}

\begin{corollary}
    Given a skew diagram $\skewschubert$, we have that $f_{\skewschubert} \pmod{n} = w_{\skewschubert}^{-1}$.%
\end{corollary}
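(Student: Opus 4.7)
The plan is to combine Lemma \ref{lem: f factorization} with the length-additive factorization of Lemma \ref{lem:additive-factorization}. First I would recall that by Lemma \ref{lem: f factorization}, we have the identity $f_{\skewschubert} = w_{\lambda}t_k w_{\mu}^{-1}$ of maps $\mathbb{Z} \to \mathbb{Z}$. Reducing modulo $n$, the affine permutation $t_k = [1+n, \ldots, k+n, k+1, \ldots, n]$ collapses to the identity in $S_n$, and so the congruence class $f_{\skewschubert} \pmod{n}$ equals $w_{\lambda} w_{\mu}^{-1}$ in $S_n$.

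Next, I would invoke Lemma \ref{lem:additive-factorization}, which provides the factorization $w_{\mu} = w_{\skewschubert} w_{\lambda}$. Inverting both sides yields $w_{\mu}^{-1} = w_{\lambda}^{-1} w_{\skewschubert}^{-1}$. Substituting, we obtain
\[
f_{\skewschubert} \pmod{n} = w_{\lambda} w_{\mu}^{-1} = w_{\lambda} w_{\lambda}^{-1} w_{\skewschubert}^{-1} = w_{\skewschubert}^{-1},
\]
which is the desired equality. No significant obstacle is expected, as both required ingredients have already been established; the corollary is essentially an algebraic manipulation using the two previous lemmas.
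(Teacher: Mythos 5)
Your proof is correct and takes essentially the same approach as the paper: both reduce the factorization $f_{\skewschubert} = w_{\lambda}t_kw_{\mu}^{-1}$ from Lemma \ref{lem: f factorization} modulo $n$ and then substitute the length-additive factorization $w_{\mu} = w_{\skewschubert}w_{\lambda}$ from Lemma \ref{lem:additive-factorization} (the paper phrases it as $w_{\skewschubert} = w_{\mu}w_{\lambda}^{-1}$, which is the same identity rearranged).
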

\begin{proof}
    Indeed, we have that $f = w_{\lambda}t_kw_{\mu}^{-1}$, while $w_{\skewschubert} = w_{\mu}w_{\lambda}^{-1}$ and $t_k \pmod n$ is the identity. 
\end{proof}




The following statement can be easily deduced from the results of \cite{KLS}.

\begin{theorem}
\label{thm: skew as positroid}
The skew shaped positroid $S_{\skewschubert}^{\circ}$ is isomorphic to the open positroid variety $\Pi^{\circ}_{\mathcal{I}_{\skewschubert}}$. 
\end{theorem}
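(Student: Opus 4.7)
The plan is to chain together the dictionary between open Richardson varieties, bounded affine permutations, and Grassmann necklaces that has been set up in the previous subsections, together with the fundamental identification of \cite{KLS} between Grassmannian open Richardson varieties and open positroid varieties.

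First, by Remark \ref{rmk: schubert cell isom}, the projection $\Fl(n) \to \Gr(k,n)$ restricts to an isomorphism between $S_{\skewschubert}^{\circ}$ (which by Definition \ref{def: schubert} is the open Richardson $R^{\circ}_{w_{\lambda}, w_{\mu}} \subseteq \Fl(n)$) and the Grassmannian open positroid variety $\Pi^{\circ}_{w_{\lambda}, w_{\mu}} \subseteq \Gr(k,n)$. By the parametrization of open positroid varieties by bounded affine permutations established in \cite{KLS}, $\Pi^{\circ}_{w_{\lambda}, w_{\mu}}$ equals $\Pi^{\circ}_{f}$ (in the sense of Definition \ref{def: positroid}) where $f = w_{\lambda} t_k w_{\mu}^{-1}$. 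By Lemma \ref{lem: f factorization}, this bounded affine permutation is precisely $f_{\skewschubert}$, and by construction (Lemma \ref{lem: baf for skew} combined with the bijection $\varphi$ of Lemma \ref{lem: GN to BA bijection}), $f_{\skewschubert}$ corresponds to the Grassmann necklace $\mathcal{I}_{\skewschubert}$ of Definition \ref{def: skew grassmann necklace}. Chaining these identifications yields the isomorphism
\[
S_{\skewschubert}^{\circ} \;\cong\; \Pi^{\circ}_{w_{\lambda}, w_{\mu}} \;=\; \Pi^{\circ}_{f_{\skewschubert}} \;=\; \Pi^{\circ}_{\mathcal{I}_{\skewschubert}}.
\]

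The conceptual content is therefore entirely carried by the combinatorial lemmas of this section; the theorem is a repackaging. The one delicate point is to match conventions. The present paper uses \emph{source} Grassmann necklaces (Definition \ref{def: Grassmann necklace for V}) whereas several references, such as \cite{MS17, FSB22}, use target Grassmann necklaces. Consequently, before citing \cite{KLS} one must verify that the specific formula $f = w_{\lambda} t_k w_{\mu}^{-1}$ assigned to the Richardson pair $(w_{\lambda}, w_{\mu})$ is consistent with the conventions encoded in the bijection $\varphi: \GN(k,n) \to \BA(k,n)$ of Lemma \ref{lem: GN to BA bijection}, i.e.\ with the cyclic ordering $\leq_{i+1}$ used to define $I_i$.

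I expect the main obstacle to be precisely this bookkeeping of conventions, in particular the placement of the translation $t_k$ and the direction of cyclicity, rather than any substantive geometric input. Once this matching is done in a single reference case (for example by verifying directly that the explicit Grassmann necklace $\mathcal{I}_{\skewschubert}$ of Definition \ref{def: skew grassmann necklace} agrees with the source Grassmann necklace of a generic point in $\Pi^{\circ}_{w_{\lambda}, w_{\mu}}$, reading off pivot columns through the lattice path labeling of Section \ref{subsec:lattice path labeling for skew}), the full theorem follows from the chain of identifications above.
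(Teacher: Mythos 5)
Your proposal is correct and follows essentially the same chain of identifications as the paper's proof: use Remark \ref{rmk: schubert cell isom} to project $R^{\circ}_{w_\lambda, w_\mu}$ isomorphically to a positroid variety in $\Gr(k,n)$, invoke \cite{KLS} to identify that positroid as $\Pi^{\circ}_f$ with $f = w_\lambda t_k w_\mu^{-1}$, and then apply Lemma \ref{lem: f factorization} to match $f$ with $f_{\skewschubert}$ (and hence with $\mathcal{I}_{\skewschubert}$ via Lemma \ref{lem: GN to BA bijection}). Your added caution about source versus target conventions is sensible bookkeeping but does not change the argument; the paper implicitly resolves it by having set up Definition \ref{def: Grassmann necklace for V} and Lemma \ref{lem: GN to BA bijection} with the source convention from the start, so that Lemma \ref{lem: baf for skew} already produces a bounded affine permutation in the matching convention.
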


\begin{proof}
By definition, the skew shaped positroid $\skewschvar$ is the open Richardson variety $R_{w_\lambda,w_\mu}^\circ$. Since the permutation $w_{\mu}$ is $k$-Grassmannian, the natural projection from the flag variety $\Fl(n)$ to $\Gr(k,n)$ identifies $R_{w_\lambda, w_\mu}^\circ$ with its image, cf. \cite{KLS}. By \cite{KLS}, this projection identifies $R_{w_\lambda,w_\mu}^\circ$ with the positroid $\Pi_f^{\circ}$ where 
$f=w_{\lambda}t_{k}w_{\mu}^{-1}$ which agrees with $f_{\skewschubert}$ by Lemma \ref{lem: f factorization}.  
\end{proof}

\begin{remark}\label{rmk:Imu1}
    Let $V$ be a $(k \times n)$-matrix representing a point in the positroid $\Pi^{\circ}_{\mathcal{I}_{\skewschubert}}$. Since $I_{\mu}$ is an element of the corresponding Grassmann necklace, we have that $\minor_{I_{\mu}}(V) \neq 0$. By rescaling, we may and will assume that $\minor_{I_{\mu}}(V) = 1$ for every element $V \in \Pi^{\circ}_{\mathcal{I}_{\skewschubert}}$.
\end{remark}

\subsection{Skew shaped positroid as a braid variety}\label{sec:braid diagram configuration for skew}
Following \cite[Lemma 2.5]{CGGS2} we define a $k$-strand braid $\br_{\skewschubert}$  associated to a skew diagram $\skewschubert$ via the following procedure.
 
\begin{enumerate}
    \item Cross out the top box of every column of $\lambda$.
    \item For each column of $(n-k)^{k}$, put strands starting on the lower left corner of each box in this column, so that there are precisely $k$ strands. We join the strands to corners in the right border of this column, as follows:
    \begin{enumerate}
    \item Join the strand starting in the lower left corner of the crossed box belonging to $\skewschubert$ (from Step 1) in this column to the lower right corner of the lowest box in the same column belonging to $\skewschubert$. If the crossed box also belongs to $\mu$, then simply join the strand, in a straight line, to the lower right corner of the same box.
    \item If a strand starts in the lower left corner of a non-crossed box in $\skewschubert$, join it to the upper right corner of the same box.
    \item If a strand starts on the lower left corner of any box which is not of the above type, join it (in a straight line) to the lower right corner of the same box.  
    \end{enumerate}
    \item The braid $\br_{\skewschubert}$ is defined to be the concatenation of the $(n-k)$-braids from Step 2. In formulas,
    \begin{equation}\label{eq: k-strand-braid}
\br_{\skewschubert} = C_1C_2\cdots C_{n-k}, \qquad C_{j} = \sigma_{\lambda_{j}^{t}-1}\sigma_{\lambda_j^{t}-2}\cdots \sigma_{\mu_j^{t}+1},
    \end{equation}
    where $C_j$ is the empty braid if $\mu_j^{t}+1 > \lambda_{j}^{t}-1$. 
\end{enumerate}

See Example \ref{ex:running k-strand} for an example of $\beta_{\skewschubert}$.
 We say that a box $\sq \in \skewschubert$ is a \newword{braid box} if it is not the top row of its column. 
 By definition, 
    the braid boxes of $\skewschubert$ are in bijection with the crossings of $\beta_{\skewschubert}$. 
    
    Next, we define a family of subspaces of $\C^k$ labeled by the braid boxes of $\skewschubert$ or, equivalently, by the regions of $\beta_{\skewschubert}$.

\begin{definition}\label{def: V(a,i)}
We define the subspace $V(a,i)=\spann\{v_j:j\in J(a,i)\}\subset \C^k$.
\end{definition}

\begin{lemma}
\label{lem: dimension V}
We have $\dim V(a,i)=i$.
\end{lemma}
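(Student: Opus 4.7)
My plan is to deduce the statement by finding, for each braid box $\sq_{a,i} \in \skewschubert$, a single Grassmann necklace element that contains $J(a,i)$. The key observation is that the topmost box $\sq_{a,\lambdabar_a}$ of the $a$-th column automatically lies in the boundary ribbon $R(\skewschubert)$. Indeed, given $\sq_{a,i} \in \skewschubert$, we have $\lambdabar_a \geq i > \mubar_a$, so $\sq_{a,\lambdabar_a} \in \skewschubert$. Moreover, the box immediately northeast of $\sq_{a,\lambdabar_a}$ is $\sq_{a-1, \lambdabar_a+1}$, which lies outside $\lambda$ because $\lambdabar_{a-1} \leq \lambdabar_a$ by the monotonicity of column heights; hence $\sq_{a,\lambdabar_a} \in R(\lambda) \cap \skewschubert = R(\skewschubert)$. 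Therefore, by Definition \ref{def: skew grassmann necklace}, the label $I'(a,\lambdabar_a)$ coincides with the Grassmann necklace element $\mathcal{I}_{\skewschubert, a+\lambdabar_a-1}$.

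Since $S^{\circ}_{\skewschubert} \cong \Pi^{\circ}_{\mathcal{I}_{\skewschubert}}$ by Theorem \ref{thm: skew as positroid}, and every element of the source Grassmann necklace indexes a nonzero maximal minor at every point of the open positroid (Definition \ref{def: Grassmann necklace for V}, cf.\ Remark \ref{rmk:Imu1}), we conclude that $\minor_{I'(a,\lambdabar_a)}(V) \neq 0$ for every $V \in S^{\circ}_{\skewschubert}$. In particular, the $k$ columns $\{v_j : j \in I'(a,\lambdabar_a)\}$ form a basis of $\C^k$.

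It then remains to observe the chain of inclusions
\[
J(a,i) \subset J(a,\lambdabar_a) \subset I'(a,\lambdabar_a),
\]
where the first inclusion is Corollary \ref{cor: contained in the bottom} and the second follows from Proposition \ref{prop: labeling relationship between I'(a,i) and I mu}, which identifies $J(a,\lambdabar_a)$ with the first $\lambdabar_a$ entries of $I'(a,\lambdabar_a)$. As a subset of a linearly independent family, the vectors $\{v_j : j \in J(a,i)\}$ are themselves linearly independent, and since $|J(a,i)| = i$ we conclude $\dim V(a,i) = i$.

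The argument is very short, so I do not anticipate any real obstacle. The only point worth verifying carefully is that the topmost box of each column of $\skewschubert$ belongs to $R(\skewschubert)$, but this is immediate from the fact that $\lambdabar_a$ is weakly increasing in $a$.
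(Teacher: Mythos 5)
Your proof is correct and follows essentially the same route as the paper's: reduce to the linear independence of the columns indexed by $J(a,i)$ via the chain $J(a,i)\subset J(a,\lambdabar_a)\subset I'(a,\lambdabar_a)$ and the nonvanishing of $\Delta_{I'(a,\lambdabar_a)}$. You are simply more explicit about \emph{why} $\Delta_{I'(a,\lambdabar_a)}\neq 0$ — you verify that $\sq_{a,\lambdabar_a}\in R(\skewschubert)$ so that $I'(a,\lambdabar_a)$ is a Grassmann necklace element, and you cite Proposition \ref{prop: labeling relationship between I'(a,i) and I mu} for the second inclusion — whereas the paper leaves these points implicit.
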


\begin{proof}
We have $J(a,i)\subset J(a,\lambdabar_a)\subset I'(a,\lambdabar_a)$  by Corollary \ref{cor: contained in the bottom}, and $\Delta_{I'(a,\lambdabar_a)}(V)\neq 0$ by definition of the Grassmann necklace $\mathcal{I}_{\skewschubert}$. Therefore the vectors $\{v_{j}:j\in I'(a,\lambdabar_a)\}$ are linearly independent, and the vectors $\{v_{j}:j\in J(a,i)\}$ are linearly independent as well.
\end{proof}


\begin{lemma}
\label{lem: containment V}
We have containment
$$
V(a,i)\subset V(a,i+1),\ V(a+1,i)\subset V(a,i+1).
$$
\end{lemma}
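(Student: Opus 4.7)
The plan is to reduce the statement directly to the combinatorial containment of the short labels $J(a,i)$ established in Lemma \ref{lem: J inductive}(c), together with the definition of $V(a,i)$ as the span of $\{v_j : j \in J(a,i)\}$.

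More precisely, assuming (as is implicit from the statement) that the boxes $\sq_{a,i}$, $\sq_{a,i+1}$, and $\sq_{a+1,i}$ all lie in $\skewschubert$, Lemma \ref{lem: J inductive}(c) gives
\[
J(a,i) \subseteq J(a,i+1), \qquad J(a+1,i) \subseteq J(a,i+1).
\]
Since taking spans is monotone with respect to inclusion of index sets, these containments immediately yield
\[
V(a,i) = \spann\{v_j : j \in J(a,i)\} \subseteq \spann\{v_j : j \in J(a,i+1)\} = V(a,i+1),
\]
and similarly $V(a+1,i) \subseteq V(a,i+1)$.

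The only thing worth double-checking is that we are in the setting where Lemma \ref{lem: J inductive}(c) applies, i.e., that the relevant boxes are in $\skewschubert$ so that their short labels are well-defined. The main substance already sits in Lemma \ref{lem: J inductive}: part (b) there, which inductively adds the single element $a+i$ when moving up one row, and part (a), which swaps $a+\overline{\mu}_a$ for $a+i$ when moving one column to the left, together imply the chain $J(a+1,i) \subseteq J(a,i+1) \supseteq J(a,i)$. So there is no genuine obstacle here — the lemma is essentially a restatement of the combinatorial content of Lemma \ref{lem: J inductive}(c) at the level of subspaces of $\C^k$, and the proof is a single line after invoking that result.
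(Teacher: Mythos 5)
Your proof is correct and is exactly the paper's argument: the paper's proof is simply ``This follows from Lemma \ref{lem: J inductive},'' and you have spelled out the one-line reduction from the containments of the index sets $J(a,i)$ to the containments of their spans $V(a,i)$.
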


\begin{proof}
This follows from Lemma \ref{lem: J inductive}.
\end{proof}

\begin{lemma}
\label{lem: equality V}
Suppose that $\sq_{a,i}$ and $\sq_{a+1,i}$ are any two adjacent squares chosen from the $i$-th row of $R(\skewschubert)$. Then $V(a,i)=V(a+1,i)$.
\end{lemma}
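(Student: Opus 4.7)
The plan is to prove that $v_{a+\mubar_a} \in V(a+1,i)$; combined with Lemma~\ref{lem: dimension V} giving $\dim V(a,i) = \dim V(a+1,i) = i$, this will force equality.

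The hypothesis that both $\sq_{a,i}$ and $\sq_{a+1,i}$ lie in $R(\skewschubert)$ first forces $\lambdabar_a = i$. Indeed, $\sq_{a+1,i} \in R(\lambda)$ requires $\sq_{a,i+1} \notin \lambda$, so $\lambdabar_a \leq i$, while $\sq_{a,i} \in \skewschubert \subseteq \lambda$ gives $\lambdabar_a \geq i$. Thus $\sq_{a,i}$ sits at the top of column $a$, and the relevant Grassmann necklace step falls under Case 1a in the proof of Lemma~\ref{lem: necklace for skew}. By Lemma~\ref{lem: baf for skew}, $f_{\skewschubert}(a+\mubar_a) = a + \lambdabar_a = a+i$. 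Combining Theorem~\ref{thm: skew as positroid} with the definition of $f_V$ from Definition~\ref{def: positroid}, this translates into the linear dependence
\[
v_{a+\mubar_a} \in \spann(v_{a+\mubar_a+1}, v_{a+\mubar_a+2}, \ldots, v_{a+i}).
\]

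Next, I would unwind the label sets using Lemma~\ref{lem: J inductive}. Iterating part~(b) along column $a$ from $\sq_{a,\mubar_a+1}$ up to $\sq_{a,i}$ shows that $J(a,i)$ contains the consecutive block $\{a+\mubar_a, a+\mubar_a+1, \ldots, a+i-1\}$. Part~(a) of the same lemma yields
\[
J(a+1,i) = \bigl(J(a,i) \setminus \{a+\mubar_a\}\bigr) \cup \{a+i\},
\]
so $J(a+1,i) \supseteq \{a+\mubar_a+1, \ldots, a+i\}$. Hence $V(a+1,i)$ contains the span of $v_{a+\mubar_a+1}, \ldots, v_{a+i}$, and the linear dependence above puts $v_{a+\mubar_a}$ inside $V(a+1,i)$ as well.

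All other generators of $V(a,i)$ already lie in $V(a+1,i)$ by the description of $J(a+1,i)$, hence $V(a,i) \subseteq V(a+1,i)$, and the dimension count closes the argument. No substantial difficulty arises; the only step requiring care is verifying the inclusion $\{a+\mubar_a, \ldots, a+i-1\} \subseteq J(a,i)$, which follows directly from Lemma~\ref{lem: J inductive}(b) and geometrically corresponds to the short path $\overline{P}_{a,i}$ travelling up the east side of column $a$ from row $\mubar_a+1$ to row $i$.
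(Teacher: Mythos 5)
Your argument is correct and essentially matches the paper's proof: both rely on Lemma~\ref{lem: J inductive} to relate $J(a,i)$ and $J(a+1,i)$, on Lemma~\ref{lem: baf for skew} to supply the linear dependence coming from $f_{\skewschubert}(a+\mubar_a)=a+\lambdabar_a$, and on a dimension count via Lemma~\ref{lem: dimension V}. The only minor difference is the direction of the intermediate inclusion: you show $V(a,i)\subseteq V(a+1,i)$ directly from $v_{a+\mubar_a}\in\spann(v_{a+\mubar_a+1},\ldots,v_{a+\lambdabar_a})$, while the paper's proof text asserts $V(a+1,i)\subseteq V(a,i)$, which also holds but quietly uses the \emph{minimality} in the definition of $f_V$ to ensure the coefficient of $v_{a+\lambdabar_a}$ in that dependence is nonzero; either direction plus the dimension count suffices, and the paper's own example following the lemma in fact carries out your direction.
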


\begin{proof}
Assume that $\sq_{a,i}$ and $\sq_{a+1,i}$ are both in $R(\skewschubert)$. Lemma \ref{lem: J inductive} implies that $J(a+1,i)$ is obtained from $J(a,i)$ by swapping $a+\mubar_a$ by $a+\lambdabar_a$. Combining Lemma \ref{lem: baf for skew} with Definition \ref{def: Grassmann necklace for V}, we have a containment 
$V(a+1,i)\subset V(a,i)$. Since $V(a,i)$ and $V(a+1,i)$ have the same dimension $i$, we can conclude that $V(a,i)=V(a+1,i)$.
\end{proof}


\begin{example}
In Example \ref{ex: running J} we have $J(5,4)=\{5,6,7,8\}$ and $J(6,4)=\{5,6,8,9\}$. Since $f(7)=9$, we have $v_7\in \langle v_8,v_9\rangle$, so $V(5,4)=\langle v_5,v_6,v_7,v_8\rangle\subset \langle v_5,v_6,v_8,v_9\rangle=V(6,4)$. On the other hand, $\dim V(5,4)=\dim V(6,4)=4$, so $V(5,4)=V(6,4)$. 
\end{example}

\begin{lemma}
\label{lem: transversality V}
Suppose that $\sq_{a,i}$ is not in $R(\skewschubert)$ and $\sq_{a+1,i}\in \skewschubert$. Then $$V(a,i)\neq V(a+1,i).$$
\end{lemma}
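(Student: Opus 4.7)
The plan is to exhibit an explicit vector, namely $v_{a+i}$, that lies in $V(a+1,i)$ but not in $V(a,i)$. The key observation is that under our hypotheses the auxiliary box $\sq_{a,i+1}$ must also lie in $\skewschubert$, and this will allow me to combine Lemma \ref{lem: J inductive}(b) with the dimension count of Lemma \ref{lem: dimension V}.

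First, I would verify the auxiliary claim that $\sq_{a, i+1}\in \skewschubert$. Since $\sq_{a,i}$ belongs to $\skewschubert$ but not to the boundary ribbon $R(\lambda)$, the northeast neighbor $\sq_{a-1, i+1}$ must belong to $\lambda$, i.e.\ $i+1 \leq \lambdabar_{a-1}$. The column heights $\lambdabar_{a}$ are weakly increasing in $a$, so we get $i+1 \leq \lambdabar_{a-1} \leq \lambdabar_{a}$ and therefore $\sq_{a, i+1} \in \lambda$. Combined with $\mubar_{a} < i < i+1$ (which comes from $\sq_{a,i} \in \skewschubert$), this gives $\sq_{a, i+1} \in \skewschubert$ as claimed.

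With this secured, Lemma \ref{lem: J inductive}(b) produces $J(a, i+1) = J(a, i) \cup \{a+i\}$. By Lemma \ref{lem: dimension V}, $\dim V(a, i+1) = i+1 = |J(a, i+1)|$, so the spanning vectors $\{v_j : j \in J(a, i+1)\}$ are linearly independent. In particular, $v_{a+i}$ does not lie in $\mathrm{span}\{v_j : j \in J(a, i)\} = V(a, i)$. On the other hand, Lemma \ref{lem: J inductive}(a) gives $J(a+1, i) = (J(a, i) \setminus \{a + \mubar_{a}\}) \cup \{a+i\}$, so $v_{a+i} \in V(a+1, i)$. Thus $V(a, i) \neq V(a+1, i)$, as required.

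The only substantive step in the argument is the combinatorial verification that $\sq_{a, i+1} \in \skewschubert$, which boils down to the monotonicity of the column heights $\lambdabar_{a}$; once this is in place the conclusion is an immediate consequence of the lemmas already established in this subsection.
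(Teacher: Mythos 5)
Your proof is correct and takes essentially the same route as the paper: both hinge on the identity $J(a,i+1)=J(a,i)\cup\{a+i\}$ from Lemma \ref{lem: J inductive} together with the linear independence guaranteed by $\dim V(a,i+1)=i+1$ from Lemma \ref{lem: dimension V}. The paper phrases this as a one-line contradiction via $V(a,i)+V(a+1,i)=V(a,i+1)$, whereas you exhibit the explicit witness $v_{a+i}\in V(a+1,i)\setminus V(a,i)$, and you make explicit the step that $\sq_{a,i+1}\in\skewschubert$ (needed to invoke Lemma \ref{lem: J inductive}(b)), which the paper leaves implicit.
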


\begin{proof}
By Lemma \ref{lem: J inductive} we get $J(a,i)\cup J(a+1,i)=J(a,i+1)$, so $V(a,i)+V(a+1,i)=V(a,i+1)$. If $V(a,i)=V(a+1,i)$ then $V(a,i+1)=V(a,i)$ which contradicts Lemma \ref{lem: dimension V}.
\end{proof}

\begin{definition}\label{def: skew schubert standard flag}
Let $I_{\mu}=\{b_1,\ldots,b_k\}$ as above, we define 
$$
W_i=\left\langle v_{b_{k-i+1}},\ldots,v_{b_k}\right\rangle,\quad \Wop_i=\left\langle v_{b_1},\ldots,v_{b_i}\right\rangle.
$$
\end{definition}

\begin{lemma}
\label{lem: V tilde}
The subspaces $\Wop_i$ have the following properties:
\begin{enumerate}[label=\alph*)]
    \item $\dim \Wop_i=i$.
    \item Suppose that $\sq_{a,i}\in \mu$ but $\sq_{a-1,i}\in \skewschubert$. Then $\Wop_i\subset V(a-1,i+1)$ and $\Wop_i\neq V(a-1,i)$.
\end{enumerate}
\end{lemma}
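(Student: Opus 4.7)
The plan is to deduce part (a) directly from the Grassmann necklace structure of the skew shaped positroid, and to prove part (b) by combining Lemma \ref{lem: J tilda inductive}(b) with the dimension count from Lemma \ref{lem: dimension V}.

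For part (a), the Grassmann necklace $\mathcal{I}_{\skewschubert}$ (Definition \ref{def: skew grassmann necklace}) satisfies $\mathcal{I}_{\skewschubert, n} = I_{\mu}$, so $\minor_{I_{\mu}}(V) \neq 0$ for every $V \in \skewschvar$; by Remark \ref{rmk:Imu1} we may even normalize this minor to equal $1$. This forces the columns $v_{b_1}, \ldots, v_{b_k}$ of $V$ to be linearly independent, and hence so are any $i$ of them. In particular $\dim \Wop_i = i$.

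For part (b), the key step is Lemma \ref{lem: J tilda inductive}(b). Under the hypotheses $\sq_{a,i} \in \mu$ and $\sq_{a-1, i} \in \skewschubert$, and assuming first that $\sq_{a-1, i+1}$ also belongs to $\skewschubert$, that lemma yields
$$
J(a-1, i+1) \;=\; J(a-1, i) \cup \widetilde{J}(a, i) \;=\; J(a-1, i) \cup \{b_1, \ldots, b_i\}.
$$
Taking linear spans of the corresponding columns of $V$ gives $V(a-1, i+1) = V(a-1, i) + \Wop_i$, which immediately proves $\Wop_i \subset V(a-1, i+1)$. For the second assertion, Lemma \ref{lem: dimension V} gives $\dim V(a-1, i) = i$ and $\dim V(a-1, i+1) = i+1$, so the inclusion $V(a-1, i) \subsetneq V(a-1, i) + \Wop_i = V(a-1, i+1)$ is strict. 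Consequently $\Wop_i$ is not contained in $V(a-1, i)$, and in particular $\Wop_i \neq V(a-1, i)$.

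The one subtle point I anticipate is the boundary case $i = \lambdabar_{a-1}$, in which $\sq_{a-1, i+1}$ lies above the top of its column in $\lambda$ and is not itself a box of $\skewschubert$. Here Lemma \ref{lem: J tilda inductive}(b) does not literally apply, but the short-path construction extends naturally to the top row of the ambient $(n-k)\times k$ rectangle by appending one further vertical step along the north boundary, and the identity $J(a-1, i+1) = J(a-1, i) \cup \{b_1, \ldots, b_i\}$ continues to hold in this extended sense. The dimension argument then goes through unchanged, consistent with the fact that $\Wop_i$ already appears as a member of the complete flag $\CF^{\Wop}$ from the introduction. I expect this to be a brief notational check rather than the main obstacle; the genuine content of the proof lies in the combinatorial identity provided by Lemma \ref{lem: J tilda inductive}(b).
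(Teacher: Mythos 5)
Your proof is correct and follows the same route as the paper: part (a) from $\Delta_{I_\mu}\neq 0$, and part (b) from the identity $\Wop_i = \spann\{v_j : j\in\widetilde{J}(a,i)\}$ combined with Lemma~\ref{lem: J tilda inductive}(b) to get $V(a-1,i)+\Wop_i = V(a-1,i+1)$, followed by the dimension argument of Lemma~\ref{lem: dimension V}. The argument that $V(a-1,i)\subsetneq V(a-1,i+1)$ forces $\Wop_i\not\subset V(a-1,i)$, hence $\Wop_i\neq V(a-1,i)$, is logically equivalent to the paper's contrapositive (if $\Wop_i=V(a-1,i)$ then $\dim V(a-1,i+1)=i$, contradiction).

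Your caveat about the case $i=\lambdabar_{a-1}$ is a fair observation that the paper passes over silently: for Lemma~\ref{lem: J tilda inductive}(b) to apply, one needs $\sq_{a-1,i+1}\in\skewschubert$, and nothing in the stated hypotheses of Lemma~\ref{lem: V tilde}(b) formally excludes $i=\lambdabar_{a-1}$ (e.g.\ $\lambda=(2,2,1,1)$, $\mu=(1,1)$ with $k=5$, $n-k=2$, $a=2$, $i=2$). However, the proposed fix of extending the short-path construction is the wrong direction to take: in the one place the lemma is invoked, namely Step~1 of the proof of Theorem~\ref{thm: from skew schubert to braid variety}, it is used to verify the local relative-position condition at a crossing of $\beta_{\skewschubert}$, which forces $\sq_{a-1,i}$ to be a braid box, hence $i<\lambdabar_{a-1}$ and $\sq_{a-1,i+1}\in\skewschubert$ automatically. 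So the boundary case simply does not arise, and the statement $\Wop_i\subset V(a-1,i+1)$ should be read with the implicit hypothesis that $V(a-1,i+1)$ is defined; no extended-path notion is needed, and invoking one here would require you to also re-verify Lemma~\ref{lem: dimension V} in the extended setting, which you do not do. This is a small imprecision in an otherwise clean write-up of the paper's argument.
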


\begin{proof}
Part (a) follows from $\Delta_{I_{\mu}}\neq 0$. For part (b), we first observe that
$$
\Wop_i=\spann\left\{v_j\ :\ j\in \widetilde{J}(a,i)\right\}, 
$$
so by Lemma \ref{lem: J tilda inductive} we get 
$\Wop_i+V(a-1,i)=V(a-1,i+1)$, in particular $\Wop_i\subset V(a-1,i+1)$. If $\Wop_i=V(a-1,i)$ then $\dim V(a-1,i+1)=i$, contradiction.
\end{proof}
It is easy to see that the vector $v_{a+\mubar_a}$ vanishes if and only if $\mubar_a=\lambdabar_a$, that is, the $a$-th column of the skew diagram $\skewschubert$ is empty (this includes the case $\mubar_a=\lambdabar_a=0$). Indeed, by Lemma \ref{lem: baf for skew} $f(a+\mubar_a)=a+\lambdabar_a$ and by definition of the bounded affine permutation $f$ the vector $v_j$ vanishes if and only if $f(j)=j$. Below we will mostly focus on the case $\mubar_a<\lambdabar_a$ where   $v_{a+\mubar_a}\neq 0$.

\begin{lemma}
\label{lem: one dimensional}
For all $a$, if $\mu_a<\lambda_a$ then the space 
$$
V(a,\mubar_a+1)\cap \left\langle v_{a+\mubar_a+1},\ldots,v_{a+\lambdabar_a}\right\rangle
$$
is one-dimensional and spanned by $v_{a+\mubar_a}$. 
\end{lemma}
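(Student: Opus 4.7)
The plan is to identify $v_{a+\mubar_a}$ as a distinguished nonzero vector inside the intersection and then to bound the intersection's dimension above by $1$ using the inclusion-exclusion dimension formula inside $V(a,\lambdabar_a)$.

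First, I would check that $v_{a+\mubar_a}$ lies in both factors. By Lemma \ref{lem: baf for skew} the bounded affine permutation satisfies $f_{\skewschubert}(a+\mubar_a)=a+\lambdabar_a$; the defining property of $f_V$ for a representative $V$ forces $v_{a+\mubar_a}\in\langle v_{a+\mubar_a+1},\ldots,v_{a+\lambdabar_a}\rangle$, and the minimality of $a+\lambdabar_a$ gives both $v_{a+\mubar_a}\notin\langle v_{a+\mubar_a+1},\ldots,v_{a+\lambdabar_a-1}\rangle$ and a nonzero coefficient for $v_{a+\lambdabar_a}$ in the expansion of $v_{a+\mubar_a}$; the hypothesis (i.e.\ $\mubar_a<\lambdabar_a$) ensures $v_{a+\mubar_a}\neq 0$. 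For the left factor, I would show $a+\mubar_a\in J(a,\mubar_a+1)$ by tracing the short boundary path. Since $\mu^t_{n-k-a+1}=\mubar_a$, row $\mubar_a+1$ of $\mu$ has length at most $n-k-a$, so in the minimal Young diagram $\mu_{a,\mubar_a+1}$ this row terminates precisely at column $n-k-a+1$ from the left. Consequently the short path $\overline{P}_{a,\mubar_a+1}$ ends with a vertical step along the east side of $\sq_{a,\mubar_a+1}$, and a direct count of its $a+\mubar_a$ total steps shows that this final step is labeled $a+\mubar_a$.

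Next, I would establish the dimension upper bound. Iterating Lemma \ref{lem: J inductive}(b) from $i=\mubar_a+1$ up to $i=\lambdabar_a-1$ (both $\sq_{a,i}$ and $\sq_{a,i+1}$ lie in $\skewschubert$ at every step) yields the disjoint union
\[
J(a,\lambdabar_a)=J(a,\mubar_a+1)\sqcup\{a+\mubar_a+1,\ldots,a+\lambdabar_a-1\},
\]
disjointness following from the fact that every label in $J(a,\mubar_a+1)$ is at most $a+\mubar_a$. Thus $V(a,\lambdabar_a)=V(a,\mubar_a+1)+\langle v_{a+\mubar_a+1},\ldots,v_{a+\lambdabar_a-1}\rangle$, and by Lemma \ref{lem: dimension V} the $\lambdabar_a$ vectors $\{v_j : j\in J(a,\lambdabar_a)\}$ form a basis of $V(a,\lambdabar_a)$; in particular $v_{a+\mubar_a+1},\ldots,v_{a+\lambdabar_a-1}$ are linearly independent.

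Setting $W:=\langle v_{a+\mubar_a+1},\ldots,v_{a+\lambdabar_a}\rangle$, I would use the relation $v_{a+\mubar_a}=\sum_{j=1}^{\lambdabar_a-\mubar_a}c_j v_{a+\mubar_a+j}$ with $c_{\lambdabar_a-\mubar_a}\neq 0$ to solve for $v_{a+\lambdabar_a}$ as a combination of $v_{a+\mubar_a}$ and the intermediate vectors, all of which already belong to $V(a,\lambdabar_a)$. This gives $W\subseteq V(a,\lambdabar_a)$ and hence $V(a,\mubar_a+1)+W=V(a,\lambdabar_a)$; the same solve forces $v_{a+\lambdabar_a}\notin\langle v_{a+\mubar_a+1},\ldots,v_{a+\lambdabar_a-1}\rangle$, since otherwise $v_{a+\mubar_a}$ itself would lie in this shorter span, contradicting the minimality of $f_{\skewschubert}(a+\mubar_a)$. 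Therefore $\dim W=\lambdabar_a-\mubar_a$, and the inclusion-exclusion dimension formula gives
\[
\dim\bigl(V(a,\mubar_a+1)\cap W\bigr)=(\mubar_a+1)+(\lambdabar_a-\mubar_a)-\lambdabar_a=1,
\]
so the intersection equals $\langle v_{a+\mubar_a}\rangle$. I expect the path-tracing step showing $a+\mubar_a\in J(a,\mubar_a+1)$ to be the main obstacle, since Lemma \ref{lem: J inductive}(b) is stated for $\sq_{a,i}\in\skewschubert$ and cannot be invoked at $i=\mubar_a$; once that input is in hand, everything else is routine dimension counting.
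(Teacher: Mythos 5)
Your proposal is correct and follows essentially the paper's own argument: show $v_{a+\mubar_a}$ lies in both factors, decompose $J(a,\lambdabar_a)$ via iterated use of Lemma \ref{lem: J inductive}(b), and apply the inclusion--exclusion dimension formula inside $V(a,\lambdabar_a)$. The only difference is cosmetic --- you pin down $\dim W$ and $\dim(V(a,\mubar_a+1)+W)$ exactly by arguing $v_{a+\lambdabar_a}$ is independent of the intermediate vectors, whereas the paper only needs the one-sided bounds $\dim W\le\lambdabar_a-\mubar_a$ and $\dim(\text{sum})\ge\lambdabar_a$ (taking $v_{a+\mubar_a}\ne0$ for granted from the preceding discussion); and the membership $a+\mubar_a\in J(a,\mubar_a+1)$ that you flag as the main obstacle drops out immediately from the closed formula $a_j=\min\{a+j-1,b_j\}$ in \eqref{eqn: I'} together with the observation $b_{\mubar_a+1}>a+\mubar_a$, so no path-tracing is needed.
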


\begin{proof}
We have $a+\mubar_a\in J(a,\mubar_a+1)$, so
$v_{a+\mubar_a}\in V(a,\mubar_a+1)$. Furthermore, by Lemma \ref{lem: baf for skew} we have $f(a+\mubar_a)=a+\lambdabar_a$, so 
$$
v_{a+\mubar_a}\in \left\langle v_{a+\mubar_a+1},\ldots,v_{a+\lambdabar_a}\right\rangle.
$$
Next, observe that 
$$
J(a,\lambdabar_a)=J(a,\mubar_a)\cup 
\left\{a+\mubar_a+1,\ldots,a+\lambdabar_a-1\right\}
$$
so 
\begin{align*}
    \dim \left[V(a,\mubar_a+1)+\left\langle v_{a+\mubar_a+1},\ldots,v_{a+\lambdabar_a}\right\rangle\right]&\ge \dim \left[V(a,\mubar_a+1)+\left\langle v_{a+\mubar_a+1},\ldots,v_{a+\lambdabar_a-1}\right\rangle\right]\\&=\dim V(a,\lambdabar_a)=\lambdabar_a
\end{align*}

Also, $\dim \left\langle v_{a+\mubar_a+1},\ldots,v_{a+\lambdabar_a}\right\rangle\le \lambdabar_a-\mubar_a$.
Therefore
$$
\dim \left[V(a,\mubar_a+1)\cap \left\langle v_{a+\mubar_a+1},\ldots,v_{a+\lambdabar_a}\right\rangle\right]=\dim V(a,\mubar_a+1)+\dim \left\langle v_{a+\mubar_a+1},\ldots,v_{a+\lambdabar_a}\right\rangle-
$$
$$
\dim \left[V(a,\mubar_a+1)+\left\langle v_{a+\mubar_a+1},\ldots,v_{a+\lambdabar_a}\right\rangle\right]\le \left(\mubar_a+1\right)+\left(\lambdabar_a-\mubar_a\right)-\lambdabar_a=1.
$$
\end{proof}

\begin{lemma}
\label{lem: v in W}
For all $a$, we have $v_{a+\mubar_a}\in W_{k-\mubar_a}$.
\end{lemma}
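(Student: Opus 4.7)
The plan is to prove the equivalent statement $v_{a+\mubar_a} \in \langle v_m : m > a+\mubar_a, \; m \in I_\mu\rangle$, and then match this subspace with $W_{k-\mubar_a}$. The proof reduces to an elementary downward induction driven by the bounded affine permutation $f_{\skewschubert}$, avoiding any use of the braid variety perspective or Plücker coordinates.

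First I would establish the combinatorial identity
\[
\{\ell \in \{1,\dots,k\} : b_\ell > a+\mubar_a\} = \{\mubar_a+1,\dots,k\}.
\]
Using $b_\ell = n-k-\mu_\ell + \ell$ together with the observation that $\mubar_a = \mu^{t}_{n-k+1-a}$ counts the rows $\ell$ with $\mu_\ell \ge n-k+1-a$, a direct computation shows $\mu_\ell \ge n-k+1-a$ for $\ell \le \mubar_a$ (giving $b_\ell \le a+\mubar_a-1$) and $\mu_\ell \le n-k-a$ for $\ell > \mubar_a$ (giving $b_\ell \ge a+\ell > a+\mubar_a$). Setting $V_{i}:=\langle v_{b_\ell} : b_\ell \ge i\rangle$, this identification yields $V_{a+\mubar_a+1} = W_{k-\mubar_a}$.

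Next I would prove by downward induction on $i \in \{1,\dots,n\}$ the following claim: if $i \notin I_\mu$, then $v_i \in V_{i+1}$. The key input is Lemma \ref{lem: baf for skew}, which guarantees $f_{\skewschubert}(i) \le n$ precisely when $i \notin I_\mu$; hence from the definition of $f_{\skewschubert}$ the vector $v_i$ lies in $\langle v_{i+1},\dots,v_{f(i)}\rangle$ with no cyclic wrap-around. For each $j$ in this range, either $j \in I_\mu$ and $v_j \in V_j \subseteq V_{i+1}$, or $j \notin I_\mu$ and by the inductive hypothesis $v_j \in V_{j+1} \subseteq V_{i+1}$. In both cases $v_i \in V_{i+1}$. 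The base case $i = n$ is immediate: if $n \notin I_\mu$ then $f(n) \le n$ combined with $f(n)\ge n$ forces $f(n)=n$, so $v_n = 0 \in V_{n+1} = \{0\}$.

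To conclude, I would apply the claim to $i = a+\mubar_a$. By Lemma \ref{lem: baf for skew} this index is a horizontal position with $f(i) = a+\lambdabar_a \le n$, so $i \notin I_\mu$. The claim gives $v_{a+\mubar_a} \in V_{a+\mubar_a+1} = W_{k-\mubar_a}$. The only delicate point is the non-wrap-around argument, which is handled cleanly by the dichotomy $f(i) \le n \Leftrightarrow i \notin I_\mu$; all remaining steps are routine. When $\mubar_a = \lambdabar_a$ the vector $v_{a+\mubar_a}$ simply vanishes, and the statement holds trivially.
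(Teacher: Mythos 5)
Your proof is correct and takes essentially the same approach as the paper's: you establish the identity $W_{k-\mubar_a}=\spann\{v_{b_\ell}: b_\ell>a+\mubar_a\}$ and then run a downward induction driven by Lemma~\ref{lem: baf for skew}, splitting the positions $a+\mubar_a+1,\ldots,f(a+\mubar_a)$ into those in $I_\mu$ and those not. The only cosmetic difference is that you index the induction by the position $i\in\{1,\ldots,n\}$ rather than by the column $a$; since $a\mapsto a+\mubar_a$ is a strictly increasing bijection from $\{1,\ldots,n-k\}$ onto $\{1,\ldots,n\}\setminus I_\mu$, this is a reparametrization of the paper's decreasing induction on $a$.
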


\begin{proof}
We prove it by decreasing induction on $a$ where $1\le a\le n-k$. Note that 
$$
W_{k-\mubar_a}=\left\langle v_{b_{\mubar_a+1}},\ldots,v_{b_k}\right\rangle=\spann\left\{v_{b_i}\mid b_i>a+\mubar_a\right\}.
$$
By Lemma \ref{lem: baf for skew} we have $f(a+\mubar_a)=a+\lambdabar_a$, so 
$$
v_{a+\mubar_a}\in \left\langle v_{a+\mubar_a+1},\ldots,v_{a+\lambdabar_a}\right\rangle.
$$
For all $t$ such that $a+\mubar_a+1\le t\le a+\lambdabar_a$, we either have $t\in I_{\mu}$ and then $v_t\in W_{k-\mubar_a}$, or $t=a'+\mubar_{a'}$ for $a'>a$ and by the assumption of induction $v_t\in W_{k-\mubar_{a'}}\subset W_{k-\mubar_{a}}$. Therefore $\left\langle v_{a+\mubar_a+1},\ldots,v_{a+\lambdabar_a}\right\rangle\subset W_{k-\mubar_{a}}$ and we are done.
\end{proof}

\begin{lemma}
\label{lem: 1d intersection new}
For all $a$, if $\mu_a<\lambda_a$ then the intersection 
$$
V(a,\mubar_a+1)\cap W_{k-\mubar_a}
$$
is one-dimensional and spanned by $v_{a+\mubar_a}$.
\end{lemma}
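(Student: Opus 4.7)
The plan is to mirror the dimension-count used in Lemma \ref{lem: one dimensional}, but replacing the spanning set $\langle v_{a+\mubar_a+1},\ldots,v_{a+\lambdabar_a}\rangle$ by the larger space $W_{k-\mubar_a}$. The two ingredients already in place are: (i) by Lemma \ref{lem: v in W}, $v_{a+\mubar_a}$ lies in $W_{k-\mubar_a}$, and (ii) $v_{a+\mubar_a}\in V(a,\mubar_a+1)$ since $a+\mubar_a\in J(a,\mubar_a+1)$ (which is immediate from the definition of the short label). Moreover, since $\mubar_a<\lambdabar_a$, Lemma \ref{lem: baf for skew} gives $f(a+\mubar_a)=a+\lambdabar_a\neq a+\mubar_a$, so $v_{a+\mubar_a}\neq 0$. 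This produces one independent vector in the intersection.

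To show the intersection is at most one-dimensional, I would use the standard dimension formula
\[
\dim\bigl(V(a,\mubar_a+1)\cap W_{k-\mubar_a}\bigr)=\dim V(a,\mubar_a+1)+\dim W_{k-\mubar_a}-\dim\bigl(V(a,\mubar_a+1)+W_{k-\mubar_a}\bigr),
\]
and combine it with Lemma \ref{lem: dimension V} and Lemma \ref{lem: V tilde}(a). These give $\dim V(a,\mubar_a+1)=\mubar_a+1$ and $\dim W_{k-\mubar_a}=k-\mubar_a$, whose sum is $k+1$. So it suffices to prove that $V(a,\mubar_a+1)+W_{k-\mubar_a}=\C^{k}$.

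The key observation for this last step is Proposition \ref{prop: labeling relationship between I'(a,i) and I mu}: the $k$-element set $I'(a,\mubar_a+1)$ decomposes as $J(a,\mubar_a+1)\cup\{b_{\mubar_a+2},\ldots,b_k\}$, and the latter part is contained in $\{b_{\mubar_a+1},\ldots,b_k\}$, which indexes a spanning set of $W_{k-\mubar_a}$. Consequently
\[
V(a,\mubar_a+1)+W_{k-\mubar_a}\supseteq\spann\{v_j:j\in I'(a,\mubar_a+1)\}.
\]
Since $I'(a,\mubar_a+1)$ is an entry of the Grassmann necklace $\mathcal{I}_{\skewschubert}$, the minor $\Delta_{I'(a,\mubar_a+1)}(V)$ is nonzero, so the $k$ vectors $\{v_j:j\in I'(a,\mubar_a+1)\}$ are linearly independent and span all of $\C^k$. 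Hence $V(a,\mubar_a+1)+W_{k-\mubar_a}=\C^{k}$, the intersection has dimension exactly one, and $v_{a+\mubar_a}$ is a spanning vector.

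The only delicate point in the plan is the set-theoretic decomposition $I'(a,\mubar_a+1)=J(a,\mubar_a+1)\cup\{b_{\mubar_a+2},\ldots,b_k\}$, but this is exactly the content of Proposition \ref{prop: labeling relationship between I'(a,i) and I mu}, so the argument is essentially a bookkeeping check on top of Lemma \ref{lem: v in W}. I do not anticipate any significant obstacle.
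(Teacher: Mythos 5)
Your overall plan (dimension count to show $\dim L=1$) is sound and is essentially the same dimension argument as the paper's, just phrased via $\dim(V+W)$ rather than by intersecting with $\Wop_{\mubar_a}$; both hinge on the same two facts, namely $v_{a+\mubar_a}\in W_{k-\mubar_a}$ (Lemma \ref{lem: v in W}) and the transversality $\Wop_{\mubar_a}\oplus W_{k-\mubar_a}=\C^k$ coming from $\Delta_{I_\mu}\neq 0$.

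However, there is a genuine gap in the step where you prove $V(a,\mubar_a+1)+W_{k-\mubar_a}=\C^k$. You justify it by asserting that $I'(a,\mubar_a+1)$ is an entry of the Grassmann necklace $\mathcal{I}_{\skewschubert}$, hence $\Delta_{I'(a,\mubar_a+1)}\neq 0$. This is false in general. By Definition \ref{def: skew grassmann necklace}, the necklace entry at position $a+\mubar_a$ is $I'(a',i')$ for the unique box $\sq_{a',i'}\in R(\skewschubert)$ with $a'+i'-1=a+\mubar_a$, and the bottom box $\sq_{a,\mubar_a+1}$ of column $a$ need not lie in $R(\lambda)$ --- it does so iff $\lambdabar_{a-1}\leq\mubar_a+1$, which often fails. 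Concretely, take $\lambda=(2,2)$, $\mu=\varnothing$ (so $S^\circ_{\skewschubert}$ is the maximal positroid in $\Gr(2,4)$) and $a=2$: then $I'(2,1)=\{2,4\}$, which is not in the necklace $(\{1,4\},\{1,2\},\{2,3\},\{3,4\})$, and $\Delta_{24}$ does vanish at certain points of that cell (e.g., $v_1=e_1$, $v_2=e_2$, $v_3=-e_1+e_2$, $v_4=e_2$). So your citation of the necklace cannot be used here.

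The repair is cheap and in fact shorter than what you wrote: since $J(a,\mubar_a+1)=\{b_1,\dots,b_{\mubar_a},a+\mubar_a\}$, you have $\Wop_{\mubar_a}\subseteq V(a,\mubar_a+1)$, and $\Wop_{\mubar_a}+W_{k-\mubar_a}=\langle v_{b_1},\dots,v_{b_k}\rangle=\C^k$ because $\Delta_{I_\mu}\neq 0$. This immediately gives $V(a,\mubar_a+1)+W_{k-\mubar_a}=\C^k$ without invoking $I'(a,\mubar_a+1)$ at all (and then you do not even need Lemma \ref{lem: v in W} for the upper bound, only for the lower bound $\dim L\geq 1$). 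After this correction your argument becomes a legitimate alternative phrasing of the paper's proof.
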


\begin{proof}
Let $L= V(a,\mubar_a+1)\cap W_{k-\mubar_a}$.
We have $J(a,\mubar_a+1)=\{b_1,\ldots,b_{\mubar_a},a+\mubar_a\}$, so
$$
V(a,\mubar_a+1)=\Wop_{\mubar_a}+\left\langle v_{a+\mubar_a}\right\rangle.
$$
By Lemma \ref{lem: v in W} we have $v_{a+\mubar_a}\in V(a,\mubar_a+1)\cap W_{k-\mubar_a}$ and $\dim L\ge 1$. On the other hand, $\Wop_{\mubar_a}\cap W_{k-\mubar_a}=0$, so $L\cap \Wop_{\mubar_a}=0$. At the same time, $L+\Wop_{\mubar_a}\subset V(a,\mubar_a+1)$, therefore $\dim L+\dim \Wop_{\mubar_a}\le \dim V(a,\mubar_a+1)$ and $\dim L\le 1.$
\end{proof}

\begin{definition}
We define $R^1(\skewschubert)\subset R(\skewschubert)$ as the set of boxes   $\sq_{a,\lambdabar_a}$ for $a=1,\ldots,n-k$ (whenever $\sq_{a,\lambdabar_a}\in \skewschubert$) . Equivalently, $R^1(\skewschubert)$ the top boxes of columns of $\lambda$ that are not in $\mu$, that is, the boxes in $\skewschubert$ that are crossed out in the construction of $\beta_{\skewschubert}.$

We define $S^{\circ,1}_{\skewschubert}\subset S^{\circ}_{\skewschubert}$ by the conditions
$$
\minor_{I'(a,i)}=1,\ \sq_{a,i}\in R^1(\skewschubert).
$$
\end{definition}

\begin{theorem}
\label{thm: from skew schubert to braid variety}
For any skew diagram $\skewschubert$ let $s=|R^1(\skewschubert)|$. Then there exist isomorphisms
$$
\widetilde{\Omega}: S^{\circ}_{\skewschubert}\xrightarrow{\sim} X\left(\longest\beta_{\skewschubert}\right)\times \left(\C^\times\right)^{s},\ 
\widetilde{\Omega}^1: S^{\circ,1}_{\skewschubert}\xrightarrow{\sim} X\left(\longest\beta_{\skewschubert}\right).
$$
\end{theorem}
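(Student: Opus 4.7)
The plan is to construct $\widetilde{\Omega}^1$ first and derive $\widetilde{\Omega}$ from it. Given $V \in S^{\circ,1}_{\skewschubert}$ with columns $v_1, \dots, v_n$, normalize so that $\minor_{I_{\mu}}(V) = 1$ (Remark \ref{rmk:Imu1}). I would then define the image $\widetilde{\Omega}^1(V)$ as the configuration of flags
\[
\left[\CF^W \stackrel{\longest}{\dashrightarrow} \CF^{\Wop} \stackrel{\beta_{\skewschubert}}{\dashrightarrow} \CF^0 \right]
\]
in which $\CF^W$ has $i$-th subspace $W_i$, $\CF^{\Wop}$ has $i$-th subspace $\Wop_i$, and the intermediate flags between $\CF^{\Wop}$ and $\CF^0$ are assembled from the subspaces $V(a,i)$ of Definition \ref{def: V(a,i)}, read off column-by-column following the geometry of $\beta_{\skewschubert}$. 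The framing on $\CF^W$ is the one induced by the ordered basis $(v_{b_1}, \dots, v_{b_k})$; by Remark \ref{rem: propagate framing}, it propagates canonically through the rest of the diagram.

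The verification that this lands in $X(\longest \beta_{\skewschubert})$ is where the lemmas of Section \ref{sec:braid diagram configuration for skew} come together. Transversality of $\CF^W$ and $\CF^{\Wop}$ comes from $\minor_{I_{\mu}}(V) = 1 \neq 0$, which gives the $\longest$ portion of the diagram. For each crossing of $\beta_{\skewschubert}$ at a braid box $\sq_{a,i}$, one must check the ``hook'' condition $A \subset B \subset D$, $A \subset C \subset D$, $B \neq C$: this is exactly the content of Lemma \ref{lem: containment V} (giving the inclusions) combined with Lemma \ref{lem: transversality V} (giving $V(a,i) \neq V(a+1,i)$ when a crossing is present). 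The horizontal regions across consecutive braid-free transitions are handled by Lemma \ref{lem: equality V}, and the interface between the $\longest$ block and the $\beta_{\skewschubert}$ block (where $\Wop_i$ meets $V(a-1, i+1)$) is handled by Lemma \ref{lem: V tilde}. The outgoing flag $\CF^0$ is antistandard with respect to the framing we started with because the far right of each column ends at $V(a, \lambdabar_a)$, which by the definition of $I'(a, \lambdabar_a)$ and the Grassmann necklace fits into the antistandard flag at the end of $\beta_{\skewschubert}$.

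The inverse direction is where the main work lies. Given a point in $X(\longest \beta_{\skewschubert})$, choose a representative in which $\CF^W$ is standard and $\CF^{\Wop}$ is antistandard, and declare $v_{b_1}, \dots, v_{b_k}$ to be the framing vectors. The remaining columns of $V$ come from the non-top boxes $\sq_{a, \mubar_a+1}$ of each column: by Lemma \ref{lem: 1d intersection new}, $v_{a+\mubar_a}$ must span the one-dimensional intersection $V(a, \mubar_a+1) \cap W_{k-\mubar_a}$, which is intrinsically visible in the flag diagram. This pins down $v_{a+\mubar_a}$ up to a nonzero scalar for each $a$ with $\mu_a < \lambda_a$; the condition $\minor_{I'(a,\lambdabar_a)}(V) = 1$ defining $S^{\circ,1}_{\skewschubert}$ fixes this scalar uniquely, and when $\mu_a = \lambda_a$ we simply set $v_{a+\mubar_a} = 0$ (consistent with $f_{\skewschubert}$ having $a+\mubar_a$ as a fixed point). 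The hardest part will be checking that the resulting matrix $V$ actually represents a point of $S^{\circ}_{\skewschubert}$, i.e.\ that the associated bounded affine permutation is $f_{\skewschubert}$; this amounts to verifying Lemma \ref{lem: baf for skew} in reverse, which follows because the hook relations at each crossing force precisely the dependencies $v_{a+\mubar_a} \in \langle v_{a+\mubar_a+1}, \dots, v_{a+\lambdabar_a}\rangle$ encoded by $f_{\skewschubert}$. The two constructions are inverse by design, giving the isomorphism $\widetilde{\Omega}^1$.

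Finally, to pass from $\widetilde{\Omega}^1$ to $\widetilde{\Omega}$, observe that the full positroid $S^{\circ}_{\skewschubert}$ is obtained from $S^{\circ,1}_{\skewschubert}$ by freeing the $s$ minors $\minor_{I'(a,i)}$ for $\sq_{a,i} \in R^1(\skewschubert)$ to take arbitrary values in $\C^\times$. These minors are algebraically independent from the braid variety coordinates (they rescale precisely the vectors $v_{a+\mubar_a}$ that were uniquely determined by normalization above), so the map
\[
\widetilde{\Omega}(V) = \left(\widetilde{\Omega}^1(V'), \left(\minor_{I'(a,i)}(V)\right)_{\sq_{a,i} \in R^1(\skewschubert)}\right),
\]
where $V'$ is the rescaling of $V$ making the distinguished minors equal to $1$, is the desired isomorphism $S^{\circ}_{\skewschubert} \xrightarrow{\sim} X(\longest \beta_{\skewschubert}) \times (\C^\times)^s$.
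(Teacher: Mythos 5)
Your forward direction matches the paper's Step 1, and the idea for the inverse (recovering $v_{a+\mubar_a}$ from a one-dimensional intersection of a flag subspace with $W_{k-\mubar_a}$, then using the minor $\minor_{I'(a,\lambdabar_a)}$ to pin down the scalar) is also the paper's Step 2. But there are two genuine problems with the way you handle the inverse direction.

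First, a circularity: you invoke Lemma \ref{lem: 1d intersection new} to conclude that the intersection $V(a,\mubar_a+1)\cap W_{k-\mubar_a}$ is one-dimensional. That lemma is a statement about $V(a,i)$ for a matrix $V\in S^{\circ}_{\skewschubert}$, but at this stage you have only a point of $X(\longest\beta_{\skewschubert})$ and you are still in the process of \emph{defining} $V$. The subspaces you read off the braid diagram (the paper calls them $L(a,i)$) are a priori unrelated to any $V(a,i)$. The one-dimensionality of $L(a,\mubar_a+1)\cap W_{k-\mubar_a}$ has to be proved from the transversality conditions of the braid variety (the paper does this with a short dimension count in Step 2), not by citing Lemma \ref{lem: 1d intersection new}. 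Closing this gap requires the paper's Step 3: an induction over the braid diagram showing that, once $V$ is built, $L(a,i)=V(a,i)$ for every box.

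Second, and more seriously, your argument that the reconstructed $V$ lies in $S^{\circ}_{\skewschubert}$ is not adequate. You say the hook relations force the dependencies $v_{a+\mubar_a}\in\langle v_{a+\mubar_a+1},\dots,v_{a+\lambdabar_a}\rangle$ encoded by $f_{\skewschubert}$, and that this verifies Lemma \ref{lem: baf for skew} in reverse. But $f_V(i)$ is defined as the \emph{minimal} $j$ with $v_i\in\spann(v_{i+1},\dots,v_j)$, and Definition \ref{def: positroid} requires $f_V=f_{\skewschubert}$ exactly. Exhibiting one dependency per $i$ gives you at best $f_V(i)\le f_{\skewschubert}(i)$; you also need to rule out earlier dependencies, i.e.\ you need both the vanishing and the non-vanishing of the relevant minors. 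The paper's Steps 4 and 5 do this work: the dimension count ($\dim X(\longest\beta_{\skewschubert})+s=|\lambda/\mu|=\dim S^{\circ}_{\skewschubert}$) together with irreducibility identifies the closures, which yields the required vanishing of minors on the image of $\Xi$; and then the transversality $L(d_i,i)\cap W_{k-i}=0$ built into the braid variety gives the non-vanishing $\Delta_{I'(d_i,i)}(V)\neq 0$, hence all of the Grassmann-necklace inequalities. Without an argument of this kind (or a direct check of both the equalities and inequalities defining the positroid), your proof of $\mathrm{Im}\,\Xi=S^{\circ}_{\skewschubert}$ is incomplete.

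The passage from $\widetilde{\Omega}^1$ to $\widetilde{\Omega}$ at the end is fine, and the reversal of the order of construction (you build $\widetilde{\Omega}^1$ first, the paper builds $\widetilde{\Omega}$) is cosmetic.
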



\begin{proof}
{\bf Step 1:} 
Recall the equivalent descriptions of $X\left(\longest\beta_{\skewschubert}\right)$ from Lemma \ref{lem: flags}.
First, we construct a map  
\begin{equation}
\label{eq: Schubert to braid}
\widetilde{\Omega}: S^{\circ}_{\skewschubert}\rightarrow X\left(\longest\beta_{\skewschubert}\right)\times \left(\C^\times\right)^{s}
\end{equation}
by its components.
The map $S^{\circ}_{\skewschubert}\to \left(\C^\times\right)^{s}$ simply sends a matrix $V$ to the tuple of minors $\minor_{I'(a,i)}$ for $\sq_{a,i}\in R^1(\skewschubert)$. To construct the map 
$
\Omega: S^{\circ}_{\skewschubert}\rightarrow X\left(\longest\beta_{\skewschubert}\right),
$ we label the regions of the braid diagram of $\beta_{\skewschubert}$ by subspaces of $\C^k$ as follows.

We have a bijection between the braid boxes of $\skewschubert$ and the crossings of $\beta_{\skewschubert}$. Given a crossing, we label the region right of it by the subspace $V(a,i)$ where $\sq_{a,i}$ is the corresponding box. Furthermore, we choose the  basis $v_{b_1},\ldots,v_{b_k}$ for the flag on the left so that  the regions on the left  boundary of $\beta_{\skewschubert}$ are labeled by the subspaces $\Wop_i$. 
We need to check that such labeling 
satisfies the conditions in Lemma \ref{lem: flags}(c). 

All subspaces associated to the regions of $\beta_{\skewschubert}$ have correct dimension by Lemma \ref{lem: dimension V}. Furthermore, $\minor_{I_{\mu}}\neq 0$ and $v_{b_1},\ldots,v_{b_k}$ are linearly independent and $\dim \Wop_i=i$.

By Lemmas \ref{lem: containment V}, \ref{lem: equality V} and \ref{lem: transversality V} we ensure that the subspaces in the regions of $\beta_{\skewschubert}$ are in correct relative position. Note that for $i=k$ we get $k$-dimensional space which coincides with $\C^k$. By Lemma \ref{lem: V tilde} the subspaces $\Wop_i$ are in correct relative position with the subspaces in regions of $\beta_{\skewschubert}$. 

Finally, we need to check that the rightmost flag is transversal to $\CF^W$ which is the opposite of $\CF^{\Wop}$. 
By construction, the rightmost flag $\CF^0$ consists of subspaces $V(d_i,i)=\spann J(d_i,i)$ where $\sq_{d_i,i}$ is the  rightmost square in the $i$-th row of $\lambda$ as in \eqref{eq: def di}. For such squares, we have
$$
I'(d_i,i)=J(d_i,i)\cup \{b_{i+1},\ldots,b_k\},
$$
and  $W_{k-i}=\left\langle v_{b_{i+1}},\ldots,v_{b_k}\right\rangle$. Since $\Delta_{I'(d_i,i)}\neq 0$, the subspaces $V(d_i,i)$ and $W_{k-i}$ are indeed transversal. To sum up, we get the configuration
\begin{equation}
\label{eq: flags from V}
\Omega(V)=\left[\CF^{W}\stackrel{\longest}{\dashrightarrow}\CF^{\Wop}\stackrel{\beta_{\skewschubert}}{\dashrightarrow}\CF^0\right].
\end{equation}

\begin{figure}
    \centering
    \begin{tikzpicture}
        \draw (-2,1) to (-1.5,1) to[out=0, in=180] (-1,2) to[out=0, in=180] (-0.5,3) to (0,3) to[out=0, in=180] (0.5,4) to (1,4) to[out=0, in=180] (1.5,5) to (4,5) to[out=0, in=180] (4.5,4) to[out=0, in=180] (5,3) to[out=0, in=180] (5.5,2) to[out=0, in=180] (6,1) to (7,1) to[out=0, in=180] (7.5,2) to (8,2) to[out=0, in=180] (8.5,1) to (9,1) to[out=0, in=180] (9.5,2) to (10,2);

        \draw (-2,2) to (-1.5,2) to[out=0, in=180] (-1,1) to (-0.5,1) to[out=0, in=180] (0,2) to (0.5,2) to[out=0, in=180] (1,3) to (1.5,3) to[out=0, in=180] (2,4) to (4,4) to[out=0, in=180] (4.5,5) to (10,5);

        \draw (-2,3) to (-1,3) to[out=0, in=180] (-0.5,2) to[out=0, in=180] (0,1) to (2,1) to[out=0, in=180] (2.5,2) to[out=0, in=180] (3,3) to (4.5,3) to[out=0, in=180] (5,4) to (10,4);

        \draw (-2,4) to (0,4) to[out=0, in=180] (0.5,3) to[out=0, in=180] (1,2) to (2,2) to[out=0, in=180] (2.5,1) to (3,1) to[out=0, in=180] (3.5,2) to (5,2) to[out=0, in=180] (5.5,3) to (6.5,3) to[out=0, in=180] (7,2) to[out=0, in=180] (7.5,1) to (8,1) to[out=0, in=180] (8.5,2) to (9,2) to[out=0, in=180] (9.5,1) to (10,1);

        \draw (-2,5) to (1,5) to[out=0, in=180] (1.5,4) to[out=0, in=180] (2,3) to (2.5,3) to[out=0, in=180] (3,2) to[out=0, in=180] (3.5,1) to (5.5,1) to[out=0, in=180] (6,2) to (6.5,2) to[out=0, in=180] (7,3) to (10,3);

        \draw[color=blue] (9.75,1.5) node {$1$}; 
        \draw[color=blue] (8.75,1.5) node {$2$}; 
        \draw[color=blue] (7.75,1.5) node {$3$}; 
        \draw[color=blue] (6.5,1.5) node {$4$}; 
        \draw[color=blue] (4.5,1.5) node {$5$}; 
        \draw[color=blue] (2.75,1.5) node {$6$}; 
        \draw[color=blue] (1,1.5) node {$8$}; 
        \draw[color=blue] (-0.75,1.5) node {$11$}; 
        \draw[color=blue] (-1.75,1.5) node {$12$}; 
        
        \draw[color=blue] (8.6,2.5) node {$3|4=2|3=1|2$}; 
        \draw[color=blue] (6,2.5) node {$4|5$}; 
        \draw[color=blue] (4,2.5) node {$5|6$}; 
        \draw[color=blue] (1.75,2.5) node {$6|8$}; 
        \draw[color=blue] (0,2.5) node {$8|11$}; 
        \draw[color=blue] (-1.5,2.5) node {$11|12$}; 
        
        \draw[color=blue] (8.2,3.5) node {$5|6|7=4|5|6=3|4|5$}; 
        \draw[color=blue] (3.2,3.5) node {$5|6|8$}; 
        \draw[color=blue] (1,3.5) node {$6|8|11$}; 
        \draw[color=blue] (-1.3,3.5) node {$8|11|12$}; 

        \draw[color=blue] (7.65,4.5) node {$5|6|8|10=5|6|8|9=5|6|7|8$}; 
        \draw[color=blue] (2.75,4.5) node {$5|6|8|11$}; 
        \draw[color=blue] (-1.15,4.5) node {$6|8|11|12$}; 
    \end{tikzpicture}
    \caption{Braid $\longest\beta_{\skewschubert}$ for $\lambda=(7,7,5,3,1)$,   and $\mu=(3,3,2)$ with subspaces $V(a,i)$ and $W_i,\Wop_i$ filled in.}
    \label{fig: skew braid diagram}
\end{figure}

\noindent {\bf Step 2:} Next, we need to construct the inverse map
$$
\Xi: X\left(\longest\beta_{\skewschubert}\right)\times \left(\C^\times\right)^s\to S^{\circ}_{\skewschubert}.
$$
We use Lemma \ref{lem: flags}(c) to think of a point $\Omega$ in $X\left(\longest\br_{\skewschubert}\right)$ as the data of a configuration of flags $\CF^1 \stackrel{\br}{\dashrightarrow}\CF^2$ together with a choice of basis for $\CF^1$. We denote this basis by $v_{b_1},\ldots,v_{b_k}$, and the subspaces in regions of $\beta_{\skewschubert}$ by $L(a,i)$.
Recall that for all $j\in \{1,\ldots,n\}$ either $j\in \{b_1,\ldots,b_k\}$ or $j=a+\mubar_a$ for some $a\in \{1,\ldots,n-k\}$.
The vectors $v_{b_1},\ldots,v_{b_k}$ are known, and we reconstruct the vectors $v_{a+\mubar_a}$. If $\mubar_a=\lambdabar_a$ then we set $v_{a+\mubar_a}=0$. From now on we assume $\mubar_a<\lambdabar_a$.
We claim that the intersection
$$
L=L(a,\mubar_a+1)\cap W_{k-\mubar_a}
$$
is one-dimensional. Indeed, 
$$
\dim L(a,\mubar_a+1)+\dim W_{k-\mubar_a}=(\mubar_a+1)+(k-\mubar_a)=k+1,\ \dim\left(L(a,\mubar_a+1)+W_{k-\mubar_a}\right)\le k
$$
so $\dim L\ge 1$. On the other hand, $\Wop_{\mubar_a}\subset L(a,\mubar_a+1)$ and $\Wop_{\mubar_a}\cap W_{k-\mubar_a}=0$, hence $\Wop_{\mubar_a}\cap L=0$. Since $\dim (\Wop_{\mubar_a}+L)\le \dim L(a,\mubar_a+1)=\mubar_a+1$, we get $\dim L\le 1$. Therefore $\dim L=1$, and we can define $v_{a+\mubar_a}$ up to a scalar  as a basis vector in $L$. 
To fix the scalar, observe that $\sq_{a,\lambdabar_a}\in R^1(\skewschubert)$  and 
$$
I'(a,\lambdabar_a)=\left\{b_1,\ldots,b_{\mubar_a},
a+\mubar_a,a+\mubar_a+1,\ldots,a+\lambdabar_a-1,b_{\lambdabar_a+1},\ldots,b_k\right\}.
$$
Therefore $\Delta_{I'(a,\lambdabar_a)}
$
involves $v_{a+\mubar_a}$ and other vectors which we have reconstructed earlier, so fixing the value of $\Delta_{I'(a,\lambdabar_a)}
$ determines the scalar for $v_{a+\mubar_a}$. 

Altogether, the vectors $v_i$ define a matrix 
\begin{equation}
\label{eq: def xi}
V=\Xi\left(\Omega,\Delta_{I'(a,\lambdabar_a)}, a=1,\ldots,n-k\right).
\end{equation}
By construction, $v_{b_1},\ldots,v_{b_k}$ are linearly independent, so $\rank(V)=k$ and we can interpret $V$ as a point of $\Gr(k,n)$. 
At this stage, we do not know if it belongs to $S^{\circ}_{\skewschubert}$, however, by Lemma \ref{lem: 1d intersection new} we get
\begin{equation}
\label{eq: left inverse}
\Xi\left(\widetilde{\Omega}(V)\right)=V\ \mathrm{for}\ V\in S^{\circ}_{\skewschubert}.
\end{equation}

\noindent {\bf Step 3:} Given $V$ as in \eqref{eq: def xi}, we can define the subspaces $V(a,i)=\spann(v_{\alpha}\mid \alpha\in J(a,i))$. We  prove that $L(a,i)=V(a,i)$ by induction in $i$. For the base case, we can choose $i=\mubar_a+1$ where $L(a,\mubar_a+1)=\Wop_{\mubar_a}+\spann(v_{a+\mubar_a})=V(a,\mubar_a+1)$ holds by construction. For the step of induction, assume that $\sq_{a,i-1}$ and $\sq_{a+1,i-1}$ are in $\skewschubert$. From the braid diagram, we read $L(a,i-1)\subset L(a,i)$, $L(a+1,i-1)\subset L(a,i)$ and $L(a,i-1)\neq L(a+1,i-1)$, therefore $L(a,i)=L(a,i-1)+L(a+1,i-1)$.
By the assumption of induction we have $L(a,i-1)=V(a,i-1)$ and $L(a+1,i-1)=V(a+1,i-1)$ and $$L(a,i)=V(a,i-1)+V(a+1,i-1)=V(a,i).$$ The case when $\sq_{a+1,i-1}\in \mu$ is treated similarly. Note that the same argument shows that $\dim V(a,i)=i$. 

\noindent {\bf Step 4:} By \eqref{eq: left inverse} we have $S^{\circ}_{\skewschubert}\subset \mathrm{Im}\ \Xi$ and the corresponding inclusion of (Zariski) closures
$\overline{S^{\circ}_{\skewschubert}}\subset \overline{\mathrm{Im}\ \Xi}$. On the other hand, observe that $$\dim X\left(\longest\beta_{\skewschubert}\right)=\ell(\beta_{\skewschubert})=|\lambda|-|\mu|-s,\ \dim S^{\circ}_{\skewschubert}=|\lambda|-|\mu|
$$
and both varieties are irreducible.
Therefore   $\overline{\mathrm{Im}\ \Xi}$  is an irreducible closed subset of the Grassmannian $\Gr(k,n)$ of dimension at most $|\lambda|-|\mu|$, hence  
$\overline{S^{\circ}_{\skewschubert}}=\overline{\mathrm{Im}\ \Xi}.$

\noindent{\bf Step 5:} 
Recall that by Definition \ref{def: Grassmann necklace for V} and Theorem \ref{thm: skew as positroid} the skew shaped positroid $S^{\circ}_{\skewschubert}$ is characterized by the non-vanishing of the minors $\Delta_{I'(a,i)}$ for $\sq_{a,i}\in R(\skewschubert)$, and vanishing of minors $\Delta_J$ with $J>I'(a,i)$ in the cyclic order $\leq_{a+i-1}$. The equations $\Delta_J(V)=0$ hold on the closure $\overline{S^{\circ}_{\skewschubert}}$ and therefore for $V$ as in \eqref{eq: def xi} we get $\Delta_J(V)=0$. 

By Step 3, we have $V(d_i,i)=L(d_i,i)$ and by definition of $X\left(\longest\beta_{\skewschubert}\right)$ we have $L(d_i,i)\cap W_{k-i}=0$. Therefore $V(d_i,i)\cap W_{k-i}=0$ and $\Delta_{I'(d_i,i)}(V)\neq 0$. This implies $\Delta_{I'(a,i)}(V)\neq 0$ for $\sq_{a,i}\in R(\skewschubert)$, and  $\mathrm{Im}\ \Xi=S^{\circ}_{\skewschubert}$.

Finally, by applying Step 3 again we conclude that $\widetilde{\Omega}$ and $\Xi$ are inverse to each other.

\end{proof}

\begin{example}
Let us describe the map $\Xi$ in our running example. Given the vectors $v_{b_1},\ldots,v_{b_k}$ and the subspaces $L(a,i)$ as in Figure \ref{fig: skew braid diagram}, we know all vectors $v_i$ right away except for $v_7,v_9$ and $v_{10}$. We also know that 
$$
L(5,3)=\langle v_5,v_6,v_7\rangle,\ L(6,4)=\langle v_5,v_6,v_8,v_9\rangle= 
L(7,4)=\langle v_5,v_6,v_8,v_{10}\rangle.
$$
By Lemma \ref{lem: 1d intersection new}, we can obtain the remaining vectors as intersections
$$
v_7\in L(5,3)\cap W_{3}=\langle v_5,v_6,v_7\rangle\cap \langle v_8,v_{11},v_{12}\rangle,
$$
$$
v_9\in L(6,4)\cap W_{2}=\langle v_5,v_6,v_8,v_9\rangle\cap \langle v_{11},v_{12}\rangle,
$$
$$
v_{10}\in L(7,4)\cap W_{2}=\langle v_5,v_6,v_8,v_{10}\rangle\cap \langle v_{11},v_{12}\rangle.
$$
In fact, the vectors $v_{9}$ and $v_{10}$ are proportional since $L(6,4)=L(7,4)$. To pin down the scaling factors, we first use the minor $\Delta_{5,6,8,10,11}$ to rescale $v_{10}$ (since the vectors $v_5,v_6,v_{8},v_{11}$ are known), then
$\Delta_{5,6,8,9,12}$ for $v_9$ and then
$\Delta_{5,6,7,8,12}$ for $v_7$.
\end{example}

\begin{example}
Alternatively, we can use Lemma \ref{lem: one dimensional} to reconstruct $v_7,v_9$ and $v_{10}$, but this turns out to be a more subtle and recursive process.

Let's start with $v_{10}$. Since $f(10)=12$ by \eqref{eq: baf example horizontal} we have $v_{10}\in \langle v_{11},v_{12}\rangle$. On the other hand, clearly $v_{10}\in \langle v_5,v_6,v_8,v_{10}\rangle$. Observe that $\Delta_{I_{\mu}}=\Delta_{5,6,8,11,12}\neq 0$, so  $\langle v_{11},v_{12}\rangle+\langle v_5,v_6,v_8,v_{10}\rangle=\C^5$.
Therefore the intersection
$\langle v_{11},v_{12}\rangle\cap \langle v_5,v_6,v_8,v_{10}\rangle$  is 1-dimensional and hence spanned by $v_{10}$.

Next, we determine $v_9$. Since $f(9)=10$, we get $v_9\in \langle v_{10}\rangle$.  

Finally, we determine $v_7$. Since $f(7)=9$, we get $v_7\in \langle v_{8},v_{9}\rangle$ while clearly $v_7\in \langle v_5,v_6,v_7\rangle$. Note that 
$\dim \langle v_5,v_6,v_7,v_8\rangle =4,$
so 
$$
\dim \left[\langle v_{8},v_{9}\rangle+\langle v_5,v_6,v_7\rangle\right]\ge 4,\ \dim \left[\langle v_{8},v_{9}\rangle\cap \langle v_5,v_6,v_7\rangle\right]\le 2+3-4=1.
$$
Therefore the intersection is indeed one-dimensional and spanned by $v_7$.
\end{example}

\begin{remark}
\label{rmk: I lambda}
The flag $\CF^0$ on the right boundary of $\Omega(V)$ constructed in Theorem \ref{thm: from skew schubert to braid variety} has the following concise description. Let 
$$
I_{\lambda}=\{d_i+i-1\mid i=1,\ldots,k\}
$$
be the set of labels of the vertical steps in the $\lambda$-labeling. Then the $j$-dimensional subspace in the flag $\CF^0$ is given by
\begin{equation}
\label{eq: flag on the right}
\CF^0_j=V(d_j,j)=\spann\{v_{d_i+i-1}\mid i=1,\ldots,j\}.
\end{equation}
This follows by induction from $V(d_j,j)=\langle v_{d_j+j-1}\rangle \oplus V(d_j,j-1)$ and Lemma \ref{lem: equality V}. In particular,
$v_{d_i+i-1}$ form a basis of $\C^k$, and 
$\Delta_{I_{\lambda}}\neq 0$ on $S_{\skewschubert}^{\circ}$, in agreement with Remark \ref{rmk: grassmannian richardson}.
In our running example, $I_{\lambda}=\{1,2,5,8,11\}$ and we get
$$
\CF^0=\left\{\langle v_1\rangle\subset \langle v_1,v_2\rangle\subset \langle v_1,v_2,v_5\rangle\subset  \langle v_1,v_2,v_5,v_8\rangle\subset  \langle v_1,v_2,v_5,v_8,v_{11}\rangle=\C^5\right\},
$$
compare with Figure \ref{fig: skew braid diagram}.
\end{remark}

\subsection{Skew shaped positroid as a braid variety on $n$ strands}
\label{sec: n strands}

We have realized the skew shaped positroid variety $S^\circ_{\skewschubert}$ as a braid variety on $k$ strands. It can also be realized as a braid variety on $n$ strands, as follows. By definition, we have that the variety
$S^{\circ}_{\skewschubert}$ is the open Richardson variety $R^{\circ}_{w_{\lambda}, w_{\mu}}$ inside the flag variety $\Fl(n)$. Let $w_0^{(n)}$ denote the longest element in $S_n$. As shown in \cite[Theorem 3.14]{CGGLSS22}, we have an isomorphism

\begin{equation}\label{eq:iso-richardson}
\begin{array}{c}
X\left(\lift{w_\mu}\lift{w_{\lambda}^{-1}w_0^{(n)}}\right) \to R^{\circ}_{w_{\lambda}, w_{\mu}} \\
\left(\CF^{\std} \stackrel{\lift{w_\mu}}{\dashrightarrow} \CF^r \stackrel{\lift{w_{\lambda}^{-1}w_0^{(n)}}}{\dashrightarrow} \CF^{\ant}\right) \mapsto \CF^r.
\end{array}
\end{equation}
where $\lift{w_{\mu}}$ is a minimal braid lift of $w_{\mu}$, and similarly for $\lift{w_{\lambda}^{-1}w_0^{(n)}}$. Let us elaborate on this map, in particular, obtaining a $k \times n$ matrix $V$ starting from the flag $\CF^r$. The flag $\CF^r$ belongs to the Schubert cell $\borel(n) w_{\mu}\borel(n)/\borel(n)$. Thus, we can find a matrix $M \in \GL(n)$ such that $\CF^r = \CF(M)$ and $M = (m_{i,j})$ satisfies the following conditions:
\begin{itemize}
\item $m_{i, w_{\mu}(i)} = 1$ for every $i = 1, \dots, n$. These are the \emph{pivots} of $M$.
\item There are $0$'s below and to the right of every pivot of $M$.
\end{itemize}

Note that, since the permutation $w_{\mu}$ is $k$-Grassmannian, every non-pivot entry of the last $(n-k)$-columns of $M$ is zero. In other words, $M = (m_1|m_2|\cdots|m_k|e_{w_{\mu}(k+1)}|\cdots|e_{w_{\mu}(n)})$. 
The $k \times n$-matrix associated to $\CF^r$ is $(m_1|\cdots|m_k)^{T}$.  

Now, by Lemma \ref{lem:additive-factorization} we can find a decomposition $\lift{w_{\mu}} = \lift{w_{\skewschubert}}\lift{w_\lambda}$, so that 
\begin{equation}\label{eq:decomposition-with-w0}
\lift{w_\mu}\lift{w_{\lambda}^{-1}w_0^{(n)}} = \lift{w_{\skewschubert}}\cdot \lift{w_0^{(n)}}.
\end{equation}

We then have the diagram
\begin{center}
\begin{tikzcd}[row sep=tiny]
R^{\circ}_{e, w_{\skewschubert}} & & \arrow{rr} \arrow{ll} X\left(\beta(w_{\skewschubert})\lift{w_{\lambda}}\lift{w_{\lambda}^{-1}w_0^{(n)}}\right) & & R^{\circ}_{w_{\lambda}, w_{\mu}} = S^{\circ}_{\skewschubert} \\
\CF^{r_1} & & \left(\CF^{\std} \stackrel{\beta(w_{\skewschubert})}{\dashrightarrow} \CF^{r_1} \stackrel{\lift{w_{\lambda}}}{\dashrightarrow} \CF^{r_2} \stackrel{\lift{w_{\lambda}^{-1}w_0^{(n)}}}{\dashrightarrow}
\CF^{\ant}\right) \arrow[mapsto]{rr} \arrow[mapsto]{ll} & & \CF^{r_2}.
\end{tikzcd}
\end{center}
where both maps are isomorphisms. 

\begin{remark}
Note that $\lift{w_{\skewschubert}}$ is an $n$-strand braid while $\beta_{\skewschubert}$ is a $k$-strand braid, so these should not be confused.
\end{remark}

\section{Cluster structure on skew shaped positroids}\label{sec: skew cluster}

In this section, we provide an initial seed construction for the skew shaped positroid following Galashin-Lam \cite{GL19} and using the lattice path labelings as in Subsection \ref{subsec:lattice path labeling for skew}. We first state the result in Section \ref{subsec: cluster result} and then give a proof in Section \ref{subsec: cluster proof}.
The reader is invited to skip Section \ref{subsec: cluster proof} at the first reading.

\subsection{Background on cluster algebras}
\label{subsec: cluster} 

Let us now recall the definition of a cluster algebra \cite{FZcluster}. We restrict ourselves to the skew-symmetric case, as this is all that will be needed.

An \emph{ice quiver} is a quiver (i.e., a directed graph, where we allow multiple arrows between vertices) with finite vertex set $Q_0$ that has no loops nor directed $2$-cycles, and where a special subset $Q_0^{f}$ of the vertices of $Q$ is declared to be \emph{frozen}. An element in $Q_0 \setminus Q_0^{f}$ is said to be a \emph{mutable vertex} of $Q$.

Now let $Q$ be an ice quiver, with $|Q_0| = n+m$, in such a way that there are $n$ mutable vertices and $m$ frozen ones. We consider a field $\mathcal{F}$ of transcendence degree $n+m$ over $\C$. A \emph{seed} $\Sigma = (Q, \mathbf{x})$ consists of:
\begin{enumerate}
    \item The ice quiver $Q$, and
    \item A set $\mathbf{x} = \{x_i \mid i \in Q_0\}$ that is a transcendental basis for $\mathcal{F}$, i.e. $\mathcal{F} = \C(x_i \mid i \in Q_0)$. The set $\mathbf{x}$ is known as the set of \emph{cluster variables} of $\Sigma$.
\end{enumerate}

Given a seed $\Sigma = (Q, \mathbf{x})$ and a mutable vertex $k \in Q_0$, the \emph{mutation} of $\Sigma$ in the direction $k$ is the seed $\mu_k(\Sigma) = (\mu_k(Q), \mu_k(\mathbf{x}))$ where:
\begin{enumerate}
    \item $\mu_k(\mathbf{x}) = (\mathbf{x}\setminus\{x_k\})\cup\{x'_k\}$, where $x'_k \in \mathcal{F}$ is defined via the equation
    \[
    x_kx_k' = \prod_{i \to k}x_i + \prod_{k \to j}x_j.
    \]
    \item $\mu_k(Q)$ has the same vertex set as $Q$, but the arrows change via the following three-step procedure:
    \begin{enumerate}
        \item Reverse all arrows incident with $k$.
        \item For any pair of arrows $i \to k \to j$ in $Q$ insert a new arrow $i \to j$, unless both $i$ and $j$ are frozen.
        \item The previous two steps may have created $2$-cycles. Remove all the arrows in a maximal collection of disjoint $2$-cycles. 
    \end{enumerate}
\end{enumerate}
Two seeds $\Sigma$ and $\Sigma'$ are said to be \emph{mutation equivalent} if there is a finite sequence of mutations taking one seed to the other. Since mutation at a fixed vertex $k$ is an involutive operation, this is well-defined. 

\begin{definition}
    Let $\Sigma$ be a seed. The \emph{cluster algebra} $\mathcal{A}(\Sigma)$ is the subalgebra of the field $\mathcal{F}$ generated by the sets of cluster variables in all seeds mutation equivalent to $\Sigma$, as well as by $x_{k}^{-1}$ for $k \in Q_0^{f}$.
    We say that a commutative algebra $\mathcal{A}$ \emph{admits a cluster structure} if there exists a seed $\Sigma$ such that $\mathcal{A} \cong \mathcal{A}(\Sigma)$. Similarly, we say that an affine algebraic variety $X$ admits a cluster structure if the coordinate algebra $\C[X]$ admits a cluster structure.  
\end{definition}

Crucially for this paper, a commutative algebra $\mathcal{A}$ may admit more than one cluster structure, that is, there may exist two seeds $\Sigma, \Sigma'$ which are not mutation equivalent but such that $\mathcal{A}(\Sigma) \cong \mathcal{A} \cong \mathcal{A}(\Sigma')$.

Let $\mathcal{A}(\Sigma)$ and $\mathcal{A}(\Sigma')$ be the cluster algebras associated to the seeds $\Sigma$ and $\Sigma'$, respectively.  Following Fraser \cite{Fraser}, see also \cite[Section 5.2]{LamSpeyerI}, we define a class of particularly well-behaved morphisms between cluster algebras associated to seeds that are not necessarily mutation-equivalent.

For a seed $\seed$ and a mutable vertex $i$, we define the exchange ratio $\widehat{y}_i$ as the ratio $$\widehat{y}_i=\dfrac{\prod_{j\rightarrow i}x_j^{\#\{j\rightarrow i\}}}{\prod_{i\rightarrow j}x_j^{\#\{i\rightarrow j\}}}.$$ 

\begin{definition}\cite{Fraser,FSB22}
Let $\mathcal{A}(\seed)$ and $\mathcal{A}(\seed')$ be cluster algebras of rank $n+m$, both with $m$ frozen variables. Let $\mathbf{x} = \{x_1, \dots, x_{n+m}\}$ be the cluster variables of $\seed$, and $\mathbf{x} = \{x'_1, \dots x'_{n+m}\}$ be the cluster variables of $\seed'$. An algebra isomorphism $f: \mathcal{A}(\seed) \to \mathcal{A}(\seed')$ is said to be a \emph{quasi-cluster isomorphism} if the following conditions are satisfied:
\begin{enumerate}
\item For each frozen variable $x_j \in \mathbf{x}$, $f(x_j)$ is a Laurent monomial in the frozen variables of $\mathbf{x}'$.
\item For each mutable variable $x_i \in \mathbf{x}$, $f(x_i)$ coincides with $x'_i$, up to multiplication by a Laurent monomial in the frozen variables of $\mathbf{x}'$.
\item The exchange ratios are preserved, i.e., for each mutable variable $x_i$ of $\seed$, $f(\widehat{y}_i) = \widehat{y}'_i$.
\end{enumerate}
\end{definition}

We remark that properties (1)--(3) are stable under mutations, i.e., if (1)--(3) holds for two seeds $\seed, \seed'$, then it also holds for $\mu_i(\seed), \mu_i(\seed')$ for every mutable vertex $i$, see \cite[Section 5.2]{LamSpeyerI}.    
\subsection{Cluster structure on skew shaped positroid: statement}
\label{subsec: cluster result}

The following statement is a consequence of the main result in \cite{GL19} which we review in Section \ref{subsec: cluster proof}.

\begin{theorem} \label{thm: skew schubert cluster}
Consider a skew diagram $\skewschubert$. The skew shaped positroid $S^{\circ}_{\skewschubert}$ admits a cluster structure, and an initial seed can be described as follows:

{\bf (a) Cluster variables:} For each box $\sq_{a,i} \in \skewschubert$, we have a cluster variable $x_{a,i} := \minor_{I'(a,i)}$.
In particular, a variable $x_{a,i}$ is frozen if the box $\sq_{a,i}$ belongs to the boundary ribbon $R(\skewschubert)$. Otherwise, a variable $x_{a,i}$ is mutable. 

{\bf (b) Quiver:} The vertices of the quiver $Q_{\skewschubert}$ are in bijection with the boxes $\sq_{a,i} \in \skewschubert$. There are three types of arrows:
\begin{center}
$\sq_{a, i} \to\sq_{a+1, i}$, \qquad \qquad $\sq_{a,i} \to \sq_{a, i-1}$. \qquad \qquad $\sq_{a,i} \to \sq_{a-1, i+1}$
\end{center} 
provided at least one of these boxes corresponds to a mutable vertex.
\end{theorem}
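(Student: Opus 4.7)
The plan is to deduce this theorem from the main result of Galashin--Lam \cite{GL19}, which produces a cluster structure on any open positroid variety from a reduced plabic graph with the appropriate trip permutation (or equivalently, with the appropriate source Grassmann necklace). By Theorem \ref{thm: skew as positroid} the skew shaped positroid $S^{\circ}_{\skewschubert}$ is isomorphic to the open positroid $\Pi^{\circ}_{\mathcal{I}_{\skewschubert}}$, so it remains to exhibit a reduced plabic graph $G_{\skewschubert}$ whose source face labels, quiver, and trip permutation match those stated in (a) and (b), and Lemma \ref{lem: baf for skew}.

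First, I would construct $G_{\skewschubert}$ by drawing the skew diagram $\skewschubert$ inside the $k \times (n-k)$ rectangle, placing a standard ``hexagonal'' (or ``bridge'') tile in each box $\sq_{a,i} \in \skewschubert$, and gluing these tiles along shared edges; along the steps of the boundary path of $\lambda$ and of $\mu$ one then attaches the $n$ boundary vertices, labeled $1, \dots, n$ according to the $\lambda$-labeling on the $\lambda$-side and the $\mu$-labeling on the $\mu$-side. This is the standard ``skew-shape'' specialization of Postnikov's rectangle plabic graph. Each interior face of the resulting graph is in bijection with a box $\sq_{a,i} \in \skewschubert$, and the boundary faces correspond to the edges of the $(n-k)\times k$ rectangle that lie outside $\skewschubert$.

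Next, I would verify that the source face label of the face corresponding to $\sq_{a,i}$ is precisely $I'(a,i)$ from Definition \ref{def: I' and Imu}. The key observation is that the source strand trip starting at boundary vertex $j$ follows the shape of $\mu_{a,i}$, so the set of strands passing to the right of the face indexed by $\sq_{a,i}$ corresponds exactly to the vertical steps of the path $P_{a,i}$. The same argument applied to the trip permutation reproduces formulas \eqref{eq: f horizontal}--\eqref{eq: f vertical}, giving trip permutation $f_{\skewschubert}$; together with a count of faces ($|\skewschubert|+1$ matches the dimension of $\Pi^{\circ}_{\mathcal{I}_{\skewschubert}}$ plus one), this shows $G_{\skewschubert}$ is reduced and has the required combinatorics to apply \cite[Theorem 4.6]{GL19}. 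The boundary/non-boundary distinction between frozen and mutable cluster variables corresponds exactly to whether $\sq_{a,i}$ lies on the ribbon $R(\skewschubert)$, since ribbon boxes are those whose corresponding face touches the boundary of $G_{\skewschubert}$.

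Finally, to obtain the arrows in (b), I would read off the dual quiver of $G_{\skewschubert}$: between two adjacent internal faces there is one arrow, oriented so that the black vertex of the shared edge is on the left, and arrows between two frozen faces are omitted. A direct case analysis of the three possible adjacencies in the skew tiling (horizontal neighbor $\sq_{a+1,i}$, vertical neighbor $\sq_{a,i-1}$, and diagonal neighbor $\sq_{a-1,i+1}$) produces precisely the arrows $\sq_{a,i}\to\sq_{a+1,i}$, $\sq_{a,i}\to\sq_{a,i-1}$, and $\sq_{a,i}\to\sq_{a-1,i+1}$ listed in the statement. The main obstacle is being careful with the combinatorics at the ``corners'' of the ribbon $R(\skewschubert)$, where adjacent frozen faces must not be connected by arrows and where the skew shape is not convex; here one must check that the standard Galashin--Lam orientation rule coincides with the three arrow types above, which follows from Lemma \ref{lem: J inductive} and the explicit identification of source labels from the previous step.
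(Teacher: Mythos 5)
Your proposal follows essentially the same route as the paper: build a plabic graph $G_{\skewschubert}$ by tiling the boxes of $\skewschubert$ with black-white bridge boxes, verify that the source face labels are the $I'(a,i)$, verify that the trip permutation agrees with $f_{\skewschubert}$, and then invoke Galashin--Lam. The main place your write-up differs from the paper is in the execution: the paper introduces an auxiliary ``lattice source labeling trip'' model on the Young diagram (Definition \ref{def: trip on skew}, Lemma \ref{lem: source vs lattice source}) to make the face-label verification concrete, and verifies reducedness directly via Postnikov's trip criterion rather than the face count $|\skewschubert|+1 = \dim S^{\circ}_{\skewschubert} + 1$; both of these are acceptable alternatives.

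One small correction worth flagging: you describe the $n$ boundary vertices as being attached ``according to the $\lambda$-labeling on the $\lambda$-side and the $\mu$-labeling on the $\mu$-side.'' In the paper's construction (Definition \ref{def: G skew}), $G_{\skewschubert}$ is drawn inside the disk $\lambda$, and all $n$ boundary edges sit on the northeast ($\lambda$) staircase, labeled consecutively $1,\dots,n$ from the southeast corner; the southwest ($\mu$) side of $\skewschubert$ has no boundary vertices, but rather bounds the interior face $R_{SW}$ whose source label is $I_{\mu}$. If you instead placed boundary vertices on both sides you would get more than $n$ of them and a mismatched Grassmann necklace. With that fixed, the rest of your argument—the trips following the shape of $\mu_{a,i}$, the identification of strands to the right of a face with vertical steps of $P_{a,i}$, the $f_{\skewschubert}$ computation via Lemma \ref{lem: baf for skew}, and the dual-quiver orientation rule—goes through as intended.
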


\begin{remark}\label{rmk:Imu2}
In part of the literature, there is an extra frozen variable $x_{\mu} = \minor_{I_{\mu}}$, and consequently the quiver $Q_{\skewschubert}$ has an extra frozen vertex. This yields a cluster structure on the affine cone over $\skewschvar$. We will work inside $\Gr(k,n)$ (as opposed to its affine cone) by setting $\minor_{I_{\mu}} = 1$, so we do not see this frozen variable, see Remark \ref{rmk:Imu1}.
\end{remark}

Recall the variety $S^{\circ, 1}_{\skewschubert}$ is given by the conditions that $\Delta_{I'(a,i)} = 1$ for every box $\sq_{a,i} \in R^{1}(\skewschubert)$. Note that these are all frozen variables for the cluster structure in $S^{\circ}_{\skewschubert}$ and we obtain the following result.

\begin{corollary}\label{cor: skew schubert cluster}
    The skew shaped positroid $S^{\circ,1}_{\skewschubert}$ admits a cluster structure, obtained from the cluster structure on $S^{\circ}_{\skewschubert}$ from Theorem \ref{thm: skew schubert cluster} by specializing the frozen variables $x_{a,i} = 1$, where $\sq_{a,i}\in R^1(\skewschubert)$. 
\end{corollary}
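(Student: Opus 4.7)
The plan is to verify that the variables being specialized are frozen in the cluster structure on $S^{\circ}_{\skewschubert}$ from Theorem \ref{thm: skew schubert cluster}, and then invoke the standard principle that specializing frozen cluster variables to $1$ yields a cluster structure on the resulting closed subvariety.

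First, I would show the inclusion $R^1(\skewschubert)\subseteq R(\skewschubert)$, so that each $x_{a,i}=\minor_{I'(a,i)}$ with $\sq_{a,i}\in R^1(\skewschubert)$ is a frozen variable of the seed. By Definition \ref{def: ribbon}, the boxes corresponding to vertical steps of $R(\lambda)$ in column $a$ are precisely those $\sq_{a,i}$ with $\lambdabar_{a-1}\le i\le \lambdabar_a$. Since $\lambda^t$ is weakly decreasing, the sequence $\lambdabar_a=\lambda^t_{n-k+1-a}$ is weakly increasing in $a$, so $\lambdabar_{a-1}\le \lambdabar_a$. Therefore $i=\lambdabar_a$ always satisfies this inequality, and the top box $\sq_{a,\lambdabar_a}$ of every column lies in $R(\lambda)$; when it additionally lies in $\skewschubert$ it lies in $R(\skewschubert)$. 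By Theorem \ref{thm: skew schubert cluster}, these are frozen variables.

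Second, I would apply the general principle of freezing/specialization: for any cluster structure on an affine variety $X$ and any subset $F'$ of frozen variables, the closed subvariety $X'=\{x=1\ :\ x\in F'\}\subseteq X$ inherits a cluster structure. The mutable variables remain mutable, the frozens outside $F'$ remain frozen, and the quiver of the new seed is obtained from the quiver of $X$ by deleting the vertices in $F'$ together with all incident arrows. Each exchange relation $x_kx'_k=M_1+M_2$ descends to an exchange relation in $\mathbb{C}[X']=\mathbb{C}[X]/(x-1\ :\ x\in F')$ by setting the specialized variables to $1$ in the monomials $M_1,M_2$; this is consistent because frozen variables are never mutated, so specialization commutes with mutation. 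Applying this with $F'=\{x_{a,i}\ :\ \sq_{a,i}\in R^1(\skewschubert)\}$ gives the asserted cluster structure on $\mathbb{C}[S^{\circ,1}_{\skewschubert}]$.

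The main point that needs to be checked is that the abstract cluster algebra produced by the specialization genuinely coincides with the coordinate ring $\mathbb{C}[S^{\circ,1}_{\skewschubert}]$, rather than merely mapping to it. This is a consistency check that is essentially built into our setup: Theorem \ref{thm: from skew schubert to braid variety} gives a concrete decomposition $S^{\circ}_{\skewschubert}\cong X(\longest\beta_{\skewschubert})\times(\mathbb{C}^\times)^s$ in which the $(\mathbb{C}^\times)^s$ factor is parametrized exactly by the values of the frozen variables being specialized, while the fiber over $(1,\ldots,1)$ is $S^{\circ,1}_{\skewschubert}$. Thus dimensions and numbers of cluster variables agree ($|\skewschubert|-s=\dim X(\longest\beta_{\skewschubert})$), and the specialized seed is a genuine seed for $S^{\circ,1}_{\skewschubert}$.
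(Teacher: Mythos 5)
Your proposal is correct and takes essentially the same approach as the paper, which sketches the argument in one sentence before the corollary (noting that the $\Delta_{I'(a,i)}$ with $\sq_{a,i}\in R^1(\skewschubert)$ are frozen variables and hence may be specialized). You simply supply the details the paper leaves implicit: the verification that $R^1(\skewschubert)\subseteq R(\skewschubert)$ via the monotonicity of $\lambdabar_a$, a statement of the freezing/specialization principle, and the dimension-matching consistency check via Theorem \ref{thm: from skew schubert to braid variety}.
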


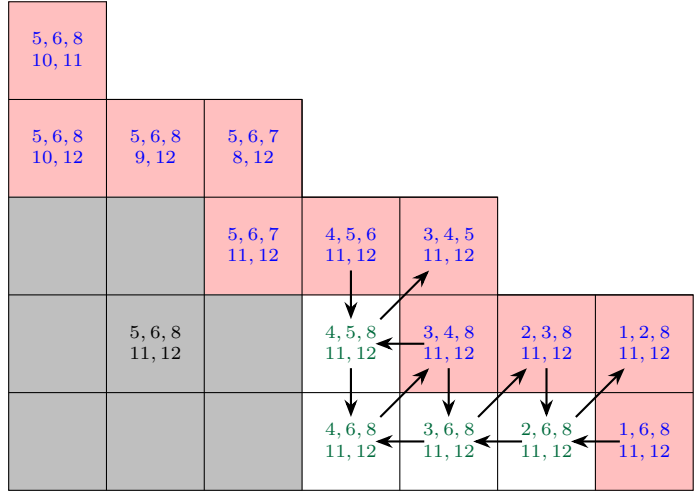
\begin{figure}
    \centering

    \begin{tikzpicture}[scale=0.65]
    \filldraw[color=pink, scale=2,fill opacity=0.5] (7,0) -- (7,2) -- (5,2) -- (5,3) -- (3,3) -- (3,4) -- (1,4) -- (1,5) -- (0,5) -- (0,3) -- (2,3) -- (2,2) -- (4,2) -- (4,1) -- (6, 1) -- (6,0) -- cycle;
        \filldraw[color=lightgray, scale=2] (0,0) -- (0,3) -- (2,3) -- (2,2) -- (3,2) -- (3,0) -- cycle;
\draw[scale=2] (0,0) -- (0,5) -- (1,5) -- (1,4) -- (3,4) -- (3,3) -- (5,3) -- (5,2) -- (7,2) -- (7,0)-- cycle;
\draw[scale=2] (0,1) to (7,1);
\draw[scale=2] (0,2) to (7,2);
\draw[scale=2] (0,3) to (5,3);
\draw[scale=2] (0,4) to (3,4);
\draw[scale=2] (1,0) to (1,5);
\draw[scale=2] (2,0) to (2,4);
\draw[scale=2] (3,0) to (3,4);
\draw[scale=2] (4,0) to (4,3);
\draw[scale=2] (5,0) to (5,2);
\draw[scale=2] (6,0) to (6,2);

\node at (3,3) {\tiny $\begin{matrix} 5, 6, 8 \\ 11, 12 \end{matrix}$};


\node at (13,1) {\tiny $\textcolor{blue}{\begin{matrix} 1, 6, 8 \\ 11, 12 \end{matrix}}$};
\node at (13,3) {\tiny $\textcolor{blue}{\begin{matrix} 1, 2, 8 \\ 11, 12 \end{matrix}}$};
\node at (11,3) {\tiny $\textcolor{blue}{\begin{matrix} 2, 3, 8 \\ 11, 12 \end{matrix}}$};
\node at (9,3) {\tiny $\textcolor{blue}{\begin{matrix} 3, 4, 8 \\ 11, 12 \end{matrix}}$};
\node at (9,5) {\tiny $\textcolor{blue}{\begin{matrix} 3, 4, 5 \\ 11, 12 \end{matrix}}$};
\node at (7,5) {\tiny $\textcolor{blue}{\begin{matrix} 4, 5, 6 \\ 11, 12 \end{matrix}}$};
\node at (5,5) {\tiny $\textcolor{blue}{\begin{matrix} 5, 6, 7 \\ 11, 12 \end{matrix}}$};
\node at (5,7) {\tiny $\textcolor{blue}{\begin{matrix} 5, 6, 7 \\ 8, 12 \end{matrix}}$};
\node at (3,7) {\tiny $\textcolor{blue}{\begin{matrix} 5, 6, 8 \\ 9, 12 \end{matrix}}$};
\node at (1,7) {\tiny $\textcolor{blue}{\begin{matrix} 5, 6, 8 \\ 10, 12 \end{matrix}}$};
\node at (1,9) {\tiny $\textcolor{blue}{\begin{matrix} 5, 6, 8 \\ 10, 11 \end{matrix}}$};


\node at (11,1) {\tiny $\textcolor{cadmiumgreen}{\begin{matrix} 2, 6, 8 \\ 11, 12 \end{matrix}}$};
\node at (9,1) {\tiny $\textcolor{cadmiumgreen}{\begin{matrix} 3, 6, 8 \\ 11, 12 \end{matrix}}$};
\node at (7,1) {\tiny $\textcolor{cadmiumgreen}{\begin{matrix} 4, 6, 8 \\ 11, 12 \end{matrix}}$};
\node at (7,3) {\tiny $\textcolor{cadmiumgreen}{\begin{matrix} 4, 5, 8 \\ 11, 12 \end{matrix}}$};

\draw[thick, -Stealth] (7, 2.5) to (7, 1.5);
\draw[thick, -Stealth] (7, 4.5) to (7, 3.5);

\draw[thick, -Stealth] (9, 2.5) to (9, 1.5);
\draw[thick, -Stealth] (11, 2.5) to (11, 1.5);

\draw[thick, -Stealth] (12.5, 1) to (11.5, 1);
\draw[thick, -Stealth] (10.5, 1) to (9.5, 1);






\draw[thick, -Stealth] (8.5, 1) to (7.5, 1);

\draw[thick, -Stealth] (8.5, 3) to (7.5, 3);

\draw[thick, -Stealth] (7.6, 1.5) to (8.6, 2.5);

\draw[thick, -Stealth] (7.6, 3.5) to (8.6, 4.5);

\draw[thick, -Stealth] (9.6, 1.5) to (10.6, 2.5);

\draw[thick, -Stealth] (11.6, 1.5) to (12.6, 2.5);

    \end{tikzpicture}
    \caption{The quiver for $S^\circ_{\skewschubert}$ for our running example where $\lambda=(7,7,5,3,1),\,\mu=(3,3,2)$ is shown superimposed on the skew diagram. The cluster variables are defined $x_{a,i}=\Delta_{I'(a,i)}$. Here we label the cluster variables only by the indices $I'(a,i)$. \textcolor{blue}{Blue} indices correspond to frozen variables that arise from the boundary ribbon $R(\skewschubert)$, while  \textcolor{cadmiumgreen}{green} indices indicate the mutable vertices of the cluster seed.}
    \label{fig: running quiver}
\end{figure}



\begin{remark}
We have $\Delta_{I_{\lambda}}\neq 0$ on $S_{\skewschubert}^{\circ}$, see Remark \ref{rmk: I lambda}. By \cite[Theorem 1.3(i)]{GLS} any invertible function on a cluster variety is equal to a monomial in frozen variables, and one may wonder how to write $\Delta_{I_{\lambda}}$ as such monomial explicitly.

To do this, we can consider the action of the torus $(\C^{\times})^n$ on $\Gr(k,n)$ which scales the column vectors $v_i$, and compare the weights of frozen variables and of $\Delta_{I_{\lambda}}$. In our running example, we get
$$
\Delta_{I_{\lambda}}=\Delta_{1,2,5,8,11}=\frac{\Delta_{1,2,8,11,12}\Delta_{3,4,5,11,12}\Delta_{5,6,7,8,12}\Delta_{5,6,8,10,11}}{
\Delta_{3,4,8,11,12}\Delta_{5,6,7,11,12}\Delta_{5,6,8,10,12}}\quad \mathrm{on}\quad S_{\skewschubert}^{\circ}.
$$
Note that the factors in the numerator correspond to the outside corners of the ribbon $R(\skewschubert)$ and the factors in the denominator correspond to the inside corners. We leave details to the reader. 
\end{remark}

\subsection{Cluster structure on skew shaped positroid: proof}
\label{subsec: cluster proof}

Cluster structures on positroid varieties are intimately related with Postnikov's plabic graphs \cite{postnikov} which are planar, bicolored graphs in a disk with $n$ edges that touch the boundary of the disk (we call these boundary edges). 

Given a plabic graph $G$, Postnikov defines in \cite[\S 13]{postnikov} the collection of \newword{trips} as follows. First, we label the boundary edges of $G$ from $1$ to $n$. A trip  starts at a boundary edge $i$ and follows the ``rules of the road" by turning right at each black vertex and left at each white vertex. A plabic graph is called \emph{reduced} if the trips in it satisfy certain technical conditions in \cite[Theorem 13.2]{postnikov}.
If $G$ is reduced, the trip starting at $i$ ends at another boundary edge labeled by $\tau_G(i)$, see Figure \ref{fig:skew plabic and a trip} (left) for examples.  If $G$ is reduced then it is known that $\tau_G$ is a permutation in $S_n$, it is called the \emph{trip permutation} for $G$. 

The connected components of the complement to $G$ will be called regions (also known as faces).
The \newword{source labeling} of regions   by subsets of $\{1,\ldots,n\}$ is defined as follows: for each  trip path starting with a boundary edge $i$, we label all the regions to the right of it by $i$. It is known that whenever $G$ is reduced, each region acquires exactly $k$ labels this way, where $k$ is the number of counter-clockwise trips.

Let $f$ be a bounded affine permutation. 
For the positroid variety $\Pi^{\circ}_{f}\subset \Gr(k,n)$, 
consider the family of all reduced plabic graphs $G$ with trip permutation $f \pmod n$.\footnote{For fixed points $i$ of $f \pmod n$, we also need to carry the information of whether the trip permutation carries $i$ to $i$ clockwise or counterclockwise.}  A plabic graph $G$ in this family determines a quiver $Q_G$ with vertices corresponding to the regions of $G$, and cluster variables given by the  Pl\"ucker coordinates indexed by the source labels of regions defined above. By the main result of \cite{GL19}, all these determine (possibly different) seeds for the cluster structure on $\Pi^{\circ}_f$. We refer to \cite{GL19}  for more details.

To prove Theorem \ref{thm: skew schubert cluster}, we will produce a plabic graph with trip permutation $f \pmod n$ whose corresponding quiver and cluster variables coincide with those stated in Theorem \ref{thm: skew schubert cluster}.


\begin{definition}
\label{def: G skew}
Given $\mu \subseteq \lambda$, we define the plabic graph $G_{\skewschubert}$ as follows:
\begin{enumerate}
\item[(a)] Replace each box in $\skewschubert$ with a box with a black-white edge, as in Figure \ref{fig:black and white bridge} (left) and Figure \ref{fig:skew plabic and a trip} (left).
\item[(b)] If $\sq_{a,i}$ and $\sq_{a-1, i}$ are both in $\skewschubert$, join the white vertex of $\sq_{a,i}$ to the black vertex of $\sq_{a-1, i}$. Similarly, if $\sq_{a,i}$ and $\sq_{a,i-1}$ are both in $\skewschubert$, join the white vertex of $\sq_{a,i}$ to the black vertex of $\sq_{a,i-1}$. See Figure \ref{fig:skew plabic and a trip} (left).
\item[(c)] If a portion of the boundary of a box in $\skewschubert$ is contained in the northeast boundary of $\lambda$, we connect the corresponding vertices of $G_{\skewschubert}$ to the boundary, as in Figure \ref{fig:skew plabic and a trip}. 
\item[(d)] If a portion of the boundary of a box in $\mu$ is contained in the northeast boundary of $\lambda$, we add new vertices and connect them to the boundary, as follows, where the thick sides are those belonging to the northeast boundary of $\lambda$:
\begin{center}
\begin{tikzpicture}
\filldraw[color=lightgray] (0,0) -- (0,1) -- (1,1) -- (1,0) -- cycle;
\draw[ultra thick] (1,0) to (1,1) to (0,1);
\draw (0,1) to (0,0) to (1,0);
\node [scale =0.5, circle, draw=black, fill=black] (b) at (0.4, 0.65){};
\draw (0.4, 0.73) to (0.4, 1);
\node [scale =0.5, circle, draw=black, fill=white] (b) at (0.65, 0.4){};
\draw (0.7575
, 0.4) to (1, 0.4);

\filldraw[color=lightgray] (2,0) -- (2,1) -- (3,1) -- (3,0) -- cycle;
\draw[ultra thick] (2,1) to (3,1);
\draw (3,1) to (3,0) to (2,0) to (2,1);
\node [scale =0.5, circle, draw=black, fill=black] (b) at (2.5, 0.65){};
\draw (2.5, 0.73) to (2.5, 1);

\filldraw[color=lightgray] (4,0) -- (5,0) -- (5,1) -- (4,1) -- cycle;
\draw[ultra thick] (5,0) to (5,1);
\draw (5,1) to (4,1) to (4,0) to (5,0);
\node [scale =0.5, circle, draw=black, fill=white] (b) at (4.65, 0.5){};
\draw (4.75, 0.5) to (5, 0.5);
\end{tikzpicture}
\end{center}
\end{enumerate}
\end{definition}

We regard $G_{\skewschubert}$ as a subset of $\lambda$, which is homeomorphic to a disk.  Below we will refer to the southwest boundary region $R_{SW}$ which contains $\mu$ (except for vertices and edges in (d) above).
Also note that $G_{\skewschubert}$ has exactly $n$ boundary edges labeled from $1$ to $n$ as in Figure \ref{fig:skew plabic and a trip}, and $R_{SW}$ connects the boundary edges labeled by $1$ and $n$. 

Next, we introduce an equivalent model of $G_{\skewschubert}$ drawn on the lattice.

\begin{definition}\label{def: trip on skew}
Given a skew diagram $\skewschubert$ where $I_\mu=\{b_1,\ldots, b_k\}$, the \newword{lattice source labeling trip} is given by the following: 
\begin{enumerate}
    \item Edge labeling:
    \begin{itemize}
        \item 
        The boundary edges $e_i$ are labeled consecutively from $1$ to $n$, starting from the southeast corner to the northwest corner. \footnote{Note that this gives a \emph{counterclockwise} enumeration of the boundary edges, that is not standard in the plabic graph literature. Had we drawn our Young diagrams in English notation, we would have obtained a clockwise orientation. We choose not to do so to keep in line with the rest of the paper.}
        
    \end{itemize}  
    \item Construction of the lattice source labeling trip $T_i$:\\
    The path $T_i$ moves southwest until it reaches the boundary of $\mu$ at which point the direction of the path reverses to either the north or east.
    \begin{enumerate}
    \item Initial turning rule:
    \begin{itemize}
        \item If the edge $e_i$ is horizontally oriented, then you make a left turn with respect to the orientation in the southern direction.    
        \item If the edge $e_i$ is vertically oriented, then you make a right turn with respect to the orientation in the western direction.  
        \item Continue until you reach the boundary of $\mu$.
    \end{itemize}
    \item Behavior at the boundary of $\mu$:
    \begin{itemize}
        \item Upon reaching an edge that is contained in the boundary of $\mu$, continue to follow the turning rule from step (a), until the next required step would be an edge contained in the interior of $\mu$.
        \item At this point, reflect the direction of the new edge which would be contained in the interior of $\mu$.
        \begin{enumerate}
            \item If the rule would require you to take the  horizontal edge in the western direction, then instead choose the horizontal edge in the eastern direction.
            \item Similarly, if the rule would require you to take the vertical edge in the southern direction, then instead choose the vertical edge in the northern direction.
        \end{enumerate}
    \end{itemize}
    \item Continuation of the path:
    \begin{itemize}
        \item Continue straight until you reach some edge $e_j$ for $j\in[n]\setminus{i}$. 
    \end{itemize}
    \end{enumerate}
    \item Box labeling:
    \begin{itemize}
        \item If the path $T_i$ has a counterclockwise orientation, then label the boxes on the exterior of the path with the integer $i$, denote this set of boxes $X_i$.
        \item If the path $T_i$ has a clockwise orientation, then label the boxes on the interior of the path with the integer $i$, denote this set of boxes $N_i$. 
         
    \end{itemize}
\end{enumerate}
\end{definition}

\begin{remark}\label{rmk: source trip cases}
    If the path $T_i$ satisfies step (b)(i) in Definition \ref{def: trip on skew}, then $i\in I_\mu$. 
\end{remark}

\begin{example}
For our running example, the lattice trips $T_4$ and $T_5$ in $\skewschubert$ along with the lattice source labeling of $4$ and $5$, respectively, are shown on the right in Figure \ref{fig:skew plabic and a trip}. The lattice trip $T_4$ is drawn in \textcolor{babyblueeyes!200}{blue}. Since the path $T_4$ is oriented clockwise, we label the boxes on the interior of the path with \textcolor{babyblueeyes!200}{4} and denote these boxes as $N_4$. Similarly, $T_5$ is drawn in \textcolor{navyblue}{navy blue} with counterclockwise orientation, so we label the boxes on the exterior of the path with \textcolor{navyblue}{5}, these boxes are denoted $X_5$.

\end{example}

\begin{figure}[!hb]
   \begin{minipage}{0.48\textwidth}
     \centering
    \begin{tikzpicture}[scale=.95]
\filldraw[color=lightgray] (0,-2)--(0,-5)--(3,-5)--(3,-3)--(2,-3)--(2,-2)--(0,-2);
\draw[color=black!70] (0,0)--(0,-5)--(7,-5)--(7,-3)--(5,-3)--(5,-2)--(3,-2)--(3,-1)--(1,-1)--(1,0)--(0,0);
\filldraw[color=babyblueeyes!50] (5.3, -3)--(5.25, -3.25)--(4.75, -3.75)--(4.25, -4.25)--(3.75, -4.75)--(3.25, -4.25)--(3.75, -3.75)--(3.25, -3.25)--(3.75, -2.75)--(3.25, -2.25)--(3.3, -2)--(5,-2)--(5,-3)--(5.3, -3);

\node [scale =0.2, circle, draw=black, fill=black] (b12) at (0.3, 0) {};
\draw[red] (0.5,0.25) node {\scriptsize{$12$}};

\node [scale =0.2, circle, draw=black, fill=black] (b11) at (1, -0.7) {};
\draw[red] (1.2,-0.5) node {\scriptsize{$11$}};

\node [scale =0.2, circle, draw=black, fill=black] (b10) at (1.3, -1) {};
\draw[red] (1.5,-0.8) node {\scriptsize{$10$}};

\node [scale =0.2, circle, draw=black, fill=black] (b9) at (2.3, -1) 
{};
\draw[red] (2.5,-0.8) node {\scriptsize{$9$}};

\node [scale =0.2, circle, draw=black, fill=black] (b8) at (3, -1.7) 
{};
\draw[red] (3.2,-1.5) node {\scriptsize{$8$}};

\node [scale =0.2, circle, draw=black, fill=black] (b7) at (3.3, -2) {};
\draw[red] (3.5,-1.8) node {\scriptsize{$7$}};

\node [scale =0.2, circle, draw=black, fill=black] (b6) at (4.3, -2) {};
\draw[red] (4.5,-1.8) node {\scriptsize{$6$}};

\node [scale =0.2, circle, draw=black, fill=black] (b5) at (5, -2.7) {};
\draw[red] (5.2,-2.5) node {\scriptsize{$5$}};

\node [scale =0.2, circle, draw=black, fill=black] (b4) at (5.3, -3) {};
\draw[red] (5.5,-2.8) node {\scriptsize{$4$}};

\node [scale =0.2, circle, draw=black, fill=black] (b3) at (6.3, -3) {};
\draw[red] (6.5,-2.8) node {\scriptsize{$3$}};

\node [scale =0.2, circle, draw=black, fill=black] (b2) at (7, -3.7) {};
\draw[red] (7.2,-3.5) node {\scriptsize{$2$}};

\node [scale =0.2, circle, draw=black, fill=black] (b1) at (7, -4.7) {};
\draw[red] (7.2,-4.5) node {\scriptsize{$1$}};

\node [scale =0.5, circle, draw=black, fill=black] (7b5) at (0.25, -0.25) {};
\node [scale =0.5, circle, draw=black, fill=white] (7w5) at (0.75, -0.75) {};

\node [scale =0.5, circle, draw=black, fill=black] (7b4) at (0.25, -1.25) {};
\node [scale =0.5, circle, draw=black, fill=white] (7w4) at (0.75, -1.75) {};

\node [scale =0.5, circle, draw=black, fill=black] (6b4) at (1.25, -1.25) {};
\node [scale =0.5, circle, draw=black, fill=white] (6w4) at (1.75, -1.75) {};

\node [scale =0.5, circle, draw=black, fill=black] (5b4) at (2.25, -1.25) {};
\node [scale =0.5, circle, draw=black, fill=white] (5w4) at (2.75, -1.75) {};

\node [scale =0.5, circle, draw=black, fill=black] (5b3) at (2.25, -2.25) {};
\node [scale =0.5, circle, draw=black, fill=white] (5w3) at (2.75, -2.75) {};

\node [scale =0.5, circle, draw=black, fill=black] (4b3) at (3.25, -2.25) {};
\node [scale =0.5, circle, draw=black, fill=white] (4w3) at (3.75, -2.75) {};

\node [scale =0.5, circle, draw=black, fill=black] (4b2) at (3.25, -3.25) {};
\node [scale =0.5, circle, draw=black, fill=white] (4w2) at (3.75, -3.75) {};

\node [scale =0.5, circle, draw=black, fill=black] (4b1) at (3.25, -4.25) {};
\node [scale =0.5, circle, draw=black, fill=white] (4w1) at (3.75, -4.75) {};

\node [scale =0.5, circle, draw=black, fill=black] (3b3) at (4.25, -2.25) {};
\node [scale =0.5, circle, draw=black, fill=white] (3w3) at (4.75, -2.75) {};

\node [scale =0.5, circle, draw=black, fill=black] (3b2) at (4.25, -3.25) {};
\node [scale =0.5, circle, draw=black, fill=white] (3w2) at (4.75, -3.75) {};

\node [scale =0.5, circle, draw=black, fill=black] (3b1) at (4.25, -4.25) {};
\node [scale =0.5, circle, draw=black, fill=white] (3w1) at (4.75, -4.75) {};

\node [scale =0.5, circle, draw=black, fill=black] (2b2) at (5.25, -3.25) {};
\node [scale =0.5, circle, draw=black, fill=white] (2w2) at (5.75, -3.75) {};

\node [scale =0.5, circle, draw=black, fill=black] (2b1) at (5.25, -4.25) {};
\node [scale =0.5, circle, draw=black, fill=white] (2w1) at (5.75, -4.75) {};

\node [scale =0.5, circle, draw=black, fill=black] (1b2) at (6.25, -3.25) {};
\node [scale =0.5, circle, draw=black, fill=white] (1w2) at (6.75, -3.75) {};

\node [scale =0.5, circle, draw=black, fill=black] (1b1) at (6.25, -4.25) {};
\node [scale =0.5, circle, draw=black, fill=white] (1w1) at (6.75, -4.75) {};

\draw[thick] (7b5)--(7w5);
\draw[thick] (7b4)--(7w4);
\draw[thick] (6b4)--(6w4);
\draw[thick] (5b4)--(5w4);
\draw[thick] (5b3)--(5w3);
\draw[thick] (4b3)--(4w3);
\draw[thick] (4b2)--(4w2);
\draw[thick] (4b1)--(4w1);
\draw[thick] (3b3)--(3w3);
\draw[thick] (3b2)--(3w2);
\draw[thick] (3b1)--(3w1);
\draw[thick] (2b2)--(2w2);
\draw[thick] (2b1)--(2w1);
\draw[thick] (1b2)--(1w2);
\draw[thick] (1b1)--(1w1);

\draw[thick] (7w5)--(7b4);
\draw[thick] (5w4)--(5b3);
\draw[thick] (4w3)--(4b2);
\draw[thick] (4w2)--(4b1);
\draw[thick] (3w3)--(3b2);
\draw[thick] (3w2)--(3b1);
\draw[thick] (2w2)--(2b1);
\draw[thick] (1w2)--(1b1);

\draw[thick] (7w4)--(6b4);
\draw[thick] (6w4)--(5b4);
\draw[thick] (5w3)--(4b3);
\draw[thick] (4w3)--(3b3);
\draw[thick] (4w2)--(3b2);
\draw[thick] (3w2)--(2b2);
\draw[thick] (2w2)--(1b2);
\draw[thick] (4w1)--(3b1);
\draw[thick] (3w1)--(2b1);
\draw[thick] (2w1)--(1b1);

\draw[thick, red] (b1)--(1w1);
\draw[thick, red] (b2)--(1w2);
\draw[thick, red] (b3)--(1b2);
\draw[thick, red] (b4)--(2b2);
\draw[thick, red] (b5)--(3w3);
\draw[thick, red] (b6)--(3b3);
\draw[thick, red] (b7)--(4b3);
\draw[thick, red] (b8)--(5w4);
\draw[thick, red] (b9)--(5b4);
\draw[thick, red] (b10)--(6b4);
\draw[thick, red] (b11)--(7w5);
\draw[thick, red] (b12)--(7b5);
 
\draw[color=black!70] (0,-4)--(7,-4);
\draw[color=black!70] (0,-3)--(5,-3);
\draw[color=black!70] (0,-2)--(3,-2);
\draw[color=black!70] (0,-1)--(1,-1);
\draw[color=black!70] (1,-5)--(1,-1);
\draw[color=black!70] (2,-5)--(2,-1);
\draw[color=black!70] (3,-5)--(3,-2);
\draw[color=black!70] (4,-5)--(4,-2);
\draw[color=black!70] (5,-5)--(5,-3);
\draw[color=black!70] (6,-5)--(6,-3);
\draw[line width=2] (0,0)--(1,0)--(1,-1)--(3,-1)--(3,-2)--(5,-2)--(5,-3)--(7,-3)--(7,-5);
\draw[line width=2,babyblueeyes!200,-Stealth] (b4)--(2b2)--(4w1)--(4b1)--(4w2)--(4b2)--(4w3)--(4b3);
\draw[line width=2,babyblueeyes!200] (4b3)--(b7);

\draw[babyblueeyes!250] (3.75,-2.25) node {4};
\draw[babyblueeyes!250] (4.75,-2.25) node {4};
\draw[babyblueeyes!250] (3.75,-3.25) node {4};
\draw[babyblueeyes!250] (3.75,-4.25) node {4};
\draw[babyblueeyes!250] (4.75,-3.25) node {4};
\draw (0.5,0.5) node {\scriptsize{$ $}};
\end{tikzpicture}

\begin{tikzpicture}[scale=.95]
\filldraw[color=lightgray] (0,-2)--(0,-5)--(3,-5)--(3,-3)--(2,-3)--(2,-2)--(0,-2);
\filldraw[color=navyblue!20] 
(5,-2.7)--(4.75, -2.75)--(4.25, -3.25)--(3.75, -3.75)--(3.25, -4.25)--(3.75, -4.75)--(4.25, -4.25)--(4.75, -4.75)--(5.25, -4.25)--(5.75, -4.75)--(6.25, -4.25)--(6.75, -4.75)--(7, -4.7)--(7,-5)--(3,-5)--(3,-3)--(2,-3)--(2,-2)--(0,-2)--(0,0)--(1,0)--(1,-1)--(3,-1)--(3,-2)--(5,-2)--(5,-2.7);
\draw[color=black!70] (0,0)--(0,-5)--(7,-5)--(7,-3)--(5,-3)--(5,-2)--(3,-2)--(3,-1)--(1,-1)--(1,0)--(0,0);

\node [scale =0.2, circle, draw=black, fill=black] (b12) at (0.3, 0) {};
\draw[red] (0.5,0.25) node {\scriptsize{$12$}};

\node [scale =0.2, circle, draw=black, fill=black] (b11) at (1, -0.7) {};
\draw[red] (1.2,-0.5) node {\scriptsize{$11$}};

\node [scale =0.2, circle, draw=black, fill=black] (b10) at (1.3, -1) {};
\draw[red] (1.5,-0.8) node {\scriptsize{$10$}};

\node [scale =0.2, circle, draw=black, fill=black] (b9) at (2.3, -1) 
{};
\draw[red] (2.5,-0.8) node {\scriptsize{$9$}};

\node [scale =0.2, circle, draw=black, fill=black] (b8) at (3, -1.7) 
{};
\draw[red] (3.2,-1.5) node {\scriptsize{$8$}};

\node [scale =0.2, circle, draw=black, fill=black] (b7) at (3.3, -2) {};
\draw[red] (3.5,-1.8) node {\scriptsize{$7$}};

\node [scale =0.2, circle, draw=black, fill=black] (b6) at (4.3, -2) {};
\draw[red] (4.5,-1.8) node {\scriptsize{$6$}};

\node [scale =0.2, circle, draw=black, fill=black] (b5) at (5, -2.7) {};
\draw[red] (5.2,-2.5) node {\scriptsize{$5$}};

\node [scale =0.2, circle, draw=black, fill=black] (b4) at (5.3, -3) {};
\draw[red] (5.5,-2.8) node {\scriptsize{$4$}};

\node [scale =0.2, circle, draw=black, fill=black] (b3) at (6.3, -3) {};
\draw[red] (6.5,-2.8) node {\scriptsize{$3$}};

\node [scale =0.2, circle, draw=black, fill=black] (b2) at (7, -3.7) {};
\draw[red] (7.2,-3.5) node {\scriptsize{$2$}};

\node [scale =0.2, circle, draw=black, fill=black] (b1) at (7, -4.7) {};
\draw[red] (7.2,-4.5) node {\scriptsize{$1$}};

\node [scale =0.5, circle, draw=black, fill=black] (7b5) at (0.25, -0.25) {};
\node [scale =0.5, circle, draw=black, fill=white] (7w5) at (0.75, -0.75) {};

\node [scale =0.5, circle, draw=black, fill=black] (7b4) at (0.25, -1.25) {};
\node [scale =0.5, circle, draw=black, fill=white] (7w4) at (0.75, -1.75) {};

\node [scale =0.5, circle, draw=black, fill=black] (6b4) at (1.25, -1.25) {};
\node [scale =0.5, circle, draw=black, fill=white] (6w4) at (1.75, -1.75) {};

\node [scale =0.5, circle, draw=black, fill=black] (5b4) at (2.25, -1.25) {};
\node [scale =0.5, circle, draw=black, fill=white] (5w4) at (2.75, -1.75) {};

\node [scale =0.5, circle, draw=black, fill=black] (5b3) at (2.25, -2.25) {};
\node [scale =0.5, circle, draw=black, fill=white] (5w3) at (2.75, -2.75) {};

\node [scale =0.5, circle, draw=black, fill=black] (4b3) at (3.25, -2.25) {};
\node [scale =0.5, circle, draw=black, fill=white] (4w3) at (3.75, -2.75) {};

\node [scale =0.5, circle, draw=black, fill=black] (4b2) at (3.25, -3.25) {};
\node [scale =0.5, circle, draw=black, fill=white] (4w2) at (3.75, -3.75) {};

\node [scale =0.5, circle, draw=black, fill=black] (4b1) at (3.25, -4.25) {};
\node [scale =0.5, circle, draw=black, fill=white] (4w1) at (3.75, -4.75) {};

\node [scale =0.5, circle, draw=black, fill=black] (3b3) at (4.25, -2.25) {};
\node [scale =0.5, circle, draw=black, fill=white] (3w3) at (4.75, -2.75) {};

\node [scale =0.5, circle, draw=black, fill=black] (3b2) at (4.25, -3.25) {};
\node [scale =0.5, circle, draw=black, fill=white] (3w2) at (4.75, -3.75) {};

\node [scale =0.5, circle, draw=black, fill=black] (3b1) at (4.25, -4.25) {};
\node [scale =0.5, circle, draw=black, fill=white] (3w1) at (4.75, -4.75) {};

\node [scale =0.5, circle, draw=black, fill=black] (2b2) at (5.25, -3.25) {};
\node [scale =0.5, circle, draw=black, fill=white] (2w2) at (5.75, -3.75) {};

\node [scale =0.5, circle, draw=black, fill=black] (2b1) at (5.25, -4.25) {};
\node [scale =0.5, circle, draw=black, fill=white] (2w1) at (5.75, -4.75) {};

\node [scale =0.5, circle, draw=black, fill=black] (1b2) at (6.25, -3.25) {};
\node [scale =0.5, circle, draw=black, fill=white] (1w2) at (6.75, -3.75) {};

\node [scale =0.5, circle, draw=black, fill=black] (1b1) at (6.25, -4.25) {};
\node [scale =0.5, circle, draw=black, fill=white] (1w1) at (6.75, -4.75) {};

\draw[thick] (7b5)--(7w5);
\draw[thick] (7b4)--(7w4);
\draw[thick] (6b4)--(6w4);
\draw[thick] (5b4)--(5w4);
\draw[thick] (5b3)--(5w3);
\draw[thick] (4b3)--(4w3);
\draw[thick] (4b2)--(4w2);
\draw[thick] (4b1)--(4w1);
\draw[thick] (3b3)--(3w3);
\draw[thick] (3b2)--(3w2);
\draw[thick] (3b1)--(3w1);
\draw[thick] (2b2)--(2w2);
\draw[thick] (2b1)--(2w1);
\draw[thick] (1b2)--(1w2);
\draw[thick] (1b1)--(1w1);

\draw[thick] (7w5)--(7b4);
\draw[thick] (5w4)--(5b3);
\draw[thick] (4w3)--(4b2);
\draw[thick] (4w2)--(4b1);
\draw[thick] (3w3)--(3b2);
\draw[thick] (3w2)--(3b1);
\draw[thick] (2w2)--(2b1);
\draw[thick] (1w2)--(1b1);

\draw[thick] (7w4)--(6b4);
\draw[thick] (6w4)--(5b4);
\draw[thick] (5w3)--(4b3);
\draw[thick] (4w3)--(3b3);
\draw[thick] (4w2)--(3b2);
\draw[thick] (3w2)--(2b2);
\draw[thick] (2w2)--(1b2);
\draw[thick] (4w1)--(3b1);
\draw[thick] (3w1)--(2b1);
\draw[thick] (2w1)--(1b1);

\draw[thick, red] (b1)--(1w1);
\draw[thick, red] (b2)--(1w2);
\draw[thick, red] (b3)--(1b2);
\draw[thick, red] (b4)--(2b2);
\draw[thick, red] (b5)--(3w3);
\draw[thick, red] (b6)--(3b3);
\draw[thick, red] (b7)--(4b3);
\draw[thick, red] (b8)--(5w4);
\draw[thick, red] (b9)--(5b4);
\draw[thick, red] (b10)--(6b4);
\draw[thick, red] (b11)--(7w5);
\draw[thick, red] (b12)--(7b5);
 
\draw[color=black!70] (0,-4)--(7,-4);
\draw[color=black!70] (0,-3)--(5,-3);
\draw[color=black!70] (0,-2)--(3,-2);
\draw[color=black!70] (0,-1)--(1,-1);
\draw[color=black!70] (1,-5)--(1,-1);
\draw[color=black!70] (2,-5)--(2,-1);
\draw[color=black!70] (3,-5)--(3,-2);
\draw[color=black!70] (4,-5)--(4,-2);
\draw[color=black!70] (5,-5)--(5,-3);
\draw[color=black!70] (6,-5)--(6,-3);
\draw[line width=2] (0,0)--(1,0)--(1,-1)--(3,-1)--(3,-2)--(5,-2)--(5,-3)--(7,-3)--(7,-5);
\draw[line width=2,navyblue,-Stealth] (b5)--(3w3)--(3b2)--(4w2)--(4b1)--(4w1)--(3b1)--(3w1)--(2b1)--(2w1)--(1b1)--(1w1);
\draw[line width=2,navyblue] (1w1)--(b1);

\draw[navyblue] (3.75,-2.25) node {5};
\draw[navyblue] (4.75,-2.25) node {5};
\draw[navyblue] (3.75,-3.25) node {5};
\draw[navyblue] (2.75,-1.25) node {5};
\draw[navyblue] (2.75,-2.25) node {5};
\draw[navyblue] (0.75,-0.25) node {5};
\draw[navyblue] (0.75,-1.25) node {5};
\draw[navyblue] (1.75,-1.25) node {5};
\draw[navyblue] (2.25,-2.75) node {5};

\draw (0.5,0.5) node {\scriptsize{$ $}};
\end{tikzpicture}

   \end{minipage}\hfill
   \begin{minipage}{0.48\textwidth}
     \centering
    \begin{tikzpicture}[scale=.95]
    \label{fig: trip on skew}
\filldraw[color=lightgray] (0,-2)--(0,-5)--(3,-5)--(3,-3)--(2,-3)--(2,-2)--(0,-2);
\filldraw[color=babyblueeyes!50](5,-3) -- (5,-4) -- (4,-4) -- (4,-5) -- (3,-5) -- (3,-2) -- (5,-2) -- (5,-3);
\draw[color=black!70] (0,0)--(0,-5)--(7,-5)--(7,-3)--(5,-3)--(5,-2)--(3,-2)--(3,-1)--(1,-1)--(1,0)--(0,0);
\draw[color=black!70] (0,-4)--(7,-4);
\draw[color=black!70] (0,-3)--(5,-3);
\draw[color=black!70] (0,-2)--(3,-2);
\draw[color=black!70] (0,-1)--(1,-1);
\draw[color=black!70] (1,-5)--(1,-1);
\draw[color=black!70] (2,-5)--(2,-1);
\draw[color=black!70] (3,-5)--(3,-2);
\draw[color=black!70] (4,-5)--(4,-2);
\draw[color=black!70] (5,-5)--(5,-3);
\draw[color=black!70] (6,-5)--(6,-3);
\draw[line width=2] (0,0)--(1,0)--(1,-1)--(3,-1)--(3,-2)--(5,-2)--(5,-3)--(7,-3)--(7,-5);
\draw[red,ultra thick] (0,0) -- (0,-0.5); 
\draw[red, ultra thick] (1,-1) -- (0.5,-1); 
\draw[red,ultra thick] (1,-1) -- (1,-1.5); 
\draw[red,ultra thick] (2,-1) -- (2,-1.5); 
\draw[red,ultra thick] (3,-2) -- (2.5,-2); 
\draw[red,ultra thick] (3,-2) -- (3,-2.5); 
\draw[red,ultra thick] (4,-2) -- (4,-2.5); 
\draw[red,ultra thick] (4.5,-3) -- (5,-3); 
\draw[red,ultra thick] (5,-3) -- (5,-3.5); 
\draw[red,ultra thick] (6,-3) -- (6,-3.5); 
\draw[red,ultra thick] (6.5,-4) -- (7,-4); 
\draw[red,ultra thick] (7,-5) -- (6.5,-5); 
\draw[line width=2,babyblueeyes!200,-Stealth] (5,-3) -- (5,-4) -- (4,-4) -- (4,-5) -- (3,-5) -- (3,-2);
\draw[red] (6.75,-4.8) node {\tiny{$1$}};
\draw[red] (6.75,-3.8) node {\tiny{$2$}};
\draw[red] (6.2,-3.25) node {\tiny{$3$}};
\draw[red] (5.2,-3.25) node {\tiny{$4$}};
\draw[red] (4.75,-2.8) node {\tiny{$5$}};
\draw[red] (4.2,-2.25) node {\tiny{$6$}};
\draw[red] (3.2,-2.25) node {\tiny{$7$}};
\draw[red] (2.75,-1.8) node {\tiny{$8$}};
\draw[red] (2.2,-1.25) node {\tiny{$9$}};
\draw[red] (1.2,-1.25) node {\tiny{$10$}};
\draw[red] (0.75,-0.8) node {\tiny{$11$}};
\draw[red] (0.2,-0.25) node {\tiny{$12$}};
\draw[babyblueeyes!250] (3.5,-2.5) node {4};
\draw[babyblueeyes!250] (4.5,-2.5) node {4};
\draw[babyblueeyes!250] (3.5,-3.5) node {4};
\draw[babyblueeyes!250] (3.5,-4.5) node {4};
\draw[babyblueeyes!250] (4.5,-3.5) node {4};
\draw (0.5,0.5) node {\scriptsize{$ $}};

\end{tikzpicture}

\begin{tikzpicture}[scale=.95]
    \label{fig: trip on skew}
\filldraw[color=lightgray] (0,-2)--(0,-5)--(3,-5)--(3,-3)--(2,-3)--(2,-2)--(0,-2);
\filldraw[color=navyblue!20](5,-3) -- (4,-3) -- (4,-4) -- (3,-4) -- (3,-3) -- (2,-3) -- (2,-2) -- (0,-2)--(0,0)--(1,0)--(1,-1)--(3,-1)--(3,-2)--(5,-2)--(5,-3);
\draw[color=black!70] (0,0)--(0,-5)--(7,-5)--(7,-3)--(5,-3)--(5,-2)--(3,-2)--(3,-1)--(1,-1)--(1,0)--(0,0);
\draw[color=black!70] (0,-4)--(7,-4);
\draw[color=black!70] (0,-3)--(5,-3);
\draw[color=black!70] (0,-2)--(3,-2);
\draw[color=black!70] (0,-1)--(1,-1);
\draw[color=black!70] (1,-5)--(1,-1);
\draw[color=black!70] (2,-5)--(2,-1);
\draw[color=black!70] (3,-5)--(3,-2);
\draw[color=black!70] (4,-5)--(4,-2);
\draw[color=black!70] (5,-5)--(5,-3);
\draw[color=black!70] (6,-5)--(6,-3);
\draw[line width=2] (0,0)--(1,0)--(1,-1)--(3,-1)--(3,-2)--(5,-2)--(5,-3)--(7,-3)--(7,-5);
\draw[red,ultra thick] (0,0) -- (0,-0.5); 
\draw[red, ultra thick] (1,-1) -- (0.5,-1); 
\draw[red,ultra thick] (1,-1) -- (1,-1.5); 
\draw[red,ultra thick] (2,-1) -- (2,-1.5); 
\draw[red,ultra thick] (3,-2) -- (2.5,-2); 
\draw[red,ultra thick] (3,-2) -- (3,-2.5); 
\draw[red,ultra thick] (4,-2) -- (4,-2.5); 
\draw[red,ultra thick] (4.5,-3) -- (5,-3); 
\draw[red,ultra thick] (5,-3) -- (5,-3.5); 
\draw[red,ultra thick] (6,-3) -- (6,-3.5); 
\draw[red,ultra thick] (6.5,-4) -- (7,-4); 
\draw[red,ultra thick] (7,-5) -- (6.5,-5); 
\draw[line width=2,navyblue,-Stealth] (5,-3) -- (4,-3) -- (4,-4) -- (3,-4) -- (3,-5) -- (7,-5);
\draw[red] (6.75,-4.8) node {\tiny{$1$}};
\draw[red] (6.75,-3.8) node {\tiny{$2$}};
\draw[red] (6.2,-3.25) node {\tiny{$3$}};
\draw[red] (5.2,-3.25) node {\tiny{$4$}};
\draw[red] (4.75,-2.8) node {\tiny{$5$}};
\draw[red] (4.2,-2.25) node {\tiny{$6$}};
\draw[red] (3.2,-2.25) node {\tiny{$7$}};
\draw[red] (2.75,-1.8) node {\tiny{$8$}};
\draw[red] (2.2,-1.25) node {\tiny{$9$}};
\draw[red] (1.2,-1.25) node {\tiny{$10$}};
\draw[red] (0.75,-0.8) node {\tiny{$11$}};
\draw[red] (0.2,-0.25) node {\tiny{$12$}};
\draw[navyblue] (0.5,-0.5) node {5};
\draw[navyblue] (0.5,-1.5) node {5};
\draw[navyblue] (1.5,-1.5) node {5};
\draw[navyblue] (2.5,-1.5) node {5};
\draw[navyblue] (2.5,-2.5) node {5};
\draw[navyblue] (3.5,-2.5) node {5};
\draw[navyblue] (4.5,-2.5) node {5};
\draw[navyblue] (3.5,-3.5) node {5};
\draw[navyblue] (1.5,-3.5) node {5};
\draw (0.5,0.5) node {\scriptsize{$ $}};

\end{tikzpicture}
\end{minipage}
 \caption{On the left, the plabic graph is shown superimposed on the skew diagram for $\skewschubert$ where $\lambda=(7,7,5,3,1)$ and $\mu=(3,3,2)$, with the path for $T_4$ highlighted in \textcolor{babyblueeyes!250}{blue} and the path for $T_5$ highlighted in \textcolor{navyblue}{navy blue}. This path is determined using the standard ``rules of the road" described in \cite{postnikov}, where the ``driver" keeps to the right side of the path. When  approaching a black dot, the driver continues straight, whereas approaching a white dot, the driver turns left. On the right, the lattice trip for $T_4$ and $T_5$ is depicted using lattice source labeling.}
    \label{fig:skew plabic and a trip}   
\end{figure}
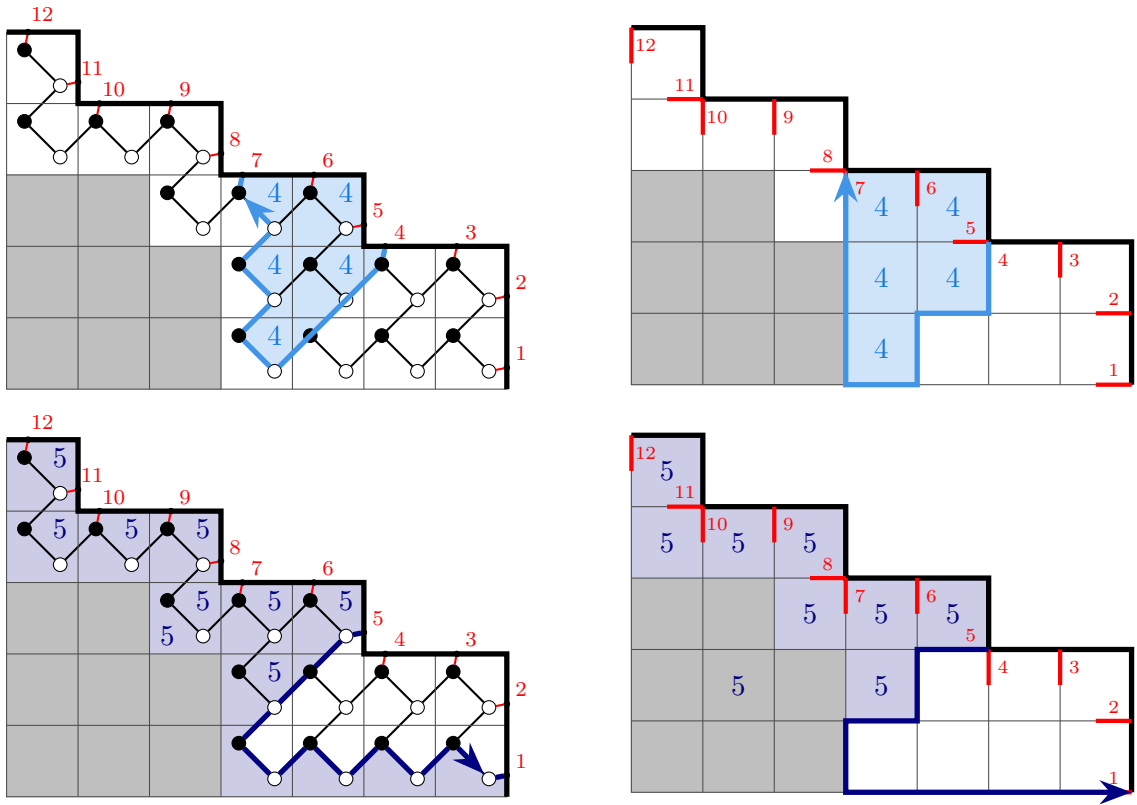

\begin{lemma}
\label{lem: source vs lattice source}

a) There is a bijection between the boxes in $\skewschubert$ and the regions in $G_{\skewschubert}$, except for the southwestern region $R_{SW}$. 

b) There is a bijection between the trips in $G_{\skewschubert}$ and lattice trips $T_i$ in $\skewschubert$ which agrees with the bijection for regions in (a).

c) Given a boundary edge $i$, the source labeling of regions for $G_{\skewschubert}$ and the lattice source labeling of boxes in $\skewschubert$ as in Definition \ref{def: trip on skew} agree. 

d) The southwestern region $R_{SW}$ in $G_{\skewschubert}$ is labeled by $I_{\mu}$.
\end{lemma}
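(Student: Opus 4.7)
The plan is to verify parts (a)--(d) in order, with (a) being a direct bookkeeping argument, (b) the technical core, and (c)--(d) following as consequences.

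For part (a), I would argue directly from Definition \ref{def: G skew}. Each box $\sq_{a,i} \in \skewschubert$ is replaced by a bridge consisting of one black vertex and one white vertex joined by an edge; this bridge, together with the connectors to neighboring bridges or to the northeast boundary, bounds exactly one region of $G_{\skewschubert}$. Since every connector is forced by the construction, the complement of these ``box regions'' inside $\lambda$ is a single connected region containing the whole of $\mu$, which we call $R_{SW}$. So the boxes of $\skewschubert$ are in bijection with the regions of $G_{\skewschubert}$ other than $R_{SW}$.

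For part (b), fix a boundary edge $i \in \{1, \dots, n\}$, and let $T^{\mathrm{pl}}_i$ denote the plabic-graph trip starting at edge $i$ and $T_i$ the lattice trip of Definition \ref{def: trip on skew}. I would prove by induction along the trip that after each step $T^{\mathrm{pl}}_i$ and $T_i$ lie in the same box-region (equivalently separate the same boxes). The key local check is that at every internal crossing the plabic ``turn right at black, left at white'' rule produces exactly the southwest-directed motion dictated by the lattice rule, because of how the bridges are oriented in Definition \ref{def: G skew}. The interesting case is the behavior at the boundary of $\mu$: when $T^{\mathrm{pl}}_i$ enters a bridge that sits against the boundary of $\mu$, the rules of the road force the trip to follow along the boundary and then to be reflected outward, precisely matching step (b)(ii) of Definition \ref{def: trip on skew}. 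Likewise, a trip starting or ending at an edge adjacent to $\mu$ (i.e., whose label lies in $I_\mu$) proceeds around $R_{SW}$ and therefore has the clockwise orientation prescribed in Definition \ref{def: trip on skew}, whereas trips with both endpoints on the northeast boundary of $\lambda$ are counterclockwise. This matching of endpoints, orientation, and separated regions gives the bijection asserted in (b). The hard part will be handling the reflection rule at the boundary of $\mu$ rigorously, as well as covering the degenerate cases when a column of $\skewschubert$ is empty or when $\skewschubert$ is disconnected (as in Example \ref{ex: disconnected}); for these I would make the verification column by column, using the explicit local pictures in Definition \ref{def: G skew}(c),(d).

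For part (c), by (b) the trips $T^{\mathrm{pl}}_i$ and $T_i$ cut $\skewschubert$ into the same pair of pieces. By construction, the plabic source labeling assigns the label $i$ to every region lying to the right of $T^{\mathrm{pl}}_i$; I would check that ``right of $T^{\mathrm{pl}}_i$'' agrees with the side prescribed by Definition \ref{def: trip on skew}, namely the exterior side for counterclockwise trips and the interior side for clockwise ones. This is a purely local check once the bijection of (b) is established, since orientation of a trip in the plabic graph is determined by which boundary edges it connects, and this matches the orientation built into the lattice trips.

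For part (d), the region $R_{SW}$ receives the label $i$ precisely for those trips that pass with $R_{SW}$ on their right. Using the description of trips from (b), these are exactly the trips starting at a boundary edge corresponding to a vertical step of $P_{\mu}$: such a trip enters $G_{\skewschubert}$ along the $\mu$-boundary, wraps around $R_{SW}$, and exits at another boundary edge, placing $R_{SW}$ on its right throughout. By the labeling convention (vertical steps of $P_\mu$ contribute the labels in $I_\mu$, cf.\ Definition \ref{def: I' and Imu}), the set of such $i$ is exactly $I_\mu$, giving (d).
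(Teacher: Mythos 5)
Your overall strategy matches the paper's: (a) and (b) by a local comparison of the plabic bridge picture with the lattice picture, (c) as a consequence, and (d) by counting which trips place $R_{SW}$ on their right. The paper dispatches (a) and (b) as ``visually clear,'' noting only that the bijection sends a box to the region containing its northeast corner and that lattice trips are $45^{\circ}$ rotations of plabic trips; your inductive plan is a legitimate way to make that precise.

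There is, however, a concrete error in your treatment of orientations in part (b). You assert that a trip $T_i$ with $i \in I_{\mu}$ is clockwise and that the remaining trips are counterclockwise. This is backwards. In the paper's running example, $T_5$ has $5 \in I_{\mu} = \{5,6,8,11,12\}$ and is explicitly counterclockwise, while $T_4$ has $4 \notin I_{\mu}$ and is clockwise; more generally, Remark \ref{rmk: source trip cases} together with the analysis in the proof of Lemma \ref{lem: cluster labeling coincide} (Case A) shows that $i \in I_{\mu}$ forces $T_i$ to be counterclockwise, and the paper's proof of part (d) relies precisely on the statement that $R_{SW}$ is labeled by the sources of the \emph{counterclockwise} trips. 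If one combined your claim in (b) with the box-labeling convention of Definition \ref{def: trip on skew} (interior for clockwise, exterior for counterclockwise), one would conclude that $i \in I_{\mu}$ labels the small region enclosed by $T_i$, which is wrong ($\mu$ lies in the exterior of $T_5$, not its interior). You reach the correct conclusion in part (d) only because your argument there works directly with ``$R_{SW}$ to the right of the plabic trip'' and never invokes the erroneous orientation assignment from (b). The fix is simply to swap clockwise and counterclockwise in your description of which trips wrap around $R_{SW}$; once that is corrected, your plan is consistent with the paper's.
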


\begin{proof}
Parts (a) and (b) are visually clear, see Figure \ref{fig:skew plabic and a trip}. For each box in $\skewschubert$ there is a unique region for $G_{\skewschubert}$ containing its northeast corner. The steps in a lattice trip are obtained from the steps in the corresponding trip for $G_{\skewschubert}$ by a $45$ degree rotation as in Figure \ref{fig:black and white bridge}. Part (c) is clear from (a) and (b). 

For (d) we observe that $R_{SW}$ is labeled by the sources of all counterclockwise trips in $G_{\skewschubert}$, while the counterclockwise lattice trips are labeled by the elements of $I_{\mu}$, see Remark \ref{rmk: source trip cases}. 
\end{proof}

\begin{remark}
A cautious reader might notice that the plabic graph $G_{\skewschubert}$ in the left of Figure \ref{fig:skew plabic and a trip} is not isomorphic to square grid (for example, most regions have 6 sides). Still, since we only use regions and their labels, we can use Lemma \ref{lem: source vs lattice source} to replace the data of $G_{\skewschubert}$ by its lattice analogue.
\end{remark}

\begin{figure}[!h]
    \centering
    \begin{tikzpicture}
    \draw[black!70] (0,0)--(0,1)--(1,1)--(1,0)--(0,0);
    
    \node [scale =0.5, circle, draw=black, fill=black] (b) at (0.25, 0.75) {};
    \node [scale =0.5, circle, draw=black, fill=white] (w) at (0.75, 0.25) {};
    
    \node [scale =0.2, circle, draw=black, fill=black] (b1) at (0.5, 1) {};
    \node [scale =0.2, circle, draw=black, fill=black] (b2) at (0, 0.5) {};
    \node [scale =0.2, circle, draw=black, fill=black] (b3) at (0.5, 0) {};
    \node [scale =0.2, circle, draw=black, fill=black] (b4) at (1, 0.5) {};
    
    \draw[thick] (b)--(w);
    \draw[thick] (b1)--(b)--(b2);
    \draw[thick] (b3)--(w)--(b4);
\end{tikzpicture}
\qquad\qquad
\begin{tikzpicture}
    \draw[black!70] (0,0)--(0,1)--(1,1)--(1,0)--(0,0);

    \node [scale =0.5, circle, draw=black, fill=black] (b) at (0, 1) {};
    \node [scale =0.5, circle, draw=black, fill=white] (w) at (0, 0) {};

    \node [scale =0.2,circle, draw=black, fill=black] (p1) at (1,1) {};
    \node [scale =0.2,circle, draw=black, fill=black] (p2) at (1,0) {};
    
    \draw[thick] (b)--(w);
    \draw[thick] (b)--(p1);
    \draw[thick] (w)--(p2);
\end{tikzpicture}
    \caption{Identification between plabic graph on $\skewschubert$ with $\skewschubert$}
    \label{fig:black and white bridge}
\end{figure}




\begin{lemma}
\label{lem: trip is f}
The trip permutation for the plabic graph $G_{\skewschubert}$ agrees  with bounded affine permutation $f_{\skewschubert}$.
\end{lemma}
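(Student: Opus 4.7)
\medskip

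\noindent\emph{Proof plan.} By Lemma \ref{lem: source vs lattice source}(b), the trips of $G_{\skewschubert}$ are in bijection with the lattice trips $T_i$ of Definition \ref{def: trip on skew}, so my plan is to verify that for every $i \in [n]$ the lattice trip $T_i$ terminates at the edge labeled $f_{\skewschubert}(i) \bmod n$. This will be done by a direct geometric case analysis, using the explicit description of $f_{\skewschubert}$ from Lemma \ref{lem: baf for skew}: $f_{\skewschubert}$ matches each horizontal edge of $\mu$'s boundary path labeled $a + \mubar_a$ with the horizontal edge of $\lambda$'s boundary path labeled $a + \lambdabar_a$ in the same column $a$, and each vertical edge $b_j = n-k-\mu_j+j \in I_\mu$ with the vertical edge $d_j + j - 1 \in I_\lambda$ in the same row $j$, all modulo $n$.

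Each boundary edge $i$ of $\lambda$ is either horizontal (of the form $a + \lambdabar_a$) or vertical (in $I_\lambda$), and the initial turning rule then sends $T_i$ moving south or west, respectively. In every case the trip follows an alternating SW staircase inside the rectangle until it encounters a convex corner of $\mu$'s boundary (or the SE/SW corner of the ambient rectangle), reflects once, and proceeds in a straight perpendicular line until exiting through another boundary edge of $\lambda$. The plan is to trace this trajectory in two main cases.

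In Case 1 ($i = a + \lambdabar_a$ horizontal on $\lambda$), the staircase descends column $a$ of $\skewschubert$ and, in general, traverses several columns further to the southwest before reflecting off $\mu$'s boundary. The reflection corner is uniquely determined by either the column $a'$ with $a' + \mubar_{a'} = i$ (when $i \notin I_\mu$) or the row $j$ with $b_j = i$ (when $i \in I_\mu$), and the reflected perpendicular segment then exits through the edge labeled $a' + \lambdabar_{a'}$ or $d_j + j - 1$ respectively, which is exactly $f_{\skewschubert}(i) \bmod n$. Case 2 ($i \in I_\lambda$ vertical) is entirely symmetric, with the roles of rows and columns interchanged. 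Fixed points of $f_{\skewschubert} \bmod n$ correspond to the empty columns ($\mubar_a = \lambdabar_a$, so $v_{a+\mubar_a}=0$) and analogously to the empty rows, in which case the lattice trip is degenerate and returns immediately to its starting edge, as required.

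The main obstacle will be the careful bookkeeping of the staircase geometry: the alternating SW path from edge $i$ can cross several columns or rows of $\skewschubert$ before reaching $\mu$'s boundary, and the behavior near concave corners of $\mu$ (where $\mu$'s boundary bends inward) requires a separate subcase analysis to identify the correct reflection corner. Once the reflection is located, however, the identification of the exit edge is essentially forced by the indexing conventions in Definitions \ref{def: skew grassmann necklace} and \ref{def: trip on skew}, combined with the pairing between horizontal/vertical edges of $\mu$ and $\lambda$ induced by $f_{\skewschubert}$.
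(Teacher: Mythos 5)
Your proposal and the paper agree on the initial reduction (via Lemma \ref{lem: source vs lattice source}, replace the plabic trips by lattice trips $T_i$), but the core of the argument is genuinely different, and yours is missing the key simplifying observation.

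The paper labels the interior lattice edges by a "diagonal height": the southern and eastern boundary of $\sq_{a,i}$ both get label $a+i-1$. The crucial point is that this label is \emph{constant} along every step of the SW diagonal staircase portion of a lattice trip. So the trip $T_j$, which starts at boundary label $j$, arrives at the boundary of $\mu$ at an edge with the \emph{same} label $j$, regardless of how many columns or rows it crosses and regardless of any concave corners of $\mu$. After that single reflection, the trip travels in a straight line north or east, which is read off directly from Figure \ref{fig:wiring-diag-affine} as $f_{\skewschubert}(j) \bmod n$. One paragraph, no casework.

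Your plan instead proposes to trace the staircase geometrically, case by case, keeping track of which columns $a'$ and rows $j$ the staircase crosses and where exactly on $\mu$'s boundary the reflection occurs, with a "separate subcase analysis" near concave corners of $\mu$. You correctly identify that this bookkeeping is the main obstacle, but you do not resolve it: you assert that "the reflection corner is uniquely determined by either the column $a'$ with $a' + \mubar_{a'} = i$ ... or the row $j$ with $b_j = i$" without proving it, and this is precisely what the diagonal-label invariant supplies for free. As written, the proposal is an outline rather than a proof, and the missing ingredient is exactly the invariant the paper uses. If you carried out the case analysis honestly you would eventually be forced to rediscover a version of this invariant, since that is what makes all the cases collapse to the same statement.

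One smaller point: the paper also flags (in Remark \ref{rem: lollipop}) that fixed points of $f_{\skewschubert}\bmod n$ require a separate check on the decorations of lollipop vertices; your plan only addresses this for empty columns and says "analogously" for empty rows, but doesn't tie it back to the plabic graph's decorated permutation, which is what Corollary \ref{cor: positroid var coincide} actually needs.
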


\begin{proof}
By Lemma \ref{lem: source vs lattice source} we can replace the trips in $G_{\skewschubert}$ by the lattice trips $T_i$. So it is sufficient to prove that $T=f_{\skewschubert}\pmod n$.

We can label all the edges between boxes in $\skewschubert$ by their (lattice) distance to the southeast corner. Specifically, we label the southern (horizontal) and the eastern (vertical) boundary of $\sq_{a,i}$ by $a+i-1$. 
Along the trip $T_j$ we first move along the diagonal staircase  (see Figure \ref{fig:skew plabic and a trip}, right) where all edges have the same labels. In particular, the initial label $j$ equals the label on the boundary of $\mu$ that the trip hits. If we hit a vertical step of $\mu$, we move horizontally, and if we hit a horizontal one we move vertically - this agrees with the definition of $f_{\skewschubert}$ in Figure \ref{fig:wiring-diag-affine}. Therefore $T=f_{\skewschubert}\pmod n$.
\end{proof}
\begin{remark}
\label{rem: lollipop}
The fixed points of $f_{\skewschubert}\pmod n$ correspond to special ``lollipop" vertices in Definition \ref{def: G skew}(d). One can check that their decorations (as in \cite{postnikov}) agree as well.
\end{remark}

\begin{corollary}\label{cor: positroid var coincide}
The positroid variety associated to the plabic graph $G_{\skewschubert}$ coincides with $S^{\circ}_{\skewschubert}$.
\end{corollary}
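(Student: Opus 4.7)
The plan is to combine the previously established identifications in essentially a one-line argument, so the ``proof'' is really an observation that all the pieces are in place. By construction \cite{postnikov, KLS, GL19}, a reduced plabic graph $G$ with decorated trip permutation $\tau$ determines an open positroid variety $\Pi_f^{\circ} \subseteq \Gr(k,n)$, where $f$ is the bounded affine permutation whose reduction mod $n$ equals $\tau$, with lifts dictated by the decorations of the fixed points (black lollipop $\leftrightarrow f(i)=i+n$, white lollipop $\leftrightarrow f(i)=i$).

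First I would invoke Lemma \ref{lem: trip is f}, which identifies the trip permutation of $G_{\skewschubert}$ (mod $n$) with $f_{\skewschubert} \pmod n$. Next I would appeal to Remark \ref{rem: lollipop} to confirm that the decorations on the fixed points of the trip permutation match the data of whether $f_{\skewschubert}(i)=i$ or $f_{\skewschubert}(i)=i+n$. Together these show that the decorated trip permutation of $G_{\skewschubert}$ lifts precisely to the bounded affine permutation $f_{\skewschubert}$, hence the positroid variety associated to $G_{\skewschubert}$ is $\Pi_{f_{\skewschubert}}^{\circ}$.

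Finally, I would close the loop by applying Theorem \ref{thm: skew as positroid}, which states that $S_{\skewschubert}^{\circ} \cong \Pi_{\mathcal{I}_{\skewschubert}}^{\circ} = \Pi_{f_{\skewschubert}}^{\circ}$ under the bijection of Lemma \ref{lem: GN to BA bijection}. Chaining these identifications yields the desired equality of positroid varieties.

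The only potential obstacle is bookkeeping at the fixed points: one must verify that the lollipops created in Definition \ref{def: G skew}(d) carry the color predicted by whether the corresponding $\sq_{a,i}$ sits inside $\mu$ or strictly outside $\lambda$, which is the content of Remark \ref{rem: lollipop}. Since the lattice-trip reformulation of Lemma \ref{lem: source vs lattice source} makes the matching between counterclockwise trips and elements of $I_{\mu}$ transparent (Remark \ref{rmk: source trip cases}), this check is routine. No further cluster-theoretic input is needed for this corollary itself.
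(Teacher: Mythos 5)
Your proposal is correct and takes essentially the same route as the paper: the paper's proof also cites Lemma \ref{lem: trip is f} and Remark \ref{rem: lollipop}, and (implicitly) Theorem \ref{thm: skew as positroid}. One small omission: you take for granted that $G_{\skewschubert}$ is \emph{reduced}, which is needed before the trip-permutation dictionary determines the positroid variety. The paper flags this explicitly (``One can check that $G_{\skewschubert}$ is reduced by verifying the conditions for trips in \cite[Theorem 13.2]{postnikov}''), and you should do likewise, since without reducedness the decorated trip permutation need not encode the correct positroid.
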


\begin{proof}
One can check that the plabic graph $G_{\skewschubert}$ is reduced by verifying the conditions for trips in \cite[Theorem 13.2]{postnikov}. By \cite{postnikov} the positroid variety associated to a reduced plabic graph is determined by the decorated trip permutation, and the result follows from Lemma \ref{lem: trip is f} and Remark \ref{rem: lollipop}.
\end{proof}

\begin{lemma}\label{lem: cluster labeling coincide}
For given $\skewschubert$, the lattice source labeling at the square $\sq_{a,i}$ in $\skewschubert$ coincides with $I'(a,i)$. The source labeling at $\mu$ is $I_{\mu}$. 
\end{lemma}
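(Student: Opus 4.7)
The plan is to reduce the lemma to classical features of reduced plabic graphs. The statement about the label of $\mu$ (equivalently, of the region $R_{SW}$) being $I_\mu$ is exactly part (d) of Lemma \ref{lem: source vs lattice source}, so only the claim $\sq_{a,i}\mapsto I'(a,i)$ requires argument. I will exploit two standard facts about reduced plabic graphs \cite{postnikov}: (F1) the source labels of regions adjacent to boundary edges of the disk form the source Grassmann necklace associated to the decorated trip permutation; and (F2) any two adjacent regions have source labels that differ in a single element. Together with the labels on a connected set of reference regions, these uniquely pin down the entire source labeling.

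First I handle boundary regions. By Lemma \ref{lem: trip is f}, the decorated trip permutation of $G_{\skewschubert}$ is $f_{\skewschubert}\pmod{n}$, and by Lemma \ref{lem: necklace for skew} together with Definition \ref{def: skew grassmann necklace} its source Grassmann necklace assigns $I'(a,i)$ to each $\sq_{a,i}\in R(\skewschubert)$, with $I_\mu$ in the remaining positions. Applying (F1), the region of every $\sq_{a,i}\in R(\skewschubert)$ therefore carries the label $I'(a,i)$, and $R_{SW}$ carries $I_\mu$.

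Next I propagate inward. Every box of $\skewschubert$ is joined to a boundary box or to $R_{SW}$ by a chain of three types of adjacencies: horizontal $\sq_{a,i}\leftrightarrow\sq_{a+1,i}$, vertical $\sq_{a,i}\leftrightarrow\sq_{a,i+1}$, or adjacency of a box with $R_{SW}$ across part of the $\mu$--boundary. Across each of these, Lemma \ref{lem: J inductive}(a),(b) (in the first two cases) and Lemma \ref{lem: J tilda inductive} (in the third) show that $I'(\cdot)$ changes by a single-element swap: $a+\mubar_a\leftrightarrow a+i$ horizontally, $b_{i+1}\leftrightarrow a+i$ vertically, and the analogous swap between $b_j$ and $a+j-1$ across the $\mu$--boundary. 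Combined with (F2) and the known boundary data, the uniqueness of the source labeling forces the candidate assignment $\sq_{a,i}\mapsto I'(a,i)$ to agree with the actual source labeling throughout.

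The key combinatorial input is thus the single-swap behavior of $I'$ across adjacent boxes, which is already recorded in Lemmas \ref{lem: J inductive} and \ref{lem: J tilda inductive}, so no new computation is needed. The one subtlety one might worry about is verifying, at each interior edge of $G_{\skewschubert}$, that the specific swapped pair is exactly the pair of trip labels crossing that edge; this refinement is unnecessary, however, since uniqueness of source labels determined by the boundary data plus the single-swap rule already fixes the labeling once one correct label (e.g.\ at $R_{SW}$) and the swap structure are known.
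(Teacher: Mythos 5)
Your approach is genuinely different from the paper's: the paper works trip-by-trip, fixing an index $l$ and directly verifying that the set of boxes whose lattice source label contains $l$ (namely $X_l$ or $N_l$) coincides with $\{\sq_{a,i}: l\in I'(a,i)\}$, using a case analysis on the shape of the trip $T_l$ and the combinatorics of $I'$ from Lemma~\ref{lem: J inductive}. You instead pin down the labels on the boundary ribbon and $R_{SW}$ via the Grassmann necklace, and then try to propagate inward.

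The propagation step has a real gap, and it is exactly the \lq\lq refinement\rq\rq\ you dismiss in your final paragraph as unnecessary. Knowing that the actual source labeling $L$ changes by \emph{some} single-element swap across each interior edge, and knowing the boundary values of $L$, does \emph{not} determine $L$ in the interior. For instance, an interior region adjacent to two regions with known labels $\{1,2\}$ and $\{3,4\}$ could a priori carry any of $\{1,3\},\{1,4\},\{2,3\},\{2,4\}$ compatibly with the single-swap constraint; larger graphs admit genuinely different labelings with the same boundary data if only the existence of a single swap (rather than its identity) is imposed. To conclude $L = I'$ from the agreement on the boundary plus the single-swap behaviour of both, you must check that the two swaps agree at every interior edge, i.e.\ that the pair $\{a+\mubar_a, a+i\}$ (resp.\ $\{b_{i+1}, a+i\}$, etc.) is precisely the pair of trips of $G_{\skewschubert}$ crossing the corresponding edge. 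That is the essential content of the lemma, and it is what the paper's case analysis actually establishes (in the equivalent language of the lattice trips). Without that check, the \lq\lq uniqueness of source labeling\rq\rq\ you invoke is not a theorem but a restatement of the thing to be proved. Your final sentence even concedes this, since it conditions the uniqueness on \lq\lq the swap structure\rq\rq\ being known, which is the very verification you had just declared unnecessary.

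A secondary caveat: your fact (F1) is quoted from \cite{postnikov}, but the paper works with \emph{source} labels of regions and a \emph{counterclockwise} boundary enumeration (see the footnote in Definition~\ref{def: trip on skew}), whereas Postnikov's statement is for target labels with the opposite orientation. The translation is mechanical but worth making explicit; as written, the citation does not directly support the convention you are using. This is a minor issue compared with the propagation gap above.
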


\begin{proof}
In line with Remark \ref{rmk: source trip cases}, we consider two cases for a trip $T_l$.

{\bf Case A:} Let $l=b_m$ for some $1\le m\le k$, i.e., let $l\in I_\mu$. Define $\sq_{\widehat{a},\widehat{i}}$ as the first box where its rightmost edge is in $P_{\mu}\cap T_l$. 
Since $l\in I_\mu$, the trip $T_l$ has a counterclockwise orientation and we can group the boxes of $X_l$ into four types. The first type are boxes $\sq_{a,i}$ directly above the diagonal path, denoted $\overline{X}_l$. Note that $\sq_{a,i}\in\overline{X}_l$ then the $i$-th element (in increasing order) is $b_m$, since the vertical boundary of $\mu_{a,i}$ contains $\sq_{a,i}\in\overline{X}_l$.
The second type are boxes $\sq_{a,i}$ above $\overline{X}_l$, where $a<\widehat{a}$ and $i>\widehat{i}$. For these boxes, we have $l\in I'(a,i)$ from Lemma \ref{lem: J inductive}. The third type are boxes $\sq_{a,i}\in X_l$ such that $a\ge \widehat{a}$. For these boxes, the first $m$ terms of $I'(a,i)$ are $\{b_1,\dots,b_m\}$ since the boxes are above $\mu$ and after the $l$-th step in the path $P_l$, therefore, $l=b_m\in I'(a,i)$. The final type of boxes to consider in $X_l$ are boxes $\sq_{a,i}$ where $a<\widehat{a}$ and $i<\widehat{i}$. For these boxes, the last $k-m+1$ terms in $I'(a,i)$ are given by $\{b_m,\dots,b_k\}$ since they are below $\sq_{\widehat{a},\widehat{i}}$ which gives the vertical step associated to $l=b_m$ and have that $l\in I'(a,i)$.

{\bf Case B:} Let $l\ne b_m$ for some $1\le m\le k$, i.e., let $l\not\in I_\mu$. Then the trip $T_l$ forms a clockwise-orientated path and the boxes are given by $N_l$. Here we have two types of boxes. The first type are squares $\sq_{a,i}\in N_l$ that sit directly above the diagonal path formed from $T_l$ which are denoted $\overline{N}_l$. Similar to Case A, we have that $\sq_{a,i}\in \overline{N}_l$ if and only if the $i$-th term of $I'(a,i)$ (in increasing order) is $l$; therefore, $l\in I'(a,i)$. The second type are boxes $\sq_{a,i}$ above $\overline{N}_l$ up to the boundary of $\lambda$. For these boxes we use Lemma \ref{lem: J inductive} to see that $l\in I'(a,i)$.

\end{proof}
We are now ready to prove Theorem \ref{thm: skew schubert cluster}. 
\begin{proof}[Proof of Theorem \ref{thm: skew schubert cluster}]
The part (b) is a consequence of Corollary \ref{cor: positroid var coincide}; the quiver for $\skewschvar$ is the dual graph of $G_{\skewschubert}$ labeled by the source labeling. Lemma \ref{lem: cluster labeling coincide} implies that the cluster variables from $G_{\skewschubert}$ by source labeling (or, equivalently, lattice source labeling) are $\Delta_{I'(a,i)}$, which proves the part (a).
Under the identification of regions of the plabic graph $G_{\skewschubert}$ with boxes in $\skewschubert$ as in Lemma \ref{lem: source vs lattice source}, the arrow rule of $Q_{\skewschubert}$ in part (c) is a consequence of \cite{scott} and \cite[Section 14]{postnikov}. 
\end{proof}

\subsection{Cluster variable comparison} From Theorem \ref{thm: from skew schubert to braid variety}, the skew shaped positroid $S^{\circ}_{\skewschubert}$ is very closely related to the braid variety $X(\longest\br_{\skewschubert})$, that also admits a cluster structure, cf. \cite{CGGLSS22,SW}. In this section, we briefly compare the cluster variables in both constructions. 

An initial seed for the cluster structure on $X\left(\longest\br_{\skewschubert}\right)$ is explicitly constructed in \cite{SW}, see \cite[Proposition 5.20]{CGGLSS22}. The cluster variables in $X\left(\longest\br_{\skewschubert}\right)$ are in bijection with the crossings of the braid $\br_{\skewschubert}$. Let $\br_{\skewschubert} = \sigma_{i_1}\cdots \sigma_{i_{r}}$. If the flag right after the $j$-th crossing of $\br_{\skewschubert}$ is given by $(w_1|\cdots|w_k)$ then, up to a sign, the corresponding cluster variable is the $i_j$-th principal minor of the matrix $(w_1|\cdots|w_k)$. Note that this coincides with the determinant of $(w_1|\cdots|w_{i_j}|e_{i_j+1}|\dots|e_{k})$, where $e_1, \dots, e_k$ is the standard basis of $\C^k$.

If $(v_1|\dots|v_n)$ is a matrix representing an element of $S^{\circ}_{\skewschubert}$, we may normalize so that $v_{b_i} = e_i$, where $I_{\mu} = \{b_1 < \cdots < b_k\}$. Recall that, for a box $\sq_{a,i} \in \skewschubert$, the corresponding cluster variable 
is $\minor_{I'(a,i)}$. From Proposition \ref{prop: labeling relationship between I'(a,i) and I mu}, $I'(a,i) = J(a,i) \cup \{b_{i+1}, \dots, b_{k}\}$. Thus,
\[
\minor_{I'(a,i)}(V)=\det(v_{j_1}|\cdots|v_{j_i}|v_{b_{i+1}}|\cdots|v_{b_k}) = \det(v_{j_1}|\cdots|v_{j_i}|e_{i+1}|\cdots|e_{k}).
\]
where $J(a, i) = \{j_1, \dots, j_i\}$. Now, the space $V(a,i)$ is spanned by the vectors $j_1, \dots, j_i$, see Definition \ref{def: V(a,i)}. Since a braid box $\sq_{a,i} \in \skewschubert$ corresponds to a crossing between the $i$-th and $(i+1)$-st strands 
of $\br_{\skewschubert}$ we see that, upon setting the cluster variables corresponding to non-braid boxes equal to $1$, the cluster variables in the seed of $S^{\circ}_{\skewschubert}$ given by Theorem \ref{thm: skew schubert cluster} coincide, up to signs, with the cluster variables in the seed of $X\left(\longest\br_{\skewschubert}\right)$ obtained in \cite{SW}. 

Moreover, it follows by construction that, upon deleting the frozen vertices corresponding to non-braid boxes of $\skewschubert$, the quiver $Q_{\skewschubert}$ coincides with the \emph{opposite} of the quiver $Q_{X\left(\longest\br_{\skewschubert}\right)}$. Summarizing:

\begin{theorem}
    Consider the isomorphism $\widetilde{\Omega}^{1}: S^{\circ, 1}_{\skewschubert} \to X\left(\longest\br_{\skewschubert}\right)$. The dual map $\widetilde{\Omega}^{1,\ast}:\C\left[X\left(\longest\br_{\skewschubert}\right)\right] \to \C\left[S^{\circ, 1}_{\skewschubert}\right]$ sends, up to signs, the cluster variables in the initial seed of $X\left(\longest\br_{\skewschubert}\right)$ from \cite{SW} to the cluster variables in the initial seed of $S^{\circ, 1}_{\skewschubert}$ given by Corollary \ref{cor: skew schubert cluster}.
\end{theorem}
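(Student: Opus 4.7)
The proof is essentially spelled out in the discussion preceding the statement, so the plan is to organize these observations into a clean argument. First, I will recall the Shen--Weng cluster variable construction as formulated in \cite[Proposition 5.20]{CGGLSS22}: at each crossing of $\br_{\skewschubert}$, say the $j$-th crossing with color $i_j$, the associated cluster variable is defined by picking the unique matrix $M=(w_1|\cdots|w_k)$ representing the flag right after the crossing in a specific gauge determined by the braid matrices, and then taking (up to a sign) the $i_j$-th principal minor $\det(w_1|\cdots|w_{i_j}|e_{i_j+1}|\cdots|e_k)$. Under the isomorphism $\widetilde\Omega^{1}$ from Theorem \ref{thm: from skew schubert to braid variety}, a braid box $\sq_{a,i}\in\skewschubert$ corresponds to the crossing of color $i$, and the subspaces labeling the regions on the right of this crossing are precisely the $V(a,\ell)$ for $\ell=1,\dots,k$.

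Next, I will pin down the gauge. Inside $S^{\circ,1}_{\skewschubert}$, use Remark \ref{rmk:Imu1} and the condition $\Delta_{I'(a,\lambdabar_a)}=1$ for $\sq_{a,\lambdabar_a}\in R^{1}(\skewschubert)$ to normalize a matrix $V=(v_1|\cdots|v_n)$ representing the point so that $v_{b_i}=e_i$ for $i=1,\ldots,k$, where $I_{\mu}=\{b_1<\cdots<b_k\}$. This is exactly the gauge chosen in Theorem \ref{thm: from skew schubert to braid variety} to go from the data of the flag configuration back to the matrix $V$, and matches the Shen--Weng gauge because the leftmost framed flag in their construction is the standard framed flag. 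With this choice, by Proposition \ref{prop: labeling relationship between I'(a,i) and I mu} we have $I'(a,i)=J(a,i)\cup\{b_{i+1},\ldots,b_k\}$, and using $v_{b_s}=e_s$ for $s>i$ the Pl\"ucker coordinate reduces to
\[
\Delta_{I'(a,i)}(V)=\det(v_{j_1}|\cdots|v_{j_i}|e_{i+1}|\cdots|e_k),
\]
where $J(a,i)=\{j_1,\ldots,j_i\}$. Since $V(a,i)=\spann\{v_{j_1},\ldots,v_{j_i}\}$ is by construction the column span of the first $i$ columns of the matrix $(w_1|\cdots|w_k)$ representing the flag right after the crossing at $\sq_{a,i}$, the right-hand side is up to sign the $i$-th principal minor at that crossing, which is the Shen--Weng cluster variable.

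For boxes on the top row of a column (the crossed-out boxes, equivalently elements of $R^{1}(\skewschubert)$), there is no crossing in $\br_{\skewschubert}$ and the corresponding frozen variable in the $S^{\circ}_{\skewschubert}$ cluster structure has been set to $1$, which matches the construction of $\widetilde\Omega^{1}$ and is consistent with the fact that no cluster variable is assigned to such boxes on the braid side. Thus, upon identifying braid boxes of $\skewschubert$ with crossings of $\br_{\skewschubert}$, the dual map $\widetilde\Omega^{1,\ast}$ sends each Shen--Weng cluster variable to the corresponding $\Delta_{I'(a,i)}$ up to a sign. Finally, the quiver matches on the nose after reversing arrows: the quiver $Q_{\skewschubert}$ of Theorem \ref{thm: skew schubert cluster} (with frozen vertices attached to non-braid boxes deleted) is the opposite of $Q_{X(\longest\br_{\skewschubert})}$ because the two constructions read the same local combinatorics (the plabic-graph/source-labeling rule versus the braid-crossing rule) with opposite conventions on the direction of arrows along a fixed strand; this is a purely combinatorial check at each box. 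The only mildly subtle step is the matching of the gauges so that the two normalizations agree, which is taken care of by working in $S^{\circ,1}_{\skewschubert}$ and by the explicit reconstruction of $V$ in the proof of Theorem \ref{thm: from skew schubert to braid variety}; no hard computation is required beyond this bookkeeping.
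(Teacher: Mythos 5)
Your argument reproduces, in slightly more organized form, exactly the reasoning the paper gives in the paragraphs immediately preceding the theorem statement (the paper does not supply a separate formal proof environment for this theorem): the same gauge $v_{b_i}=e_i$, the same reduction $\Delta_{I'(a,i)}=\det(v_{j_1}|\cdots|v_{j_i}|e_{i+1}|\cdots|e_k)$ via Proposition \ref{prop: labeling relationship between I'(a,i) and I mu}, the same identification of $V(a,i)$ with the column span at the corresponding crossing, the same handling of $R^1(\skewschubert)$-boxes, and the same observation that the quivers agree up to reversal. The proposal is correct and matches the paper's approach.
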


\section{Splicing for skew shaped positroids}
\label{sec: splicing}
In this section, we generalize the splicing map introduced in \cite{GS} to a splicing map for $S_{\skewschubert}^{\circ}$ and investigate its cluster algebraic interpretation.
\subsection{Freezing a column}

Let $S_{\skewschubert}^{\circ}$ be the skew shaped positroid in the Grassmannian $\Gr(k,n)$. 

\begin{definition}\label{def: open subset in skew}
    Given $1\le a\le n-k$, we define the open subset $U_{a}\subset S_{\skewschubert}^{\circ}$ by the inequalities \begin{equation}
        U_{a}:=\left\{V\in S_{\skewschubert}^{\circ}: \Delta_{I'(a,i)}\neq 0,\text{ for }\mubar_a+1\le i\le \lambdabar_a \right\}.
    \end{equation}

\end{definition}
Given a skew diagram $\skewschubert$, we cut it along the boundary of the $(a-1)$-st and $a$-th column (from the right) where the $(a-1)$-st column is contained in the right part of the diagram and the $a$-th column is contained in the left part of the diagram.
We get two skew diagrams, $\lambda^{a,L}/\mu^{a,L}$ on the left and $\lambda^{a,R}/\mu^{a,R}$ on the right.

\begin{remark}
    We can explicitly determine the resulting $\lambda$ and $\mu$ for the new diagrams using
    \begin{align*}
    &\lambda^{a,L}_i=\begin{cases}
        n-k-a+1,&\text{if }\lambda_i>n-k-a,\\\lambda_i,&\text{otherwise}
    \end{cases}\\
    &\mu^{a,L}_i=\begin{cases}
        n+k+a+1,&\text{if }\mu_i>n-k-a\\\mu_i,&\text{otherwise}
    \end{cases}\\    
    &\lambda^{a,R}_i=\begin{cases}
        \lambda_i-n+k+a-1,&\text{if }\lambda_i>n-k-a\\0,&\text{otherwise}
    \end{cases}\\
    &\mu^{a,R}_i=\begin{cases}
        \mu_i-n+k+a-1,&\text{if }\mu_i>n-k-a\\0,&\text{otherwise}.
    \end{cases}
    \end{align*}
\end{remark}

It is clear from definitions that  the corresponding braids satisfy
$$
\beta_{\skewschubert}=\beta_{L}\beta_{R},\ \mathrm{where}\ 
\beta_L=\beta_{\lambda^{a,L}/\mu^{a,L}},\ \beta_R=\beta_{\lambda^{a,R}/\mu^{a,R}}. 
$$

\begin{definition}
Given $1\le a\le n-k$, we define the \newword{flag at the cut} $\CF^{a}$ as the following sequence of subspaces:
\begin{equation}
\label{eq: flag cut}
\Wop_1   \subset\cdots \subset \Wop_{\mubar_a} \subset V\left(a,\mubar_a+1\right)\subset \cdots\subset  V\left(a,\lambdabar_a\right) \subset V\left(d_{\lambdabar_a+1},\lambdabar_a+1\right)\subset \cdots V(d_{k},k)
\end{equation}
where $\left(d_{\lambdabar_a+1},\lambdabar_a+1\right),\ldots (d_{k},k)$ are the rightmost boxes in rows $\lambdabar_a+1,\ldots,k$ as in \eqref{eq: def di}. 
\end{definition}
One can check (see Example \ref{ex: flag cut} below) that $\CF^a$ is the flag corresponding to the boundary between $\beta_L$ and $\beta_R$ under the map in Theorem \ref{thm: from skew schubert to braid variety}. In particular, \eqref{eq: flag cut} indeed defines a complete flag in $\C^k$.

\begin{lemma}
\label{lem: freeze column}
We have $V\in U_a$ if and only if the flag $\CF^a$  is transversal to the flag $\CF^W$.
\end{lemma}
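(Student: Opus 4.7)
The plan is to unpack the transversality condition $\CF^a\pitchfork \CF^W$ position-by-position and match each condition with a Pl\"ucker coordinate. Recall that two complete flags $\CF,\CG$ in $\C^k$ are transversal iff $\CF_i\cap \CG_{k-i}=0$ for every $i$ (equivalently, since dimensions add to $k$, $\CF_i+\CG_{k-i}=\C^k$). So I would check transversality of $\CF^a_i$ against $W_{k-i}$ for each of the three blocks appearing in \eqref{eq: flag cut}.

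The key technical step is a minor-to-transversality dictionary. By Proposition \ref{prop: labeling relationship between I'(a,i) and I mu}, for any box $\sq_{a,i}\in\skewschubert$ we have $I'(a,i)=J(a,i)\cup\{b_{i+1},\ldots,b_k\}$. Since $V(a,i)=\spann\{v_j:j\in J(a,i)\}$ has dimension $i$ by Lemma \ref{lem: dimension V}, while $W_{k-i}=\langle v_{b_{i+1}},\ldots,v_{b_k}\rangle$ has dimension $k-i$, these two subspaces are transversal iff the $k$ vectors indexed by $I'(a,i)$ are linearly independent. Equivalently, $V(a,i)\cap W_{k-i}=0$ iff $\Delta_{I'(a,i)}\neq 0$. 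The same equivalence applies when $V(a,i)$ is replaced by $V(d_i,i)$ with label $I'(d_i,i)$.

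Using this dictionary I would verify transversality in three regimes. For $1\le i\le\mubar_a$, the subspace $\CF^a_i=\Wop_i=\langle v_{b_1},\ldots,v_{b_i}\rangle$, and transversality with $W_{k-i}$ is automatic since $\Delta_{I_\mu}=1$ throughout $S^\circ_{\skewschubert}$ by Remark \ref{rmk:Imu1}. For $\mubar_a+1\le i\le \lambdabar_a$, the condition ``$\CF^a_i=V(a,i)$ transversal to $W_{k-i}$'' becomes exactly $\Delta_{I'(a,i)}\neq 0$, which is by definition the condition defining $U_a$. For $\lambdabar_a+1\le i\le k$, the condition becomes $\Delta_{I'(d_i,i)}\neq 0$: since $\sq_{d_i,i}$ is the rightmost box in row $i$ of $\lambda$, it lies in the boundary ribbon $R(\skewschubert)$ (when $\lambda_i>0$), so by Theorem \ref{thm: skew schubert cluster} the minor $\Delta_{I'(d_i,i)}$ is a frozen cluster variable and hence non-vanishing on $S^\circ_{\skewschubert}$; the degenerate case $\lambda_i=0$ reduces to $\CF^a_i=\Wop_i$, handled exactly as in the first regime.

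Combining the three regimes, the transversality $\CF^a\pitchfork\CF^W$ is equivalent to the conjunction of the conditions $\Delta_{I'(a,i)}\neq 0$ for $\mubar_a+1\le i\le\lambdabar_a$, which is precisely $V\in U_a$. This gives both directions at once. There is no real obstacle here: the proof is just bookkeeping, the only subtlety being to confirm that all the ``free'' transversality conditions (outside the $a$-th column) are consequences of the Grassmann necklace conditions already built into $S^\circ_{\skewschubert}$ via Theorem \ref{thm: skew as positroid} and the description of its frozen variables.
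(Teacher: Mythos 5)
Your proof follows the same three-case decomposition as the paper's own argument, matching transversality of $\CF^a_i$ with $W_{k-i}$ to the Pl\"ucker conditions $\Delta_{I_\mu}\neq 0$, $\Delta_{I'(a,i)}\neq 0$, and $\Delta_{I'(d_i,i)}\neq 0$ in the three index ranges, respectively. The explicit minor-to-transversality dictionary and the note on the degenerate $\lambda_i=0$ case are minor elaborations of steps the paper leaves implicit, but the structure and the key observations are identical.
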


\begin{proof}
The two flags are transversal if, for all $i$, we have $$\CF^a_i+W_{k-i}=\CF^a_i+\left\langle v_{b_{j+1}},\ldots,v_{b_k}\right\rangle=\C^k.$$ We have the following cases:

{\bf Case 1:} For $1\le i\le \mubar_a$, we have $\langle v_{b_1},\ldots,v_{b_i}\rangle+\langle v_{b_{i+1}},\ldots,v_{b_k}\rangle=\C^k$, this is automatic.

{\bf Case 2:} For $\mubar_a+1\le i\le \lambdabar_a$, we have $I'(a,i)=J(a,i)\cup \{b_{i+1},\ldots,b_k\}$ and $V(a,i)=\spann J(a,i)$. Therefore $V(a,i)+W_{k-i}=\C^k$ if and only if $\Delta_{I'(a,i)}\neq 0$, these are precisely the conditions defining $U_a$.

{\bf Case 3:} For $\lambdabar_a+1\le i\le k$, the box $\sq_{d_i,i}$ belongs to the boundary ribbon $R(\skewschubert)$, so $\Delta_{I'(d_i,i)}\neq 0$ and similarly to Case 2 we have 
$V(d_i,i)+W_{k-i}=\C^k$. 
\end{proof}

\begin{example}
\label{ex: flag cut}
In our running example, choose $a=6$. The subset $U_a$ is given by one condition $\Delta_{5,6,8,9,12}\neq 0$. The flag $\CF^a$ is given by 
$$
\langle v_5\rangle \subset \langle v_5,v_6\rangle\subset
\langle v_5,v_6,v_8\rangle\subset
\langle v_5,v_6,v_8,v_9\rangle\subset 
\langle v_5,v_6,v_8,v_{10},v_{11}\rangle,
$$
while the flag $\CF^W$ is given by 
$$
\langle v_{12}\rangle \subset \langle v_{12},v_{11}\rangle\subset
\langle v_{12},v_{11},v_8\rangle\subset
\langle v_{12},v_{11},v_8,v_6\rangle\subset \langle v_{12},v_{11},v_8,v_6,v_5\rangle.
$$
Note that 
$$
\langle v_5\rangle+\langle v_{12},v_{11},v_8,v_6\rangle=\langle v_5,v_6\rangle+\langle v_{12},v_{11},v_8\rangle=\langle v_5,v_6,v_8\rangle+\langle v_{12},v_{11}\rangle=\C^5
$$
since $\Delta_{5,6,8,11,12}\neq 0$. Furthermore,
$\langle v_5,v_6,v_8,v_9\rangle+\langle v_{12}\rangle =\C^5$ if and only if $\Delta_{5,6,8,9,12}\neq 0$ which is the defining condition for $U_a$. Finally, $\langle v_5,v_6,v_8,v_{10},v_{11}\rangle=\C^5$ since $\Delta_{5,6,8,10,11}\neq 0$ on $S^{\circ}_{\skewschubert}$.
\end{example}

\subsection{Splicing: motivation}

Recall Theorem \ref{thm: from skew schubert to braid variety} which relates the skew shaped positroid $S^{\circ}_{\skewschubert}$ to the braid variety $X\left(\longest\beta_{\skewschubert}\right)$ via the map $\Omega$.  Given $V\in S^{\circ}_{\skewschubert}$, we can draw the diagram 
\eqref{eq: flags from V} as follows:
\begin{equation}
\Omega(V)=\left[\CF^{W}\stackrel{\longest}{\dashrightarrow}\CF^{\Wop}\stackrel{\beta_{L}}{\dashrightarrow}\CF^a\stackrel{\beta_{R}}{\dashrightarrow}\CF^0\right].
\end{equation}
 By Lemma \ref{lem: freeze column}, we have $V\in U_a$ if and only if $\CF^a\pitchfork \CF^W$ in $\Omega(V)$. 

\begin{definition}
We define the \newword{splicing map}
$$
\Phi^{\mathrm{flag}}_a:U_a\to X\left(\longest\beta_L\right)\times X\left(\longest\beta_R\right),\ \Phi_a^{\mathrm{flag}}(\Omega(V))=\left(\Omega^L(V),\Omega^R(V)\right)
$$
as follows: 
\begin{equation}
\label{eq: Phi flags}
\Omega^L(V)=\left[\CF^{W}\stackrel{\longest}{\dashrightarrow}\CF^{\Wop}\stackrel{\beta_{L}}{\dashrightarrow}\CF^a\right],\quad  \Omega^R(V)=\left[\CF^{W}\stackrel{\longest}{\dashrightarrow}\CF^{a}\stackrel{\beta_{R}}{\dashrightarrow}\CF^0\right].
\end{equation}
\end{definition}

Note that the condition $\CF^a\pitchfork \CF^W$ ensures that $\Omega^L(V)$ and $\Omega^R(V)$ yield well defined points in the respective braid varieties, and the framings on the flag $\CF^W$ in $\Omega(V),\Omega^L(V)$ and $\Omega^R(V)$ agree.

\begin{remark}
The diagrams \eqref{eq: Phi flags} are defined in terms of unframed flags. We can upgrade them to framed flags using the standard framing on $\CF^W$ on the left, which can be propagated to the right using Remark \ref{rem: propagate framing}.

Note that the framings for all the flags between $\CF^{\Wop}$ and $\CF^a$ in $\Omega(V)$ and $\Omega^L(V)$ agree. However, $\Omega^R$ induces a possibly different framing on $\CF^a$ and on all flags between $\CF^a$ and $\CF^0$.
\end{remark}

Below we use Theorem \ref{thm: from skew schubert to braid variety} again to reinterpret $\Phi^{\mathrm{flag}}$ as a map
$$
\Phi_a:U_a\to S_{{\lambda^{a,L}/\mu^{a,L}}}^{\circ}\times S_{\lambda^{a,R}/\mu^{a,R}}^{\circ},
$$
and explore its cluster-theoretic properties. More precisely, we get the commutative diagram:
\begin{equation}
\label{eq: splicing diagram}
\begin{tikzcd}
U_a \arrow{r}{\Phi_a} \arrow{d}{\Omega} & S_{{\lambda^{a,L}/\mu^{a,L}}}^{\circ}\times S_{\lambda^{a,R}/\mu^{a,R}}^{\circ} \arrow{d}{(\Omega,\Omega)}\\
X\left(\longest\beta_{\skewschubert}\right) \arrow{r}{\Phi_a^{\mathrm{flag}}} & X\left(\longest\beta_L\right)\times X\left(\longest\beta_R\right).
\end{tikzcd}
\end{equation}
If $\Phi_a(V)=(V^L,V^R)$ then $\Omega\left(V^L\right)=\Omega^L,\Omega\left(V^R\right)=\Omega^R$.

\subsection{Splicing: Left diagram}
\label{sec: splicing left}

We have 
$$
\Omega^L(V)=\left[\CF^{W}\stackrel{\longest}{\dashrightarrow}\CF^{\Wop}\stackrel{\beta_{L}}{\dashrightarrow}\CF^a\right],
$$
in particular the pair of transversal flags $\CF^{\Wop}\pitchfork\CF^W$ still corresponds to the basis $v_{b_1},\ldots,v_{b_k}$. We need to understand what happens to subspaces $V(a,i)$.

\begin{lemma}
\label{lem: left diagram}
Assume that $a\le a'\le n-k$, then the long labels $I'(a',i)$ starts with $b_1,\ldots,b_{\mubar_a}$. Furthermore,
\begin{align}
\label{eq: left long}
I'_L(a',i)&=\left[I'(a',i)\setminus \{b_1,\ldots,b_{\mubar_a}\}-a+1\right]\cup \{1,\ldots,\mubar_a\},\\
I_{\mu^L}&=
    \left[I_{\mu}\setminus \{b_1,\ldots,b_{\mubar_a}\}-a+1\right]\cup \{1,\ldots,\mubar_a\}.
\end{align}
\end{lemma}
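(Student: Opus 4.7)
My plan would be to carry out the proof by unpacking the combinatorial definitions and directly comparing the lattice path $P_{a',i}$ (in the original rectangle) with the corresponding path in the left rectangle.

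First, for the statement that $I'(a',i)$ begins with $b_1,\ldots,b_{\mubar_a}$, the key observation is the following: for every row $j$ with $j\le \mubar_a$, the box $\sq_{a,j}\in\mu$, hence $\mu_j\ge n-k-a+1 \ge n-k-a'+1$. Consequently $\max(\mu_j,n-k-a'+1)=\mu_j$, meaning that rows $1,\ldots,\mubar_a$ of $\mu_{a',i}=\mu\cup\mathrm{Rect}(a',i)$ coincide with those of $\mu$. Therefore the boundary paths $\overline{P}_{a',i}$ and $P_\mu$ agree up to (and including) the end of the $\mubar_a$-th vertical step. Since the label of a step is the total number of steps taken to reach it, the first $\mubar_a$ vertical steps of $\overline{P}_{a',i}$ carry the same labels as those of $P_\mu$, namely $b_1,\ldots,b_{\mubar_a}$. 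Because $\mubar_a\le\mubar_{a'}<i$, these are among the first $i$ elements of $I'(a',i)$, i.e. the first $\mubar_a$ entries of $J(a',i)$.

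Second, I will compute $I_{\mu^L}$ directly from $b^L_j = (n-k-a+1)-\mu^L_j+j$, using $\mu^L_j=\min(\mu_j,n-k-a+1)$. Splitting into the two cases $j\le\mubar_a$ (so $\mu^L_j=n-k-a+1$, giving $b^L_j=j$) and $j>\mubar_a$ (so $\mu^L_j=\mu_j$, giving $b^L_j=b_j-(a-1)$) immediately yields the claimed formula for $I_{\mu^L}$.

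Third, for $I'_L(a',i)$ I will apply the same reasoning inside the left rectangle. First I note that the rightmost column of $\mu^L$ that meets row $j$ corresponds to row $j$ of $\mu^L$ reaching $x\ge n-k-a'+1$, which, by the definition of $\mu^L$, holds iff $j\le\mubar_{a'}$. Thus $\mubar^L_{a'-a+1}=\mubar_{a'}$. Writing out $I'_L(a'-a+1,i)$ using the general formula $I'(a,i)=\{b_1,\ldots,b_{\mubar_a},a+\mubar_a,\ldots,a+i-1,b_{i+1},\ldots,b_k\}$ (which follows from Lemma~\ref{lem: J inductive}), and substituting the left-diagram values of $b^L_j$ from the previous step, one sees that every entry in $I'_L(a'-a+1,i)$ arising from indices $j>\mubar_a$ is exactly the corresponding entry of $I'(a',i)$ shifted by $-(a-1)$, while the indices $j\le\mubar_a$ contribute precisely $\{1,\ldots,\mubar_a\}$. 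Collecting the pieces yields the stated identity.

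None of the steps involve a genuine obstacle: the arguments are bookkeeping based on the explicit labels $b_j = n-k-\mu_j+j$ and the combinatorial picture of the boundary path. The only mild subtlety is verifying cleanly that the deviations of $\overline{P}_{a',i}$ from $P_\mu$ occur strictly above row $\mubar_a$, which I will handle by using the monotonicity $\mubar_a\le\mubar_{a'}$ together with $i>\mubar_{a'}$, as sketched above.
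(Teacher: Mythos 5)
Your proof is correct and follows essentially the same path-comparison idea as the paper's proof, which observes that $P_{a',i}$ and $P^L_{a',i}$ coincide except for the initial segment and differ in labels by the constant shift $a-1$ thereafter. You make the argument more explicit by working from the formulas $b_j = n-k-\mu_j+j$, $\mu^L_j = \min(\mu_j, n-k-a+1)$, and the general expansion $I'(\alpha,i) = \{b_1,\ldots,b_{\mubar_\alpha}, \alpha+\mubar_\alpha, \ldots, \alpha+i-1, b_{i+1},\ldots,b_k\}$, whereas the paper leaves the corresponding bookkeeping to visual inspection of the boundary paths; this is a matter of exposition rather than a different route. One small notational point: the lemma writes $I'_L(a',i)$ while you (correctly) index the corresponding box of the left diagram by $a'-a+1$; worth flagging since otherwise the shift by $a-1$ looks like it overcounts.
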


\begin{proof}
For $a+1\le a'\le n-k$ the path $P_{a',i}$ starts with the boundary of $\mu$ contained to the right of the cut, with vertical steps $b_1,\ldots,b_{\mubar_a}$. The similar path $P^{L}_{a',i}$ in the left diagram starts with $\mubar_a$ vertical steps labeled by $1,\ldots,\mubar_a$ instead. The remainders of the paths coincide, but the labels are shifted by $a-1$ which is the number of horizontal steps in $P_{a',i}$ before the cut.
\end{proof}

\begin{definition} 
Given $V=(v_1|\cdots|v_n)$ we define $V^{L}=(w_1|\cdots|w_{n-a+1})\in \Gr(k,n-a+1)$ where 
\begin{equation}
\label{eq: def w}
w_1=v_{b_1},\ldots,w_{\mubar_a}=v_{b_{\mubar_a}}\quad \mathrm{and}\quad 
w_j=v_{j+a-1},\quad \mubar_a<j\le n-a+1.
\end{equation}
\end{definition}

\begin{corollary}
The minors for $V^L$ and $V$ in the left diagram $\lambda^{a,L}/\mu^{a,L}$ coincide after relabeling. In particular, $V^L$ corresponds to a well-defined point in the  skew shaped positroid $S_{\lambda^{a,L}/\mu^{a,L}}^\circ$.
\end{corollary}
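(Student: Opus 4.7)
The plan is to deduce this corollary directly from Lemma \ref{lem: left diagram} by comparing minors column by column, and then invoke the Grassmann necklace characterization of the skew shaped positroid from Theorem \ref{thm: skew as positroid}.

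First, I would introduce the relabeling map $\sigma: \{1,\ldots,n-a+1\} \to \{1,\ldots,n\}$ defined by $\sigma(j) = b_j$ for $1 \le j \le \mubar_a$ and $\sigma(j) = j+a-1$ for $\mubar_a < j \le n-a+1$. By the definition \eqref{eq: def w} of $V^L$, we have $w_j = v_{\sigma(j)}$ for every $j$. Consequently, for any $k$-element ordered subset $J = \{j_1 < \cdots < j_k\} \subseteq \{1,\ldots,n-a+1\}$, we have
\[
\minor_{J}(V^L) = \pm \minor_{\sigma(J)}(V),
\]
with the sign governed by the permutation needed to order $\sigma(J)$ increasingly (this is a fixed sign depending only on $J$, not on $V$). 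In particular, $\minor_J(V^L) \neq 0$ if and only if $\minor_{\sigma(J)}(V) \neq 0$.

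Next, I would apply Lemma \ref{lem: left diagram}, which precisely states that $\sigma(I'_L(a',i)) = I'(a',i)$ for all boxes $\sq_{a',i}$ of the left diagram (equivalently, $a' \ge a$), and $\sigma(I_{\mu^L}) = I_{\mu}$. Combining with the previous display yields
\[
\minor_{I'_L(a',i)}(V^L) = \pm \minor_{I'(a',i)}(V),\qquad \minor_{I_{\mu^L}}(V^L) = \pm \minor_{I_\mu}(V),
\]
proving the first assertion.

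Finally, to conclude that $V^L$ lies in $S^{\circ}_{\lambda^{a,L}/\mu^{a,L}}$, I would use Theorem \ref{thm: skew as positroid} which identifies this skew shaped positroid with the open positroid $\Pi^{\circ}_{\mathcal{I}_{\lambda^{a,L}/\mu^{a,L}}}$, together with Definition \ref{def: Grassmann necklace for V} characterizing this open positroid by (a) the non-vanishing of the Pl\"ucker coordinates indexed by $\mathcal{I}_{\lambda^{a,L}/\mu^{a,L}}$, and (b) the maximality of each $I'_L(a',i)$ (resp.\ $I_{\mu^L}$) in the cyclic order $\leq_{a'+i}$ (resp.\ the appropriate order) among nonzero Pl\"ucker coordinates. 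Part (a) follows from the displayed equalities above together with $V \in S^{\circ}_{\skewschubert}$. For part (b), I would observe that $\sigma$ is strictly increasing on its two pieces and preserves the relevant cyclic order on $k$-subsets: indeed, any $J \subseteq \{1,\ldots,n-a+1\}$ strictly larger than $I'_L(a',i)$ in the relevant cyclic order lifts via $\sigma$ to a $k$-subset strictly larger than $I'(a',i)$ in the corresponding cyclic order on $\{1,\ldots,n\}$, and by the positroid condition on $V$ such a minor of $V$ vanishes; hence $\minor_J(V^L)=0$ as well. The main (mild) obstacle is bookkeeping the compatibility of cyclic orders under $\sigma$, but this reduces to the fact that $\sigma$ is increasing on $\{\mubar_a+1,\ldots,n-a+1\}$ and maps this interval into $\{a+\mubar_a,\ldots,n\}$ in an order-preserving way, while the initial segment $\{1,\ldots,\mubar_a\}$ is mapped to $\{b_1,\ldots,b_{\mubar_a}\}\subseteq I_\mu$, again preserving the induced order.
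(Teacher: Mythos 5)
Your relabeling map $\sigma$ and the identity $\minor_J(V^L)=\minor_{\sigma(J)}(V)$ are exactly the right observations (in fact $\sigma$ is \emph{globally} strictly increasing, since $b_{\mubar_a}\le a+\mubar_a-1$, so the sign is always $+1$), and your translation of Lemma~\ref{lem: left diagram} into $\sigma(I'_L(a',i))=I'(a',i)$ is correct. Two issues, one cosmetic and one substantive.

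The cosmetic one: you should assume $V\in U_a$, not merely $V\in S^{\circ}_{\skewschubert}$, in part~(a). The boxes $\sq_{a,i}$ with $\mubar_a<i\le\lambdabar_a$ become the rightmost column of $\lambda^{a,L}/\mu^{a,L}$ and are therefore always in $R(\lambda^{a,L}/\mu^{a,L})$; the corresponding minors $\minor_{I'(a,i)}(V)$ need not be nonzero on all of $S^{\circ}_{\skewschubert}$, and their nonvanishing is precisely Definition~\ref{def: open subset in skew}. (The paper's statement has the same implicit hypothesis, coming from the ambient discussion of $\Phi_a\colon U_a\to\cdots$.)

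The substantive gap is in part~(b). Your argument asserts that if $J>_{m+1}I'_L(a',i)$ then $\sigma(J)>_{m+a}I'(a',i)$, and then that $\minor_{\sigma(J)}(V)=0$ ``by the positroid condition on $V$.'' This last step only works when $I'(a',i)$ is itself the Grassmann-necklace entry $\mathcal{I}_{\skewschubert,a'+i-1}$, i.e.\ when $\sq_{a',i}\in R(\skewschubert)$. For the boxes $\sq_{a,i}$ in the cut column that are \emph{not} on $R(\skewschubert)$ (which is a common occurrence as soon as $\lambdabar_{a-1}>i$), one has $I'(a,i)<_{a+i}\mathcal{I}_{\skewschubert,a+i-1}$, and the positroid condition only forces vanishing of minors strictly above $\mathcal{I}_{\skewschubert,a+i-1}$. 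There is a genuine window $I'(a,i)<_{a+i}J'\le_{a+i}\mathcal{I}_{\skewschubert,a+i-1}$ where $\minor_{J'}(V)$ may well be nonzero, and your argument does not explain why such $J'$ cannot lie in $\operatorname{Im}(\sigma)$. As a concrete instance, take $k=3$, $n=8$, $\lambda=(5,4,2)$, $\mu=(2,1,0)$, $a=3$: then $\sq_{3,1}\notin R(\skewschubert)$, $I'(3,1)=\{3,6,8\}$, $\mathcal{I}_{\skewschubert,3}=I'(2,2)=\{2,3,8\}$, and the set $J'=\{3,7,8\}=\sigma(\{1,5,6\})$ satisfies $J'>_4 I'(3,1)$ but $J'<_4\{2,3,8\}$, so the positroid condition alone is silent. (The minor $\minor_{\{3,7,8\}}(V)$ does vanish, but only because $f_{\skewschubert}(7)=8$ forces $v_7\in\langle v_8\rangle$ — a rank condition, not a necklace-maximality condition.) To close the gap you would need an additional argument using that the offending index set is constrained to $\operatorname{Im}(\sigma)$, e.g.\ by verifying the bounded affine permutation $f_{V^L}=f_{\lambda^{a,L}/\mu^{a,L}}$ directly from the spanning conditions $f_V(j)$, which localize nicely under the cut. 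The paper itself ultimately avoids this entirely: in the proof of Theorem~\ref{thm: main} it re-derives $V^L\in S^{\circ}_{\skewleft}$ by feeding the configuration $\Omega^L(V)$ through the isomorphism $\Xi=\widetilde\Omega^{-1}$ of Theorem~\ref{thm: from skew schubert to braid variety}, whose Steps~4--5 already establish that the image of $\Xi$ is exactly the skew shaped positroid.
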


\begin{example}
In our running example, choose $a=6$ and get the following left diagram
\begin{center}
\begin{tikzpicture}
\filldraw[lightgray](0,0)--(0,3)--(2,3)--(2,0)--(0,0);
\draw (0,0)--(0,5)--(1,5)--(1,4)--(2,4)--(2,0)--(0,0);  
\draw (1,0)--(1,4);
\draw (0,1)--(2,1);
\draw (0,2)--(2,2);
\draw (0,3)--(2,3);
\draw (0,4)--(1,4);
\draw (1.5,1.5) node {\scriptsize 12367};
\draw (1.5,3.5) node {\scriptsize 12347};
\draw (0.5,3.5) node {\scriptsize 12357};
\draw (0.5,4.5) node {\scriptsize 12356};
\end{tikzpicture}
\end{center}
which defines a skew shaped positroid in $\Gr(5,7)$.
If we denote
$$
\mathbf{w_1=v_5,\ w_2=v_6,\ w_3=v_8},\ 
w_4=v_9,\ w_5=v_{10},\ w_6=v_{11},\ w_7=v_{12},\ 
$$
then the labels would match the ones in the left side of Example \ref{ex: running labels}. Note that $\mubar_a=3$ and $(b_1,b_2,b_3)=(5,6,8)$. 
\end{example}

\medskip

\begin{remark}
If $\mubar_a=0$, so that the cut is performed to the right of $\mu$, we get 
$V^L=\left(w_1|\cdots|w_{n-a+1}\right)=(v_a|\cdots|v_n)$ as in \cite{GS}.
\end{remark}

\begin{remark}
We have $\minor_{I_{\mu^L}}\left(V^L\right)=\minor_{I_{\mu}}(V)=1$, which is consistent with Remark \ref{rmk:Imu1}. 
\end{remark}

\subsection{Splicing: Right diagram}
\label{sec: splicing right}

We have 
$$
\Omega^R(V)=\left[\CF^{W}\stackrel{\longest}{\dashrightarrow}\CF^{a}\stackrel{\beta_{R}}{\dashrightarrow}\CF^0\right].
$$
We define a point $V^R=(u_1|\cdots|u_{k+a-1})$ in the skew shaped positroid $S_{\skewright}^\circ\subset  \Gr(k,a+k-1)$ in two steps. Observe that 
$$
I_{\mu^R}=\left\{b_1^R,\ldots,b_k^R\right\}=
\left\{b_1,\ldots,b_{\mubar_a},a+\mubar_a,\ldots,a+k-1\right\}
.$$ As in Lemma \ref{lem: flags}, we can obtain the basis $u_{b_1},\ldots,u_{b_{\mubar_a}},u_{a+\mubar_a},\ldots,u_{a+k-1}$ by intersecting two transverse flags $\CF^a$ and $\CF^W$. 

\begin{definition}
\label{def: right step 1}
For $i=1,\ldots,k$, we define $u_{b_i^R}$ as the unique vector such that
$$
u_{b_i^R}\in \CF^a_{i}\cap W_{k-i+1}
$$
and
\begin{equation}
\label{eq: step 1 triang}
u_{b_i^R}\in v_{b_i}+\left\langle v_{b_{i+1}},\ldots,v_{b_{k}}\right\rangle,\quad i=1,\ldots,k.
\end{equation}
\end{definition}
The condition \eqref{eq: step 1 triang} ensures that $v_{b_i}$ and $u_{b_i^R}$ induce the same framing on the flag $\CF^W$.
Note that we get 
$$
u_{b_1}=v_{b_1},\ldots,u_{b_{\mubar_a}}=v_{b_{\mubar_a}},
$$
$$
u_{a+i-1}\in V(a,i)\cap W_{k-i+1},\quad \mubar_a+1\le i\le \lambdabar_a
$$
and 
$$
u_{a+i-1}\in V(d_i,i)\cap W_{k-i+1},\quad \lambdabar_a+1\le i\le k.
$$

\begin{lemma}
Suppose that $\mubar_a+1\le i\le \lambdabar_a$. 
Then
\begin{equation}
\label{eq: step 1 other triang}
u_{a+i-1}\in\begin{cases}
\frac{\Delta_{I'(a,i-1)}}{\Delta_{I'(a,i)}}v_{a+i-1}+V(a,i-1) & \mathrm{if}\ i>\mubar_a+1,\\
\frac{\Delta_{I_{\mu}}}{\Delta_{I'(a,i)}}v_{a+i-1}+\left\langle v_{b_1},\ldots,v_{b_{\mubar_a}}\right\rangle 
 & \mathrm{if}\ i=\mubar_a+1.
\end{cases}
\end{equation}
\end{lemma}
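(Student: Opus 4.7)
The plan is to compute the coefficient $c$ in the unique decomposition $u_{a+i-1} = c\,v_{a+i-1} + w$, where $w$ lies in the complementary summand (either $V(a,i-1)$ in the general case or $\Wop_{\mubar_a}$ in the base case). By Lemma \ref{lem: J inductive}(b), for $i > \mubar_a + 1$ we have $J(a,i) = J(a,i-1)\sqcup\{a+i-1\}$, so by Lemma \ref{lem: dimension V} the decomposition $V(a,i) = V(a,i-1)\oplus \langle v_{a+i-1}\rangle$ is direct; hence a unique such $c$ and $w$ exist given $u_{a+i-1}\in V(a,i)$. For $i = \mubar_a+1$ the same argument applies using $V(a,\mubar_a+1) = \Wop_{\mubar_a}\oplus\langle v_{a+\mubar_a}\rangle$, which is the content of Lemma \ref{lem: 1d intersection new}.

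To pin down $c$, I would evaluate the maximal minor
\[
D := v_{j_1}\wedge\cdots\wedge v_{j_{i-1}}\wedge u_{a+i-1}\wedge v_{b_{i+1}}\wedge\cdots\wedge v_{b_k},
\]
where $J(a,i-1) = \{j_1,\ldots,j_{i-1}\}$, in two different ways. On the one hand, multilinearity together with the decomposition $u_{a+i-1} = c\,v_{a+i-1} + w$ and the fact $w\in V(a,i-1) = \langle v_{j_1},\ldots,v_{j_{i-1}}\rangle$ gives $D = c\cdot \Delta_{I'(a,i)}(V)$, up to a fixed sign coming from reordering $J(a,i-1)\cup\{a+i-1\}\cup\{b_{i+1},\ldots,b_k\}$ into the canonical increasing order. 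On the other hand, the triangularity condition \eqref{eq: step 1 triang} applied with $b_i^R = a+i-1$ yields $u_{a+i-1} \equiv v_{b_i} \pmod{\langle v_{b_{i+1}},\ldots,v_{b_k}\rangle}$, so after expanding, the extra terms contribute zero (they repeat one of $v_{b_{i+1}},\ldots,v_{b_k}$), and $D = \Delta_{I'(a,i-1)}(V)$, up to the analogous sign. Equating the two expressions and solving for $c$ produces $c = \Delta_{I'(a,i-1)}/\Delta_{I'(a,i)}$.

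The base case $i = \mubar_a+1$ is formally identical: $J(a,\mubar_a+1) = \{b_1,\ldots,b_{\mubar_a},a+\mubar_a\}$, so the minor $D$ becomes
\[
v_{b_1}\wedge\cdots\wedge v_{b_{\mubar_a}}\wedge u_{a+\mubar_a}\wedge v_{b_{\mubar_a+2}}\wedge\cdots\wedge v_{b_k},
\]
which equals $c\cdot \Delta_{I'(a,\mubar_a+1)}(V)$ by multilinearity and equals $\Delta_{I_{\mu}}(V)$ after the replacement $u_{a+\mubar_a}\mapsto v_{b_{\mubar_a+1}}$ coming from \eqref{eq: step 1 triang}. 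The ratio formula follows.

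The main obstacle, and the only nontrivial bookkeeping, is verifying that the two sign conventions (from reordering $I'(a,i)$ and from reordering $I'(a,i-1)$ into standard increasing order) agree so that the stated ratio comes out with the correct sign. Since the two sets $I'(a,i)$ and $I'(a,i-1)$ differ only by exchanging $a+i-1$ for $b_i$, the relative sign is $(-1)^{p}$, where $p$ is the difference between the positions of these two elements in the canonical orderings; a short combinatorial check using $b_i = n-k-\mu_i+i$ and the definition \eqref{eqn: I'} shows this sign is $+1$, so no correction is needed.
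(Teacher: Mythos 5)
Your proof is correct and is essentially the same argument as the paper's, just repackaged: the paper expands $v_{b_i}$ in the basis indexed by $I'(a,i)$ and then invokes Cramer's rule to read off the coefficient of $v_{a+i-1}$, whereas you compute the corresponding wedge product $D$ in two ways, which is precisely what Cramer's rule does under the hood. Both arguments rest on the same two inputs — the direct sum $V(a,i)=V(a,i-1)\oplus\langle v_{a+i-1}\rangle$ (Lemma \ref{lem: J inductive}) and the triangularity $u_{a+i-1}\equiv v_{b_i}\pmod{\langle v_{b_{i+1}},\dots,v_{b_k}\rangle}$ from \eqref{eq: step 1 triang} — and your explicit sign check (that $a+i-1$ sits at position $i$ in $I'(a,i)$ exactly as $b_i$ sits at position $i$ in $I'(a,i-1)$, using $a+i-2<b_i<b_{i+1}$) is a small but welcome refinement of a point the paper leaves implicit.
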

\begin{proof}
Assume $i>\mubar_a+1$ first. Recall that $V\in U_a$, so $\Delta_{I'(a,i)}(V)\neq 0$ and the subset $I'(a,i)=J(a,i)\cup \{b_{i+1},\ldots,b_k\}$ labels a basis in $\C^k$. If we expand $v_{b_i}$ in this basis, we get
\begin{equation}
\label{eq: cramer}
v_{b_i}=\sum_{\alpha\in J(a,i)}c_{\alpha}v_{\alpha}+c_{b_{i+1}}v_{b_{i+1}}+\cdots+c_{b_k}v_{b_k},
\end{equation}
and 
$$
u_{a+i-1}=v_{b_i}-c_{b_{i+1}}v_{b_{i+1}}-\cdots-c_{b_k}v_{b_k}=\sum_{\alpha\in J(a,i)}c_{\alpha}v_{\alpha}.
$$
Recall that $a+i-1\in J(a,i)$.
By Cramer's Rule the coefficient $c_{a+i-1}$ at $v_{a+i-1}$ in \eqref{eq: cramer} equals
$$
\frac{\Delta_{I'(a,i)
\cup b_i\setminus (a+i-1)}}{\Delta_{I'(a,i)}}=\frac{\Delta_{I'(a,i-1)}}{\Delta_{I'(a,i)}}.
$$
The case $i=\mubar_a+1$ is almost identical, but in this case 
$I'(a,i)\cup b_i\setminus (a+i-1)=I_{\mu}$.
\end{proof}

\begin{example}
\label{ex: right easy}
Following Example \ref{ex: flag cut}, we get
$$
u_5=v_5,\ u_6=v_6,\ u_8=v_8,\ u_9\in V(6,4)\cap W_{2}=\langle v_5,v_6,v_8,v_9\rangle\cap \langle v_{11},v_{12}\rangle, u_{10}=v_{12}.
$$
By Lemma \ref{lem: 1d intersection new} we get that $u_9$ is proportional to $v_9$. By \eqref{eq: step 1 other triang} we get
$$
u_9=\frac{\Delta_{5,6,8,11,12}}{\Delta_{5,6,8,9,12}}v_9.
$$
\end{example}

Equation \eqref{eq: step 1 triang} immediately implies the following.

\begin{lemma}
\label{lem: step 1 wedge}
For all $1\le i\le k$ we have $u_{b_i^R}\wedge \cdots u_{b_k^R}=v_{b_i}\wedge \cdots v_{b_k}$. In particular, 
$$
\Delta_{I_{\mu^R}}\left(V^R\right)=\Delta_{I_{\mu}}(V)=1.
$$
\end{lemma}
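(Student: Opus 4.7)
The plan is to prove both assertions by downward induction on $i$, using the triangularity property \eqref{eq: step 1 triang} as the sole input. The second statement will be the $i=1$ case of the first, combined with the normalization $\Delta_{I_\mu}(V) = 1$ from Remark \ref{rmk:Imu1}.

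First I would set up the induction. The base case $i=k$ is immediate from \eqref{eq: step 1 triang}: when $i=k$ the ``tail'' $\langle v_{b_{i+1}}, \ldots, v_{b_k}\rangle$ is zero, so $u_{b_k^R} = v_{b_k}$ on the nose, and the wedge identity is trivial. For the inductive step, assume
\[
u_{b_{i+1}^R} \wedge \cdots \wedge u_{b_k^R} = v_{b_{i+1}} \wedge \cdots \wedge v_{b_k}.
\]
By \eqref{eq: step 1 triang} we may write $u_{b_i^R} = v_{b_i} + \sum_{j > i} c_{ij}\, v_{b_j}$ for some scalars $c_{ij}$. Wedging with the inductive hypothesis gives
\[
u_{b_i^R} \wedge u_{b_{i+1}^R} \wedge \cdots \wedge u_{b_k^R} = \Bigl(v_{b_i} + \sum_{j > i} c_{ij}\, v_{b_j}\Bigr) \wedge v_{b_{i+1}} \wedge \cdots \wedge v_{b_k}.
\]
Each term with $j > i$ contains $v_{b_j}$ wedged with itself and hence vanishes, leaving $v_{b_i} \wedge v_{b_{i+1}} \wedge \cdots \wedge v_{b_k}$, which is the desired equality.

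Specializing to $i=1$ gives $u_{b_1^R}\wedge\cdots\wedge u_{b_k^R} = v_{b_1}\wedge\cdots\wedge v_{b_k}$. Since $I_{\mu^R} = \{b_1^R, \ldots, b_k^R\}$ (in increasing order, as recorded just before Definition \ref{def: right step 1}) and $I_\mu = \{b_1, \ldots, b_k\}$, this wedge identity reads $\Delta_{I_{\mu^R}}(V^R) = \Delta_{I_\mu}(V)$, which equals $1$ by the standing normalization.

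There is essentially no obstacle here: the whole argument is a direct unwinding of the triangular form of \eqref{eq: step 1 triang}. The only point to double-check is that the triangular expansion is indexed consistently with the wedge order, i.e.\ that in $u_{b_i^R} = v_{b_i} + \sum_{j > i} c_{ij} v_{b_j}$ the ``error'' terms are supported on vectors that already appear in $u_{b_{i+1}^R} \wedge \cdots \wedge u_{b_k^R}$ as an element of $\bigwedge^{k-i}\langle v_{b_{i+1}},\ldots, v_{b_k}\rangle$; this is built into the statement of \eqref{eq: step 1 triang}, so no further verification is needed.
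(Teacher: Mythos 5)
Your proof is correct and takes essentially the same approach as the paper: the paper simply states that the lemma ``immediately'' follows from \eqref{eq: step 1 triang} without writing out a proof, and your downward induction on $i$ is exactly the argument that justifies that claim.
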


Let us reconstruct the rest of the vectors $u_i$, following the proof of Theorem \ref{thm: from skew schubert to braid variety}. Recall that all  $u$'s not described by Definition \ref{def: right step 1} can be written as $u_{a'+\mubar_{a'}}$ for $1\le a'\le a-1$. 

\begin{lemma}
\label{lem: step 2 triang}
We can choose $u_{a'+\mubar_{a'}}=v_{a'+\mubar_{a'}}$ for all $a'$. This is a unique (up to scalar) basic vector in the intersection 
$V(a',\mubar_{a'}+1)\cap \left\langle u_{b^R_{\mubar_{a'}+1}},\ldots,u_{b^R_{k}}\right\rangle.$
\end{lemma}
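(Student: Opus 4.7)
The plan is to reduce the intersection $V(a',\mubar_{a'}+1)\cap \langle u_{b^R_{\mubar_{a'}+1}},\ldots,u_{b^R_{k}}\rangle$ to an intersection already analyzed in Lemma \ref{lem: 1d intersection new}. The key observation is that equation \eqref{eq: step 1 triang} is precisely an upper unitriangular change of basis: for each $i$, $u_{b^R_i} \in v_{b_i}+\langle v_{b_{i+1}},\ldots,v_{b_k}\rangle$. Consequently, for any $i$ we have the equality of spans
\[
\langle u_{b^R_{i}},\ldots,u_{b^R_{k}}\rangle = \langle v_{b_{i}},\ldots,v_{b_{k}}\rangle = W_{k-i+1}.
\]
Since $a'\le a-1$ and $\mu$ is a partition, $\mubar_{a'}\ge \mubar_a$, so the indices $b^R_{\mubar_{a'}+1},\ldots,b^R_k$ all fall within the triangular pattern of Definition \ref{def: right step 1} (covering both the cases $b^R_i=b_i$ for $i\le \mubar_a$ and $b^R_i=a+i-1$ for $i>\mubar_a$ uniformly). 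Taking $i=\mubar_{a'}+1$ gives the identification
\[
\langle u_{b^R_{\mubar_{a'}+1}},\ldots,u_{b^R_{k}}\rangle = W_{k-\mubar_{a'}}.
\]

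With this identification, the intersection in question equals $V(a',\mubar_{a'}+1)\cap W_{k-\mubar_{a'}}$. When $\mubar_{a'}<\lambdabar_{a'}$, Lemma \ref{lem: 1d intersection new} (applied to the column index $a'$ in the original diagram $\skewschubert$) states precisely that this intersection is one-dimensional and spanned by $v_{a'+\mubar_{a'}}$. In the degenerate case $\mubar_{a'}=\lambdabar_{a'}$, the $a'$-th column of $\skewright$ is empty, $v_{a'+\mubar_{a'}}=0$, and both conditions of the lemma hold trivially. Thus the choice $u_{a'+\mubar_{a'}}=v_{a'+\mubar_{a'}}$ is well defined and unique up to scalar.

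I expect no serious obstacle here: essentially all of the work has been done in Definition \ref{def: right step 1} and Lemma \ref{lem: 1d intersection new}, and the present lemma is the step that transports the latter result to the $u$-setup via the triangular change of basis. The only place to be mildly careful is in checking that the triangular identification of spans covers both regimes $i\le \mubar_a$ and $i>\mubar_a$ at once; since \eqref{eq: step 1 triang} was stated uniformly for all $i$, this is immediate.
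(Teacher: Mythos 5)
Your proof is correct and takes essentially the same route as the paper: use the upper unitriangular relation \eqref{eq: step 1 triang} to identify $\langle u_{b^R_{\mubar_{a'}+1}},\ldots,u_{b^R_{k}}\rangle = W_{k-\mubar_{a'}}$, then invoke Lemma~\ref{lem: 1d intersection new} with $a$ replaced by $a'$. One small slip: since columns are indexed from the right and $\mu$ is a partition, $a'\le a-1$ gives $\mubar_{a'}\le\mubar_a$, not $\mubar_{a'}\ge\mubar_a$; this plays no role, however, since \eqref{eq: step 1 triang} is stated uniformly for all $i=1,\dots,k$, so the span identification holds regardless.
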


\begin{proof}
By \eqref{eq: step 1 triang} we get
$$
\bigg\langle u_{b^R_{\mubar_{a'}+1}},\ldots,u_{b^R_{k}}\bigg\rangle=\left\langle v_{b_{\mubar_{a'}+1}},\ldots,v_{b_{k}}\right\rangle=W_{k-i}.
$$
Now the statement follows from Lemma \ref{lem: 1d intersection new}.
\end{proof}

Next, we need to compute the minors in the right diagram. 

\begin{lemma}
\label{lem: minors triang}
Let $1\le a'\le a-1$. Then 
$$
\Delta_{I'_R(a',i)}\left(V^R\right)=\Delta_{I'(a',i)}(V)A_{a'+\mubar_{a'}}\cdots A_{a'+i-1},
$$
where the scalars $A_t$ are defined by \eqref{eq: def A} below.
\end{lemma}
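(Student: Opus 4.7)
The plan is to compute $\Delta_{I'_R(a',i)}(V^R)$ by expressing it as an alternating product of the columns of $V^R$ indexed by $I'_R(a',i)$ and then replacing each column $u_t$ by its triangular expansion in terms of the columns $v_s$ of $V$, as provided by Definition \ref{def: right step 1} and Lemma \ref{lem: step 2 triang}.

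First I would identify the label $I'_R(a',i)$. Since $1\leq a'\leq a-1$, the $a'$-th column of $\lambda^{a,R}/\mu^{a,R}$ coincides with the $a'$-th column of $\lambda/\mu$, so $\mubar^R_{a'}=\mubar_{a'}$, $\lambdabar^R_{a'}=\lambdabar_{a'}$ and the short label agrees: $J_R(a',i)=J(a',i)$. The tail $\{b^R_{i+1},\ldots,b^R_k\}$ agrees with $\{b_{i+1},\ldots,b_k\}$ at positions $\leq \mubar_a$, but for $j>\mubar_a$ we have $b^R_j=a+j-1$. Applying Lemma \ref{lem: step 1 wedge} to the trailing columns gives $u_{b^R_{i+1}}\wedge\cdots\wedge u_{b^R_k}=v_{b_{i+1}}\wedge\cdots\wedge v_{b_k}$, so up to a global sign
\begin{equation*}
\Delta_{I'_R(a',i)}(V^R)=\bigwedge_{t\in J(a',i)}u_t\wedge v_{b_{i+1}}\wedge\cdots\wedge v_{b_k}.
\end{equation*}

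Next I would expand each $u_t$ with $t\in J(a',i)$. These indices fall into three groups: (i) $t=b_j$ with $j\leq\mubar_{a'}$, for which $u_t=v_t$ from Definition \ref{def: right step 1}; (ii) $t=a''+\mubar_{a''}$ for some $a''<a$ (equivalently, $t<a+\mubar_a$ and $t\notin I_{\mu^R}$), for which $u_t=v_t$ by Lemma \ref{lem: step 2 triang}; and (iii) $t=a+\ell-1$ with $\ell>\mubar_a$, for which $u_t=u_{b^R_\ell}$ is a Step 1 vector and equation \eqref{eq: step 1 other triang} yields $u_t=A_t v_t+(\text{correction in }V(a,\ell-1))$, with $A_t$ a ratio of minors from \eqref{eq: def A}; in groups (i) and (ii) one sets $A_t:=1$. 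Substituting these expansions into the wedge and observing that all correction terms in group (iii) are linear combinations of columns already present in the wedge (and hence contribute zero), one arrives at
\begin{equation*}
\Delta_{I'_R(a',i)}(V^R)=\Big(\prod_{t\in J(a',i)}A_t\Big)\bigwedge_{t\in J(a',i)}v_t\wedge v_{b_{i+1}}\wedge\cdots\wedge v_{b_k}=A_{a'+\mubar_{a'}}\cdots A_{a'+i-1}\,\Delta_{I'(a',i)}(V).
\end{equation*}

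The main technical obstacle lies in justifying that the correction terms in group (iii) drop out. Concretely, one must verify that $V(a,\ell-1)=\spann\{v_s:s\in J(a,\ell-1)\}$ is contained in the linear span of the remaining columns $\{v_s:s\in J(a',i),\,s<t\}\cup\{v_{b_{i+1}},\ldots,v_{b_k}\}$. This reduces to the combinatorial inclusion $J(a,\ell-1)\subseteq J(a',i)$, which should follow by iterating Lemma \ref{lem: J inductive}(c) along a path from $(a,\ell-1)$ to $(a',i)$ in $\lambda/\mu$, using the bound $\ell\leq a'+i-a+1$ (valid since $t=a+\ell-1=a'+j'$ with $j'\leq i-1$). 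To keep the expansions clean I would process the columns of the wedge in decreasing order of $t$, so that at each step only a single scalar $A_t$ is extracted and the corrections become linear combinations of vectors whose columns still appear in the remaining wedge.
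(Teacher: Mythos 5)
Your proposal is correct and follows essentially the same strategy as the paper: expand each $u_t$ for $t$ indexing a column of $V^R$ in its triangular form with respect to the $v$'s (via \eqref{eq: step 1 triang}, \eqref{eq: step 1 other triang}, and Lemma \ref{lem: step 2 triang}), observe that the correction terms are spanned by other columns appearing in the wedge and hence vanish, and collect the leading coefficients $A_t$. Lemma \ref{lem: step 1 wedge} then handles the tail $u_{b^R_{i+1}},\ldots,u_{b^R_k}$ exactly as you use it.

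The one place where you add content is the justification of the cancellation: the paper simply states the claim that all lower-order terms have indices in $I'(a',i)$ and writes the wedge identity, whereas you correctly reduce it to the inclusion $J(a,\ell-1)\subseteq J(a',i)$. This inclusion does hold, and your route via iterating Lemma \ref{lem: J inductive}(c) works; it can also be checked directly, since $J(a,\ell-1)=\{b_1,\ldots,b_{\mubar_a}\}\cup\{a+\mubar_a,\ldots,t-1\}$ and both pieces land inside the interval $\{a'+\mubar_{a'},\ldots,a'+i-1\}\subset J(a',i)$ together with $\{b_1,\ldots,b_{\mubar_{a'}}\}$.

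Two minor imprecisions worth noting: first, your grouping of $t\in J(a',i)$ into (i)--(iii) misses the case $t=b_j$ with $\mubar_{a'}<j\le\mubar_a$, which can appear inside the interval $\{a'+\mubar_{a'},\ldots,a'+i-1\}$; for such $t$ you still have $t\in I_{\mu^R}$ and $u_t=v_t$, so the conclusion is unaffected, but the partition as stated is not exhaustive. Second, when $\ell=\mubar_a+1$ the correction for $u_{a+\mubar_a}$ lies in $\langle v_{b_1},\ldots,v_{b_{\mubar_a}}\rangle$ rather than in $V(a,\ell-1)$ (since $\sq_{a,\mubar_a}\notin\skewschubert$ and $J(a,\mubar_a)$ is not formally a short label), so the degenerate base of the iteration should start from \eqref{eq: step 1 other triang}'s second case; the indices still lie in $J(a',i)$, so the argument goes through.
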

\begin{proof}
By \eqref{eq: step 1 other triang} and Lemma \ref{lem: step 2 triang} the basis $\{u_t\}$ is triangular with respect to $\{v_t\}$, up to some relabeling. More precisely, for $a'+\mubar_{a'}\le t\le a'+i-1$   we have 
\begin{equation}
\label{eq: triang summary}
u_t\in A_tv_t+\spann(v_{b_j}\mid b_j<t)+\spann(v_s\mid \mubar_a<s<t).
\end{equation}
where the leading coefficient $A_t$ is given by 
\begin{equation}
\label{eq: def A}
A_t=\begin{cases}
\Delta_{I'(a,t-a)}/\Delta_{I'(a,t-a+1)} & \mathrm{if}\ t\in \{a+\mubar_a,\ldots,a+\lambdabar_a-1\},\\
1 & \mathrm{otherwise}.\\
\end{cases}
\end{equation}
We claim that for all $t\in I'(a',i)$ all lower degree terms in the expansion \eqref{eq: triang summary} have indices which are also contained in $I'(a',i)$. This would ensure that the minors $\Delta_{I'_R(a',i)}\left(V^R\right)$ and $\Delta_{I'(a',i)}(V)$ are proportional.
Indeed, we have 
$$
I'_R(a',i)=J(a',i)\cup\{b_{i+1}^R,\ldots,b_k^R\}=\{b_1,\ldots,b_{\mubar_{a'}}\}\cup \{a'+\mubar_{a'},\ldots,a'+i-1\}\cup \{b_{i+1}^R,\ldots,b_k^R\},
$$
while
$$
I'(a',i)=J(a',i)\cup\{b_{i+1},\ldots,b_k\}=\{b_1,\ldots,b_{\mubar_{a'}}\}\cup \{a'+\mubar_{a'},\ldots,a'+i-1\}\cup \{b_{i+1},\ldots,b_k\}.
$$
Therefore 
\begin{multline*}
u_{b_1}\wedge\cdots \wedge u_{b_{\mubar_{a'}}}\wedge u_{a'+\mubar_{a'}}\wedge\cdots \wedge u_{a'+i-1}=(1\cdots 1\cdot A_{a'+\mubar_{a'}}\cdots A_{a'+i-1})\times \\
v_{b_1}\wedge\cdots \wedge v_{b_{\mubar_{a'}}}\wedge v_{a'+\mubar_{a'}}\wedge\cdots \wedge v_{a'+i-1},
\end{multline*}
Finally, by Lemma \ref{lem: step 1 wedge} we get
$$
u_{b_{i+1}^R}\wedge\cdots \wedge u_{b_k^R}=v_{b_{i+1}}\wedge\cdots \wedge v_{b_k},
$$
and the result follows.
\end{proof}

Next, we consider the quiver $Q$ for the cluster structure on $S^{\circ}_{\skewschubert}$ and the quiver $Q^R$ for the cluster structure on
$S^{\circ}_{\lambda^R/\mu^R}$. Clearly, $Q^R$ is a subgraph of $Q$. We denote by $\widehat{y}(a',i)$ and $\widehat{y}^R(a',i)$ the  exchange ratios at the vertices $I'(a',i)$ and $I'_R(a',i)$ respectively.

\begin{lemma}
\label{lem: exchange ratios not bdry}
Assume that $a'<a-1$ and $I'_R(a',i)$ is a mutable box of $Q^R$. Then the exchange ratios at $(a',i)$ agree: $\widehat{y}(a',i)=\widehat{y}^R(a',i)$.
\end{lemma}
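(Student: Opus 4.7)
The first step is to unpack the lemma into a combinatorial identity among monomials. By Lemma \ref{lem: minors triang}, the pullback $\Phi_a^*$ sends $x^R_{b,j}$ to $x_{b,j}\cdot m_{b,j}$ where
$$
m_{b,j}:=A_{b+\mubar_b}\cdots A_{b+j-1},
$$
and each $A_t$ equals either $1$ or the ratio $\Delta_{I'(a,t-a)}/\Delta_{I'(a,t-a+1)}$, in particular a Laurent monomial in the cluster variables of the $a$-th column. The hypothesis $a'<a-1$ forces the six neighbors $(a'\pm 1,i),(a',i\pm 1),(a'\pm 1,i\mp 1)$ of $(a',i)$ in $Q$ to lie in columns of index strictly less than $a$, so these neighbors all belong to $\skewright$ and the arrows of $Q^R$ incident to $(a',i)$ coincide exactly with those of $Q$. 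Hence $\Phi_a^*\widehat{y}^R_{a',i}=\widehat{y}_{a',i}\cdot R$ where
$$
R:=\frac{m_{a'-1,i}\,m_{a',i+1}\,m_{a'+1,i-1}}{m_{a'+1,i}\,m_{a',i-1}\,m_{a'-1,i+1}},
$$
and the lemma reduces to showing $R=1$.

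The second step is to reformulate this as an indicator function identity. Introducing $\tilde{A}_s:=A_{a+s}$, we obtain $m_{b,j}=\prod_s \tilde{A}_s^{N_{b,j}(s)}$ where $N_{b,j}(s):=\mathbf{1}[b+\mubar_b-a\le s\le b+j-1-a]$ records whether $\tilde{A}_s$ appears in the product defining $m_{b,j}$. Since the six exponent ranges enjoy a great deal of symmetry, it suffices to verify, for every $s$, that the alternating sum
$$
E(s):=\bigl[N_{a'-1,i}+N_{a',i+1}+N_{a'+1,i-1}\bigr](s)-\bigl[N_{a'+1,i}+N_{a',i-1}+N_{a'-1,i+1}\bigr](s)
$$
vanishes.

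The final step is the direct check that $E\equiv 0$, which I would carry out by pairing the six indicators according to the column index $b\in\{a'-1,a',a'+1\}$ (one incoming and one outgoing neighbor per column). Each such pair shares the same lower bound $b+\mubar_b-a$ but has upper bounds differing by exactly one; after setting $c:=a-a'$, the pair with $b=a'-1$ contributes $-\mathbf{1}[s=i-c-1]$, the pair with $b=a'$ contributes $+\mathbf{1}[s=i-c-1]+\mathbf{1}[s=i-c]$, and the pair with $b=a'+1$ contributes $-\mathbf{1}[s=i-c]$. The three contributions telescope to zero. The principal obstacle is purely combinatorial bookkeeping: one must also handle the degenerate cases where some of the six "neighbors" $(b,j)$ do not belong to $\skewschubert$ and thus do not contribute to the exchange ratio; a short case analysis shows that whenever such a neighbor is missing, its partner indicator already has empty support, and the cancellation persists.
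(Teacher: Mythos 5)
Your reduction to showing $R=1$ and the indicator-function reformulation $E\equiv 0$ are correct, and the telescoping in the generic case is exactly right. However, the final hand-waved sentence covering the degenerate cases is where the argument breaks, and it breaks for a substantive reason rather than a bookkeeping one.

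The problematic configuration is when both $\sq_{a'+1,i-1}$ and $\sq_{a'+1,i}$ lie in $\mu$ (the paper's Cases 4 and 5). In that situation the entire $b=a'+1$ pair has empty support, so its contribution of $-\mathbf{1}[s=i-c]$ disappears, and one computes $E(s)=\mathbf{1}[s=i-c]$, i.e.\ $R=\tilde{A}_{i-c}=A_{a'+i}$. This is not identically $1$: the cancellation does \emph{not} persist as a combinatorial identity among indicators. The correct conclusion requires the additional, non-combinatorial observation that $A_{a'+i}=1$, which holds because in this configuration one has $a'+i=b_i$ and $i\le\mubar_{a'+1}\le\mubar_a$ together with $a'<a$, hence $a'+i<a+\mubar_a$, putting $a'+i$ outside the window where $A_t$ is nontrivial in \eqref{eq: def A}. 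Without this step your proof asserts something false.

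Your stated justification, ``whenever such a neighbor is missing, its partner indicator already has empty support,'' is also inaccurate in the other degenerate cases. When only $\sq_{a'+1,i-1}\in\mu$, or when $\sq_{a',i-1},\sq_{a'+1,i-1}\in\mu$ with $\sq_{a'+1,i}$ present, the partner $(a'+1,i)$ has a nonempty (singleton) support; the cancellation nonetheless goes through there, but not for the reason you give. A correct and still compact version of your Step 3 would be: formally include all six $m_{b,j}$ (setting $m_{b,j}=1$ for boxes in $\mu$, which is consistent since the defining range is then empty), verify that the column-pairing telescopes to $E(s)=\mathbf{1}[s=i-c]\cdot\mathbf{1}[\sq_{a'+1,i}\in\mu]$, and finish by showing $A_{a'+i}=1$ in the remaining case via $a'+i<a+\mubar_a$. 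This matches the paper's case analysis in substance, with your indicator-function language being a cleaner packaging of Cases 1--3 but not a replacement for the argument in Cases 4--5.
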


\begin{proof}
Let us denote the factor in Lemma \ref{lem: minors triang} by
$$
A(a',i)=A_{a'+\mubar_{a'}}\cdots A_{a'+i-1}.
$$
Note that $A(a',i+1)=A(a',i)\cdot A_{a'+i}$. Now we need to compute the exchange ratios  for a mutable box $\sq_{a',i}$. Since it is mutable, it is not in the boundary ribbon and $\sq_{a'-1,i},\sq_{a'-1,i+1},\sq_{a',i+1}$ are all in $\lambda_R/\mu_R$. For the other neighbors of  $\sq_{a',i}$, we make the following observations:
\begin{itemize}
\item[(a)] Either the box $\sq_{a',i-1}$ is in $\lambda_R/\mu_R$, or it is in $\mu_R$. In the latter case $i=\mubar_{a'}+1$ and $\sq_{a'+1,i-1}$ is also  in $\mu$.
\item[(b)] If $a'<a-1$, then either the box $\sq_{a'+1,i}$ is in $\lambda_R/\mu_R$, or it is in $\mu_R$. In the latter case $\sq_{a'+1,i-1}$ is also in $\mu$, and we have $b_i=a'+i$ (since this is the vertical step at the boundary of $\mu$ at height $i$).
\end{itemize}

\noindent {\bf Case 1}: All six neighbors of $\sq_{a',i}$ are present:
\begin{center}
\begin{tikzcd}
 & I'(a',i+1) \arrow{d} & I'(a'-1,i+1)\\
I'(a'+1,i) & I'(a',i) \arrow{ur} \arrow{d} \arrow{l} & I'(a'-1,i) \arrow{l}\\
I'(a'+1,i-1) \arrow{ur} & I'(a',i-1) & \\
\end{tikzcd}
\end{center}
The exchange ratio gets multiplied by
$$
\frac{A(a'-1,i)}{A(a'-1,i+1)}\cdot \frac{A(a',i+1)}{A(a',i-1)}\cdot \frac{A(a'+1,i-1)}{A(a'+1,i)}=\frac{1}{A_{a'+i-1}}\cdot (A_{a'+i}A_{a'+i-1})\cdot \frac{1}{A_{a'+i}}=1.
$$
{\bf Case 2}: 
\begin{center}
\begin{tikzcd}
 & I'(a',i+1) \arrow{d} & I'(a'-1,i+1)\\
I'(a'+1,i) & I'(a',i) \arrow{ur}   \arrow{l} & I'(a'-1,i) \arrow{l}\\
\mu& \mu & \\
\end{tikzcd}
\end{center}
In this case  the exchange ratio gets multiplied by
$$
\frac{A(a'-1,i)}{A(a'-1,i+1)}\cdot \frac{A(a',i+1)}{A(a'+1,i)}=\frac{1}{A_{a'+i-1}}\cdot \frac{A_{a'+i}A_{a'+i-1}}{A_{a'+i}}=1.
$$

{\bf Case 3:}
\begin{center}
\begin{tikzcd}
 & I'(a',i+1) \arrow{d} & I'(a'-1,i+1)\\
I'(a'+1,i) & I'(a',i) \arrow{ur}   \arrow{l} \arrow{d}& I'(a'-1,i) \arrow{l}\\
\mu& I'(a',i-1) & \\
\end{tikzcd}
\end{center}
The exchange ratio gets multiplied by
$$
\frac{A(a'-1,i)}{A(a'-1,i+1)}\cdot \frac{A(a',i+1)}{A(a',i-1)}\cdot \frac{1}{A(a'+1,i)}=\frac{1}{A_{a'+i-1}}\cdot (A_{a'+i}A_{a'+i-1})\cdot \frac{1}{A_{a'+i}}=1.
$$
{\bf Case 4:}
\begin{center}
\begin{tikzcd}
 & I'(a',i+1) \arrow{d} & I'(a'-1,i+1)\\
\mu & I'(a',i) \arrow{ur}    \arrow{d}& I'(a'-1,i) \arrow{l}\\
\mu& I'(a',i-1) & \\
\end{tikzcd}
\end{center}
The exchange ratio gets multiplied by
$$
\frac{A(a'-1,i)}{A(a'-1,i+1)}\cdot \frac{A(a',i+1)}{A(a',i-1)}=\frac{1}{A_{a'+i-1}}\cdot (A_{a'+i}A_{a'+i-1})=A_{a'+i}.
$$
But by observation (b) above we have $a'+i=b_i$ and $A_{a'+i}=A_{b_i}=1$.

\noindent {\bf Case 5:}
\begin{center}
\begin{tikzcd}
 & I'(a',i+1) \arrow{d} & I'(a'-1,i+1)\\
\mu & I'(a',i) \arrow{ur}     & I'(a'-1,i) \arrow{l}\\
\mu& \mu & \\
\end{tikzcd}
\end{center}
$$
\frac{A(a'-1,i)}{A(a'-1,i+1)}\cdot  A(a',i+1)=\frac{1}{A_{a'+i-1}}\cdot (A_{a'+i}A_{a'+i-1})=A_{a'+i}.
$$
Similarly to the previous case, we have $A_{a'+i}=1$.
\end{proof}

Now we deal with the left boundary where $a'=a-1$.

\begin{lemma}
\label{lem: exchange ratios bdry}
Suppose that $a'=a-1$ and $I'_R(a',i)$ is a mutable box of $Q^R$. Then the exchange ratios at $(a',i)$ agree: $\widehat{y}(a',i)=\widehat{y}^R(a',i)$.
\end{lemma}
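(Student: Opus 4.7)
The plan is to mirror the strategy of Lemma \ref{lem: exchange ratios not bdry}, but accounting for the fact that when $a'=a-1$, the two western neighbors $\sq_{a,i}$ and $\sq_{a,i-1}$ of $\sq_{a-1,i}$ belong to the left piece $\skewleft$ and are therefore absent from the quiver $Q^R$. By Lemma \ref{lem: minors triang}, $\Phi_a^{\ast}(x^R_{a',j}) = x_{a',j}\,A(a',j)$; substituting this and telescoping via $A(a',j+1)/A(a',j)=A_{a'+j}$, in the generic configuration (all four remaining neighbors of $\sq_{a-1,i}$ lying in $\skewschubert$) one finds
\[
\frac{\widehat{y}^R(a-1,i)}{\widehat{y}^{U_a}(a-1,i)} \;=\; A_{a+i-1}\cdot\frac{x_{a,i}}{x_{a,i-1}},
\]
where $\widehat{y}^{U_a}$ denotes the exchange ratio computed in the cluster structure on $U_a$ (which agrees with $\widehat{y}(a-1,i)$ in $Q$, since localizing column $a$ only changes the frozen status, not the quiver).

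The core of the argument is then to verify the identity $A_{a+i-1}\cdot x_{a,i}/x_{a,i-1}=1$, with the conventions $x_{a,j}=1$ when $\sq_{a,j}\in\mu$ (consistent with $\Delta_{I_\mu}=1$), and the corresponding factor absent when $\sq_{a,j}\notin \lambda$. Crucially, since $\sq_{a-1,i}\in\skewschubert$ forces $i\le\lambdabar_{a-1}\le\lambdabar_a$, the potentially problematic case $i>\lambdabar_a$ (which would involve the boundary-ribbon variable $x_{a,\lambdabar_a}$ and would make the identity fail) cannot arise. This leaves three sub-cases: (I) if $i\le\mubar_a$, then $a+i-1<a+\mubar_a$ so $A_{a+i-1}=1$ by \eqref{eq: def A}, and both $\sq_{a,i-1},\sq_{a,i}$ lie in $\mu$ so the $x$-ratio is $1/1$; (II) if $i=\mubar_a+1$, then $A_{a+\mubar_a}=\Delta_{I_\mu}/\Delta_{I'(a,\mubar_a+1)}=1/x_{a,\mubar_a+1}$ and $x_{a,\mubar_a}=1$; (III) if $\mubar_a+2\le i\le\lambdabar_a$, then $A_{a+i-1}=x_{a,i-1}/x_{a,i}$ directly from \eqref{eq: def A}. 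In all three the identity holds.

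Finally, I would treat the boundary configurations where one of the other four neighbors of $\sq_{a-1,i}$ is also missing from $Q^R$ (equivalently, from $Q$), namely $\sq_{a-1,i-1}\in\mu$ (forcing $i=\mubar_{a-1}+1$), $\sq_{a-1,i+1}\notin\lambda$ (forcing $i=\lambdabar_{a-1}$), and their combination. In each such case the corresponding $A(a',j)$ factor simply does not appear in the computation of $\widehat{y}^R/\widehat{y}^{U_a}$, and the resulting simplified ratio still equals $1$ by the same sub-case verification, together with the observation that $A_t=1$ for all $t$ outside the interval $\{a+\mubar_a,\dots,a+\lambdabar_a-1\}$ (this uses the monotonicity $\mubar_{a-1}\le\mubar_a$ and $\lambdabar_{a-1}\le\lambdabar_a$). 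The main obstacle will be the book-keeping of the sub-configurations, but in every instance the identity collapses in precisely the same clean way as in the corresponding case of Lemma \ref{lem: exchange ratios not bdry}.
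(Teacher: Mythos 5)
Your proposal is correct and takes essentially the same approach as the paper: both treat the arrows removed from $Q$ at $a'=a-1$ and verify that the rescaling factors $A_t$ from Lemma \ref{lem: minors triang} exactly compensate, with your single identity $A_{a+i-1}\cdot x_{a,i}/x_{a,i-1}=1$ (under the convention $x_{a,j}=1$ for $\sq_{a,j}\in\mu$) neatly packaging the paper's five-case ``missing factor'' analysis. One small point: the sub-case $\sq_{a-1,i+1}\notin\lambda$ you flag at the end is vacuous, since $\lambdabar_{a-2}\le\lambdabar_{a-1}$ would then also force $\sq_{a-2,i+1}\notin\lambda$, putting $\sq_{a-1,i}$ on the boundary ribbon and contradicting mutability.
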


\begin{proof}
 We follow the cases in the proof of Lemma \ref{lem: exchange ratios not bdry}. In each case, we describe the contribution from $I'(a'+1,i)$ and $I'(a'+1,i-1)$ which are present in $\lambda/\mu$ but missing in $\lambda_R/\mu_R$ and call these ``missing factors".

\noindent {\bf Case 1:} The missing factor is $\Delta_{I'(a,i-1)}/\Delta_{I'(a,i)}$ while the rest of the exchange ratio is multiplied by
$$
\frac{A(a'-1,i)}{A(a'-1,i+1)}\cdot \frac{A(a',i+1)}{A(a',i-1)}=\frac{1}{A_{a'+i-1}}\cdot (A_{a'+i}A_{a'+i-1})=A_{a'+i}.
$$
Note that $a'+i=a+i-1\in \{a+\mubar_a,\ldots,a+\lambdabar_a-1\}$, so by \eqref{eq: def A} we get
$$
A_{a'+i}=\Delta_{I'(a,i-1)}/\Delta_{I'(a,i)},
$$
so the missing factor is restored.

\noindent {\bf Case 2:} The missing factor is $1/\Delta_{I'(a'+1,i)}$  
while the rest of the exchange ratio is multiplied by
$$
\frac{A(a'-1,i)}{A(a'-1,i+1)}\cdot A(a',i+1) =\frac{1}{A_{a'+i-1}}\cdot (A_{a'+i}A_{a'+i-1})=A_{a'+i}.
$$
Note that $a'+i=a+\mubar_a$, so $A_{a'+i}=\Delta_{I_{\mu}}/\Delta_{I'(a,i)}=1/\Delta_{I'(a,i)},$ 

{\bf Case 3} is similar to {\bf Case 2}, and {\bf Case 4} and {\bf Case 5} follow Lemma \ref{lem: exchange ratios not bdry} verbatim since there are no missing factors.

\end{proof}

\subsection{Example}
\label{sec: intro example details}

Here we discuss Example \ref{ex: intro} in detail. Recall that $\lambda=(7,7,5,3,1), \mu=(3,1)$ and $a=6$.  We can label the boxes of $\skewschubert$ by $I'(a,i)$ and draw the quiver as follows:

\begin{center}
\resizebox{5.1in}{3.65in}{
\begin{tikzpicture}
\filldraw[color=lightgray,scale=2] (0,-3)--(0,-5)--(3,-5)--(3,-4)--(1,-4)--(1,-3)--(0,-3);
\draw[scale=2] (0,0)--(0,-5)--(7,-5)--(7,-3)--(5,-3)--(5,-2)--(3,-2)--(3,-1)--(1,-1)--(1,0)--(0,0);
\draw [dotted, ultra thick, color=blue, scale=2] (2,-5.25)--(2,0.15);
\draw[scale=2] (0,-4)--(7,-4);
\draw[scale=2] (0,-3)--(5,-3);
\draw[scale=2] (0,-2)--(3,-2);
\draw[scale=2] (0,-1)--(1,-1);
\draw[scale=2] (1,-5)--(1,-1);
\draw[scale=2] (2,-5)--(2,-1);
\draw[scale=2] (3,-5)--(3,-2);
\draw[scale=2] (4,-5)--(4,-2);
\draw[scale=2] (5,-5)--(5,-3);
\draw[scale=2] (6,-5)--(6,-3);
\draw (13,-9) node {\textcolor{blue}{\scriptsize $1{,}8{,}10{,}11{,}12$}};
\draw (11,-9) node {\textcolor{cadmiumgreen}{\scriptsize $2{,}8{,}10{,}11{,}12$}};
\draw (9,-9) node {\textcolor{cadmiumgreen}{\scriptsize $3{,}8{,}10{,}11{,}12$}};
\draw (7,-9) node {\textcolor{cadmiumgreen}{\scriptsize $4{,}8{,}10{,}11{,}12$}};
\draw (13,-7) node {\textcolor{blue}{\scriptsize $1{,}2{,}10{,}11{,}12$}};
\draw (11,-7) node {\textcolor{blue}{\scriptsize $2{,}3{,}10{,}11{,}12$}};
\draw (9,-7) node {\textcolor{blue}{\scriptsize $3{,}4{,}10{,}11{,}12$}};
\draw (7,-7) node {\textcolor{cadmiumgreen}{\scriptsize $4{,}5{,}10{,}11{,}12$}};
\draw (5,-7) node {\textcolor{cadmiumgreen}{\scriptsize $5{,}6{,}10{,}11{,}12$}};
\draw (3,-7) node {\textcolor{cadmiumgreen}{\scriptsize $5{,}7{,}10{,}11{,}12$}};

\draw (9,-5) node {\textcolor{blue}{\scriptsize $3{,}4{,}5{,}11{,}12$}};
\draw (7,-5) node {\textcolor{blue}{\scriptsize $4{,}5{,}6{,}11{,}12$}};
\draw (5,-5) node {\textcolor{blue}{\scriptsize $5{,}6{,}7{,}11{,}12$}};
\draw (3,-5) node {\textcolor{cadmiumgreen}{\scriptsize $5{,}7{,}8{,}11{,}12$}};
\draw (1,-5) node {\textcolor{cadmiumgreen}{\scriptsize $5{,}8{,}9{,}11{,}12$}};

\draw (5,-3) node {\textcolor{blue}{\scriptsize $5{,}6{,}7{,}8{,}12$}};
\draw (3,-3) node {\textcolor{blue}{\scriptsize $5{,}7{,}8{,}9{,}12$}};
\draw (1,-3) node {\textcolor{blue}{\scriptsize $5{,}8{,}9{,}10{,}12$}};
\draw (1,-1) node {\textcolor{blue}{\scriptsize $5{,}8{,}9{,}10{,}11$}};
\draw (1,-9) node{\scriptsize $5{,}8{,}10{,}11{,}12$};

\draw[thick, -Stealth] (12.25,-9) to (11.8,-9);
\draw[thick, -Stealth] (10.25,-9) to (9.8,-9);
\draw[thick, -Stealth] (8.25,-9) to (7.8,-9);

\draw[thick, -Stealth] (8.25,-7) to (7.8,-7);
\draw[thick, -Stealth] (6.25,-7) to (5.8,-7);
\draw[thick, -Stealth] (4.25,-7) to (3.8,-7);

\draw[thick, -Stealth] (4.25,-5) to (3.8,-5);
\draw[thick, -Stealth] (2.25,-5) to (1.8,-5);

\draw[thick, -Stealth] (11,-7.2) to (11,-8.8);
\draw[thick, -Stealth] (9,-7.2) to (9,-8.8);
\draw[thick, -Stealth] (7,-7.2) to (7,-8.8);

\draw[thick, -Stealth] (7,-5.2) to (7,-6.8);
\draw[thick, -Stealth] (5,-5.2) to (5,-6.8);
\draw[thick, -Stealth] (3,-5.2) to (3,-6.8);

\draw[thick, -Stealth] (3,-3.2) to (3,-4.8);
\draw[thick, -Stealth] (1,-3.2) to (1,-4.8);

\draw[thick, -Stealth] (11.3,-8.7) to (12.8,-7.2);
\draw[thick, -Stealth] (9.3,-8.7) to (10.8,-7.2);
\draw[thick, -Stealth] (7.3,-8.7) to (8.8,-7.2);

\draw[thick, -Stealth] (7.3,-6.7) to (8.8,-5.2);
\draw[thick, -Stealth] (5.3,-6.7) to (6.8,-5.2);
\draw[thick, -Stealth] (3.3,-6.7) to (4.8,-5.2);

\draw[thick, -Stealth] (3.3,-4.7) to (4.8,-3.2);
\draw[thick, -Stealth] (1.3,-4.7) to (2.8,-3.2);
\end{tikzpicture}}
\end{center}

The open subset $U_a$ is defined by inequalities 
$$
\Delta_{5,7,10,11,12}\neq 0, \Delta_{5,7,8,11,12}\neq 0,
\Delta_{5,7,8,9,12}\neq 0.
$$
The flag at the cut is given by 
$$
\CF^a=\left\{\langle v_5\rangle\subset \langle v_5,v_7\rangle\subset  \langle v_5,v_7, v_8\rangle\subset  \langle v_5,v_7,v_8,v_9\rangle\subset \C^5\right\},
$$
while the flag $\CF^W$ is defined by
$$
\CF^W=\left\{\langle v_{12}\rangle\subset \langle v_{11},v_{12}\rangle\subset  \langle v_{10},v_{11}, v_{12}\rangle\subset  \langle v_8,v_{10},v_{11},v_{12}\rangle\subset \C^5\right\}.
$$
It is easy to see that $\CF^a$ is transversal to $\CF^W$ precisely when $V$ is in the open subset $U_a$. The basis $u_5,u_7,u_8,u_9,u_{10}$ can be obtained by intersecting $\CF^a$ and $\CF^W$:
$$
u_5=v_5, u_7\in \langle v_5,v_7\rangle\cap \langle v_8,v_{10},v_{11},v_{12}\rangle, u_8\in \langle v_5,v_7, v_8\rangle\cap \langle v_{10},v_{11}, v_{12}\rangle, 
$$
$$ 
u_9\in \langle v_5,v_7,v_8,v_9\rangle\cap \langle v_{11},v_{12}\rangle, u_{10}=v_{12}.
$$ 
Note that by Lemma \ref{lem: 1d intersection new} we have $v_7\in \langle v_5,v_7\rangle\cap \langle v_8,v_{10},v_{11},v_{12}\rangle$, 
so the vector $u_7$ is proportional to $v_7,$   and the coefficient is given by \eqref{eq: step 1 other triang}:
\begin{equation}
\label{eq: u7}
u_7=\frac{\Delta_{5,8,10,11,12}}{\Delta_{5,7,10,11,12}}v_7=\frac{1}{\Delta_{5,7,10,11,12}}v_7\quad \mathrm{since}\ \Delta_{I_{\mu}}=1.
\end{equation}

To find $u_8$ and $u_9$, we can expand $v_9$ and $v_{10}$ in the basis $(v_5,v_7,v_8,v_{11},v_{12})$ using Cramer's Rule:
\begin{equation}
\label{eq: v9}v_9=\frac{\Delta_{9,7,8,11,12}}{\Delta_{5,7,8,11,12}}v_5+\frac{\Delta_{5,9,8,11,12}}{\Delta_{5,7,8,11,12}}v_7+\frac{\Delta_{5,7,9,11,12}}{\Delta_{5,7,8,11,12}}v_8+\frac{\Delta_{5,7,8,9,12}}{\Delta_{5,7,8,11,12}}v_{11}+\frac{\Delta_{5,7,8,11,9}}{\Delta_{5,7,8,11,12}}v_{12},
\end{equation}
\begin{equation}
\label{eq: v10}
v_{10}=\frac{\Delta_{10,7,8,11,12}}{\Delta_{5,7,8,11,12}}v_5+\frac{\Delta_{5,10,8,11,12}}{\Delta_{5,7,8,11,12}}v_7+\frac{\Delta_{5,7,10,11,12}}{\Delta_{5,7,8,11,12}}v_8+\frac{\Delta_{5,7,8,10,12}}{\Delta_{5,7,8,11,12}}v_{11}+\frac{\Delta_{5,7,8,11,10}}{\Delta_{5,7,8,11,12}}v_{12},
\end{equation}

Given that $u_8\in\langle v_5,v_7,v_8\rangle\cap\langle v_{10},v_{11},v_{12}\rangle$ and $u_9\in\langle v_5,v_7,v_8,v_9\rangle\cap\langle v_{11},v_{12}\rangle$, we rewrite \eqref{eq: v9} and \eqref{eq: v10} as

\begin{equation}
\label{eq: pre u8}
    v_{10}-\frac{\Delta_{5{,}7{,}8{,}10{,}12}}{\Delta_{5{,}7{,}8{,}11{,}12}}v_{11}-\frac{\Delta_{5{,}7{,}8{,}11{,}10}}{\Delta_{5{,}7{,}8{,}11{,}12}}v_{12}=\frac{\Delta_{10{,}7{,}8{,}11{,}12}}{\Delta_{5{,}7{,}8{,}11{,}12}}v_5+
\frac{\Delta_{5{,}10{,}8{,}11{,}12}}{\Delta_{5{,}7{,}8{,}11{,}12}}v_7+\frac{\Delta_{5{,}7{,}10{,}11{,}12}}{\Delta_{5{,}7{,}8{,}11{,}12}}v_8,
\end{equation}

\begin{equation}
\label{eq: pre u9}
    v_9-\frac{\Delta_{9,7,8,11,12}}{\Delta_{5,7,8,11,12}}v_5-
\frac{\Delta_{5,9,8,11,12}}{\Delta_{5,7,8,11,12}}v_7-\frac{\Delta_{5,7,9,11,12}}{\Delta_{5,7,8,11,12}}v_8=\frac{\Delta_{5,7,8,9,12}}{\Delta_{5,7,8,11,12}}v_{11}+\frac{\Delta_{5,7,8,11,9}}{\Delta_{5,7,8,11,12}}v_{12}.
\end{equation}

We find that \eqref{eq: pre u8} defines $u_8$ under the normalization of \eqref{eq: step 1 triang}, whereas \eqref{eq: pre u9} requires rescaling by $\minor_{5,7,8,11,12}/\minor_{5,7,8,10,12}$ to obtain the following:
\begin{equation}
\label{eq: u8}
u_8=v_{10}-\frac{\Delta_{5{,}7{,}8{,}10{,}12}}{\Delta_{5{,}7{,}8{,}11{,}12}}v_{11}-\frac{\Delta_{5{,}7{,}8{,}11{,}10}}{\Delta_{5{,}7{,}8{,}11{,}12}}v_{12}=\frac{\Delta_{10{,}7{,}8{,}11{,}12}}{\Delta_{5{,}7{,}8{,}11{,}12}}v_5+
\frac{\Delta_{5{,}10{,}8{,}11{,}12}}{\Delta_{5{,}7{,}8{,}11{,}12}}v_7+\frac{\Delta_{5{,}7{,}10{,}11{,}12}}{\Delta_{5{,}7{,}8{,}11{,}12}}v_8,
\end{equation}
\begin{equation}
\label{eq: u9}
u_9=v_{11}+\frac{\Delta_{5,7,8,11,9}}{\Delta_{5,7,8,9,12}}v_{12}=\frac{\Delta_{5,7,8,11,12}}{\Delta_{5,7,8,9,12}}v_9-\frac{\Delta_{9,7,8,11,12}}{\Delta_{5,7,8,9,12}}v_5-
\frac{\Delta_{5,9,8,11,12}}{\Delta_{5,7,8,9,12}}v_7-\frac{\Delta_{5,7,9,11,12}}{\Delta_{5,7,8,9,12}}v_8.
\end{equation}

After the cut, we get the left matrix $V^L=(w_1,\ldots,w_7)$ where
$$
w_1=v_5, w_2=v_7, w_3=v_8, w_4=v_9, w_5=v_{10}, w_6=v_{11}, w_7=v_{12},
$$
 and the right matrix $V^R=(u_1,\ldots,u_{10})$ where $u_7,u_8,u_9$ are given by \eqref{eq: u7},\eqref{eq: u8}, \eqref{eq: u9} and 
$$
 u_i=v_i \quad  \text{for $1\le i\le 6$} \qquad \text{and}\quad u_{10}=v_{12}. 
$$
We can fill in the diagrams $\lambda^{6,L}/\mu^{6,L}$ and $\lambda^{6,R}/\mu^{6,R}$ with $I'_L(a',i)$ and $I'_R(a',i)$ respectively: 

\begin{center}
\resizebox{5.1in}{3.2in}{
\begin{tikzpicture} 
\filldraw[color=lightgray, scale=2] (0,-3)--(0,-5)--(2,-5)--(2,-4)--(1,-4)--(1,-3)--(0,-3);
\draw[scale=2] (0,0)--(0,-5)--(2,-5)--(2,-1)--(1,-1)--(1,0)--(0,0);
\draw[scale=2] (0,-4)--(2,-4);
\draw[scale=2] (0,-3)--(2,-3);
\draw[scale=2] (0,-2)--(2,-2);
\draw[scale=2] (0,-1)--(1,-1);
\draw[scale=2] (1,-5)--(1,-1);

\filldraw[color=lightgray,scale=2] (3,-4)--(3,-5)--(4,-5)--(4,-4)--(3,-4);
\draw[scale=2] (3,-5)--(3,-1)--(4,-1)--(4,-2)--(6,-2)--(6,-3)--(8,-3)--(8,-5)--(3,-5);
\draw[scale=2] (3,-4)--(8,-4);
\draw[scale=2] (3,-3)--(6,-3);
\draw[scale=2] (3,-2)--(4,-2);
\draw[scale=2] (4,-5)--(4,-2);
\draw[scale=2] (5,-5)--(5,-2);
\draw[scale=2] (6,-5)--(6,-2);
\draw[scale=2] (7,-5)--(7,-3);
\draw (15,-9) node {\textcolor{blue}{\scriptsize $1{,}7{,}8{,}9{,}10$}};
\draw (13,-9) node {\textcolor{cadmiumgreen}{\scriptsize $2{,}7{,}8{,}9{,}10$}};
\draw (11,-9) node {\textcolor{cadmiumgreen}{\scriptsize $3{,}7{,}8{,}9{,}10$}};
\draw (9,-9) node {\textcolor{cadmiumgreen}{\scriptsize $4{,}7{,}8{,}9{,}10$}};

\draw (15,-7) node {\textcolor{blue}{\scriptsize $1{,}2{,}8{,}9{,}10$}};
\draw (13,-7) node {\textcolor{blue}{\scriptsize $2{,}3{,}8{,}9{,}10$}};
\draw (11,-7) node {\textcolor{blue}{\scriptsize $3{,}4{,}8{,}9{,}10$}};
\draw (9,-7) node {\textcolor{cadmiumgreen}{\scriptsize $4{,}5{,}8{,}9{,}10$}};
\draw (7,-7) node {\textcolor{cadmiumgreen}{\scriptsize $5{,}6{,}8{,}9{,}10$}};

\draw (11,-5) node {\textcolor{blue}{\scriptsize $3{,}4{,}5{,}9{,}10$}};
\draw (9,-5) node {\textcolor{blue}{\scriptsize $4{,}5{,}6{,}9{,}10$}};
\draw (7,-5) node {\textcolor{blue}{\scriptsize $5{,}6{,}7{,}9{,}10$}};
\draw (7,-3) node {\textcolor{blue}{\scriptsize $5{,}6{,}7{,}8{,}12$}};

\draw (7,-9) node {\scriptsize $5{,}7{,}8{,}9{,}10$};

\draw[thick, -Stealth] (14.3,-9) to (13.7,-9);
\draw[thick, -Stealth] (12.3,-9) to (11.7,-9);
\draw[thick, -Stealth] (10.3,-9) to (9.7,-9);

\draw[thick, -Stealth] (10.3,-7) to (9.7,-7);
\draw[thick, -Stealth] (8.3,-7) to (7.7,-7);

\draw[thick, -Stealth] (13,-7.2) to (13,-8.8);
\draw[thick, -Stealth] (11,-7.2) to (11,-8.8);
\draw[thick, -Stealth] (9,-7.2) to (9,-8.8);

\draw[thick, -Stealth] (9,-5.2) to (9,-6.8);
\draw[thick, -Stealth] (7,-5.2) to (7,-6.8);

\draw[thick, -Stealth] (13.3,-8.7) to (14.8,-7.2);
\draw[thick, -Stealth] (11.3,-8.7) to (12.8,-7.2);
\draw[thick, -Stealth] (9.3,-8.7) to (10.8,-7.2);

\draw[thick, -Stealth] (9.3,-6.7) to (10.8,-5.2);
\draw[thick, -Stealth] (7.3,-6.7) to (8.8,-5.2);
 

\draw (3,-7) node {\textcolor{blue}{\scriptsize $1{,}2{,}5{,}6{,}7$}};
\draw (3,-5) node {\textcolor{blue}{\scriptsize $1{,}2{,}3{,}6{,}7$}};
\draw (1,-5) node {\textcolor{cadmiumgreen}{\scriptsize $1{,}3{,}4{,}6{,}7$}};
\draw (3,-3) node {\textcolor{blue}{\scriptsize $1{,}2{,}3{,}4{,}7$}};
\draw (1,-3) node {\textcolor{blue}{\scriptsize $1{,}3{,}4{,}5{,}7$}};
\draw (1,-1) node {\textcolor{blue}{\scriptsize $1{,}3{,}4{,}5{,}6$}};
\draw (1,-9) node{\scriptsize $1{,}3{,}5{,}6{,}7$};

\draw[thick, -Stealth] (2.3,-5) to (1.7,-5);
\draw[thick, -Stealth] (1,-3.2) to (1,-4.8);
\draw[thick, -Stealth] (1.3,-4.7) to (2.8,-3.2);
\end{tikzpicture}
}
\end{center}

The reader is invited to check that the corresponding minors are rescaled as in Lemma \ref{lem: minors triang}.


\subsection{Proof of Theorem \ref{thm: main}}

In this section we prove Theorem \ref{thm: main} which we repeat for the reader's convenience.
\begin{theorem}
    Let $n > 0$ and $0 < k \leq n$. Let $\mu \subseteq \lambda$ be partitions fitting inside a $k \times (n-k)$-rectangle. Choose $1 \leq a \leq n-k$, and let $U_a$ be the principal open set in $S_{\skewschubert}^\circ$ defined by the non-vanishing of the cluster variables in the $a$-th column of $\skewschubert$  (from the right). Then, we have an isomorphism
\[
\Phi_a: U_a  \xrightarrow{\sim} S_{\skewleft}^\circ \times S_{\skewright}^\circ. 
\]
Furthermore, $\Phi_a$ is a quasi-cluster equivalence.
\end{theorem}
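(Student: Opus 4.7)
The plan is to assemble the theorem out of the components that have been built up throughout the paper, using the braid variety model as the main organizing tool.

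First, I would construct the map at the braid variety level. Under the isomorphism $\widetilde{\Omega}$ of Theorem \ref{thm: from skew schubert to braid variety}, the open subset $U_a$ corresponds by Lemma \ref{lem: freeze column} to precisely those configurations in which the intermediate flag $\CF^a$ sitting between $\beta_L$ and $\beta_R$ is transversal to $\CF^W$. On such configurations the map $\Phi_a^{\mathrm{flag}}$ of \eqref{eq: Phi flags} is manifestly well-defined and, in fact, an isomorphism onto $X(\longest\beta_L)\times X(\longest\beta_R)$: transversality of $\CF^a$ and $\CF^W$ is exactly what makes each half a valid braid variety configuration, and an inverse is given by concatenating two such halves (the common flag $\CF^a$ together with the shared $\CF^W$ allow unambiguous gluing, with framings propagated via Remark \ref{rem: propagate framing}).

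Second, I would transport $\Phi_a^{\mathrm{flag}}$ back to the skew shaped positroid through the commutative diagram \eqref{eq: splicing diagram}, identifying the result with the explicit formulas of Sections \ref{sec: splicing left} and \ref{sec: splicing right}. For $V^L$ this identification is straightforward from Lemma \ref{lem: left diagram}: the columns $w_j$ are essentially a relabeled subsequence of the $v_i$ and the necklace minors of $V^L$ agree with those of $V$, so $V^L\in S^{\circ}_{\skewleft}$. The real content lies in $V^R$: Definition \ref{def: right step 1} produces the vectors $u_{b_i^R}$ as intersections of $\CF^a$ with $W_{k-i+1}$, whose one-dimensionality is precisely the transversality condition of Lemma \ref{lem: freeze column}, and Lemma \ref{lem: step 2 triang} recovers the remaining $u_{a'+\mubar_{a'}}$ using Lemma \ref{lem: 1d intersection new}. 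To prove $V^R\in S^{\circ}_{\skewright}$ I would apply Theorem \ref{thm: from skew schubert to braid variety} in reverse: the data $(u_1,\ldots,u_{a+k-1})$ together with the appropriate frozen scalars reconstruct the configuration $\Omega^R(V)$, so $V^R$ automatically lies in the correct positroid. The $(\C^{\times})^s$ factors are matched up by tracking the frozen minors $\minor_{I'(a,\lambdabar_a)}$ through the cut.

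The main obstacle, as acknowledged in the introduction, is the verification that $V^R$ lies in $S^{\circ}_{\skewright}$ with the correct minor structure; this rests on the triangularity relation \eqref{eq: triang summary} and is summarized cleanly by Lemma \ref{lem: minors triang}, which gives
\[
\minor_{I'_R(a',i)}(V^R)=\minor_{I'(a',i)}(V)\cdot A_{a'+\mubar_{a'}}\cdots A_{a'+i-1},
\]
with each $A_t$ a ratio of frozen minors in the $a$-th column of $\skewschubert$. This identity immediately delivers conditions (1) and (2) in the definition of a quasi-cluster isomorphism: $\Phi_a^{\ast}$ sends each frozen variable to a Laurent monomial in the frozens of $U_a$, and each mutable cluster variable to the corresponding cluster variable on $U_a$ times such a monomial.

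Finally, to establish condition (3) on exchange ratios, I would invoke Lemmas \ref{lem: exchange ratios not bdry} and \ref{lem: exchange ratios bdry}, which handle the interior case $a'<a-1$ and the boundary case $a'=a-1$ respectively via the telescoping identity $A(a',i+1)=A(a',i)A_{a'+i}$ and a case-by-case check of the six possible neighbor configurations around a mutable vertex. The symmetric verification on the left factor $S^{\circ}_{\skewleft}$ is essentially trivial because $V^L$ shares its minors with $V$ verbatim. Combining these, $\Phi_a$ is a quasi-cluster equivalence in the sense of Fraser, completing the proof.
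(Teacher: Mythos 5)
Your proposal follows the paper's own proof almost step by step: it assembles the commutative diagram \eqref{eq: splicing diagram} with Theorem \ref{thm: from skew schubert to braid variety}, uses Lemma \ref{lem: freeze column} for the transversality characterization of $U_a$, invokes Lemma \ref{lem: minors triang} for the frozen-monomial scaling of minors, and uses Lemmas \ref{lem: exchange ratios not bdry} and \ref{lem: exchange ratios bdry} for the exchange ratios, with the inverse realized as gluing of braid-variety configurations (equivalently, the paper's normalization of unique matrix representatives of $V^L$ and $V^R$). The one phrase that reads a bit too casually is the suggestion that the framings on $\CF^a$ propagate compatibly between the two halves when gluing: the remark following \eqref{eq: Phi flags} explicitly warns that they need not agree, and the paper handles this by matching the framed $\CF^W$ and the \emph{unframed} flag $\CF^a$, choosing the unique (unipotent) row operation on $V^R$ that aligns both with the fixed representative of $V^L$.
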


\begin{proof}
We use the commutative diagram \eqref{eq: splicing diagram} to summarize the results of this section. By \eqref{eq: Phi flags}, the configurations of flags $\Omega^L$ and $\Omega^R$ yield well-defined points in the respective braid varieties $X(\longest\beta^L)$ and $X(\longest\beta^R)$. We can formally restate the constructions in Sections \ref{sec: splicing left} and \ref{sec: splicing right} by writing 
$$
V^L=\Xi(\Omega^L),\ V^R=\Xi(\Omega^R)
$$
where $\Xi=\widetilde{\Omega}^{-1}$ is defined in the proof of Theorem \ref{thm: from skew schubert to braid variety}. In particular, the proof of Theorem \ref{thm: from skew schubert to braid variety} implies that 
$$
V^L\in S_{\skewleft}^\circ,\ V^R\in S_{\skewright}^\circ
$$
and $\Omega(V^L)=\Omega^L,\Omega(V^R)=\Omega^R$. Therefore $\Phi_a$ is well defined. Note that applying row operations on $V$ corresponds to simultaneous row operations on $V^L$ and $V^R$.

Next, we prove that $\Phi_a$ is an isomorphism. The matrices $V^L$ and $V^R$ are defined up to row operations on both (independent from each other), but we can fix a unique representative for $V^L$ by requiring that $v_{b_1},\ldots,v_{b_k}$ is a standard basis. This determines the (framed) flag $\CF^a$, and we can apply unique row operations on $V^R$ which match both framed flags $\CF^W$ and $\CF^a$. Now all vectors $v_i$ are contained in the union of columns of $V^L$ and $V^R$.

Lemma \ref{lem: minors triang} implies that all cluster variables in $Q^R$ are multiplied by monomials in frozens, and 
by Lemmas \ref{lem: exchange ratios not bdry} and \ref{lem: exchange ratios bdry} the exchange ratios are preserved. So $\Phi_a$ is a quasi-cluster isomorphism.
\end{proof}

\subsection{Splicing on $n$-strands} As mentioned in Section \ref{sec: n strands}, the skew shaped positroid $S^{\circ}_{\skewschubert}$ is isomorphic to the braid variety $X\left(\lift{w_{\skewschubert}}\lift{w_0^{(n)}}\right)$, where $w_{\skewschubert} \in S_n$, cf. \eqref{eq:decomposition-with-w0}.
Selecting $1 \leq a \leq n-k$, we have
\[
w_{\lambda^{a, L} / \mu^{a, L}} \in S_{n-a}, \qquad  w_{\lambda^{a, R} / \mu^{a, R}} \in S_{a + k}.
\]

We choose an embedding $S_{a+k} \hookrightarrow S_n$ as the subgroup of permutations that leave fixed the elements $a+k+1, \dots, n$, and we embed $S_{n-a} \hookrightarrow S_{n}$ as the subgroup of elements that leave fixed the elements $1, \dots, a$. Upon these embeddings, we have a length-additive decomposition
\begin{equation}\label{eq:red-dec-w-skewschubert}
w_{\skewschubert} = w_{\lambda^{a, L}/\mu^{a, L}}w_{\lambda^{a, R}/\mu^{a, R}},
\end{equation}
see Figure \ref{fig:red-dec-w-skewschubert}. By results of \cite{forthcoming}, the decomposition \eqref{eq:red-dec-w-skewschubert} implies that we have an open embedding
\[
X\left(\lift{w_{\lambda^{a, L}/\mu^{a, L}}}\lift{w_0^{(n)}}\right) \times X\left(\lift{w_{\lambda^{a, R}/\mu^{a, R}}}\lift{w_0^{(n)}}\right) \hookrightarrow X\left(\lift{w_{\skewschubert}}\lift{w_0^{(n)}}\right)
\]
that induces a quasi-cluster equivalence between the left-hand side an an open subset of the right-hand side given by the freezing of some cluster variables. We do not know whether, upon the corresponding identifications, this coincides with the embedding $S^{\circ}_{\lambda^{a,L}/\mu^{a,L}} \times S^{\circ}_{\lambda^{a, R}/\mu^{a, R}} \hookrightarrow S^{\circ}_{\skewschubert}$.
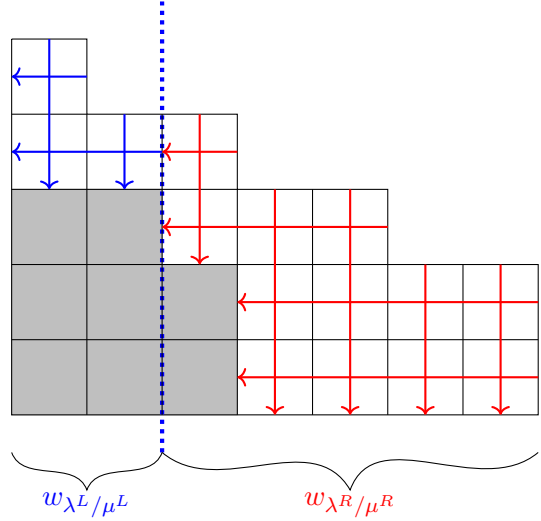
\begin{figure}
    \centering
    \begin{tikzpicture}
\filldraw[color=lightgray] (0,-2)--(0,-5)--(3,-5)--(3,-3)--(2,-3)--(2,-2)--(0,-2);
\draw (0,0)--(0,-5)--(7,-5)--(7,-3)--(5,-3)--(5,-2)--(3,-2)--(3,-1)--(1,-1)--(1,0)--(0,0);
\draw [dotted, ultra thick, color=blue] (2,-5.5)--(2,0.5);
\draw (0,-4)--(7,-4);
\draw (0,-3)--(5,-3);
\draw (0,-2)--(3,-2);
\draw (0,-1)--(1,-1);
\draw (1,-5)--(1,-1);
\draw (2,-5)--(2,-1);
\draw (3,-5)--(3,-2);
\draw (4,-5)--(4,-2);
\draw (5,-5)--(5,-3);
\draw (6,-5)--(6,-3);
\draw[color=red, thick, ->](7, -3.5) -- (3, -3.5);
\draw[color=red, thick, ->](7, -4.5) -- (3, -4.5);
\draw[color=red, thick, ->] (5, -2.5) -- (2, -2.5);
\draw[color=red, thick, ->] (3, -1.5) -- (2, -1.5);
\draw[color=red, thick, ->] (6.5, -3) -- (6.5, -5);
\draw[color=red, thick, ->] (5.5, -3) -- (5.5, -5);
\draw[color=red, thick, ->] (4.5, -2) -- (4.5, -5);
\draw[color=red, thick, ->] (3.5, -2) -- (3.5, -5);
\draw[color=red, thick, ->] (2.5, -1) -- (2.5, -3);

\draw[color=blue, thick, ->] (1.5, -1) -- (1.5, -2);
\draw[color=blue, thick, ->] (0.5, 0) -- (0.5, -2);
\draw[color=blue, thick, ->] (2, -1.5) -- (0, -1.5);
\draw[color=blue, thick, ->] (1, -0.5) -- (0, -0.5);

\draw (0, -5.5) to[out=290, in=110] (1, -6) to [out=70, in=250] (2, -5.5);
\node at (1, -6.2) {\color{blue}$w_{\lambda^{L}/\mu^{L}}$};

\draw (2, -5.5) to[out=320,in=120] (4.5, -6) to[out=70, in=220] (7, -5.5);
\node at (4.5, -6.2) {\color{red}$w_{\lambda^{R}/\mu^{R}}$};
\end{tikzpicture}
    \caption{Length-additive decomposition $w_{\skewschubert} = w_{\lambda^{a, L}/\mu^{a, L}}w_{\lambda^{a, R}/\mu^{a, R}}$.}
    \label{fig:red-dec-w-skewschubert}
\end{figure}

\bibliographystyle{plain}
\bibliography{skew.bibliography.bib}
\end{document}